\newcommand{\cO}{\mathcal{O}}
\newcommand{\cU}{\mathcal{U}}
\newcommand{\cP}{\mathcal{P}}
\newcommand{\p}{\mathbb{P}}
\newcommand{\Z}{\mathbb{Z}}
\DeclareMathOperator{\fix}{Fix}
\DeclareMathOperator{\Sym}{Sym}
\DeclareMathOperator{\supp}{supp}
\DeclareMathOperator{\codim}{codim}
\DeclareMathOperator{\Pic}{Pic}
\DeclareMathOperator{\prym}{Prym}
\DeclareMathOperator{\Fix}{Fix}
\DeclareMathOperator{\nm}{Nm}
\DeclareMathOperator{\sing}{sing}
\DeclareMathOperator{\aut}{Aut}
\DeclareMathOperator{\spec}{Spec}
\DeclareMathOperator{\reg}{reg}
\DeclareMathOperator{\id}{id}
\DeclareMathOperator{\gr}{gr}
\DeclareMathOperator{\NS}{NS}
\DeclareMathOperator{\PL}{PL}
\newcommand{\sheafext}{\mathscr{E}xt}
\newcommand{\sheafhom}{\mathscr{H}om}
\newtheorem{theorem}{Theorem}[section]
\newtheorem{corollary}[theorem]{Corollary}
\newtheorem{proposition}[theorem]{Proposition}
\newtheorem{propdef}[theorem]{Proposition-Definition}
\newtheorem{lemma}[theorem]{Lemma}
\newtheorem*{proposition*}{Proposition}
\newtheorem*{lemma*}{Lemma}
\newtheorem{remark}[theorem]{Remark}
\newtheorem{definition}[theorem]{Definition}
\newtheorem{example}[theorem]{Example}
\date{\today}
\title{Irreducible symplectic varieties via relative Prym varieties}
\author[E. Brakkee]{Emma Brakkee}
\address{\parbox{0.9\textwidth}{Emma Brakkee, Leiden University, Mathematical Institute,
Einsteinweg 55, 2333 CC Leiden, The Netherlands
\vspace{1mm}}}
\email{{e.l.brakkee@math.leidenuniv.nl}}
\author[C. Camere]{Chiara Camere}
\address{\parbox{0.9\textwidth}{Chiara Camere, Dipartimento di Matematica F. Enriques, Università degli Studi di Milano, Dipartimento di Matematica, Via
Cesare Saldini 50, 20133 Milano, Italy \vspace{1mm}}}
\email{{chiara.camere@unimi.it}}
\author[A. Grossi]{Annalisa Grossi}
\address{\parbox{0.9\textwidth}{Annalisa Grossi, Université Paris-Saclay, CNRS, Laboratoire de Mathématiques d’Orsay, Rue Michel Magat, Bât. 307, 91405 Orsay, France \vspace{1mm}}}
\email{{annalisa.grossi@universite-paris-saclay.fr}}
\author[L. Pertusi]{Laura Pertusi}
\address{\parbox{0.9\textwidth}{Laura Pertusi, Dipartimento di Matematica F. Enriques, Università degli studi di Milano, Via Cesare Saldini
50, 20133 Milano, Italy \vspace{1mm}}}
\email{laura.pertusi@unimi.it}
\author[G. Sacc\`{a}]{Giulia Sacc\`{a}}
\address{\parbox{0.9\textwidth}{Giulia Saccà, Columbia University, Department of Mathematics
2990 Broadway, New York, NY 10027, USA \vspace{1mm}}}
\email{{gs3032@columbia.edu}}
\author[S. Viktorova]{Sasha Viktorova}
\address{\parbox{0.9\textwidth}{Sasha Viktorova, Department of Mathematics, KU Leuven, Celestijnenlaan 200B, 3001 Leuven, Belgium \vspace{1mm}}}
\email{{sasha.viktorova@kuleuven.be}}
\begin{document}

\begin{abstract} 
Generalizing work of Markushevich--Tikhomirov and Arbarello--Saccà--Ferretti, we use relative Prym varieties to construct Lagrangian fibered symplectic varieties in infinitely many dimensions. We then give criteria for when the construction yields primitive symplectic varieties, respectively, irreducible symplectic varieties. 
The starting point of the construction is a K3 surface endowed with an anti-symplectic involution and an effective linear system on the quotient surface. We give sufficient conditions on the linear system to ensure that the relative Prym varieties satisfy the criteria above.
As a consequence, we produce infinite series of irreducible symplectic varieties.
\end{abstract}

\maketitle

\section{Introduction}

\subsection{Irreducible symplectic varieties}
Since the 1980's the Beauville--Bogomolov decomposition theorem has turned the spotlight on irreducible holomorphic symplectic manifolds, establishing them as one of the building blocks of compact K\"ahler manifolds with trivial first Chern class. More recently, the effort of many has resulted in the formulation of a decomposition theorem for singular varieties with trivial canonical class (see \cite{GKKP,DG,GrGuKe,Druel2018,Guenancia2016,Campana2021, BGL2020}\cite[Theorem 1.5]{HP}).
The singular version of the decomposition theorem states that a normal projective variety with numerically trivial canonical class and klt singularities admits a finite quasi-étale cover which is isomorphic to the product of abelian varieties, strict Calabi–Yau varieties, and irreducible symplectic varieties. These 
are the singular analogue of irreducible holomorphic symplectic manifolds. 

While irreducible holomorphic symplectic manifolds have provided an interesting sample of manifolds on which to test general conjectures, they are notoriously difficult to construct. 
Beyond K3 surfaces, which are the only examples in dimension $2$, the  known deformation classes in higher dimensions are those of Hilbert schemes of points on a K3 surface and of generalized Kummer manifolds, which occur in all even dimensions greater or equal to $4$, and those of O'Grady's examples which occur in dimension \(6\) and \(10\). 

Relaxing the smoothness assumption, more examples arise. However, it remains hard to find new examples and a classification still seems to be out of reach.
The known examples of irreducible symplectic varieties are terminalizations of symplectic quotients of irreducible holomorphic symplectic manifolds \cite{Fujiki, Fu-Menet, GM22, bertini2024terminalizations}, moduli spaces of semistable sheaves on K3 or abelian surfaces  or in the Kuznetsov component of a cubic fourfold or a Gushel--Mukai fourfold \cite{PR,Sacca-ISV}, or compactifications of Lagrangian fibrations, see \cite{MT,SS} (for \cite{MT} combine \cite[Proposition~3.12]{MenetRiess} and \cite[Proposition 3(2)]{Perego}).
Compactifications of Lagrangian fibrations were also studied in \cite{ASF} and \cite{Matteini}; we will see that these are  irreducible symplectic varieties as well (see Corollaries~\ref{cor_ASFisISV} and \ref{cor_Matteini_generalized}).

The purpose of this paper is to construct infinitely many examples of irreducible symplectic varieties in infinitely many dimensions. We do this by proving a general result on relative Prym varieties associated to a K3 surface with an anti-symplectic involution and invariant linear system on the K3 surface.

\subsection{Examples via relative Prym varieties} 
The construction of relative Prym varieties was proposed in papers by Markushevich--Tikhomirov \cite{MT} and Arbarello--Saccà--Ferretti \cite{ASF}. 
The idea is to consider a K3 surface $S$ carrying an anti-symplectic involution $i$ and, for any polarization $H$ and smooth invariant curve $D$ on $S$, the moduli space $M_H:=M_{S,H}(v)$ of $H$-stable sheaves on $S$ with Mukai vector $v=(0,D,1-g(D))$. 
One studies a certain irreducible component of the fixed locus inside $M_H$ of the symplectic birational involution $\tau$ obtained as the composition of $i^*$ and $F\mapsto \sheafhom(F,\cO_{\supp(F)})$,
which commute and are both anti-symplectic.
Over a smooth invariant curve $D'\in |D|$, this amounts to studying the Prym variety associated to the double cover $D'\rightarrow D'/i $.
Globally, one obtains a subvariety $\cP_H$ inside $M_H$ which has an open subset that is fibered in Prym varieties.
When the compactification $\cP_H$ is a symplectic variety, as we will show to be the case when $H=D$, it has a structure of Lagrangian fibration $\mathcal{P}_D\rightarrow |C|$. 
As mentioned above, the general fibers of this morphism are Prym varieties, so it will be called the relative Prym variety associated to $(S,i)$ and $|C|$.
Here $|C|$ is the linear system on the quotient surface that pullbacks to $|D|$ (one of the two when the double cover is étale).

The relative Prym variety associated to $(S,i)$ and $|C|$ has been investigated before in the case when $S$ is a double cover of a del Pezzo surface $T$ of degree 2 and $C=-K_T$ \cite{MT}, and when $S$ covers an Enriques surface $T$ and  $D$ is primitive  \cite{ASF}. A few more cases, when the quotient surface $T$ is a del Pezzo surface and $C=-K_T$ or $-2K_T$, have been studied in \cite{Matteini} and in \cite{SS} -- see Section \ref{sec:examples} for further details. 
In all of these cases, it was shown that one of the two following possibilities occurs: the relative Prym variety is singular and admits no smooth symplectic resolution, or a symplectic resolution exists and is an irreducible holomorphic symplectic manifold of $K3^{[n]}$-type.

The aim of this paper is to discuss the relative Prym construction for any choice of very general $(S,i)$ and $|C|$.
By work of Nikulin~\cite{Nikulin_Finitegroups} there are 75 different families of K3 surfaces carrying an anti-symplectic involution. Here, by very general $(S,i)$ we mean that $(S,i)$ is a very general point in the corresponding family. We can choose any effective linear system $|C|$ on the quotient surface $T$, giving many more cases than have been studied to date.
We will discuss criteria for the relative Prym varieties to be respectively symplectic, primitive symplectic and irreducible symplectic varieties (see Section \ref{sec:singular-definitions} for exact definitions).

After recalling the necessary definitions and constructions in Section \ref{sec:preliminaries}, we obtain the following result, which we prove in Section \ref{sec_sv} using arguments inspired by \cite{MT, ASF}. 

\begin{proposition}\label{prop_simplvarBeauville}
Let $S$ be a  smooth K3 surface with an anti-symplectic involution $i$ and let $f\colon S\rightarrow T=S/i$ be the quotient map, let $C$ be a smooth curve of genus $g(C)$ on $T$ and let $D=f^{-1}C$. 
We assume that the pair $(S,i)$ is very general in the sense of Definition \ref{def_generalK3withsympinv} and that $D$ is smooth of  genus  $g(D)\geq 2$. 
Then the relative Prym variety $\mathcal{P}_D \to |C|$ is a symplectic variety of dimension $2(g(D)-g(C))$.
\end{proposition}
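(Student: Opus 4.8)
The plan is to realize $\mathcal{P}_D$ inside the moduli space $M:=M_{S,D}(v)$ with $v=(0,[D],1-g(D))$ --- the relative compactified Jacobian of $|D|$, so that $M\to|D|\cong\p^{g(D)}$ is Lagrangian fibered with fiber $\Pic^0(D')$ over a smooth curve $D'$ --- and to check the three defining properties of a symplectic variety in Beauville's sense: $\mathcal{P}_D$ is normal and projective, its smooth locus carries a holomorphic symplectic $2$-form, and that form extends to a holomorphic $2$-form on one (equivalently, every) resolution. By a theorem of Namikawa the last two conditions together amount to $\mathcal{P}_D$ having rational Gorenstein singularities and a symplectic form on $(\mathcal{P}_D)_{\reg}$. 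Recall that $M$ carries the Mukai form $\sigma_M$ on $M_{\reg}$, that $\mathcal{P}_D$ is by construction the closure in $M$ of the relative Prym variety over the open set $U\subset|C|$ of curves whose preimage in $S$ is smooth, and that the birational involution $\tau$ --- the composition of $i^*$ with $F\mapsto\sheafhom(F,\cO_{\supp(F)})$ --- is regular over the locus of sheaves supported on integral curves, where it is a symplectic involution whose fixed locus over a smooth invariant $D'=f^{-1}(C')$ contains $\prym(D'/C')$ as a connected component.

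\emph{Dimension and irreducibility.} By Mukai's formula $\dim M=\langle v,v\rangle+2=D^2+2=2g(D)$. The image of $\mathcal{P}_D$ in $|D|$ is the sublinear system $f^*|C|$, and over a smooth invariant curve $D'=f^{-1}(C')$ the fiber of $\mathcal{P}_D$ is $\prym(D'/C')$, of dimension $g(D')-g(C')=g(D)-g(C)$; hence $\dim\mathcal{P}_D=\dim|C|+(g(D)-g(C))$. A Riemann--Roch computation on $T$ --- using $\chi(\cO_T)=1$ when $f$ is branched, the Enriques Riemann--Roch when $f$ is \'etale, the identity $C^2=g(D)-1$ coming from $D^2=(f^*C)^2=2C^2=2g(D)-2$, and adjunction for $C\subset T$ --- gives $\dim|C|=g(D)-g(C)$, whence $\dim\mathcal{P}_D=2(g(D)-g(C))$. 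Irreducibility is automatic, since $\mathcal{P}_D$ is the closure of a family of connected abelian varieties over the irreducible base $U$.

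\emph{The symplectic form on the dense open part.} Both factors $i^*$ and $F\mapsto\sheafhom(F,\cO_{\supp(F)})$ of $\tau$ are anti-symplectic, so $\tau$ is symplectic. Over the open set $\mathcal{P}_D^{\circ}\subset\mathcal{P}_D$ lying above $U$ the space $M$ is smooth and $\tau$ is a regular symplectic involution; at $[F]\in\mathcal{P}_D^{\circ}$ the differential of $\tau$ splits $T_{[F]}M=\Ext^1(F,F)$ into its $\pm1$-eigenspaces, with $T_{[F]}\mathcal{P}_D^{\circ}$ the $+1$-eigenspace, and since $\sigma_M$ is $\tau$-invariant it pairs the two eigenspaces to zero; hence the restriction $\sigma:=\sigma_M|_{\mathcal{P}_D^{\circ}}$ is a closed non-degenerate $2$-form. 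It remains to prove that $\mathcal{P}_D$ is normal, that $\sigma$ extends to a symplectic form on all of $(\mathcal{P}_D)_{\reg}$, and that it extends further to a holomorphic $2$-form on a resolution; all of this will come from a description of the singularities of $\mathcal{P}_D$.

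\emph{The singularities of $\mathcal{P}_D$ --- the main obstacle.} One has to understand $\mathcal{P}_D$ over the discriminant $\Delta\subset|C|$ parametrizing curves $C'$ for which $D'=f^{-1}(C')$ is singular or meets the ramification curve of $f$ non-transversally. Over the generic point of a component of $\Delta$ the curve $D'$ acquires a single node, and a local computation in the spirit of \cite{MT, ASF} --- modelled on the smoothness of the Beauville--Mukai system and compatible with the duality defining $\tau$ --- should show that $\mathcal{P}_D$ is smooth there, so that $\sing(\mathcal{P}_D)$ has codimension $\geq2$. More generally, the aim is to produce, \'etale-locally at any point of $\mathcal{P}_D$, a model $\C^{2k}\times Z$ in which $Z$ is the fixed locus of the involution induced by $\tau$ on a local model of the relative compactified Jacobian at a point lying over a more degenerate curve (reducible, with several nodes, or tangent to the ramification) --- the very general hypothesis on $(S,i)$ serving to exclude uncontrolled configurations --- and such that $\sigma$ corresponds to the product of a standard symplectic form on $\C^{2k}$ with the symplectic form of $Z$, so that $\sigma$ is non-degenerate on all of $(\mathcal{P}_D)_{\reg}$. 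One then checks that $Z$ is again a symplectic singularity --- in particular normal, Cohen--Macaulay, and rational Gorenstein. This yields: the Cohen--Macaulayness of $\mathcal{P}_D$, hence (with the codimension estimate and Serre's criterion) its normality; and the rationality of its singularities, hence the extension of $\sigma$ to a holomorphic $2$-form on any resolution. By Namikawa's criterion, $\mathcal{P}_D$ is a symplectic variety of dimension $2(g(D)-g(C))$.
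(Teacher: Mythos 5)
Your setup (dimension count, irreducibility, and the Fujiki-type eigenspace argument giving a symplectic form on the part of $\cP_D$ over the locus of smooth curves) is fine and consistent with the paper. The problem is the section you yourself call ``the main obstacle'': it is not a proof but a program. The phrases ``should show'', ``the aim is to produce \'etale-locally \dots a model $\C^{2k}\times Z$'', and ``one then checks that $Z$ is again a symplectic singularity'' are exactly the assertions that carry all the content of the proposition, and none of them is established. Worse, the route you propose cannot work at the stated level of generality: the claim that over the generic point of each component of the discriminant the curve $D'$ acquires a single node (so that $\sing(\cP_D)$ has codimension $\ge 2$) requires control of the codimension-one degenerations of $f^*|C|$ -- precisely the hypotheses the paper only imposes later, in Theorem \ref{thm_isv_exactHp}(2)--(3) and Sections \ref{section_curvesreduciblepreim}--\ref{section_curvessingularpreim} -- and these are \emph{not} among the hypotheses of Proposition \ref{prop_simplvarBeauville}. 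A priori $|C|$ can have divisorial loci of curves with non-integral or badly singular preimage, and your local-model strategy gives no handle on $\cP_D$ there. You also set out to \emph{prove} normality of the closure via Cohen--Macaulayness and Serre's criterion, whereas in the paper $\cP_D$ is by definition the normalization of $\overline{\fix^0(\tau)}$, so normality is free; proving the closure itself is normal would again require the local analysis you have not done.

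The paper's actual argument avoids the discriminant entirely and is worth internalizing. First, $M_D$ is a symplectic variety for \emph{any} polarization (Proposition \ref{prop-M-symplectic-variety}, by comparing with a $v$-generic polarization via the morphism $M_{H'}\to M_H$, $F\mapsto\gr_H F$). Second, when $H=D$ the involution $\tau=j\circ i^*$ is biregular, so $\fix^0(\tau)$ is closed. Third -- and this is the key general fact you are missing -- the normalization of any irreducible component of the fixed locus of a finite-order \emph{symplectic} automorphism of a symplectic variety, not contained in the singular locus, is again a symplectic variety (Proposition \ref{prop:fixed-locus-symplectic-variety}). The proof of that uses Kaledin's stratification: each stratum of $(M_D)_{\sing}$ carries a canonical generically non-degenerate $2$-form preserved by $\tau$, so $\fix(\tau)$ meets each stratum in an even-dimensional subvariety; this forces $\fix^0(\tau)\cap(M_D)_{\sing}$ to have codimension $\ge 2$ in $\fix^0(\tau)$, whence the Fujiki form extends to a reflexive symplectic form on the normalization, and extension to a resolution is obtained by embedding a resolution of $\fix^0(\tau)$ into one of $M_D$. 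If you want to complete your write-up, replace your local-model section by this soft argument; as written, the proof has a genuine gap exactly where the proposition's content lies.
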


In particular, $\mathcal{P}_D$ has trivial canonical bundle and canonical singularities. In Section \ref{sec_sv} we  establish  general criteria for a projective symplectic variety to be primitive symplectic (Proposition \ref{prop_condition_for_psv}) and irreducible symplectic (Proposition \ref{prop_keyprop}). The first application is to note that the (normalization of the) relative Prym varieties constructed in \cite{ASF} are irreducible symplectic varieties. Then, we show the following result.

\begin{theorem}\label{thm_psv}
Under the assumptions of Proposition \ref{prop_simplvarBeauville}, if in addition $|C|$ is very ample on $T$ and $|D|$ is very ample on $S$, then $\mathcal{P}_D$ is a primitive symplectic variety.
\end{theorem}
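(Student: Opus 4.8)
The plan is to verify the defining conditions of a primitive symplectic variety for $\mathcal{P}_D$: namely, that $\mathcal{P}_D$ is a normal projective symplectic variety with $H^1(\mathcal{P}_D,\mathcal{O}_{\mathcal{P}_D})=0$ and such that the symplectic form spans $H^0(\mathcal{P}_D^{\mathrm{reg}},\Omega^2)$, i.e.\ $h^{2,0}=1$. By Proposition~\ref{prop_simplvarBeauville} we already know $\mathcal{P}_D$ is a symplectic variety, so the content of the theorem is the two cohomological conditions, and these should be read off from the geometry of the ambient moduli space $M_H=M_{S,H}(v)$ with $H=D$. The strategy is to relate the Hodge theory of $\mathcal{P}_D$ to that of $M_D$, which is an irreducible holomorphic symplectic manifold of $K3^{[n]}$-type (a smooth moduli space of stable sheaves on the K3 surface $S$ when $v$ is primitive and $H$ is generic, which the very-generality hypothesis on $(S,i)$ should guarantee). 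Since $\mathcal{P}_D$ sits inside $M_D$ as (a component of) the fixed locus of the symplectic birational involution $\tau$, one expects $H^2(\mathcal{P}_D)$ — or at least its transcendental/$(2,0)$ part — to be controlled by the $\tau$-invariant part of $H^2(M_D)$.

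First I would set up the comparison carefully: using that $|D|$ is very ample on $S$, the Mukai vector $v=(0,D,1-g(D))$ is primitive and, for the very general $(S,i)$, the polarization $H=D$ is $v$-generic, so $M_D$ is smooth, projective, holomorphic symplectic of dimension $D^2+2=2g(D)-2$, with $H^1(M_D,\mathcal{O})=0$ and $h^{2,0}(M_D)=1$ by O'Grady--Yoshioka. Next I would analyze the involution $\tau=(i^*)\circ(F\mapsto \sheafhom(F,\mathcal{O}_{\mathrm{supp}F}))$ on $M_D$: it is anti-symplectic times anti-symplectic, hence symplectic, and the support map $M_D\to |D|$ is $\tau$-equivariant, intertwining $\tau$ with the involution on $|D|$ induced by $i$; the Prym component $\mathcal{P}_D$ maps onto the invariant linear subsystem, which is $|C|$ (or its appropriate twist), of dimension $g(C)$, with general fiber the Prym variety of dimension $g(D)-g(C)$.

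Then the heart of the argument: I would show $H^1(\mathcal{P}_D,\mathcal{O}_{\mathcal{P}_D})=0$ and $h^0(\mathcal{P}_D^{\mathrm{reg}},\Omega^2)=1$. For the first, since $\mathcal{P}_D$ has rational (indeed canonical) singularities, it suffices to compute $H^1$ of a resolution, or to use that $\mathcal{P}_D$ is fibered over the rational variety $|C|\cong\mathbb{P}^{g(C)}$ in (Prym) abelian varieties together with the fact that the relevant $H^{1,0}$ must be anti-invariant under a relevant involution and hence vanishes for very general $(S,i)$ — concretely, $H^1(\mathcal{P}_D,\mathcal{O})$ would be a sub-Hodge-structure of $H^1$ of the family of Pryms, which is anti-invariant in $H^1$ of the spectral curves, and one checks this has no $(1,0)$-part by a Lefschetz/monodromy argument. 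For $h^{2,0}=1$: the symplectic form gives $h^{2,0}\geq 1$; for the reverse inequality I would argue that $H^{2,0}(\mathcal{P}_D)$ injects into the $\tau$-invariant part $H^{2,0}(M_D)^{\tau}$ via restriction (or via a correspondence given by the inclusion), and that since $\tau$ is symplectic on $M_D$ it acts trivially on the one-dimensional $H^{2,0}(M_D)$, so this invariant part is already one-dimensional — then one must check the restriction map is injective, which follows because the symplectic form on $M_D$ restricts to a nonzero form on $\mathcal{P}_D$ (its restriction is, up to scalar, the symplectic form of Proposition~\ref{prop_simplvarBeauville}), forcing the restriction $H^{2,0}(M_D)^\tau\to H^{2,0}(\mathcal{P}_D)$ to be nonzero, hence (both sides being at most/at least one-dimensional) an isomorphism. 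I expect the main obstacle to be making the relation between the Hodge structures of $\mathcal{P}_D$ and of $M_D$ precise at the level of the \emph{singular} variety $\mathcal{P}_D$: one must either pass to a resolution and control the discrepancy, or invoke that $\mathcal{P}_D$ has rational singularities (which Proposition~\ref{prop_simplvarBeauville} provides, canonical singularities being rational for varieties with trivial canonical class) so that $H^i(\mathcal{P}_D,\mathcal{O})$ and the reflexive $2$-forms behave as in the smooth case, and then to justify that the inclusion $\mathcal{P}_D\hookrightarrow M_D$ induces the expected maps on $H^1$ and $H^{2,0}$ — here the very-generality of $(S,i)$ is essential to kill any extra invariant classes that could contribute to $h^{2,0}$ or $h^{1,0}$.
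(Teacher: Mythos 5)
There is a genuine gap in your argument for $h^0(\Omega^{[2]})\leq 1$, and it is the central point of the theorem. You propose to bound $h^{2,0}(\mathcal{P}_D)$ from above by showing that ``$H^{2,0}(\mathcal{P}_D)$ injects into the $\tau$-invariant part $H^{2,0}(M_D)^{\tau}$ via restriction.'' But restriction goes the other way: the inclusion $\mathcal{P}_D\subset M_D$ induces $H^{2,0}(M_D)\to H^{2,0}(\mathcal{P}_D)$, and its nonvanishing (which you correctly observe, since the symplectic form of Proposition~\ref{prop_simplvarBeauville} is the restriction of the one on $M_D$) only yields the \emph{lower} bound $h^{2,0}(\mathcal{P}_D)\geq 1$. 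There is no natural map $H^{2,0}(\mathcal{P}_D)\to H^{2,0}(M_D)$ attached to a closed embedding, and a subvariety of a holomorphic symplectic variety can carry holomorphic $2$-forms that do not come from the ambient space; so your ``both sides are at most one-dimensional'' step assumes exactly what has to be proved. (A secondary issue: for very general $(S,i)$ the vector $v=(0,D,1-g(D))$ need not be primitive and $H=D$ need not be $v$-generic, so $M_D$ need not be smooth of $K3^{[n]}$-type; the paper only uses that $M_D$ is a symplectic variety. Also $\dim|C|=g(D)-g(C)$, not $g(C)$.)

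The missing idea — and the actual role of the very-ampleness hypotheses, which your proposal never uses in an essential way — is to produce a dominant rational map $\varphi\colon S^{[g(D)-g(C)]}\dashrightarrow\mathcal{P}_D$ (Proposition~\ref{prop_dominantMap}): very ampleness of $|C|$ and $|D|$ gives embeddings $T\hookrightarrow\mathbb{P}(H^0(T,C)^\vee)$ and $S\hookrightarrow\mathbb{P}(H^0(S,D)^\vee)$, a general $Z\in S^{[g(D)-g(C)]}$ spans a hyperplane cutting out a smooth $C_Z\in|C|$ with smooth integral preimage $D_Z$, and $Z\mapsto\mathcal{O}_{D_Z}(Z-i^*Z)$ lands in $\fix^0(\tau)$ and is dominant by an Abel--Jacobi rank computation. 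Once one has a dominant rational map from the irreducible symplectic manifold $S^{[g(D)-g(C)]}$, Proposition~\ref{prop_condition_for_psv} concludes: after resolving, dominance gives $h^1(\mathcal{O}_{\mathcal{P}_D})=0$, and Kebekus' injectivity of reflexive pullback of $p$-forms along dominant morphisms of klt varieties (Proposition~\ref{prop_kebekus}) gives $h^0(\mathcal{P}_D,\Omega^{[2]})\leq h^0(S^{[g(D)-g(C)]},\Omega^2)=1$. Here the map points in the right direction (source has $h^{2,0}=1$, target is what we want to bound), which is precisely what your fixed-locus comparison with $M_D$ cannot achieve.
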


The key ingredient in the proof of Theorem \ref{thm_psv} is the construction of a dominant rational map $S^{[g(D)-g(C)]}\dashrightarrow \mathcal{P}_D$, which generalizes the one constructed in \cite[Theorem 8.1]{ASF}. This is the content of Proposition \ref{prop_dominantMap}.
It is worth noting that even in other cases which may not satisfy the assumptions of Theorem \ref{thm_psv}, showing the existence of such a dominant rational map from an irreducible holomorphic symplectic variety  to $\mathcal{P}_D$ automatically implies that $\mathcal{P}_D$ is a primitive symplectic variety. This fact is proved Proposition \ref{prop_condition_for_psv} -- see also Remark \ref{rmk_rationalmap_Cample}.

In order to obtain an irreducible symplectic variety, Proposition \ref{prop_keyprop} requires showing that the regular locus $(\mathcal{P}_D)_{\reg}\subset\mathcal{P}_D$ is simply connected. This is the most difficult part of the proof, and the one which requires more assumptions. In fact, in Section \ref{sec:pi1} we generalize the strategy of \cite{MT,ASF} and in Theorem~\ref{thm_pi1} we give a criterion for the relative Prym variety to be simply connected. We show the following statement.

\begin{theorem}\label{thm_isv_exactHp}
Under the assumptions of Proposition \ref{prop_simplvarBeauville}, the relative Prym variety $\mathcal{P}_D$ is an irreducible symplectic variety of dimension $2(g(D)-g(C))$ if the following hold:

\begin{enumerate}
\item $|C|$ and $|D|$ are very ample on $T$ and $S$, respectively;
\item the locus $\{\Gamma\in |C|\mid f^*\Gamma\ \mathrm{is}\ \mathrm{not}\ \mathrm{integral}\}$ has codimension at least $2$ in $|C|$;
\item Let $Z\subset|C|$ be a codimension $1$ irreducible component of the locus $\{\Gamma\in |C|\mid f^*\Gamma\ \text{is singular}\}$.
The general element of $Z$ is one of the following:
 \begin{enumerate}[(a)]
    \item[(1)] A smooth integral curve intersecting $B$ transversely except at one point, where the multiplicity is 2;
    \item[(2)] An integral curve intersecting the branch locus $B\subset T$ of $f$ transversely, with one node outside $B$ and no other singularities.
 \end{enumerate}
\end{enumerate}
\end{theorem}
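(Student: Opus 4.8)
The plan is to deduce the statement from the structural results already available and to concentrate all the new work into the verification, under hypotheses (1)--(3), of the criterion for simple connectedness. By Proposition~\ref{prop_simplvarBeauville}, $\mathcal{P}_D$ is a symplectic variety of dimension $2(g(D)-g(C))$ equipped with a Lagrangian fibration $\pi\colon\mathcal{P}_D\to|C|$. Since hypothesis~(1) is exactly the additional assumption required in Theorem~\ref{thm_psv}, $\mathcal{P}_D$ is a primitive symplectic variety; here one also uses that the dominant rational map $S^{[g(D)-g(C)]}\dashrightarrow\mathcal{P}_D$ of Proposition~\ref{prop_dominantMap} forces $h^1(\mathcal{O}_{\mathcal{P}_D})=0$ and $H^0(\mathcal{P}_D,\Omega^{[2]})=\C\sigma$. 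By Proposition~\ref{prop_keyprop}, $\mathcal{P}_D$ is then an irreducible symplectic variety as soon as its regular locus is shown to be simply connected, so everything reduces to proving $\pi_1\big((\mathcal{P}_D)_{\reg}\big)=1$; to this end I would verify, using hypotheses (2) and (3), the conditions of the criterion in Theorem~\ref{thm_pi1}.

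Concretely, let $|C|^{\mathrm{int}}\subset|C|$ be the open locus of curves $\Gamma$ with $f^{*}\Gamma$ integral. Hypothesis~(2) says that the preimage in $\mathcal{P}_D$ of its complement has codimension $\ge 2$, so, $\mathcal{P}_D$ being normal, $(\mathcal{P}_D)_{\reg}$ and its restriction over $|C|^{\mathrm{int}}$ have the same fundamental group. Over the smaller open set $|C|^{\circ}\subset|C|^{\mathrm{int}}$ where $f^{*}\Gamma$ is in addition smooth, $\pi$ restricts to a smooth proper fibration with fibres the Prym varieties $P$ of the double covers $f^{-1}\Gamma\to\Gamma$, giving the homotopy exact sequence
\[
\pi_1(P)\longrightarrow\pi_1\big(\pi^{-1}(|C|^{\circ})\big)\longrightarrow\pi_1(|C|^{\circ})\longrightarrow 1 .
\]
As $\pi_1(|C|)=1$, the group $\pi_1(|C|^{\circ})$ is normally generated by meridians around the codimension~$1$ irreducible components $Z$ of the discriminant; by the local analysis of $\pi$ near a general point of $\pi^{-1}(Z)$, enlarging $\pi^{-1}(|C|^{\circ})$ back to $(\mathcal{P}_D)_{\reg}$ trivializes each such meridian and imposes on $\pi_1(P)$ the relation killing the vanishing cycle $\delta_Z\in H_1(P)$ of that degeneration. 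Consequently $\pi_1\big((\mathcal{P}_D)_{\reg}\big)$ is a quotient of $H_1(P)$ by the subgroup generated by the $\delta_Z$ together with their monodromy translates, and it remains to prove that this subgroup is all of $H_1(P)$.

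This last step is the technical core, and I expect it to be the main obstacle. Hypothesis~(3) is precisely what pins down the codimension~$1$ degenerations: for $Z$ of type~(1) the curve $\Gamma$ stays smooth but becomes simply tangent to the branch curve $B$, so two branch points of $f^{-1}\Gamma\to\Gamma$ collide, the cover acquires one node, and a Picard--Lefschetz computation identifies $\delta_Z$ with the associated vanishing cycle, which is anti-invariant under the covering involution and primitive in $H_1(P)$; for $Z$ of type~(2) the curve $\Gamma$ acquires a single node off $B$, its preimage acquires two nodes exchanged by $i$, and $\delta_Z$ is the corresponding anti-invariant vanishing class. Thus every codimension~$1$ boundary stratum supplies an explicit nonzero primitive class in $H_1(P)$. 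To conclude that these classes and their monodromy translates generate $H_1(P)$ over $\Z$, so that the quotient above is trivial, one needs that the monodromy action of $\pi_1(|C|^{\circ})$ on $H_1(P)$ is as large as possible; I would establish this for the very general pair $(S,i)$ and the very ample systems $|C|$ and $|D|$ by degenerating to Lefschetz pencils on $T$ and $S$ and invoking the classical irreducibility of vanishing-cycle monodromy, thereby generalizing the explicit case analyses of \cite{MT} and \cite{ASF}.
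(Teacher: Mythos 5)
Your opening reduction coincides with the paper's: Proposition~\ref{prop_simplvarBeauville} gives that $\cP_D$ is a symplectic variety of the stated dimension, Proposition~\ref{prop_dominantMap} combined with Propositions~\ref{prop_condition_for_psv} and \ref{prop_keyprop} reduces everything to $\pi_1\bigl((\cP_D)_{\reg}\bigr)=1$, and that last statement is exactly Theorem~\ref{thm_pi1}. Note, however, that condition~(i) of Theorem~\ref{thm_pi1} is $B.C>2$, which is not literally among your hypotheses~(2) and~(3), so ``verifying the conditions of Theorem~\ref{thm_pi1} using (2) and (3)'' already conceals a step; this is not a cosmetic point, because $B.C>2$ is precisely what makes the final stage of the monodromy argument work (see below).

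The sketch you then give of the simple-connectedness argument has genuine gaps at its core. First, the relation imposed on $\pi_1(P)$ by filling in over a codimension-one component $Z$ is \emph{not} ``kill the vanishing cycle $\delta_Z$'': by Leibman's theorem the new relations are the commutators $[c,s_*\gamma_Z]$, i.e.\ the Picard--Lefschetz differences $-c+\PL([\gamma_Z])(c)=(c\cdot\delta_Z)\,\delta_Z$ in the type~(1) case and $(c\cdot\alpha_s)([\alpha_s]-[i\alpha_s])$ in the type~(2) case, as $c$ ranges over the \emph{anti-invariant} homology $H_1(D,\Z)_-$. These kill only the multiple $d\,\delta_Z$ with $d=\gcd_c(c\cdot\delta_Z)$, so one must produce an anti-invariant class $c$ with $c\cdot\delta_Z=\pm1$; this is Lemma~\ref{lemma_intersection1}, it is exactly where $C.B>2$ enters, and the remark following that lemma shows the relation yields only $2\delta_Z$ when $C.B=2$. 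Your ``local analysis'' assertion is therefore the step most likely to fail, and you give no argument for it. Second, your concluding appeal to the ``classical irreducibility of vanishing-cycle monodromy'' for Lefschetz pencils is not an adequate substitute for the paper's argument: the relevant lattice is $H_1(D,\Z)_-$, whose intersection form is non-unimodular precisely when $C.B>2$ (the Prym polarization is not principal), and largeness of the monodromy on $H_1(D,\Z)$ does not by itself give integral generation of $H_1(D,\Z)_-$ by the anti-invariant(ized) vanishing cycles. The paper proves this generation directly, via Proposition~\ref{prop_fundgroupV}, an application of Leibman's theorem to $M_H\to|D|$ together with Proposition~\ref{prop_Msimplyconn}, and the explicit topological model of the involution in Propositions~\ref{invariant vanishing cycle} and \ref{anti_invariant_H_1}. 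Finally, to apply Leibman's theorem to $\eta^{-1}(U')\to U'$ one needs the zero-section to meet every codimension-one component of the boundary of the smooth fibration, which is why the paper must first establish irreducibility of the compactified Prym fibres over general points of the codimension-one degeneration loci (Lemmas~\ref{irr_prym_one_node} and \ref{irr_prym_transv_B}); your passage from the homotopy exact sequence over $|C|^{\circ}$ back to $(\cP_D)_{\reg}$ omits this entirely.
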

 
In Section \ref{section_curvesreduciblepreim}, respectively Section \ref{section_curvessingularpreim}, we investigate which properties of $|C|$ imply  that the locus of curves in $|C|$ with non-integral preimage has codimension $\geq 2$, respectively that the codimension 1 components of $|C|$ are as in Theorem \ref{thm_isv_exactHp}(3). This is the subject of Theorem \ref{thm_curvesreduciblepreim}, respectively of Theorem \ref{thm_codim1loci}. The final outcome is the following criterion, which is the main result of the paper.

\begin{theorem} \label{thm_Pisv}
Under the assumptions of Proposition \ref{prop_simplvarBeauville}, the relative Prym variety $\mathcal{P}_D$ is an irreducible symplectic variety of dimension $2(g(D)-g(C))$ if the following hold:

\begin{enumerate}\label{thm_isv}
\item \label{hyp_veryample} $|C|$ and $|D|$ are very ample on $T$ and $S$, respectively;
\item \label{hyp_int>2} $C.B>2$;
\item \label{hyp_int_not4} $C^2\neq 4$ or $C.B\neq 4$;
\item \label{hyp_2conn} $|C|$ is $2$-connected;
\item \label{hyp_hyperell} $C$ is not hyperelliptic if $B^2\leq 0$.
\end{enumerate}
\end{theorem}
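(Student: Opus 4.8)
The plan is to derive Theorem~\ref{thm_Pisv} from Theorem~\ref{thm_isv_exactHp}, by verifying that hypotheses~(1)--(3) of the latter are implied by the numerical conditions \ref{hyp_veryample}--\ref{hyp_hyperell}. Hypothesis~(1) of Theorem~\ref{thm_isv_exactHp} \emph{is} condition~\ref{hyp_veryample} of Theorem~\ref{thm_Pisv}, so nothing is needed there. The remaining work is to translate the two geometric conditions of Theorem~\ref{thm_isv_exactHp} --- a codimension bound for the locus of curves with non-integral preimage, and a classification of the general member of each codimension-one component of the locus of curves with singular preimage --- into the stated inequalities for $C^2$, $C.B$, $B^2$ together with $2$-connectedness and non-hyperellipticity of $C$. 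These are exactly the two statements established in Sections~\ref{section_curvesreduciblepreim} and~\ref{section_curvessingularpreim} (Theorems~\ref{thm_curvesreduciblepreim} and~\ref{thm_codim1loci}), and the proof of Theorem~\ref{thm_Pisv} amounts to combining them with Theorem~\ref{thm_isv_exactHp}.

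For hypothesis~(2) of Theorem~\ref{thm_isv_exactHp}, I would split the locus $\{\Gamma\in|C|\mid f^*\Gamma\ \text{not integral}\}$ into two kinds of members: those where $\Gamma$ itself is already non-integral, and those where $\Gamma$ is integral but the restricted double cover $f^{-1}(\Gamma)\to\Gamma$ has trivial étale part, so that $f^{-1}(\Gamma)$ splits into two components swapped by the involution. Viewing $|C|$ inside the $i$-invariant part of $|D|$, both phenomena are controlled by effective decompositions of $C$ (respectively $D$), and here $2$-connectedness (condition~\ref{hyp_2conn}) is used to bound the dimension of each such stratum, with $C.B>2$ (condition~\ref{hyp_int>2}) disposing of the remaining low-degree strata, pushing everything into codimension $\ge 2$. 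The second, ``étale-splitting'' kind forces $\Gamma$ to meet $B$ only along even-order contact; when $B^2\le 0$ this can occur in codimension one only if $C$ is hyperelliptic, which is precisely what condition~\ref{hyp_hyperell} excludes. This is the content of Theorem~\ref{thm_curvesreduciblepreim}.

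For hypothesis~(3) of Theorem~\ref{thm_isv_exactHp}, the locus $\{\Gamma\in|C|\mid f^*\Gamma\ \text{singular}\}$ is a discriminant-type subvariety of $|C|$, and one must show that the general element of each of its codimension-one components is either a smooth $\Gamma$ simply tangent to $B$ at one point and transverse elsewhere (case~(1)), or an integral $\Gamma$ with a single node off $B$ and transverse to $B$ (case~(2)). Granting very ampleness of $|C|$ and $|D|$, I would set up the usual incidence varieties over $|C|$ and argue that every worse degeneration --- higher-order contact with $B$, two or more tangency or node points, a node lying on $B$, or a cusp or worse singularity of $\Gamma$ --- imposes at least two conditions and hence lives in codimension $\ge 2$; the count uses $C.B>2$, and condition~\ref{hyp_int_not4} ($C^2\ne 4$ or $C.B\ne 4$) is exactly what removes the one borderline numerical case (essentially the anticanonical-type situation $C\sim -2K_T$ on a degree-one del Pezzo quotient, already analyzed in \cite{Matteini,SS}) in which a spurious codimension-one stratum of the wrong type appears. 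This is the content of Theorem~\ref{thm_codim1loci}.

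With Theorems~\ref{thm_curvesreduciblepreim} and~\ref{thm_codim1loci} in hand, hypotheses~(1)--(3) of Theorem~\ref{thm_isv_exactHp} all hold under conditions~\ref{hyp_veryample}--\ref{hyp_hyperell}, and that theorem yields that $\mathcal{P}_D$ is an irreducible symplectic variety of dimension $2(g(D)-g(C))$. I expect the genuinely hard step to be Theorem~\ref{thm_codim1loci}: both producing the two allowed degeneration types and, more delicately, ruling out all other degenerations in codimension one is a careful general-position analysis of linear systems on $T$ in which each of conditions~\ref{hyp_int>2}, \ref{hyp_int_not4}, \ref{hyp_2conn} is used in an essential way. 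Since the simple-connectivity input behind Theorem~\ref{thm_isv_exactHp} only depends on the general member of each codimension-one component of the bad locus, the whole argument hinges on this classification being clean.
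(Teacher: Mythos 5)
Your reduction is exactly the paper's proof: Theorem~\ref{thm_Pisv} is obtained by feeding Theorem~\ref{thm_curvesreduciblepreim} and Theorem~\ref{thm_codim1loci} into Theorem~\ref{thm_isv_exactHp}, and there is no further content to the argument at this level.

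However, your gloss on which numerical hypothesis does which job does not match how the two supporting theorems actually work, and the misattributions are worth correcting. First, condition~\eqref{hyp_int_not4} ($C^2\neq 4$ or $C.B\neq 4$) plays no role in Theorem~\ref{thm_codim1loci}, whose only inputs are very ampleness and the already-established codimension bound on non-integral preimages; it is used inside Theorem~\ref{thm_curvesreduciblepreim}, in Proposition~\ref{prop_irredcurveswithredpullback}, precisely to kill the ``\'etale-splitting'' stratum of integral $\Gamma$ with $f^*\Gamma=A_0+i^*A_0$, whose codimension is computed to be $\tfrac14C^2+\tfrac14C.B-1$. (You correctly recognize the borderline geometry $C\sim-2K_T$ on a degree-one del Pezzo, but place it in the wrong locus.) Second, condition~\eqref{hyp_hyperell} is not about that splitting stratum at all: it enters via Corollary~\ref{cor_C22=0}, for decompositions $C=C_1+C_2$ with $C_2$ an integral genus-one curve with $C_2^2=0$ and $\omega_T|_{C_2}$ trivial, i.e.\ a fibre class of a genus-one fibration; there $\dim|C|-\dim|C_1|-\dim|C_2|=C_1.C_2-1$ and one needs $C.C_2\geq 3$, which fails exactly when $C$ is hyperelliptic (and $B^2\leq 0$ is what allows such a $C_2$ to exist, by Lemma~\ref{lemma_C22>=0}). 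Third, $C.B>2$ is not what closes the codimension counts -- only $C.B>0$ (plus $C.B_0>0$ for rational components $B_0$, which follows from very ampleness) is used there. The strict inequality $C.B>2$ is hypothesis~(i) of Theorem~\ref{thm_pi1} and is consumed in the monodromy step, Lemma~\ref{lemma_intersection1}, where one needs a ramification point of $D\to C$ away from a given invariant vanishing cycle in order to build an anti-invariant $1$-cycle meeting it once; with $C.B=2$ all intersection numbers with anti-invariant classes become even and the Picard--Lefschetz argument fails. None of this invalidates your proof of Theorem~\ref{thm_Pisv} as a formal deduction, but if you intend the sketches of Theorems~\ref{thm_curvesreduciblepreim} and~\ref{thm_codim1loci} to be expandable into proofs, the division of labor among the hypotheses must be reassigned as above.
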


Theorem \ref{thm_isv} allows us to show in infinitely many cases that the relative Prym variety is an irreducible symplectic variety.
In Section \ref{sec:examples} we show that the linear systems $|\cO_{\mathbb{P}^2}(n)|$, $n\geq 3$, on $\mathbb{P}^2$ satisfy all the assumptions of Theorem~\ref{thm_isv}, thus producing examples of irreducible symplectic varieties of infinitely many dimensions, starting from dimension 18.
We also extend the examples of \cite{MT} and \cite{Matteini} to del Pezzo surfaces of higher degree, and we give examples of linear series on del Pezzo surfaces that were not studied in the past literature and which satisfy the assumptions of Theorem \ref{thm_isv}. This yields examples of irreducible symplectic varieties of arbitrarily high dimension,
starting from dimension 8.
Note that the conditions (1)--(5) are numerical, and we expect them to be satisfied for sufficiently positive linear systems $|C|$ on $T$. 

Note also that assumptions (2)--(5) above are sufficient to deduce the simple connectedness of $(\mathcal{P}_D)_{\reg}$, but they are not necessary. For example, they do not hold in the case studied by Markushevich--Tikhomirov even though the relative Prym variety still turns out to be an irreducible symplectic variety (see \cite[Proposition 3.12]{MenetRiess} and \cite[Proposition 3(2)]{Perego}).
When the linear system does not satisfy the assumptions, for example if there are  codimension one components of the locus $\{\Gamma\in |C|\mid f^*\Gamma\ \text{is singular}\}$ that do not satisfy the  assumptions of Theorem \ref{thm_isv_exactHp}, we still expect that our general strategy for checking if the corresponding relative Prym is an irreducible symplectic variety may work, provided one proves an analogue of Proposition \ref{prop_fundgroupV} to compute the extra generators of the fundamental group that come from loops around the additional divisorial components.

Although we leave to a future work the computation of the Euler characteristic and of other topological invariants of the relative Prym varieties that we construct in this paper, we should mention that we expect  our construction to produce genuinely new examples. This is indeed the case for some of the low-dimension relative Prym varieties that have been studied before:
in the case of Markushevich--Tikhomirov's example, the deformation class of $\cP_D$ is distinct from that of a moduli space of semistable sheaves on K3 surfaces (those studied by \cite{PR}). Indeed, Markushevich--Tikhomirov's example is singular and does not admit a symplectic resolution, while in dimension four those moduli spaces are smooth or have a symplectic resolution. 
Matteini's six-dimensional example $\cP_D$ is also not deformation equivalent to a six dimensional moduli space of stable sheaves on a K3 surface: it is birational to the quotient of a Hilbert scheme of three points on a K3 surface by a symplectic involution, so its second Betti number is strictly smaller than $23$, while the second Betti number of any moduli space of semistable sheaves on a K3 surface, whether smooth or singular, is equal to $23$ (see \cite{PR}).

Finally, note that our Theorems \ref{thm_psv}, \ref{thm_isv_exactHp}, \ref{thm_isv} do not apply in the case when $D$ is hyperelliptic.
In this situation, we expect $\cP_D$ to be birational to a moduli space of stable sheaves on a K3 surface. This is indeed what happens for hyperelliptic linear systems in the case of Enriques surfaces \cite[Section 6]{ASF} (see Remark \ref{remark_hyperell}), but we have not pursued this at this time.

\subsection*{Acknowledgements}
This project started during the first edition of "Women in Algebraic Geometry" hosted online by ICERM in July 2020 and the authors want to heartily thank all the organizers and ICERM for the occasion. Moreover, we want to thank Nikolas Adaloglou, Enrico Arbarello, Arend Bayer, Lie Fu, Alice Garbagnati, Marco Golla, Christian Lehn, Mirko Mauri, Arvid Perego, Chris Peters, for useful and interesting discussions. We would also like to thank Stefan Kebekus for asking whether the varieties constructed in \cite{ASF} are examples of irreducible symplectic varieties and, more generally, how one can find examples of such varieties. This question contributed to the start of this project. 

E.B. was supported by NWO grants 016.Vidi.189.015 and VI.Veni.212.209.
C.C. received partial support from grants Prin Project 2020 "Curves, Ricci flat Varieties and their Interactions" and Prin Project 2022 "Symplectic varieties: their interplay with Fano manifolds and derived categories"; she is a member of the Indam group GNSAGA.
A.G. was supported by
the European Research Council (ERC) under the European Union’s Horizon 2020 research
and innovation programme (ERC-2020-SyG-854361-HyperK), and by the DFG through the research grant Le 3093/3-2.
L.P. is a member of the Indam group GNSAGA.
G.S. was partially supported by NSF CAREER grant DMS-2144483 and NSF FRG grant DMS-2052750.
S.V. was supported by Methusalem grant METH/21/03 -- long term structural funding of the Flemish Government.

\section{Preliminaries}\label{sec:preliminaries}

In this section we collect all the results and the definitions that we need in the rest of the paper. 
First, we briefly recall Nikulin's classification of anti-symplectic involutions acting on K3 surfaces, 
and then we discuss in detail the construction of the relative Prym variety, focusing in particular on the case of a quotient rational surface.

\subsection{Anti-symplectic involutions on K3 surfaces}\label{subsec_involutions}
Here we summarize some results of Nikulin on K3 surfaces with an anti-symplectic involution, contained in \cite{Nikulin_Finitegroups}, \cite{Nikulin_integral} and \cite{Nikulin_quotientgroups.geomappl}. 
Consider a pair \((S, i)\) consisting of a K3 surface $S$ with an anti-symplectic involution $i$, that is, $i^*\omega_S=-\omega_S$ for any symplectic form $\omega_S$ on $S$. By \cite[Corollary~3.2]{Nikulin_Finitegroups}, such a K3 surface \(S\) is projective.
For $(S,i)$ very general in an irreducible component of the moduli space of K3 surfaces with anti-symplectic involution, 
the Néron--Severi group $\NS(S)$, which always contains the fixed lattice $H^2(S,\mathbb{Z})^{i^*}:=\{x\in H^2(S,\mathbb{Z})\mid i^*x=x\}$, is as small as possible.

\begin{definition}\label{def_generalK3withsympinv}
    A K3 surface $S$ with an anti-symplectic involution $i$ is called \emph{very general} if $\NS(S)=H^2(S,\mathbb{Z})^{i^*}$. 
\end{definition}

From now on, we assume that $(S,i)$ is very general. Let \(T=S/i\) be the quotient surface, which is smooth. 
When $\fix(i)$ is empty, $T$ is an Enriques surface; when $\fix(i)$ is non-empty, $T$ is a (not necessarily minimal) rational surface.

The classification of pairs $(S,i)$ is obtained using lattice theory. We denote by \(L_{\pm} \subset H^{2}(S,\mathbb{Z})\) the sublattices of cohomology classes \(l\) such that \(i^{*}(l)=\pm l\). 
Let \(r\) be the rank of \(L_+\), then the signature of \(L_+\) is \((1,r-1)\). 
Denote by \(D_{L_+}=L_{+}^{\vee}/L_{+}\) the discriminant group of $L_+$, with induced quadratic form \(q_+\colon D_{L_+} \to \mathbb{Q}/2\mathbb{Z} \). 
Then we have  \(D_{L_+} \cong (\mathbb{Z}/2\mathbb{Z})^{a}\) for some non-negative integer $a$, that is, $L_+$ is \emph{2-elementary}. 
The parity \(\delta\in\{0,1\}\) of \(q_+\) is defined to be 0 if \(q_+(D_{L_+}) \subset \mathbb{Z}/2\Z\), and 1 otherwise. 
The triple \((r,a,\delta)\) is called the \emph{main invariant} of \((S,i)\): by \cite[Theorem~3.6.2]{Nikulin_integral}, the isometry classes of the lattices \(L_{\pm}\) and the isomorphism class of the pair $(S,i)$ are uniquely determined by \((r,a,\delta)\). 
Moreover, the following holds: 

\begin{theorem}[{\cite[Theorem~4.2.2]{Nikulin_quotientgroups.geomappl}}]\label{Nikulinstheorem}
Let \((S,i)\) be a very general K3 surface with an anti-symplectic involution with main invariant \((r,a,\delta)\).
\begin{itemize}
    \item[i)] If \((r,a,\delta)=(10,10,0)\), then \(\Fix(i)=\emptyset\).
    \item[ii)] If \((r,a,\delta)=(10,8,0)\), then \(\Fix(i)\) is the union of two elliptic curves.
    \item[iii)] Otherwise \(\Fix(i)\) decomposes as \(C \sqcup E_1 \sqcup \cdots \sqcup E_k \) where \(C\) is a genus \(g\) curve and \(E_1, \cdots, E_k\) are rational curves with
    \begin{equation*}
        g=11-\frac{r+a}{2}, \ \  k=\frac{r-a}{2}.
    \end{equation*}
     Moreover, \(\delta=0\) if and only if the class of \(\Fix(i)\) is divisible by 2 in \(L_+\).
\end{itemize}
\end{theorem}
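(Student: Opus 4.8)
The plan is to first pin down the differential-geometric nature of $\Fix(i)$ and extract one numerical relation from the topological Lefschetz formula, then obtain the second relation and the parity statement from the lattice structure, and finally treat the two degenerate main invariants by hand. \emph{Smoothness and the first relation.} Since $i$ is a holomorphic involution, at a fixed point $p$ the differential $di_p$ is a complex-linear involution of $T_pS$, diagonalizable with eigenvalues in $\{+1,-1\}$. Because $i^*\omega_S=-\omega_S$, the involution acts by $-1$ on $K_S=\Omega^2_S$, so $\det(di_p)=-1$ and the eigenvalues are exactly $+1$ and $-1$. Hence $\Fix(i)$ is smooth of pure dimension $1$ (or empty), i.e. a disjoint union of smooth curves; as it is of codimension $1$, the quotient $T=S/i$ is smooth and $f\colon S\to T$ is a double cover branched along $B\cong\Fix(i)$. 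Applying the topological Lefschetz fixed point formula on the K3 surface $S$, where $H^{\mathrm{odd}}=0$, where $i^*$ is the identity on $H^0$ and $H^4$, and where $i^*$ has $r$ eigenvalues $+1$ and $22-r$ eigenvalues $-1$ on $H^2(S,\Z)$, gives $\chi(\Fix(i))=1+(2r-22)+1=2r-20$. Writing $\Fix(i)=C\sqcup E_1\sqcup\cdots\sqcup E_k$ with $g(C)=g$ and $g(E_j)=0$ yields $\chi(\Fix(i))=(2-2g)+2k$, whence $g-k=11-r$.

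\emph{The second relation and the role of $a$ and $\delta$.} The missing relation $g+k=11-a$, equivalent to the stated formulas, is the lattice-theoretic heart of the argument. I would compare the $\F_2$-cohomology of $S$ and of $\Fix(i)$ via Smith theory: the Smith--Thom inequality $\sum_j b_j(\Fix(i);\F_2)\le \sum_j b_j(S;\F_2)=24$ has an even defect, and the key input is that this defect equals $a$. This is where the $2$-elementary structure of $L_+$ enters: $a=\dim_{\F_2}D_{L_+}$ is read off from the reduction mod $2$ of the invariant lattice, and one identifies it with the corank of $1+i^*$ acting on $H^2(S;\F_2)$, which is precisely the Smith--Thom defect. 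Since a genus-$\gamma$ curve contributes $2+2\gamma$ to the total $\F_2$-Betti number, the equality $\sum_j b_j(\Fix(i);\F_2)=24-2a$ reads $2+2g+2k=24-2a$, that is $g+k=11-a$; combined with the first relation this gives $g=11-\tfrac{r+a}{2}$ and $k=\tfrac{r-a}{2}$. For the parity statement, note that $[\Fix(i)]=[B]$ lies in $L_+$ (it is $i$-invariant) and is a characteristic element of $L_+$, its pairing with every class agreeing mod $2$ with the self-pairing by adjunction on $S$. Since $\delta$ is by definition the obstruction to $q_+$ taking values in $\Z/2\Z$, one checks via the discriminant-form computation that this obstruction vanishes exactly when the characteristic class $[\Fix(i)]$ lies in $2L_+$, giving ``$\delta=0$ iff $[\Fix(i)]$ is divisible by $2$''.

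\emph{The degenerate main invariants.} The formulas above are derived generically and must be corrected for the two values $(10,10,0)$ and $(10,8,0)$, which are exactly the cases where the soft numerical invariants are compatible with more than one decomposition (or predict a nonzero total Betti number for an empty locus). For $(10,10,0)$ the invariant lattice is $U\oplus E_8(-1)$; here $i$ is fixed-point-free and $T=S/i$ is an Enriques surface, so $\Fix(i)=\emptyset$. For $(10,8,0)$ one identifies $\Fix(i)$ directly as two disjoint elliptic curves, using the elliptic fibration structure forced by the invariant lattice, and checks that this is consistent with $\chi(\Fix(i))=0$ and with the $\F_2$-Betti count; both of these cannot by themselves distinguish it from the spurious ``genus $2$ plus one rational curve'' solution, which is why a finer geometric input is required.

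\textbf{Main obstacle.} The delicate point is the middle step: identifying the Smith--Thom defect with the invariant $a$ and tying the parity $\delta$ to the $2$-divisibility of $[\Fix(i)]$ both require the full strength of Nikulin's theory of $2$-elementary lattices and discriminant forms, rather than the soft topological input giving the first relation. The exceptional cases are then genuinely necessary, precisely because $\chi(\Fix(i))$ and the total $\F_2$-Betti number fail to separate the competing decompositions, so one must invoke the finer geometry of the quotient surface to settle them.
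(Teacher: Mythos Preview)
The paper does not prove this theorem: it is stated with a citation to Nikulin \cite[Theorem~4.2.2]{Nikulin_quotientgroups.geomappl} and used as input, with no argument given. There is therefore no proof in the paper to compare your proposal against.

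For what it is worth, your outline is a sound sketch of how one actually proves this. The Lefschetz computation giving $g-k=11-r$ is correct and standard. For the second relation you invoke Smith theory; this works, but the step ``the Smith--Thom defect equals $a$'' is exactly the nontrivial content, and your justification (``the corank of $1+i^*$ on $H^2(S;\F_2)$'') is correct in spirit but would need a careful lattice computation showing $\dim_{\F_2} H^2(S;\F_2)^{i^*}=22-a$, using that $L_\pm$ are $2$-elementary. Nikulin's original route is slightly different and more geometric: he passes to the smooth quotient $T$, computes $b_2(T)=r$ from $\chi(T)=r+2$, and then reads off the number of branch components from the index $[L_+:f^*H^2(T,\Z)]=2^{(r-a)/2}$, noting that this quotient is generated by the classes of the ramification components subject to the single relation $\sum [R_i]\in f^*H^2(T,\Z)$. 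Your treatment of $\delta$ via the characteristic class and of the exceptional invariants $(10,10,0)$, $(10,8,0)$ is accurate; you are right that the numerical relations alone do not separate ``two elliptic curves'' from ``genus $2$ plus one rational curve'', so extra geometric input is genuinely needed there.
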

Using \cite[\S1.12]{Nikulin_integral}, one can classify all even 2-elementary lattices of signature $(1,r)$ that admit a primitive embedding into the second integral cohomology of a K3 surface.
Any of these lattices can be realized as fixed sublattice for some anti-symplectic involution $i$ on a K3 surface $S$ (see \cite[\S4.2]{Nikulin_quotientgroups.geomappl}).
The 75 possible main invariants $(r,a,\delta)$ were first given in \cite[Table~1]{Nikulin_quotientgroups.geomappl}. The following picture was taken from \cite[Figure 3.1]{Matteini}:

\begin{figure}[H]
\label{NikulinsTriangle}
 \includegraphics[width=11cm]{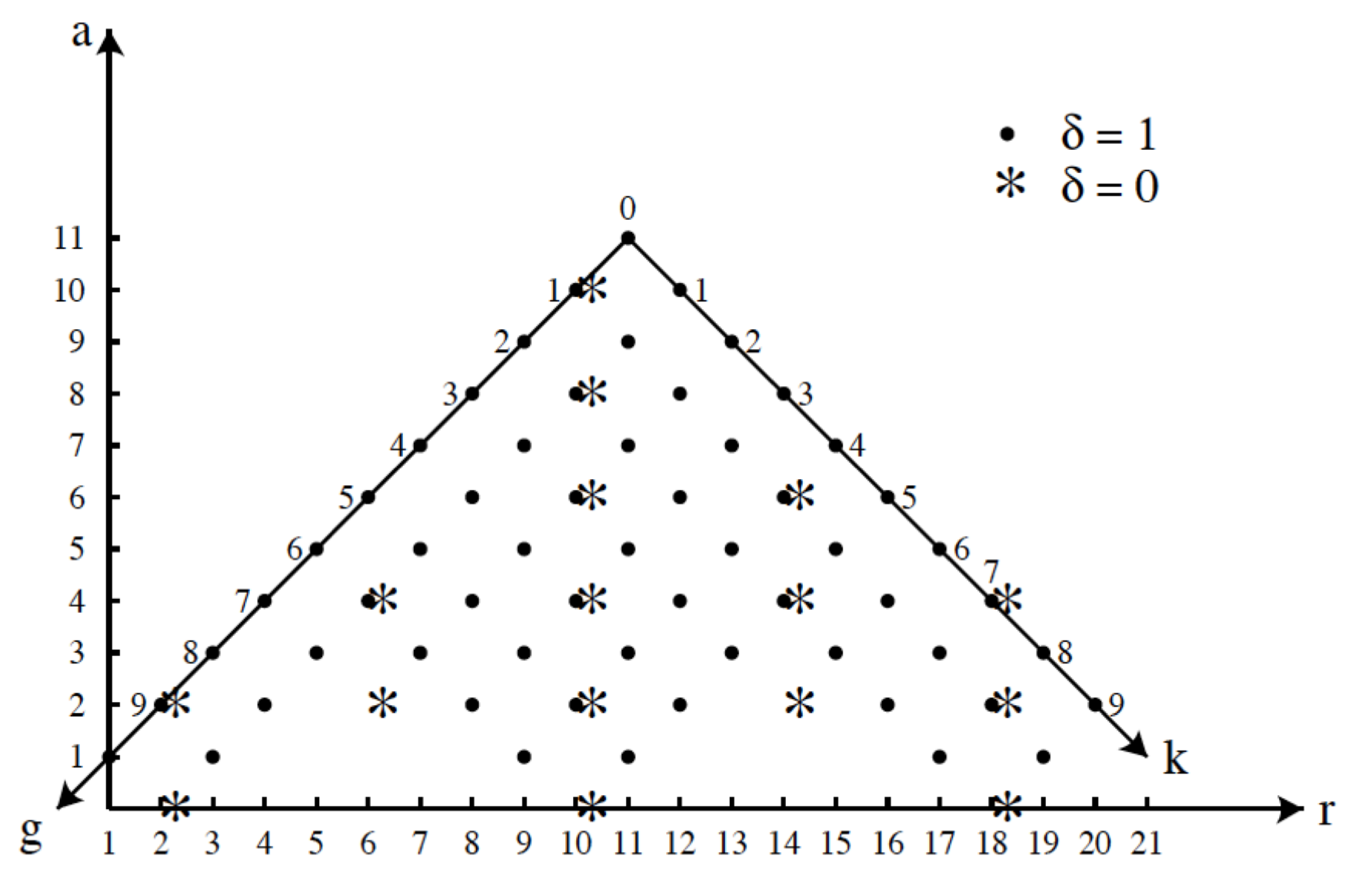}
\caption{Possible main invariants for $\NS(S)$}
\end{figure}

\subsection{Construction of the relative Prym variety} \label{subsec-rel-prym}

The construction of the relative Prym variety, first described by Markushevich and Tikhomirov \cite{MT} for a K3 cover of a del Pezzo surface of degree 2, has been developed in full generality by \cite{Sacca,ASF, Matteini}.
We recall the construction here. Since the case of a  double cover of an Enriques surface was studied in detail in \cite{ASF}, we focus on the case of double covers of rational surfaces. This corresponds precisely to the case when the double cover is not \'etale (and hence ramifies along a smooth curve).

\subsubsection{The relative compactified Jacobian}
\label{subsubsec-rel-jac}

Before starting the construction of the relative Prym variety, we need to recall the construction and some properties of Beauville--Mukai's integrable systems.

We fix a K3 surface $S$ and a smooth curve $D\subset S$ of genus $g(D)\geq 2$. By \cite{Saint-Donat} (cf. also \cite[Corollary 2.3.6]{Huy_bookK3}), the linear system $|D|$ has no base points and its general member is a smooth irreducible curve.
Consider the following Mukai vector:
\[v:=(0,[D],1-g(D))\in H^0(S,\mathbb{Z})\oplus H^2(S,\mathbb{Z})\oplus H^4(S,\mathbb{Z}).\]
For an ample line bundle $H$ on $S$, we consider the Beauville--Mukai system corresponding to $v$ and $H$, i.e. the moduli space 
\[
M_H:=M_{S,H}(v)
\]
of Gieseker $H$-semistable coherent sheaves on $S$ of Mukai vector $v$. 
Thus $M_H$ parametrizes S-equivalence classes of $H$-semistable sheaves of pure dimension one with first Chern class $[D]$ and Euler characteristic $1-g(D)$.
The space $M_H$ is an irreducible projective variety of dimension  $2g(D)$.
Since we do not require $H$ to be $v$-generic, nor the Mukai vector $v$ to be primitive,  $M_H$ can be singular.
Its regular locus $(M_{H})_{\reg}$, which contains the locus $M_H^{st}$ of stable sheaves, 
carries a symplectic form \cite{Mukai}.

The determinantal, or Fitting, support defines a morphism (\cite{LePoitier})
\begin{equation}\label{eq_supportmap}
\supp\colon M_H\to |D|,\;\; F\mapsto\supp(F).
\end{equation}
We will see in Proposition \ref{prop-M-symplectic-variety} that $M_H$ is a (possibly singular) symplectic variety (see Definition \ref{def_symplvariety}). 
When $M_H$ is smooth, so it has a symplectic form everywhere, it is well-known that the support morphism is a Lagrangian fibration.
We will see in Section~\ref{sec:singular-definitions} that this is still the case when $M_H$ is singular.

\medskip 
If $\supp(F)$ is an integral curve $D_0$, then $F$ is stable \cite[Lemma~1.1.13]{Sacca}, hence $[F]\in (M_H)_{\reg}$. 
In this case, $F$ is of the form $g_*L$ with $L$ a torsion-free sheaf of rank 1 on $D_0$ and $g$ the embedding of $D_0$ into $S$. 
In particular, for a smooth curve $D_0\in |D|$, the fibre $\supp^{-1}(D_0)$ is the Jacobian $J(D_0)$ of degree 0 line bundles on $D_0$. 
More generally, if $D_0$ is integral, its fibre is the compactified Jacobian $\overline{J(D_0)}$ parametrizing torsion free sheaves on $D_0$ of rank 1 and degree 0.
Furthermore, over the locus parametrizing reduced curves, the support morphism realizes this moduli space as a relative compactified Jacobian. The fiber of $\supp$ over a point corresponding to a non-reduced curve is more complicated, but, somewhat inappropriately, we will still refer to $M_H$ as the relative compactified Jacobian of the linear system $|D|$.

The map $\supp\colon M_H\to |D|$ has a rational 0-section 
\begin{equation}\label{zerosection_supportmap}
s\colon |D|\dashrightarrow M_H,
\end{equation}
which is regular on the locus $V'\subset |D|$ of integral curves. It sends $D_0\in V'$ to  $\iota_*\mathcal{O}_{D_0}$, where $\iota$ is the embedding $D_0\hookrightarrow S$.
This is a stable sheaf, hence $s(V')$ is contained in $(M_H)_{\reg}$.

Note that $v$ is primitive if and only if $[D]\in H^2(S,\mathbb{Z})$ is primitive. 
In this case, if we further assume that $H$ is $v$-generic, i.e.\ $H$-semistable sheaves with Mukai vector $v$ are actually stable, then $M_H$ is a smooth projective irreducible holomorphic symplectic manifold \cite{Mukai,Y}.
In general, when $v$ is not primitive or $H$ is not $v$-generic, $M_H$ is still a symplectic variety, as we record in Proposition \ref{prop-M-symplectic-variety} below.

In terms of the topology of $M_H$, we have the following general result on its fundamental group.

\begin{proposition} \label{prop_Msimplyconn}
   For any Mukai vector $w=(0,[C],s)\in H^*(S,\mathbb{Z})$ and any polarization $H$, the moduli space $M_{S,H}(w)$ is simply connected. 
\end{proposition}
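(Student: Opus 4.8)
The plan is to reduce the statement to the connectedness of the support morphism's general fibre together with the simple connectedness of a resolution of the base, using the fibration structure $\supp\colon M_{S,H}(w)\to |C|$ described in Section~\ref{subsec-rel-prym}. More precisely, I would argue as follows. Write $M:=M_{S,H}(w)$ and let $\pi=\supp\colon M\to \mathbb{P}:=|C|\cong\mathbb{P}^N$ be the support map. The base $\mathbb{P}^N$ is simply connected, so by the homotopy exact sequence of a fibration it suffices to understand $\pi_1$ of a general fibre and the way loops in the base act — but since $M$ is singular and $\pi$ is not a fibre bundle, the cleanest route is to use the following: a normal projective variety $M$ is simply connected provided there is a dominant morphism $\pi\colon M\to B$ with $B$ simply connected whose general fibre is connected and simply connected, and such that $\pi$ admits a rational section. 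The general fibre of $\pi$ over an integral curve $C_0\in|C|$ is the compactified Jacobian $\overline{J(C_0)}$ (for $C_0$ smooth it is the Jacobian $J(C_0)$), which is an abelian variety, hence \emph{not} simply connected; so a naive application fails and one must instead exploit the degenerate fibres.

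The correct mechanism, which I would carry out in detail, is this: the monodromy of the family of Jacobians $J(C_0)$ over the smooth locus $U\subset|C|$ acts on $H_1$ of the fibre, and one shows that the loops around the discriminant divisor — in particular around the locus where $C_0$ acquires a node — kill all of $\pi_1(J(C_0))=H_1(C_0,\mathbb{Z})$. Concretely, a vanishing cycle argument: as a smooth curve $C_0$ degenerates to a $1$-nodal curve, a cycle $\gamma\in H_1(C_0,\mathbb{Z})$ vanishes, and the corresponding $1$-parameter degeneration inside $M$ contracts the associated loop in the Jacobian; the Picard--Lefschetz transvections generated by the components of the discriminant act on $H_1(C_0,\mathbb{Z})$ with no nonzero invariants, because the linear system $|C|$ is base-point free with connected general member of genus $\geq 1$ and the discriminant is irreducible (or at least the vanishing cycles span). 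Hence $\pi_1$ of a neighbourhood of a general fibre inside $M$ already surjects onto $\pi_1(M^{\mathrm{sm}})$ where the Jacobian factor dies, and combining with $\pi_1(\mathbb{P}^N)=0$ and the existence of the rational $0$-section $s\colon|C|\dashrightarrow M$ (which shows the sequence $\pi_1(\text{fibre})\to\pi_1(M)\to\pi_1(\mathbb{P}^N)\to 1$ is split-exact on the relevant part) gives $\pi_1(M)=1$. Since $M$ is normal, $\pi_1(M)=\pi_1(M_{\reg})$, giving the claim.

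An alternative and perhaps more robust approach, which I would likely use to avoid delicate analysis of all degenerations, is via the birational model coming from the rational $0$-section together with a known smooth model: when $w$ is primitive and $H$ is $w$-generic, $M$ is a smooth projective irreducible holomorphic symplectic manifold of $K3^{[g]}$-type, which is simply connected; for general $w$ and $H$ one can deform $(w,H)$ or use a wall-crossing/variation-of-stability argument to connect $M_{S,H}(w)$ to such a model by birational maps and deformations, and simple connectedness is a birational and deformation invariant among (terminalizations of) symplectic varieties. I would phrase this as: $M_{S,H}(w)$ is deformation equivalent, or related by a sequence of Mukai flops and small contractions, to $M_{S,H'}(w')$ with $w'$ primitive and $H'$ generic; since birational projective varieties with canonical singularities have isomorphic fundamental groups and fundamental group is a deformation invariant of the underlying topological space, and the primitive generic case is the smooth simply connected $K3^{[g]}$-type manifold, we conclude.

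The main obstacle is the passage from the smooth generic case to arbitrary $(w,H)$: controlling the fundamental group under the non-fine, non-generic moduli situation. Either one must show that the discriminant loci in $|C|$ that $\supp$ sees are rich enough (e.g.\ contain a component over which the generic curve is $1$-nodal) to kill $H_1$ of the general fibre by monodromy — which requires knowing $|C|$ is sufficiently positive, true here since $w=(0,[C],s)$ with $|C|$ a complete linear system on a K3 surface, base-point free with $1$-nodal general discriminant member — or one must invoke a deformation/semicontinuity argument for $\pi_1$ across the strata of $M_{S,H}(w)$ as $w$, $H$ vary, which needs care because the moduli spaces change dimension and singularity type. I expect the vanishing-cycle/monodromy argument to be the intended one and to be the technical heart of the proof.
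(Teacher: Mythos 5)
You have correctly located the crux --- passing from the generic-polarization case to arbitrary $H$ --- but you do not resolve it, and neither of your two proposed mechanisms matches the one that actually closes the gap. The paper's proof is short: for $H$ $w$-generic, it quotes \cite[\S 3.1]{PR}, where it is shown that the open locus $M_{S,H}(w)^{irr}$ of sheaves supported on \emph{irreducible} curves is simply connected (this is where the vanishing-cycle/monodromy content you sketch in your first paragraph lives, but it is outsourced rather than reproved). The key observation for arbitrary $H'$ is that stability of a pure one-dimensional sheaf with integral support does not depend on the polarization, so $M_{S,H}(w)^{irr}=M_{S,H'}(w)^{irr}$ as open subsets; since $M_{H'}$ is normal (\cite[Theorem 5.1]{AS-update}) and the fundamental group of a normal variety is surjected upon by that of any dense open subset (\cite[0.7.B]{FuLaz}), simple connectedness of this common open locus forces $\pi_1(M_{H'})=1$. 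Your proposal never identifies this polarization-independent open set, which is the one missing idea.

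Two further points. First, your direct monodromy argument as stated aims at $\pi_1(M)$ via the full support fibration over $|C|$ and a Leibman-type sequence; this can be made to work but requires controlling all degenerate fibres, whereas restricting to the irreducible-support locus (where fibres are compactified Jacobians of integral curves and the zero-section is regular) is exactly what makes the argument in \cite{PR} manageable. Second, your fallback via ``deformation equivalence'' and ``birational invariance of $\pi_1$'' is not justified as written: $M_{S,H}(w)$ for $H$ on a wall is not homeomorphic to, nor an honest deformation of, the generic model, so $\pi_1$ is not preserved by fiat. What one actually has is a proper birational morphism $M_{H'}\to M_H$ from an adjacent generic chamber (the map $F\mapsto \gr_H F$ of \cite[Lemma~4.1]{Zowislok}, used in Proposition~\ref{prop-M-symplectic-variety}); combined with surjectivity of $\pi_1$ under proper surjective morphisms with connected fibres onto a normal target, this would give an alternative route --- but that is a different argument from the one you wrote, and it still needs the generic case as input from \cite{PR}.
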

\begin{proof}
    When $H$ is $w$-generic, this is 
    proved in \cite[\S 3.1]{PR}. More precisely, the authors prove that in this case the locus $M_{S,H}(w)^{irr}$ of sheaves supported on irreducible curves is simply connected. 
    Now let $H'$ be an arbitrary polarization. Since $M_{S,H}(w)^{irr}=M_{S,H'}(w)^{irr}$, the result follows since $M_H$ is normal by \cite[Theorem 5.1]{AS-update}, and the fundamental group of a normal variety is always surjected upon by the fundamental group of an open subset \cite[0.7.B]{FuLaz}.
\end{proof}

We finish by recalling a natural stratification of the singular locus $(M_H)_{\sing}$ of $M_H$ by locally closed smooth symplectic varieties,
as explained in \cite[Prop.~2.5]{AS}.
Let $[F]\in (M_H)_{\sing}$ be a strictly $H$-semistable sheaf. 
Then there are positive integers $a_1,\dots,a_r$ and $n_1,\dots,n_r$, and Mukai vectors $v_1,\dots,v_r$, such that $F$ is S-equivalent 
to a unique polystable sheaf
\[F_{1,1}^{n_1}\oplus\cdots\oplus F_{1,a_1}^{n_1}\oplus F_{2,1}^{n_2}\oplus\cdots\oplus F_{2,a_2}^{n_2}\oplus \cdots\oplus F_{r,1}^{n_r}\oplus\cdots\oplus F_{r,a_r}^{n_r}\]
where the $F_{j,i}$ are pairwise distinct stable sheaves with Mukai vector $v_j$, so that $\sum_{j=1}^ra_jn_jv_j=v$, and the pairs $(n_j,v_j)$ are mutually distinct. 
We say that $F$ is \emph{of type $(a_1n_1v_1,\dots,a_rn_rv_r)$}. 
Let $\Sigma_{(a_1n_1v_1,\dots,a_rn_rv_r)}$ be the locus of all polystable sheaves of type $(a_1n_1v_1,\dots,a_rn_rv_r)$, and denote by $M_{H,v_i}$ the moduli space $M_{S,H}(v_i)$. Then the rational map 
\begin{align*}
\Sym^{a_1}M_{H,v_1}\times\cdots\times\Sym^{a_r}M_{H,v_r} &\dashrightarrow \Sigma_{(a_1n_1v_1,\dots,a_rn_rv_r)},\\ 
((F_{1,1},\dots,F_{1,a_1}),\dots,(F_{r,1},\dots,F_{r,a_r})) &\longmapsto F
\end{align*}
is birational, and its restriction to the open subset  
\(\left(\Sym^{a_1}M_{H,v_1}^{st}\right)_{\reg}\times\cdots\times\left(\Sym^{a_r}M_{H,v_r}^{st}\right)_{\reg}\)
is bijective.
Since $M_{H,v_j}^{st}$ is contained in $(M_{H,v_j})_{\reg}$, it carries a symplectic form; therefore, 
$\Sigma_{(a_1n_1v_1,\dots,a_rn_rv_r)}$ (or rather its smooth locus) 
has a symplectic form as well.
The union of all strata $\Sigma_{(a_1n_1v_1,\dots,a_sn_sv_s)}$ is $(M_H)_{\sing}$, and only finitely many of them are non-empty, giving the stratification we aimed for.

\begin{remark}
Since $M_H$ is a symplectic variety (see Proposition~\ref{prop-M-symplectic-variety}), the existence of a stratification of $(M_H)_{\sing}$ also follows abstractly from \cite[\S3]{Kaledin}.
\end{remark}

\subsubsection{The relative Prym variety}
\label{subsubsec_Prymfib}
The starting point of the construction of the relative Prym variety is to consider the moduli space $M_H$ studied in Section~\ref{subsubsec-rel-jac}, in the case of a K3 surface $S$ endowed with an anti-symplectic involution $i$ and an $i$-invariant linear system on $S$. We refer to \S 3.3 of \cite{ASF} for a more thorough introduction (note that in loc.\ cit.\ this is formulated for \'etale double covers, but most of the results of that introductory section work in our setting too, mutatis mutandi).

\medskip
From now on, we fix a very general pair $(S,i)$ as in Definition~\ref{def_generalK3withsympinv}. As in Section \ref{subsec_involutions}, we denote by $T$ the smooth quotient surface $S/ i$,  by $f\colon S\to T$ the quotient morphism, and by $B\subset T$ the branch locus of $f$. The branch locus satisfies $B\in|-2K_T|$.

Let $C\subset T$ be a smooth curve with $C^2>0$ and let $D:=f^{-1}C\subset S$ be its preimage under $f$. In particular, $D$ is $i$-invariant.
We assume that $D$ is smooth, which happens when $C$ intersects the branch locus $B$ transversely (see Lemma~\ref{lemma_milnornumber}).
The genus $g(D)$ of $D$ is
\[g(D)=2g(C)-C.K_T-1.\]

Let $C_0\in|C|$ be any smooth curve whose inverse image $D_0:=f^{-1}C_0$ is smooth as well. 
Then $f$ restricts to a double cover $D_0\to C_0$ branched in $B.C=-2C.K_T$ many points, and this cover is induced by $\omega_T^{-1}|_C$. 
Note that it cannot be a trivial cover: if $D_0$ is a disjoint union of curves $D_0',D_0''\subset S$ isomorphic to $C_0$, then $(D_0')^2,(D_0'')^2>0$ which contradicts the Hodge index theorem.
It follows that $\omega_T|_C\not\cong\cO_C$.
The map $D_0\to C_0$ has covering involution $i_0:=i|_{D_0}$.
We recall the definition of the associated Prym variety \cite[\S3]{Mumford}: 

\begin{definition}\label{def_prym_smooth_curves}
Let $-i_0^*$ be the involution on $J(D_0)$ defined by
\[
L \mapsto i_0^*L^{\vee}.
\]
The Prym variety $\prym(D_0/C_0)$ associated to the double cover $D_0\rightarrow C_0$ is  is the identity component of the fixed locus:
\[\prym(D_0/C_0):=\fix^0(-i_0^*)=\ker(\id +i_0^*)^0\subset J(D_0).\]
\end{definition}

Alternatively, one can consider the norm map
\begin{align*}
 \nm\colon J(D_0)&\to J(C_0)\\
 L=\mathcal{O}_{D_0}\left(\sum a_ix_i\right)&\mapsto \det(f_*L)=\mathcal{O}_{C_0}\left(\sum a_if(x_i)\right);
\end{align*}
since $f^*(\nm(L))=(\id+i_0^*)L$, we have
\[\prym(D_0/C_0)=\ker(f^*\circ\nm)^0.\]

The Prym variety is an abelian variety of dimension $g(C)-C.K_T-1=g(D)-g(C)$, which is principally polarized if and only if $C.B=0$ or $C.B=2$ \cite[Corollary~2]{Mumford}.
If $i_0$ is fixed-point free, then $\fix(-i_0^*)$ has four connected components \cite[\S3.3]{ASF}. If $i_0$ is ramified, it has only one:
    \begin{proposition}\label{fix_curves_connected}
Let $f_0\colon D_0\to C_0$ be a ramified double cover of smooth curves, with covering involution $i_0$. Then $\fix(-i_0^*)\subset J(D_0)$ is connected. In particular, $\prym(D_0/C_0)=\fix(-i_0^*)$. 
\end{proposition}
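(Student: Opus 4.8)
The plan is to identify $\fix(-i_0^*)$ with the kernel of the norm map $\nm\colon J(D_0)\to J(C_0)$ and then to invoke the classical connectedness statement for ramified double covers. Note that the second assertion of the Proposition is automatic from the first: by Definition~\ref{def_prym_smooth_curves} the Prym variety is the identity component $\fix^0(-i_0^*)$, so it equals $\fix(-i_0^*)$ as soon as the latter is connected. For the group-theoretic reformulation, observe that $L\in J(D_0)$ is fixed by $-i_0^*$ precisely when $i_0^*L\cong L^\vee$, i.e.\ when $L\otimes i_0^*L\cong\cO_{D_0}$; using the identity $f_0^*(\nm(L))=L\otimes i_0^*L$ recalled before Definition~\ref{def_prym_smooth_curves}, this reads
\[
\fix(-i_0^*)=\ker\bigl(\id+i_0^*\colon J(D_0)\to J(D_0)\bigr)=\ker\bigl(f_0^*\circ\nm\bigr)
\]
as closed subgroups of $J(D_0)$. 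Since $\ker(\nm)\subseteq\ker(f_0^*\circ\nm)$ with quotient embedding into $\ker(f_0^*)$ (via $L\mapsto\nm(L)$), it suffices to prove that $f_0^*$ is injective and that $\ker(\nm)$ is connected.

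The injectivity of $f_0^*\colon J(C_0)\to J(D_0)$ is the only step that uses the ramification hypothesis. If $f_0^*M\cong\cO_{D_0}$ for $M\in\Pic^0(C_0)$, then taking norms gives $M^{\otimes 2}=\nm(f_0^*M)=\cO_{C_0}$, so $M$ is $2$-torsion. Write $f_{0*}\cO_{D_0}=\cO_{C_0}\oplus\cN^{-1}$ with $\cN^{\otimes 2}=\cO_{C_0}(R)$ and $R$ the branch divisor of $f_0$, which is nonempty because $f_0$ is ramified, so $\deg\cN>0$. The projection formula then yields
\[
1=h^0(D_0,\cO_{D_0})=h^0(D_0,f_0^*M)=h^0(C_0,M)+h^0(C_0,M\otimes\cN^{-1}),
\]
and as $\deg(M\otimes\cN^{-1})<0$ the last summand vanishes, forcing $h^0(C_0,M)=1$, hence $M\cong\cO_{C_0}$. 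Thus $\ker(f_0^*)=0$ and $\fix(-i_0^*)=\ker(\nm\colon J(D_0)\to J(C_0))$.

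It then remains to invoke the classical fact that the kernel of the norm map of a ramified double cover of smooth curves is connected (see \cite[\S3]{Mumford}; for an \'etale cover one gets two components instead, so ramification really is used). This shows $\fix(-i_0^*)$ is connected and therefore equal to $\prym(D_0/C_0)$, as claimed.

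I expect the connectedness of $\ker(\nm)$ to be the one genuinely nontrivial ingredient. A self-contained alternative would be to compute the component group as $\pi_0(\fix(-i_0^*))\cong H^1(D_0,\Z)^{i_0^*}/(\id+i_0^*)H^1(D_0,\Z)$ and to show it vanishes: the ramification hypothesis ensures that every loop on $C_0$ based at a branch point lifts to a loop on $D_0$ (the branch point has a unique preimage), so that $f_{0*}\colon\pi_1(D_0)\to\pi_1(C_0)$, and hence $f_{0*}$ on $H_1$, is surjective; the vanishing then reduces to the integral statement that the $i_0$-invariant part of $H_1(D_0,\Z)$ coincides with the transfer of $H_1(C_0,\Z)$. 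This last integral point is the delicate one, which is why appealing to Mumford's theorem is preferable.
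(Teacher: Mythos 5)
Your proof is correct and follows essentially the same route as the paper: identify $\fix(-i_0^*)=\ker(f_0^*\circ\nm)$ with $\ker(\nm)$ via the injectivity of $f_0^*$ (which the paper simply cites from Mumford, while you give a clean direct argument using $f_{0*}\cO_{D_0}=\cO_{C_0}\oplus\cN^{-1}$ and the projection formula), and then invoke the classical connectedness of the norm kernel for a ramified double cover (for which the paper cites Kanev's Lemma~1.1 rather than Mumford). There is no gap.
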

\begin{proof}
    By \cite[Section~3]{Mumford}, the pullback $f^*_0\colon J(C_0)\to J(D_0)$ is injective. It follows that $\fix(-i_0^*)=\ker(f^*_0\circ\nm)$ equals $\ker(\nm)$, which is connected by \cite[Lemma~1.1]{Kanev}. 
\end{proof}

Definition \ref{def_prym_smooth_curves} can be extended to the case when $C_0$ and $D_0$ are integral and locally planar.

\begin{propdef} \label{involution_integral_case}
Let $f_0\colon D_0\to C_0$ be a double cover of integral, locally planar curves, with covering involution $i_0$. The assignment
\[
L \mapsto i_0^* \sheafhom(L, \mathcal{O}_{D_0})
\]
defines an involution $\tau_0$ on the compactified Jacobian of the curve $D_0$.
We define the compactified Prym variety of $D_0 \to C_0$ by setting
\[
\overline{ \prym(D_0/C_0)}:=\Fix^0(\tau_0),
\]
where $\Fix^0(\tau_0)$ denotes the irreducible component containing the identity of the fixed locus of the involution $\tau_0$.
\end{propdef}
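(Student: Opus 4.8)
The statement carries three things to check: that $L\mapsto\sheafhom(L,\mathcal{O}_{D_0})$ sends a rank-one torsion-free sheaf of degree $0$ to another one, so that $\tau_0$ is a self-map of the compactified Jacobian $\overline{J(D_0)}$; that $\tau_0$ is a morphism of schemes and not merely a bijection on points; and that $\tau_0^2=\id$. Once these are in place, $\Fix^0(\tau_0)$ makes sense automatically: the fixed locus of an involution of a projective scheme is a closed subscheme, and it contains $[\mathcal{O}_{D_0}]$ because $i_0^*\sheafhom(\mathcal{O}_{D_0},\mathcal{O}_{D_0})\cong\mathcal{O}_{D_0}$, so one may single out the irreducible component through that point. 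None of this is deep; the content is duality on Gorenstein curves together with the representability of the compactified Jacobian functor.

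For the first point I would use that an integral, locally planar curve is a local complete intersection, hence Gorenstein, so its dualizing sheaf $\omega_{D_0}$ is a line bundle. On a Gorenstein curve $\sheafhom(-,\omega_{D_0})$ is an exact contravariant functor restricting to an involutive self-duality of Cohen--Macaulay modules, and on an integral curve the rank-one torsion-free sheaves are exactly the rank-one Cohen--Macaulay modules; twisting by the line bundle $\omega_{D_0}^{\vee}$ transports this to $\sheafhom(-,\mathcal{O}_{D_0})=\sheafhom(-,\omega_{D_0})\otimes\omega_{D_0}^{\vee}$. Hence $L\mapsto\sheafhom(L,\mathcal{O}_{D_0})$ is a contravariant involution of rank-one torsion-free sheaves, with a canonical biduality isomorphism $\sheafhom(\sheafhom(L,\mathcal{O}_{D_0}),\mathcal{O}_{D_0})\cong L$, and Serre duality on $D_0$ (namely $\chi(\sheafhom(L,\omega_{D_0}))=-\chi(L)$) yields $\deg\sheafhom(L,\mathcal{O}_{D_0})=-\deg L$, so the degree-zero compactified Jacobian is preserved. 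The automorphism $i_0^*$ preserves rank, degree and torsion-freeness, and satisfies $i_0^*\sheafhom(-,\mathcal{O}_{D_0})=\sheafhom(i_0^*(-),\mathcal{O}_{D_0})$ because $i_0$ is an isomorphism; so the composite $\tau_0$ is a well-defined self-bijection of $\overline{J(D_0)}$, and $\tau_0^2\cong\id$ on points by biduality combined with $i_0\circ i_0=\id$.

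The one step that requires care, and which I expect to be the main obstacle, is that $\tau_0$ is algebraic. Here I would run the construction in families: rank-one torsion-free sheaves of degree $0$ on the integral curve $D_0$ are automatically stable, so the compactified Jacobian (co)represents its moduli functor; taking a (possibly étale-local) universal sheaf $\mathcal{L}$ on $D_0\times\overline{J(D_0)}$, local planarity ensures that the relative $\sheafhom$ of $\mathcal{L}$ into the relative dualizing sheaf commutes with base change and produces a flat family of rank-one torsion-free sheaves of degree $0$; pulling this family back along $i_0\times\id$ and invoking the moduli property yields a morphism $\tau_0$ restricting to the set-map above, and descent disposes of the étale-local ambiguity. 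Since $\overline{J(D_0)}$ is integral for $D_0$ integral locally planar, an endomorphism that is the identity on closed points is the identity, so $\tau_0^2=\id$; alternatively, one exhibits a canonical isomorphism $\tau_0^*\mathcal{L}\cong\mathcal{L}\otimes p^*N$ with $N$ a line bundle on $\overline{J(D_0)}$, whence $\tau_0^2=\id$ formally from the canonicity of biduality. In the geometric situation of the paper, where $D_0\in|D|$ lies on the K3 surface $S$, one may instead realize $\overline{J(D_0)}$ as the fibre over $[D_0]$ of $\supp\colon M_H\to|D|$ and obtain $\tau_0$ as the restriction of the self-map $F\mapsto i^*\sheafhom(F,\mathcal{O}_{\supp F})$ of $M_H$, which is a genuine morphism over the open locus of integral curves. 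This establishes that $\tau_0$ is an involution of the projective scheme $\overline{J(D_0)}$ fixing $[\mathcal{O}_{D_0}]$, and hence that $\Fix^0(\tau_0)=\overline{\prym(D_0/C_0)}$ is well-defined.
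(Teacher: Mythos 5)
Your proposal is correct and follows essentially the same route as the paper's (much terser) proof, which asserts exactly your three points: that dualizing preserves rank-one torsion-free sheaves of degree $0$ on a locally planar curve, that the assignment works in families and hence defines a morphism of $\overline{J(D_0)}$, and that being an involution on the dense locus of line bundles forces $\tau_0^2=\id$ everywhere. You have simply filled in the details (Gorenstein duality, the degree computation via Serre duality, and the universal-sheaf/base-change argument) that the paper leaves as "it can be checked".
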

\begin{proof}
    The dual of a rank one torsion free sheaf of degree $0$ on a locally planar curve is still rank one, torsion free, of degree $0$. Moreover, it can be checked that this assignment works in families, therefore defining an automorphism on the degree $0$ compactified Jacobian of $D_0$. Since it is an involution on the locus parametrizing line bundles, it is an involution on the whole compactified Jacobian.
\end{proof}

In the situation of Proposition-Definition~\ref{involution_integral_case}, certain properties of usual Prym varieties still hold.
We denote by $g_a(C)$ the arithmetic genus of a curve $C$.

\begin{lemma} \label{integral Prym}
    Let $f_0\colon D_0 \to C_0$ be a double cover of integral locally planar curves. Then the compactified Prym variety $\overline{\prym(D_0/C_0)}$ has dimension $g_a(D_0)-g_a(C_0)$, i.e. the same dimension as in the smooth case.
\end{lemma}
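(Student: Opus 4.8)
The plan is to read off $\dim\overline{\prym(D_0/C_0)}=\dim\Fix^0(\tau_0)$ from the Zariski tangent space of the fixed locus of $\tau_0$ at the identity point $[\cO_{D_0}]$. Recall that for an integral, locally planar curve $D_0$ the compactified Jacobian $\overline{J(D_0)}$ is an irreducible projective variety of dimension $g_a(D_0)$ whose smooth locus is the open subset $J(D_0)=\Pic^0(D_0)$ of line bundles, and that at a line bundle the tangent space of $J(D_0)$ is $H^1(D_0,\cO_{D_0})$, which has dimension $g_a(D_0)$. In particular $[\cO_{D_0}]$ is a smooth point of $\overline{J(D_0)}$, and it is fixed by $\tau_0$ because $\sheafhom(\cO_{D_0},\cO_{D_0})\cong\cO_{D_0}$ and $i_0^*\cO_{D_0}\cong\cO_{D_0}$.

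Next I would compute the differential of $\tau_0$ at this point. On the group $J(D_0)$ the assignment $L\mapsto\sheafhom(L,\cO_{D_0})$ is the inversion $L\mapsto L^{-1}$, whose differential at the origin is $-\id$ on $H^1(D_0,\cO_{D_0})$; and $L\mapsto i_0^*L$ is a group automorphism inducing $i_0^*$ on $H^1(D_0,\cO_{D_0})$. These two operations commute, so $d\tau_0|_{[\cO_{D_0}]}=-i_0^*$. Since $[\cO_{D_0}]$ is a smooth point of $\overline{J(D_0)}$, $\tau_0$ is an involution, and we work in characteristic zero, linearizing $\tau_0$ near the fixed point (Cartan's lemma) shows that $\Fix(\tau_0)$ is smooth there, with tangent space the $(+1)$-eigenspace of $d\tau_0|_{[\cO_{D_0}]}$, that is, the $(-1)$-eigenspace of $i_0^*$ acting on $H^1(D_0,\cO_{D_0})$. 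Hence $\Fix^0(\tau_0)$, the irreducible component through $[\cO_{D_0}]$, has dimension $\dim_{\C}H^1(D_0,\cO_{D_0})^{-}$, where $(\,\cdot\,)^{-}$ denotes the $(-1)$-eigenspace of $i_0^*$.

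Finally I would identify that eigenspace via pushforward along $f_0$. Since $f_0$ is finite and we are in characteristic zero, the idempotent $\tfrac12(1+i_0^*)$ decomposes $f_{0*}\cO_{D_0}=\cO_{C_0}\oplus Q$ into its $i_0$-invariant part $\cO_{C_0}=\cO_{D_0/i_0}$ and its anti-invariant part $Q$. Applying cohomology (and using $Rf_{0*}=f_{0*}$ as $f_0$ is affine) gives the $i_0^*$-equivariant decomposition $H^1(D_0,\cO_{D_0})=H^1(C_0,\cO_{C_0})\oplus H^1(C_0,Q)$, on which $i_0^*$ acts by $+1$ on the first summand and by $-1$ on the second. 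Therefore $H^1(D_0,\cO_{D_0})^{-}=H^1(C_0,Q)$, whose dimension is $h^1(D_0,\cO_{D_0})-h^1(C_0,\cO_{C_0})=g_a(D_0)-g_a(C_0)$, as claimed. The only steps demanding some care — and the places where the hypotheses "integral" and "locally planar" enter — are the local structure of the compactified Jacobian of $D_0$ at $[\cO_{D_0}]$ (smoothness, together with the identification of its tangent space with $H^1(\cO_{D_0})$) and the characteristic-zero linearization of the involution $\tau_0$ at its smooth fixed point; both are standard, so I do not expect a genuine obstacle here.
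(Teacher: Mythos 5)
Your proof is correct and follows essentially the same route as the paper's: both reduce the dimension count to the Zariski tangent space of $\Fix(\tau_0)$ at the smooth point $[\cO_{D_0}]$ of $\overline{J(D_0)}$ and identify it with the $(-1)$-eigenspace of $i_0^*$ on $H^1(D_0,\cO_{D_0})$. The only divergence is the final step: you compute that eigenspace via the eigensheaf decomposition $f_{0*}\cO_{D_0}=\cO_{C_0}\oplus Q$, whereas the paper computes the complementary $(+1)$-eigenspace by showing that $i_0^*$-invariant line bundles near $\cO_{D_0}$ descend to $C_0$, so that $\Fix^0(i_0^*)=f_0^*\Pic^0(C_0)$ has dimension $g_a(C_0)$ --- an equivalent, global version of the same computation.
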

\begin{proof}
    To compute $\dim\overline{\prym(D_0/C_0)}$, it is enough to compute the dimension of its tangent space at $\mathcal{O}_{D_0}$. The involution $\tau_0$ fixes $\mathcal{O}_{D_0}$ and the $+1$ eigenspace for the action of $\tau_0$ on $T_{\mathcal{O}_{D_0}}\Pic^0(D_0)$ is the tangent space to $\prym(D_0/C_0)$ at $\mathcal{O}_{D_0}$. Moreover, the $-1$ eigenspace is the tangent space at $\mathcal{O}_{D_0}$ to $\Fix(i_0^*)$. It is therefore enough to compute the dimension of $\Fix(i_0^*)$ at $\mathcal{O}_{D_0}$.
    
    Let $L$ be an $i_0^*$-invariant line bundle on $D_0$ that belongs to the irreducible component $\Fix^0(i_0^*)$ of $\Fix(i_0^*)$ containing $\mathcal{O}_{D_0}$. Since $i_0^*$ acts as the identity on the fibres of $L$ over the ramification points of $f_0$, 
    the line bundle $L$ descends to $C_0$, that is $L\cong f_0^*L'$ for a line bundle $L'$ (see e.g. \cite[Theorem 2.3]{DrNar}). 
    One can set $L'$ to be the locally free sheaf $((f_0)_*L)^{i_0^*}$ of $i_0^*$-invariant sections of $(f_0)_*L$.
    Therefore, $\Fix^0(i_0^*)=f_0^*\Pic^0(C_0)$ and $\dim\prym(D_0/C_0)=\dim\Pic^0(D_0)-\dim\Fix^0(i_0^*)=\dim\Pic^0(D_0)-\dim\Pic^0(C_0)=g_a(D_0)-g_a(C_0)$.
\end{proof}

The following two lemmas will be important in Section~\ref{sec:pi1}:

\begin{lemma} \label{irr_prym_one_node}
    Let $f_0\colon D_0 \to C_0$ be a double cover such that $C_0$ is smooth and $D_0$ is integral with one node. Assume that the induced involution on the normalization of $D_0$ is not \'etale. Then $\Fix^0(\tau_0)$ is the only irreducible component of $\Fix(\tau_0)$ of maximal dimension.
\end{lemma}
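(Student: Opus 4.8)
The plan is to describe $\Fix(\tau_0)$ on the two strata of the degree-$0$ compactified Jacobian $\overline{J(D_0)}$: the open dense semiabelian variety $P:=\Pic^0(D_0)$ of line bundles, which sits in an exact sequence $1\to\mathbb{G}_m\to P\xrightarrow{\nu^*}J(\widetilde{D_0})\to 1$ where $\nu\colon\widetilde{D_0}\to D_0$ is the normalization ($\widetilde{D_0}$ smooth and irreducible, as $D_0$ is integral), and the closed boundary $\partial:=\overline{J(D_0)}\smallsetminus P$, which for a curve with a single node is identified with $\Pic^{-1}(\widetilde{D_0})$ via $M\mapsto\nu_*M$. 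Let $\widetilde{i_0}$ be the involution of $\widetilde{D_0}$ lifting $i_0$, put $\widetilde{C_0}:=\widetilde{D_0}/\widetilde{i_0}$, and let $p_1,p_2$ be the two points of $\widetilde{D_0}$ over the node of $D_0$. Since $C_0$ is smooth, $i_0$ must swap the two branches of the node, so $\widetilde{i_0}$ interchanges $p_1$ and $p_2$; then the induced morphism $\widetilde{C_0}\to C_0$ is finite of degree $2/\deg(\widetilde{D_0}\to\widetilde{C_0})=1$, hence an isomorphism onto the smooth curve $C_0$, so $g(\widetilde{C_0})=g(C_0)=g_a(C_0)$. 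Finally, because $\widetilde{i_0}$ is not étale, Proposition~\ref{fix_curves_connected} applies to $\widetilde{D_0}\to\widetilde{C_0}$: the locus $\Fix(-\widetilde{i_0}^*)=\prym(\widetilde{D_0}/\widetilde{C_0})\subset J(\widetilde{D_0})$ is a connected abelian variety of dimension $g(\widetilde{D_0})-g(\widetilde{C_0})$.

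On the boundary, relative duality for the finite morphism $\nu$ gives $\sheafhom(\nu_*M,\mathcal{O}_{D_0})\cong\nu_*\big(M^\vee(-p_1-p_2)\big)$, so (using that $i_0^*$ commutes with $\sheafhom(-,\mathcal{O}_{D_0})$) $\tau_0$ sends $\nu_*M$ to $\nu_*\big(\widetilde{i_0}^*M^\vee(-p_1-p_2)\big)$. Since $\widetilde{i_0}^*\mathcal{O}(p_1+p_2)=\mathcal{O}(p_1+p_2)$, translating $\Pic^{-1}(\widetilde{D_0})$ onto $J(\widetilde{D_0})$ by $\mathcal{O}(p_1)$ turns $\tau_0|_\partial$ into $-\widetilde{i_0}^*$; hence $\Fix(\tau_0)\cap\partial\cong\Fix(-\widetilde{i_0}^*)$ has dimension $g(\widetilde{D_0})-g(\widetilde{C_0})$.

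On the line-bundle locus I would prove that $\Fix(\tau_0)\cap P=\ker\big(\id+i_0^*\colon P\to P\big)$ equals $(\nu^*)^{-1}\big(\Fix(-\widetilde{i_0}^*)\big)$, which is then connected, being a $\mathbb{G}_m$-torsor over the connected variety $\Fix(-\widetilde{i_0}^*)$. The inclusion ``$\subseteq$'' is immediate from $\nu^*\circ i_0^*=\widetilde{i_0}^*\circ\nu^*$. For ``$\supseteq$'': if $L\in P$ has $\nu^*L\in\Fix(-\widetilde{i_0}^*)$, then $(\id+i_0^*)L\in\ker\nu^*=\mathbb{G}_m$, so $\id+i_0^*$ restricts to a homomorphism of algebraic groups $(\nu^*)^{-1}(\Fix(-\widetilde{i_0}^*))\to\mathbb{G}_m$; since $i_0^*$ acts on $\mathbb{G}_m=\ker\nu^*$ by inversion (the two branches of the node being swapped), this homomorphism vanishes on $\mathbb{G}_m$ and thus factors through the abelian variety $\Fix(-\widetilde{i_0}^*)$, hence is trivial, so $(\id+i_0^*)L=0$. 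Consequently $\Fix(\tau_0)\cap P$ is irreducible; since $[\mathcal{O}_{D_0}]$ is a smooth point of $\overline{J(D_0)}$ at which $\Fix(\tau_0)$ is smooth, the closure of $\Fix(\tau_0)\cap P$ is the unique component of $\Fix(\tau_0)$ through $[\mathcal{O}_{D_0}]$, namely $\Fix^0(\tau_0)$, and by Lemma~\ref{integral Prym} $\dim\Fix^0(\tau_0)=g_a(D_0)-g_a(C_0)=g(\widetilde{D_0})+1-g(\widetilde{C_0})$. Any other irreducible component of $\Fix(\tau_0)$ must be disjoint from $P$, hence contained in $\partial$, and so has dimension $\le g(\widetilde{D_0})-g(\widetilde{C_0})<\dim\Fix^0(\tau_0)$; this proves the claim.

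The step I expect to be the main obstacle is the equality $\Fix(\tau_0)\cap P=(\nu^*)^{-1}(\Fix(-\widetilde{i_0}^*))$ — in effect, a Prym-type connectedness statement for the generalized Jacobian of the nodal curve $D_0$. Its proof hinges on identifying the action of $i_0^*$ on the toric part $\mathbb{G}_m\subset\Pic^0(D_0)$ as inversion, which is what makes $\id+i_0^*$ vanish there and reduces the only conceivable obstruction to connectedness to a homomorphism from an abelian variety to $\mathbb{G}_m$, hence to nothing. Pinning down this $\mathbb{G}_m$-action, as well as the description of $\tau_0$ on $\partial$ via relative duality, requires a careful local computation at the node; the rest of the argument is formal.
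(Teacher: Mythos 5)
Your proof is correct and follows essentially the same route as the paper: any component other than $\Fix^0(\tau_0)$ is forced into the boundary, which is identified with $\Pic^{-1}(\widetilde{D_0})$ via $\nu_*$, and there relative duality plus translation by $\cO(p_1)$ identify the fixed locus with the classical Prym of $\widetilde{D_0}\to C_0$, of dimension one less than $\Fix^0(\tau_0)$. The only difference is that the paper dispatches the open stratum by citing Proposition~\ref{fix_curves_connected} (stated for smooth curves), whereas you supply the argument actually needed for the generalized Jacobian --- the inversion action of $i_0^*$ on the toric part $\mathbb{G}_m=\ker\nu^*$ and the vanishing of homomorphisms from an abelian variety to $\mathbb{G}_m$ --- which is a welcome sharpening of that step rather than a divergence from it.
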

\begin{proof}
    Let $Z$ be an irreducible component of $\Fix(\tau_0)$ different from $\Fix^0(\tau_0)$. Since the induced involution $i_0'$ on the normalization of $D_0$ is not \'etale, it follows from Proposition \ref{fix_curves_connected} that $Z$ has to be contained in the boundary of $\overline{J(D_0)}\setminus \Pic^0(D_0)$ of the compactified Jacobian $\overline{J(D_0)}$. We thus need to understand the action of $\tau_0$ on the boundary.
  
    Let $\nu\colon D_0' \to D_0$ be the normalization map. The boundary of $\overline{J(D_0)}$ is naturally isomorphic to $\Pic^{-1}(D_0')$ via the pushforward map $\nu_*$, which is a closed embedding (see e.g.\ \cite[Lemma 3.1]{B}).  
    Let $i_0'$ be the covering involution of the induced map $D_0'\to C_0$.
    Let $p$ and $q$ be the inverse images of the node under $\nu$; in particular, $i_0'(p)=q$.
    Define an involution $\tau_0'$ on $\Pic^{-1}(D_0')$ by setting $L' \mapsto (i_0')^*(L')^{\vee}(-p-q)$.

    By relative duality applied to the normalization map $\nu$, and the fact that the relative dualizing sheaf of $\nu$ is $\cO_{D'_0}(-p-q)$, it follows that $\nu_*\colon \Pic^{-1}(D_0') \to \overline{J(D_0)}$ is equivariant with respect to the involution $\tau_0$ on $\overline{J(D_0)}$ and the involution $\tau_0'$ on $\Pic^{-1}(D_0')$:
    \[\tau_0(\nu_*L') = i_0^*\sheafhom(\nu_*L',\cO_{D_0})\cong i_0^*\nu_*\sheafhom(L',\cO_{D_0'}(-p-q))\cong \nu_*((i_0')^*(L')^{\vee}(-p-q))=\nu_*\tau_0'(L').\]
    Thus to understand $Z$ it suffices to understand the fixed locus of $\tau_0'$ on $\Pic^{-1}(D_0')$.
    For this aim, consider the isomorphism $\phi\colon \Pic^0(D_0') \to \Pic^{-1}(D_0')$ given by tensoring with $\cO_{D_0'}(-p)$. 
    Since $i(p)=q$, one can check that $\phi$ is equivariant with respect to the involutions $\tau_0'$ on $\Pic^{-1}(D_0')$ and $(i_0')^*(\cdot)^{\vee}$ on $\Pic^{0}(D_0')$. Hence the fixed locus of $\tau_0'$ on $\Pic^{-1}(D_0')$ is isomorphic to the fixed locus of $(i_0')^*(\cdot)^{\vee}$ on $\Pic^{0}(D_0')$. But this is just the usual Prym variety of the double cover $D'_0 \to C_0$, and hence it has dimension $g(D'_0)-g(C_0)=g_a(D_0)-g(C_0)-1$. This proves the lemma.
\end{proof}

\begin{lemma} \label{irr_prym_transv_B}
    Let $f_0\colon D_0 \to C_0$ be a double cover of integral curves such that $C_0$ has one node and $D_0$ has two nodes. If $f_0$ is not \'etale, then $\Fix^0(\tau_0)$ is the only irreducible component of $\Fix(\tau_0)$ of maximal dimension.
\end{lemma}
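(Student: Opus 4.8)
The plan is to reduce Lemma~\ref{irr_prym_transv_B} to Lemma~\ref{irr_prym_one_node} by a two-step degeneration/normalization analysis of the involution $\tau_0$ on the boundary of the compactified Jacobian $\overline{J(D_0)}$. As in the proof of Lemma~\ref{irr_prym_one_node}, any irreducible component $Z\subset\Fix(\tau_0)$ other than $\Fix^0(\tau_0)$ must lie in the boundary $\overline{J(D_0)}\setminus\Pic^0(D_0)$, since on the open part $\Pic^0(D_0)$ the fixed locus of $\tau_0$ is the usual (connected, by Proposition~\ref{fix_curves_connected}) Prym of the double cover of smooth curves $\widetilde D_0\to\widetilde C_0$ obtained by normalizing both curves — here one uses that $f_0$ is not étale to invoke Proposition~\ref{fix_curves_connected}. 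So the entire content is a dimension count for $\Fix(\tau_0)$ restricted to the boundary strata.

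First I would set up the stratification of $\overline{J(D_0)}$ by the two nodes $n_1,n_2$ of $D_0$, which map to the single node $n$ of $C_0$ (so $f_0$ identifies the two nodes and is, near there, two disjoint smooth branches mapping isomorphically; in particular $i_0$ swaps $n_1$ and $n_2$). The boundary decomposes according to which subsets of $\{n_1,n_2\}$ a torsion-free rank-one sheaf fails to be locally free at: there is the deepest stratum where the sheaf is non-locally-free at both nodes, isomorphic via pushforward along the full normalization $\nu\colon D_0'\to D_0$ to a suitable $\Pic$ of $D_0'$ (a smooth curve now, since both nodes are resolved and $C_0$'s node lies under them), and the two "intermediate" strata where exactly one node is resolved, isomorphic to boundaries of compactified Jacobians of curves with a single node. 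Because $i_0$ interchanges $n_1$ and $n_2$, it interchanges the two intermediate strata, so neither of them can contain a $\tau_0$-fixed component on its own; a fixed component meeting the intermediate locus must be swapped into itself, forcing it down into the deepest stratum or back into the partial normalization identified with a one-node situation. For the deepest stratum, I would run exactly the relative-duality computation from Lemma~\ref{irr_prym_one_node}: the dualizing sheaf of $\nu$ is $\cO_{D_0'}(-p_1-q_1-p_2-q_2)$ where $p_j,q_j$ lie over $n_j$, the pushforward $\nu_*$ intertwines $\tau_0$ with $L'\mapsto (i_0')^*(L')^\vee(-\sum p_j-q_j)$ on the appropriate $\Pic$ of $D_0'$, and tensoring by a fixed line bundle identifies its fixed locus with the classical Prym of $D_0'\to C_0'$ (where $C_0'$ is $C_0$ normalized at its node, which is forced because the $p_j,q_j$ come in $i_0'$-orbits mapping to the two preimages of $n$ on $C_0'$). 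That Prym has dimension $g(D_0')-g(C_0')=\bigl(g_a(D_0)-2\bigr)-\bigl(g_a(C_0)-1\bigr)=g_a(D_0)-g_a(C_0)-1$, which by Lemma~\ref{integral Prym} is strictly less than $\dim\overline{\prym(D_0/C_0)}=g_a(D_0)-g_a(C_0)$.

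The hard part will be bookkeeping the intermediate strata carefully and showing a $\tau_0$-fixed component there has dimension $\le g_a(D_0)-g_a(C_0)-1$ as well: a priori one should check that the partial normalization $\overline{J}$ at $n_1$ (say), which is a one-node compactified Jacobian, carries an involution conjugate under $\nu_*$ to the analogue on a one-node curve, but the induced "involution" there is not literally a covering involution of a double cover — $i_0$ moves $n_1$ to $n_2$, so it does not preserve the partial normalization. The clean way around this is to note that the $\tau_0$-fixed locus in the union of the two intermediate strata maps isomorphically, via $\tau_0$ and the normalization identifications, to the graph of the isomorphism between the two strata induced by $i_0$, hence is isomorphic to a single intermediate stratum, i.e.\ to the boundary of a one-node compactified Jacobian — of dimension $g_a(D_0)-1\le g_a(D_0)-g_a(C_0)-1+g_a(C_0)$, and then intersecting with $\Fix$ cuts this down further by the Prym-type count; alternatively and more robustly, I would simply observe that any component $Z\ne\Fix^0(\tau_0)$ has $Z\cap\Pic^0(D_0)=\emptyset$, hence $\dim Z\le\dim\bigl(\overline{J(D_0)}\setminus\Pic^0(D_0)\bigr)$ combined with the eigenspace computation at a general point of $Z$ showing the relevant tangent space has dimension $<\dim\overline{\prym}$. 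I expect the swap of the two nodes by $i_0$ to be the one subtlety that needs to be stated explicitly, and everything else to parallel Lemma~\ref{irr_prym_one_node} verbatim with "$-p-q$" replaced by "$-p_1-q_1-p_2-q_2$".
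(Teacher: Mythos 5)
Your argument is essentially the paper's: any component other than $\Fix^0(\tau_0)$ avoids $\Pic^0(D_0)$, and since $i_0$ swaps the two nodes of $D_0$, a $\tau_0$-fixed sheaf must fail to be locally free at both nodes or at neither, so such a component lands in $\nu_*\Pic^{-2}(D_0')$, where relative duality plus a twist by a degree $-2$ line bundle identifies the fixed locus with $\prym(D_0'/C_0')$, of dimension $g_a(D_0)-g_a(C_0)-1$. The worry in your last paragraph about the intermediate strata is unfounded --- the fixed locus is simply disjoint from them, exactly by the node-swapping observation you already made, so the ``graph'' workaround is unnecessary (and as stated does not quite parse); note also that the fixed locus of $\tau_0$ on $\Pic^0(D_0)$ is not literally $\prym(\widetilde D_0/\widetilde C_0)$ (the dimensions differ by one), though its connectedness is all you need there.
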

\begin{proof}
    Let $\nu\colon D_0' \to D_0$ be the normalization map. Similarly to the previous lemma, the set of sheaves in $\overline{J(D_0)}$ which are not locally free at both nodes of $D_0$ can be identified with $\Pic^{-2}(D_0')$.
    Denote by $i_0'$ the covering involution of $f_0'\colon D_0' \to C_0'$, where $f_0'$ is the lift of $f_0\circ\nu$ to the normalization $C_0'$ of $C_0$. 
    We define an involution $\tau_0'$ on $\Pic^{-2}(D_0')$ as $L' \mapsto (i_0')^*(L')^{\vee}(-p_1-q_1-p_2-q_2)$ where $p_j$ and $q_j$ are the preimages of the nodal points of $D_0$ in $D_0'$. As in the proof of Lemma~\ref{irr_prym_one_node}, $\nu_*\circ\tau_0'$ is compatible with $\tau_0$.

    Let $Z$ be an irreducible component of $\Fix(\tau_0)$ different from $\Fix^0(\tau_0)$. Since $i_0'$ is not \'etale, $Z$ has to be contained in the set of non-locally free sheaves in $\overline{J(D_0)}$. As in the proof of Lemma~\ref{irr_prym_one_node}, it follows that 
    \[\dim Z\leq \dim\Fix(\tau_0')=\dim\prym(D_0',C_0')=g_a(D_0')-g_a(C_0')=g_a(D_0)-g_a(C_0)-1. \qedhere\]
\end{proof}

\begin{remark}
In fact, in the situations of Lemmas~\ref{irr_prym_one_node} and \ref{irr_prym_transv_B}, one can show that $\fix(\tau_0)$ is irreducible.
\end{remark}

Following \cite{MT,ASF}, we globalize the construction of $\prym(D_0/C_0)$ over all of $|C|$ as follows. We fix the Mukai vector $v=(0,[D],1-g(D))\in H^*(S,\mathbb{Z})$ and we consider, for a polarization $H$ on $S$, the moduli space  $M_H$ discussed in Section \ref{subsubsec-rel-jac}.
The pullback of sheaves induces a birational involution
\[i^*\colon M_H\dashrightarrow M_H,\]
regular on the locus of sheaves with integral support and compatible with the support morphism.
By \cite[Lemma~3.6]{ASF}, $i^*$ is anti-symplectic (see also \cite[Lemma~3.4]{OW}).
There is a second anti-symplectic birational involution on $M_H$ \cite[Proposition~3.11]{ASF}, given by
\begin{align*}
 j\colon M_H&\dashrightarrow M_H\\
 F&\mapsto \sheafext^1_S(F,\mathcal{O}_S(-D)).
\end{align*}
If $F\in M_H$ is supported on $D_0$, 
then $j(F)\cong\sheafhom_{D_0}(F,\mathcal{O}_{D_0})$ \cite[Lemma~3.7]{ASF}, which is again supported on $D_0$.
Since $\mathcal{O}_S(-D)$ is $i^*$-invariant, the maps $j$ and $i^*$ commute; hence the composition
\begin{equation}\label{eq_deftau}
\tau:=j\circ i^*\colon M_H\dashrightarrow M_H
\end{equation}
is a symplectic birational involution. 
We denote by $\fix(\tau)$
the fixed locus of the restriction of $\tau$ to the biggest open in $M_H$ where $\tau$ is regular. 
Then the intersection $\fix(\tau)\cap (M_H)_{\reg}=\fix(\tau|_{(M_H)_{\reg})}$, which is a Zariski dense open subset of $\fix(\tau)$, is smooth and has a symplectic form \cite[Proposition~2.6]{Fujiki}. 
Note that the support map restricts to a morphism
\[\fix(\tau)\to |C|\]
where $|C|$ sits in $|D|$ via the pullback $f^*$.
The general fibre of $\fix(\tau)\to |C|$ is Lagrangian. 

\medskip
If $C_0\in |C|$ is a smooth curve such that $D_0=f^{-1}C_0$ is also smooth, then the fibre of $\fix(\tau)\to |C|$ over $C_0$ is $\fix(-i|_{D_0}^*)$.
In particular, it contains the image $[\cO_{D_0}]$ of $C_0$ 
under the rational section $s\colon |C|\dashrightarrow M_H$.
We denote by $\fix^0(\tau)$ the irreducible component of $\fix(\tau)$ containing the image of $s$. 
Then $\fix^0(\tau)$ dominates $|C|$; the inverse image of the smooth curve $C_0\in|C|$ under $\fix^0(\tau)\to|C|$ is $\prym(D_0/C_0)$. Note that $\fix^0(\tau)$ has dimension
\[\dim|C|+\dim\prym(D_0/C_0)=\dim|C|+g(D)-g(C).
\]
We find the dimension of $|C|$ using the following lemma.

\begin{lemma}
\label{lemma_dimlinearsystemT}
 Let $C'$ be a curve on a rational surface $T$. We have $h^1(C',\cO_T(C')|_{C'})=h^0(C',\omega_T|_{C'})$
 and 
 \[\dim |C'|=h^0(C',\cO_T(C')|_{C'})=(C')^2+1-g(C')+h^0(C',\omega_T|_{C'}).\]
\end{lemma}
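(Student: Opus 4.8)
The plan is to work with the short exact sequence of sheaves on $T$ obtained by restricting $\cO_T(C')$ to $C'$, namely
\[
0 \to \cO_T \to \cO_T(C') \to \cO_T(C')|_{C'} \to 0.
\]
Taking cohomology and using that $T$ is rational, so that $h^1(T,\cO_T)=h^2(T,\cO_T)=0$ (and $H^0(T,\cO_T)=\C$), the long exact sequence gives $h^0(T,\cO_T(C')) = 1 + h^0(C',\cO_T(C')|_{C'})$ and an isomorphism $H^1(T,\cO_T(C')) \cong H^1(C',\cO_T(C')|_{C'})$, together with $h^2(T,\cO_T(C')) \cong h^1(C',\cO_T(C')|_{C'})$ being replaced — more precisely, the tail reads $H^1(T,\cO_T(C')) \to H^1(C',\cO_T(C')|_{C'}) \to H^2(T,\cO_T) = 0$. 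Since $\dim|C'| = h^0(T,\cO_T(C'))-1$, this already yields $\dim|C'| = h^0(C',\cO_T(C')|_{C'})$.

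Next I would identify $h^1(C',\cO_T(C')|_{C'})$ with $h^0(C',\omega_T|_{C'})$. By the adjunction formula, the dualizing sheaf of $C'$ is $\omega_{C'} \cong (\omega_T \otimes \cO_T(C'))|_{C'}$, so Serre duality on the (Gorenstein, possibly singular) curve $C'$ gives
\[
h^1(C',\cO_T(C')|_{C'}) = h^0\!\big(C', \omega_{C'} \otimes (\cO_T(C')|_{C'})^{\vee}\big) = h^0\!\big(C', (\omega_T\otimes\cO_T(C')\otimes\cO_T(-C'))|_{C'}\big) = h^0(C',\omega_T|_{C'}).
\]
This proves the first assertion $h^1(C',\cO_T(C')|_{C'}) = h^0(C',\omega_T|_{C'})$, and combined with the isomorphism $H^1(T,\cO_T(C')) \cong H^1(C',\cO_T(C')|_{C'})$ from the previous step it also controls $h^1(T,\cO_T(C'))$ if needed.

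Finally, for the numerical formula I would apply Riemann--Roch on the curve $C'$ to the line bundle $\cO_T(C')|_{C'}$, which has degree $(C')^2$ (the self-intersection, computed via the adjunction/intersection pairing on $T$). Riemann--Roch on a curve of arithmetic genus $g(C')$ reads
\[
h^0(C',\cO_T(C')|_{C'}) - h^1(C',\cO_T(C')|_{C'}) = \deg\big(\cO_T(C')|_{C'}\big) + 1 - g(C') = (C')^2 + 1 - g(C'),
\]
and substituting $h^1(C',\cO_T(C')|_{C'}) = h^0(C',\omega_T|_{C'})$ gives $\dim|C'| = h^0(C',\cO_T(C')|_{C'}) = (C')^2 + 1 - g(C') + h^0(C',\omega_T|_{C'})$, as claimed. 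The only mild subtlety — and the step I would be most careful about — is making sure Serre duality and Riemann--Roch are invoked in the form valid for an arbitrary (possibly singular, but locally planar, hence Gorenstein) curve $C'$ on $T$, so that $\omega_{C'}$ is a genuine line bundle and adjunction applies verbatim; for smooth $C'$ this is of course standard.
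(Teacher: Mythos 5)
Your proposal is correct and follows essentially the same route as the paper: the restriction sequence $0\to\cO_T\to\cO_T(C')\to\cO_T(C')|_{C'}\to 0$ together with $h^1(T,\cO_T)=0$ for a rational surface, adjunction plus Serre duality on the Gorenstein curve $C'$ to identify $h^1(C',\cO_T(C')|_{C'})$ with $h^0(C',\omega_T|_{C'})$, and Riemann--Roch for the embedded curve. The aside about $H^1(T,\cO_T(C'))\cong H^1(C',\cO_T(C')|_{C'})$ is not needed for the statement but is harmless.
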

\begin{proof}
Note that $C'$ is Gorenstein, so the 
dualizing sheaf $\omega_{C'}$ is locally free, and is isomorphic to $(\cO_T(C')\otimes\omega_T)|_{C'}$ by adjunction. 
Serre duality gives $h^0(C',\omega_T|_{C'})=h^1(C',\cO_T(C')|_{C'})$.
Because the plurigenera of $T$ are all trivial, we have a short exact sequence
\[0\to H^0(T,\cO_T)\to H^0(T,\cO_T(C'))\to H^0(C',\cO_T(C')|_{C'})\to 0.\]
This gives $\dim|C'|=h^0(C',\cO_T(C')|_{C'})$; by the Riemann--Roch theorem for embedded curves, this equals $(C')^2+1-g(C')+h^0(C',\omega_T|_{C'})$.
\end{proof}

\begin{corollary}\label{cor_dimCforB.C>0}
If $C'$ is a curve on a rational surface $T$ with $\deg\omega_T|_{C'}\leq 0$ and $\omega_T|_{C'}\not\cong\cO_{C'}$,
then we have
$H^1(C',\cO_T(C')|_{C'})=0$ and 
\[\dim|C'|=(C')^2+1-g(C') = g(C')-C'.K_T-1.\] 
\end{corollary}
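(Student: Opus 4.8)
The plan is to read this off from Lemma~\ref{lemma_dimlinearsystemT}. That lemma already yields $h^1(C',\cO_T(C')|_{C'})=h^0(C',\omega_T|_{C'})$ and $\dim|C'|=(C')^2+1-g(C')+h^0(C',\omega_T|_{C'})$, so everything reduces to the single vanishing $h^0(C',\omega_T|_{C'})=0$, after which the asserted equality $(C')^2+1-g(C')=g(C')-C'.K_T-1$ is obtained by plugging the adjunction formula $2g(C')-2=(C')^2+C'.K_T$ into the left-hand side. So essentially no work is needed beyond the vanishing.

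To prove $h^0(C',\omega_T|_{C'})=0$, I would first observe that $\deg\bigl(\omega_T|_{C'}\bigr)=K_T.C'\leq 0$ by hypothesis, and then invoke the following elementary fact on an integral projective curve $C'$: if a line bundle $N$ of degree $\leq 0$ has a nonzero global section $s$, then the vanishing scheme $Z(s)$ is effective of length $\deg N\geq 0$, so $\deg N=0$ and $Z(s)=\emptyset$, i.e.\ $s$ trivializes $N$ and $N\cong\cO_{C'}$. Applying this with $N=\omega_T|_{C'}$ and recalling the hypothesis $\omega_T|_{C'}\not\cong\cO_{C'}$ forces $h^0(C',\omega_T|_{C'})=0$. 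Combining this with Lemma~\ref{lemma_dimlinearsystemT} gives $H^1(C',\cO_T(C')|_{C'})=0$ and both displayed expressions for $\dim|C'|$.

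There is no serious obstacle here; the only point deserving attention is that the implication ``degree $\leq 0$ plus a nonzero section $\Rightarrow$ trivial'' uses that $C'$ is integral, which is the case in all applications in this paper (there $C'$ is even smooth with $(C')^2>0$). If one wishes to allow a connected reduced but reducible $C'$ with $\deg\omega_T|_{C_i}\leq 0$ on each component $C_i$, the same argument adapts: a nonzero global section is nowhere vanishing on any component on which it does not vanish identically, and propagating this through the nodes using connectedness of $C'$ shows it is a nowhere-vanishing global section, hence still trivializes $\omega_T|_{C'}$, contradicting the hypothesis.
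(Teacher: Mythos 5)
Your proof is correct and follows the same route as the paper: the paper's own argument is the one-line observation that $\deg\omega_T|_{C'}=K_T.C'\leq 0$ together with $\omega_T|_{C'}\not\cong\cO_{C'}$ forces $h^0(C',\omega_T|_{C'})=0$, after which everything follows from Lemma~\ref{lemma_dimlinearsystemT} and adjunction. You merely spell out the standard fact (a degree~$\leq 0$ line bundle with a nonzero section on an integral curve is trivial) and flag the integrality caveat, both of which the paper leaves implicit.
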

\begin{proof}
If $\deg\omega_T|_{C'}=K_T.C'$ is non-positive and $\omega_T|_{C'}\not\cong\cO_{C'}$, then we have $h^0(C',\omega_T|_{C'})=0$.
\end{proof}

Since the curve $C$ satisfies the assumptions of Corollary~\ref{cor_dimCforB.C>0}, we find

\begin{corollary}
The variety $\fix^0(\tau)$ has dimension
\[\dim\fix^0(\tau) = 2(g(D)-g(C)).
\]
\end{corollary}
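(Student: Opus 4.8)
The plan is to obtain the statement purely by bookkeeping, combining the dimension formula for $\fix^0(\tau)$ recorded just before the statement with the two dimension computations already available. From the paragraph defining $\fix^0(\tau)$ we have, for a general member $C_0\in|C|$ with smooth preimage $D_0$,
\[
\dim\fix^0(\tau) \;=\; \dim|C| + \dim\prym(D_0/C_0) \;=\; \dim|C| + \bigl(g(D)-g(C)\bigr),
\]
where the value $\dim\prym(D_0/C_0)=g(D)-g(C)$ comes from Lemma~\ref{integral Prym} (equivalently, from the dimension count following Proposition~\ref{fix_curves_connected}), and the fact that $\fix^0(\tau)$ dominates $|C|$ with general fibre of this dimension was established there via the section $s$. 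So everything reduces to proving $\dim|C| = g(D)-g(C)$.

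To do this I would first check that $C$ satisfies the hypotheses of Corollary~\ref{cor_dimCforB.C>0}. Since $B\in|-2K_T|$, we have $\deg\omega_T|_C = C.K_T = -\tfrac12\,C.B \le 0$; and $\omega_T|_C\not\cong\cO_C$ was already observed in Section~\ref{subsec-rel-prym} (it is precisely what forces the double cover $D_0\to C_0$ to be nontrivial, via the Hodge index argument given there). Corollary~\ref{cor_dimCforB.C>0} then yields
\[
\dim|C| \;=\; g(C) - C.K_T - 1.
\]
Combining this with the genus formula $g(D) = 2g(C) - C.K_T - 1$ from Section~\ref{subsec-rel-prym}, which rearranges to $g(D)-g(C) = g(C) - C.K_T - 1$, gives $\dim|C| = g(D)-g(C)$. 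Substituting into the displayed formula for $\dim\fix^0(\tau)$ produces $\dim\fix^0(\tau) = 2\bigl(g(D)-g(C)\bigr)$.

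I do not anticipate a genuine obstacle: the statement is a direct consequence of Corollary~\ref{cor_dimCforB.C>0}, the adjunction/genus formula, and the dimension of the Prym variety, all of which are already in place. The only point deserving a word of care is that the two contributions $\dim|C|$ and $\dim\prym(D_0/C_0)$ both turn out to equal $g(D)-g(C)$; this coincidence is exactly the arithmetic of the identity $g(D) = 2g(C)-C.K_T-1$, so the proof can be kept to the two or three lines above.
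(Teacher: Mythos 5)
Your argument is correct and is exactly the paper's proof: the paper obtains the corollary by noting that $C$ satisfies the hypotheses of Corollary~\ref{cor_dimCforB.C>0}, so $\dim|C|=g(C)-C.K_T-1=g(D)-g(C)$, and adding this to $\dim\prym(D_0/C_0)=g(D)-g(C)$ in the displayed formula $\dim\fix^0(\tau)=\dim|C|+\dim\prym(D_0/C_0)$. Your extra care in verifying $\deg\omega_T|_C\le 0$ and $\omega_T|_C\not\cong\cO_C$ only makes explicit what the paper leaves implicit.
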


The stratification of $(M_H)_{\sing}$ described in Section~\ref{subsubsec-rel-jac} induces stratifications by smooth symplectic varieties of $\fix(\tau)\cap (M_H)_{\sing}$ and $\fix^0(\tau)\cap (M_H)_{\sing}$.
Indeed, suppose that a polystable sheaf\[F= F_{1,1}^{n_1}\oplus\cdots\oplus F_{1,a_1}^{n_1}\oplus \cdots\oplus F_{r,1}^{n_r}\oplus\cdots\oplus F_{r,a_r}^{n_r}\]
in $\Sigma_{(a_1n_1v_1,\dots,a_rn_rv_r)}$
is fixed by $\tau$. In particular, $\tau(F_{h,k}):=j(i^*F_{h,k})$
is well-defined for all $h,k$.
One checks that $\tau(F_{h,k})$ has the same Mukai vector as $F_{h,k}$.
Consider the birational, bijective morphism
\begin{equation}\label{eq_stratum}
\left(\Sym^{a_1}M_{H,v_1}^{st}\right)_{\reg}\times\cdots\times\left(\Sym^{a_r}M_{H,v_r}^{st}\right)_{\reg} \to \Sigma_{(a_1n_1v_1,\dots,a_rn_rv_r)}.
\end{equation}
The map $F_{h,k}\mapsto j(i^*F_{h,k})$ is a well-defined involution on the $a_k$-fold product $M^{st}_{H,v_k}\times\cdots\times M^{st}_{H,v_k}$ for each $k$,
inducing a component-wise birational symplectic involution $\tau'$ on the left hand side of 
\eqref{eq_stratum}.
Its restriction to $\Sigma_{(a_1n_1v_1,\dots,a_rn_rv_r)}$ is the same as the restriction of $\tau$ on $M_H$ to $\Sigma_{(a_1n_1v_1,\dots,a_rn_rv_r)}$. 
Hence, $\fix(\tau)\cap \Sigma_{(a_1n_1v_1,\dots,a_rn_rv_r)}$ is an open subset of $\fix(\tau')$, which carries a symplectic form.

\medskip
When $B.C=0$, one can use this stratification to show that $\fix(\tau)$ has four irreducible components of dimension equal to $\dim\fix(\tau)=2(g(D)-g(C))$ which all dominate $|C|$ \cite[Lemma~3.2.10]{Sacca}.
When $B.C>0$, it follows from Proposition~\ref{fix_curves_connected} that $\fix^0(\tau)$ is the only irreducible component of $\fix(\tau)$ of maximal dimension, as we show in the following corollary.

\begin{corollary}\label{cor_Fix=connected}
If $B.C>0$, 
then the locus $\fix^0(\tau)$ is the unique irreducible component of $\fix(\tau)$ whose dimension equals $\dim\fix(\tau)=2(g(D)-g(C))$.
\end{corollary}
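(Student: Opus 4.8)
The plan is to reduce the statement to the fibrewise analysis of $\fix(\tau)\to|C|$ together with the connectedness results already established, controlling the contribution of the non-integral locus via the stratification of $(M_H)_{\sing}$. First I would recall that $\fix(\tau)$ is a closed subscheme of $M_H$ with a proper morphism $\supp\colon\fix(\tau)\to|C|$, and that $\dim\fix(\tau)=2(g(D)-g(C))=\dim|C|+\dim\prym(D_0/C_0)$ by the preceding corollaries. Since $\fix^0(\tau)$ dominates $|C|$ with general fibre $\prym(D_0/C_0)$, it is itself of this maximal dimension; the content of the corollary is that no \emph{other} irreducible component can be this large.

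The key step is to stratify $|C|$ according to the nature of $f^*\Gamma$ and bound the fibre dimension of $\fix(\tau)$ over each stratum. Over the open locus $U\subset|C|$ of curves $\Gamma$ with $f^*\Gamma$ integral (and locally planar — which holds since $D$, hence its general deformation in $|D|$, is smooth and the total space is a K3), the fibre of $\supp$ is the compactified Jacobian $\overline{J(f^*\Gamma)}$, and $\tau$ acts as the involution $\tau_0$ of Proposition-Definition~\ref{involution_integral_case}; by Lemma~\ref{integral Prym} together with Proposition~\ref{fix_curves_connected} (applicable because $B.C>0$ forces the cover $f^*\Gamma\to\Gamma$ to be ramified, so the induced cover of normalizations is non-étale), $\fix(\tau_0)$ has its unique top-dimensional component equal to $\overline{\prym(f^*\Gamma/\Gamma)}$, of dimension exactly $g_a(f^*\Gamma)-g_a(\Gamma)=g(D)-g(C)$. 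So over $U$ the fibres of $\fix(\tau)$ have a single component of the expected dimension, and these sweep out precisely (an open dense subset of) $\fix^0(\tau)$. Over the complement $|C|\setminus U$, which has positive codimension, I would use the stratification \eqref{eq_stratum}: a component $Z$ of $\fix(\tau)$ not contained in $\overline{\fix(\tau)|_U}=\fix^0(\tau)$ must lie over $|C|\setminus U$, i.e. $\supp(Z)$ has codimension $\ge 1$ in $|C|$, so it suffices to bound the fibre dimension of $\fix(\tau)$ over such curves by $\dim\prym(D_0/C_0)$. For $F$ supported on a reduced but reducible curve $\Gamma'=f^*\Gamma$, the sheaf lies in a stratum $\Sigma_{(\dots)}$, and $\fix(\tau)\cap\Sigma_{(\dots)}$ is open in $\fix(\tau')$, a fixed locus of a symplectic involution $\tau'$ on a product of symmetric powers of lower-dimensional moduli spaces $M_{H,v_k}$; since each factor $M_{S,H}(v_k)$ has dimension $\le 2g(D)$ with equality only when $\Sigma$ is the whole moduli space, the dimension of $\fix(\tau')$ over a codimension-$c$ locus in $|C|$ drops by at least $c$ from $\dim\fix^0(\tau)$. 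The non-reduced case is handled the same way once one notes the support map still has fibres of dimension $\le g(D)$ there, which again beats $\dim|C|+\dim\prym$ after subtracting the codimension. The hyperbolic-looking bookkeeping here is genuinely the crux: one must check uniformly that for \emph{every} boundary stratum the fibre dimension plus the dimension of its image in $|C|$ stays strictly below $2(g(D)-g(C))$, and this is where $B.C>0$ is used essentially, via Proposition~\ref{fix_curves_connected}, to rule out the four-component phenomenon of the étale case.

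The main obstacle I anticipate is the dimension count over the non-integral locus: unlike the étale case of \cite[Lemma~3.2.10]{Sacca} where the four components come from the disconnectedness of $\fix(-i_0^*)$ on smooth curves, here one must argue that the \emph{only} way to produce a top-dimensional component is the generic one. Concretely, one needs that for $\Gamma$ with $f^*\Gamma$ reducible, the locus of such $\Gamma$ in $|C|$ plus the dimension of $\fix(\tau)$ above it is $<2(g(D)-g(C))$; this is plausible because reducibility is a codimension-$\ge 1$ condition and the Prym-type fibre cannot gain dimension from it, but making it rigorous requires a careful semicontinuity argument for $\dim\fix(\tau)$ as a function on $|C|$, or equivalently a uniform estimate on the fixed loci $\fix(\tau')$ in the strata \eqref{eq_stratum}. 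Once that is in place, the conclusion is immediate: any irreducible component of $\fix(\tau)$ of dimension $2(g(D)-g(C))$ must dominate $|C|$ (else its image has codimension $\ge 1$ and the fibre estimate gives a contradiction), and a dominating component of this dimension has general fibre of dimension $\dim\prym(D_0/C_0)$ over a smooth $C_0$; since $\fix(-i_0^*)=\prym(D_0/C_0)$ is irreducible by Proposition~\ref{fix_curves_connected}, there is room for only one such component, namely $\fix^0(\tau)$.
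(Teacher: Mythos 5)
There is a genuine gap, and it sits exactly where you flag ``the crux'': the uniform bound on the fibre dimension of $\supp$ over the boundary loci is asserted but never proved. Your heuristic --- that over a codimension-$c$ locus of $|C|$ the dimension of $\fix(\tau')$ ``drops by at least $c$'' because the factors $M_{S,H}(v_k)$ have dimension at most $2g(D)$ --- does not bound the fibres of the support map at all; a priori the fibre of $\supp$ over a very singular or non-reduced curve can jump, and the paper explicitly warns that these fibres are ``more complicated.'' The paper closes this gap with a single symplectic observation that your proposal never invokes: for \emph{any} irreducible component $Z$ of $\fix(\tau)$, an open dense subset of $Z$ carries a symplectic form (from Fujiki's Proposition~2.6 if $Z$ meets $(M_H)_{\reg}$, and from the symplectic form on the top stratum of the stratification of $(M_H)_{\sing}$ if $Z$ is contained in the singular locus), and the fibres of $\eta|_Z\colon Z\to\supp(Z)$ are isotropic for it. This gives $\dim Z\le \tfrac12\dim Z+\dim\eta(Z)$, i.e.\ $\dim Z\le 2\dim\eta(Z)$, uniformly and with no case analysis on the singularities of $f^*\Gamma$. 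From this bound the dichotomy is immediate: a component of maximal dimension $2\dim|C|$ must dominate $|C|$, and then Proposition~\ref{fix_curves_connected} (connectedness of $\fix(-i_0^*)$ for a ramified cover, which is where $B.C>0$ enters) forces it to be $\fix^0(\tau)$. Your final paragraph reaches the same conclusion, but only conditionally on the missing fibre estimate.

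A secondary over-reach: over the locus $U$ of curves with integral preimage you claim that $\fix(\tau_0)$ has a \emph{unique} top-dimensional component for every such curve, citing Lemma~\ref{integral Prym} and Proposition~\ref{fix_curves_connected}. Those results do not give this: Lemma~\ref{integral Prym} only computes $\dim\Fix^0(\tau_0)$, Proposition~\ref{fix_curves_connected} only treats smooth curves, and Lemmas~\ref{irr_prym_one_node} and~\ref{irr_prym_transv_B} only treat the mildest nodal degenerations (they are proved for a different purpose, namely the simple-connectedness argument in Section~\ref{sec:pi1}). For an integral curve in $|C|$ with worse singularities no such uniqueness is established in the paper, and none is needed once one has the isotropy bound, since the bound already forces any maximal-dimensional component to dominate $|C|$ and hence to be detected over the \emph{smooth} members of $|C|$. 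I would recommend replacing your stratified fibre count by the isotropy argument.
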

\begin{proof}
Let $Z$ be an irreducible component of $\fix(\tau)$. 
If $Z$ intersects $(M_H)_{\reg}$, then 
by Fujiki
the open subset $Z\cap (M_H)_{\reg}\subset Z$ has a symplectic form, 
with respect to which the general fibre of the restriction of the support map $\eta\colon Z\to\supp(Z)\subset |C|$ 
 is isotropic.
In particular, the dimension of a fibre is at most $(\dim Z)/2$. It follows that
\[\dim Z\leq(\dim Z)/2+\dim\eta(Z)\]
and therefore, $\dim Z\leq 2\dim\eta(Z)$.
Hence, if $\dim Z=\dim\fix^0(\tau)=2\dim|C|$, then $\eta$ is dominant.
By Proposition~\ref{fix_curves_connected},
for a general $C_0\in|C|$, the fibre of $\eta$ over $C_0$ equals the fibre over $C_0$ of $\fix^0(\tau)\to |C|$. We conclude that $Z=\fix^0(\tau)$.
 
Now let $Z$ be an irreducible component of $\fix(\tau)$ which is contained in $(M_H)_{\sing}$.
Then $Z\to |C|$ is not dominant.
As explained above, the stratification of $(M_H)_{\sing}$ induces a stratification of $Z$ into locally closed subsets with a symplectic form. 
In particular, the stratum of highest dimension induces a symplectic form on an open subset of $Z$, 
so that the general fibre of $\eta \colon Z\to\supp(Z)\subset|C|$ is isotropic.
As above, this implies that
\[\dim Z\leq 2\dim\eta(Z)<2\dim|C|=\dim\fix(\tau). \qedhere\]
\end{proof}

We compactify $\fix^0(\tau)$ by taking its closure $\overline{\fix^0(\tau)}$ in $M_H$. Finally, we define the relative Prym variety by normalizing $\overline{\fix^0(\tau)}$.

\begin{definition}\label{def_Prym}
Let $S$ be a very general K3 surface carrying an anti-symplectic involution $i$, and let $C$ be a smooth curve on the quotient surface $T$ with $C^2>0$, 
such that $D:=f^{-1}C$ is smooth.
Let $H$ be a polarization on $S$.
The \emph{relative Prym variety} $\mathcal{P}_H$ associated to these data is the normalization of the closure
$\overline{\fix^0(\tau)}$
of $\fix^0(\tau)$ in $M_H$.
\end{definition}

If \(H\) is \(i\)-invariant, then the birational involution $i^*$ of $M_H$ is biregular by \cite[Lemma~3.6]{ASF} (see also \cite[Proposition~3.1]{OW}). Furthermore, if \(H\) is a multiple of \(D\), then the anti-symplectic involution \(j\) defined on $M_H$ is biregular \cite[Lemma 3.2.8]{Matteini}. 
It follows that if $H=D$, then \( \tau =j \circ i^{*} \) is a biregular symplectic involution on \(M_D\). 
Hence $\fix(\tau)\subset M_D$ is closed, and, as a consequence,  the relative Prym variety \(\mathcal{P}_D\) is simply the normalization of
$\fix^0(\tau)$ in \(M_D\).

\begin{remark}
\label{remark_hyperell}
Note that we allow $i^*= j$ on $M_H$, in which case the relative Prym variety equals $M_H$. 
This happens when the linear system $|D|$ is hyperelliptic and $i$ induces the hyperelliptic involution on $D$. In fact, these conditions are necessary:
when $i^*=j$, the restriction $i_{|D}$ to an invariant smooth curve $D$ satisfies $(i_{|D})^*=-1$ on $J(D)$, hence $D$ is hyperelliptic by the Torelli theorem for curves. 
For an example,
take the main invariant of $(S,i)$ to be $(1,1,1)$. Then $S$ is a generic double cover of $\mathbb{P}^2$ branched along a smooth sextic $B$.
    When $C\in |\cO_{\mathbb{P}^2}(1)|$, the curve $D$ is hyperelliptic and $\varphi_D\colon S\rightarrow\mathbb{P}^2$ coincides with the quotient map $f$, hence $i^*=j$.

    It can happen, however, that $|D|$ is hyperelliptic but $i$ does not induce the hyperelliptic involution on $D$, so $i^*\neq j$. We thank Alice Garbagnati for suggesting the following example. Suppose that $i$ fixes a hyperelliptic curve $D$ of genus two, which holds when the main invariant of $i$ is $(r,18-r,\delta)\neq (10,8,0)$. Then the hyperelliptic involution induced by the degree two map $\varphi_D\colon S\rightarrow\mathbb{P}^2$ is different from $i$, since $i_{|D}=\id_D$. Therefore, we have $i^*\neq j$ on $M_H$. 

    Note that the case when $|D|$ is a hyperelliptic linear system is not covered by Theorems~\ref{thm_psv}--\ref{thm_isv}, since we need $D$ to be very ample to verify the criterion (Proposition \ref{prop_condition_for_psv}) for being an irreducible symplectic variety.
    However, similarly to the case of Enriques involutions studied in \cite{ASF},  we expect that in this case the relative Prym variety $\cP_H$ is birational to a moduli space of semistable sheaves on a K3 surface. 
\end{remark}

The variety $\mathcal{P}_H$ is a normal, compact, irreducible variety of dimension $2(g(D)-g(C))$, and we will see in Section \ref{sec:singular-definitions} that the smooth locus has a holomorphic $2$-form $\sigma$ which is symplectic on an open subset of $(\cP_H)_{\reg}$.
The support map induces a fibration
\[\eta\colon\mathcal{P}_H\to |C|\] 
whose general fibre is Lagrangian.
We will discuss when $\cP_H$ is a symplectic variety in the next section.

\begin{remark}\label{remark_sectionsupportP}
Recall that $\fix^0(\tau)$ is defined as the irreducible component of $\fix(\tau)$ containing the image of $|C|$ under the rational section \eqref{zerosection_supportmap} of the support map $\supp\colon M_H\to |D|$.
This rational section is defined on the locus $V'\subset|D|$ of integral curves and sends $V'$ into $(M_H)_{\reg}$. 
Hence, by \cite[Proposition 2.6]{Fujiki}, it sends the locus $U'\subset |C|\subset |D|$ of curves with integral preimage into $\fix^0(\tau)_{\reg}$.
Therefore, the induced rational section $s\colon |C|\dashrightarrow \cP_H$ of $\eta$ is defined on $U'$ as well, and sends $U'$ into $(\cP_H)_{\reg}$.
\end{remark}

\section{Symplectic varieties}\label{sec_sv}

In this section we first review some basic definitions and results on singular symplectic varieties. Then we show that the relative Prym varieties introduced in Section \ref{subsec-rel-prym} are symplectic varieties, at least when the involution $\tau$ (cf. Section  \ref{sec:singular-definitions}) is regular, and that they admit a Lagrangian fibration. We then prove 
a criterion for symplectic varieties to be irreducible symplectic (Proposition \ref{prop_keyprop}) which, together with Proposition \ref{prop_dominantMap} and the results of Section \ref{sec:pi1}, will allow us to prove that in many cases the relative Prym varieties are irreducible symplectic.

\subsection{Singular symplectic varieties}\label{sec:singular-definitions}

For a more thorough introduction to the content of this section and for examples we recommend \cite{Perego}, and references therein.

The first definition, which was introduced by Beauville \cite{Beauville2000}, is that of symplectic variety. 
Let $X$ be a normal variety. Denote by $X_{\reg}$ its regular locus, with its open embedding $\iota \colon X_{\reg} \to X$. For every integer $0 \leq p \leq \dim X$, set $\Omega^{[p]}_X:=\iota_*\Omega^p_{X_{\reg}} \cong \left(\bigwedge\nolimits^p\Omega_X\right)^{**}$. By definition, a reflexive $p$-form on $X$ is a global section of $\Omega^{[p]}_X$. 

\begin{definition}{(\cite[Definition 2.10]{Perego})}
\label{def_symplform}
Let $X$ be a normal variety. A \emph{symplectic form} on $X$ is a reflexive $2$-form $\sigma$, which is non-degenerate at every point of the regular locus $X_{\reg}$ of $X$.
\end{definition}

\begin{definition}{(\cite[ Definition 1.1]{Beauville2000}; cf also \cite[Definition 2.10]{Perego})} \label{def_symplvariety}
A variety $X$ is called a \emph{symplectic variety} if:
\begin{enumerate}[(i),itemsep=0pt]
    \item $X$ is normal;
    \item $X$ has a symplectic form $\sigma$;
    \item\label{condition_res_sing} 
    for every resolution of singularities $\rho\colon\widetilde{X}\to X$, the holomorphic symplectic form $\sigma_{\reg}:=\sigma|_{X_{\reg}}$ 
    extends to a holomorphic 2-form on $\widetilde{X}$.
\end{enumerate}
\end{definition} 

Note that it is enough to check condition \ref{condition_res_sing} for one resolution, since any two resolutions are dominated by a common one.
A resolution $\rho\colon \widetilde{X}\to X$ as in \ref{condition_res_sing} is called a \emph{symplectic resolution} if $\sigma_{\reg}$ extends to a holomorphic \emph{symplectic} form on $\widetilde{X}$.

Note also, that a symplectic variety has trivial canonical bundle and canonical singularities. In fact, it has rational Gorenstein singularities \cite[Prop. 1.3]{Beauville2000}.

\medskip
Now we introduce primitive symplectic varieties (see \cite[Definition 3.1]{BaLe} for the definition in the setting of K\"ahler, non-projective spaces).

\begin{definition}{(\cite[Definition 1(3)]{Schwald})}\label{def_psv}
 A \emph{primitive symplectic variety} is a normal projective variety $X$ such that $H^1(X, \cO_X)=0$ and $H^0(X,\Omega^{[2]}_X)$ is generated by a symplectic form $\sigma$.
\end{definition}

\noindent The above notion is the most general which allows for a deformation and moduli theory as in the smooth case, see \cite{BaLe}.

Finally, we give the definition of irreducible symplectic variety.
Recall that if $X$ and $Y$ are normal irreducible projective varieties, then a \emph{finite quasi-\'etale morphism} $g \colon X \to Y$ is a finite morphism which is \'etale in codimension one (see \cite[Definition 2.3]{grebharmonic}). The pullback morphism of forms on the smooth locus induces a morphism of reflexive sheaves $g^*\Omega_X^{[p]} \to \Omega_Y^{[p]}$, and thus a morphism $g^{[*]} \colon H^0(X, \Omega_X^{[p]}) \to H^0(Y, \Omega_Y^{[p]})$ on reflexive forms called reflexive pullback.

\begin{definition}[{\cite[ Definition 2.11]{Perego}}]
\label{def_irreduciblesymplectic}
A symplectic variety $X$ with symplectic form $\sigma$ is an \emph{irreducible symplectic variety} if $X$ is projective and for every finite quasi-\'etale morphism $g\colon Y\to X$, the exterior algebra of reflexive forms on $Y$ is spanned by $g^{[*]}\sigma$. 
\end{definition}

As an example, we show that the moduli spaces $M_H$ considered in this paper (see \S \ref{subsubsec-rel-jac}) are symplectic varieties.

\begin{proposition}\label{prop-M-symplectic-variety}
The moduli space $M_ H$ is a symplectic variety. 
\end{proposition}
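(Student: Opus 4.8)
The plan is to verify the three conditions of Definition \ref{def_symplvariety} for $X = M_H$. Normality (condition (i)) is already recorded in the excerpt: by \cite[Theorem 5.1]{AS-update}, $M_H$ is normal. For condition (ii), the existence of a symplectic form, I would invoke Mukai's construction: the regular locus $(M_H)_{\reg}$ contains the stable locus $M_H^{st}$ and carries a closed holomorphic $2$-form $\sigma_{\reg}$ which is non-degenerate there (by \cite{Mukai}, as quoted after \eqref{eq_supportmap}). Taking the reflexive extension $\sigma := \iota_*\sigma_{\reg} \in H^0(M_H, \Omega^{[2]}_{M_H})$ gives a symplectic form in the sense of Definition \ref{def_symplform}, since non-degeneracy is a condition only on the regular locus. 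So the only real content is condition (iii): that $\sigma_{\reg}$ extends holomorphically across the exceptional divisor of any resolution $\rho\colon \widetilde{M_H}\to M_H$.

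For condition (iii) I would argue via the codimension of the singular locus combined with a local-analytic model for the singularities. The key input is the stratification of $(M_H)_{\sing}$ recalled at the end of \S\ref{subsubsec-rel-jac}: each stratum $\Sigma_{(a_1n_1v_1,\dots,a_rn_rv_r)}$ is covered birationally by a product of symmetric powers $\Sym^{a_1}M_{H,v_1}\times\cdots\times\Sym^{a_r}M_{H,v_r}$, whose smooth locus carries a symplectic form. One then estimates the dimension of each stratum and checks that the singular locus has codimension at least $2$ (in fact the deepest relevant strata have even higher codimension unless $v$ is "as non-primitive as possible"), so that already $M_H$ has a $2$-form in codimension $1$; more importantly, near a generic point of the top-dimensional stratum the singularity of $M_H$ is, up to smooth factors, the symplectic singularity coming from a symmetric product $\Sym^k$ of a symplectic surface, or more generally an ADE-type / symplectic quotient singularity. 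Such singularities are well known to have symplectic (hence crepant, hence canonical) resolutions on which the form extends — this is the content of the general theory going back to Beauville \cite{Beauville2000} and Kaledin \cite{Kaledin}. Concretely, I would either (a) quote that $M_H$ is étale-locally (near the generic point of each stratum) a product of a smooth factor with a symplectic quotient singularity $V/G$ for $V$ a symplectic vector space and $G$ finite, and invoke \cite[Prop.~2.4]{Beauville2000} that the symplectic form on $V/G$ extends to any resolution; or (b) use the result of Kaledin–Lehn–Sorger type that moduli of sheaves on a K3 have symplectic singularities, reducing again to the local statement.

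The main obstacle I anticipate is \emph{not} the non-degeneracy or closedness of the form — those are immediate from Mukai — but rather making condition (iii) rigorous at the \emph{non-reduced} points of $|D|$, i.e.\ over curves in the linear system that are not integral. Over reduced (even reducible) curves the local structure of $M_H$ as a relative compactified Jacobian is well understood, and the stratification argument applies cleanly; but the fibres of $\supp$ over non-reduced curves are, as the excerpt itself warns, "more complicated," and the local model for $M_H$ there is less transparent. The cleanest route around this is to avoid an explicit local analysis altogether and instead cite the now-standard fact (e.g.\ via \cite{AS-update}, \cite{PR}, or the general machinery of \cite{Kaledin}) that $M_{S,H}(v)$ for a Mukai vector $v=(0,[D],\chi)$ on a K3 surface is a symplectic variety in Beauville's sense for \emph{any} polarization $H$ and \emph{any} such $v$; the normality statement quoted from \cite[Theorem 5.1]{AS-update} is precisely of this flavour, and I would expect the symplectic-singularities statement to be available in the same circle of references. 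Thus the proof reduces to: normality from \cite{AS-update}; the form from \cite{Mukai}; and the extension property from the fact that $M_H$ has symplectic singularities, which in turn follows from the local identification of its singularities with symplectic quotient singularities via the stratification of \S\ref{subsubsec-rel-jac}.
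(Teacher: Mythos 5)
Your treatment of conditions (i) and (ii) matches the paper: normality from \cite[Theorem 5.1]{AS-update} and the Mukai form on $(M_H)_{\reg}$, extended reflexively. The real divergence, and the gap, is in condition (iii). Your proposed local model is not correct: near a generic point of a singular stratum, $M_H$ is \'etale-locally a GIT quotient of a quadratic cone in $\Ext^1(F,F)$ by the \emph{reductive} automorphism group of the polystable sheaf (e.g.\ $\prod\mathrm{GL}(n_i)$), not a quotient of a symplectic vector space by a \emph{finite} group. Already in the O'Grady case $v=2v_0$ the generic singularity is, by Lehn--Sorger, a product of a smooth factor with the closure of a nilpotent orbit --- not an ADE or $\Sym^k$-type finite quotient singularity --- so invoking \cite[Prop.~2.4]{Beauville2000} does not apply. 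The stratification of \S\ref{subsubsec-rel-jac} gives symplectic forms on the strata, but by itself it says nothing about extending the form across the exceptional locus of a resolution, which is the whole content of (iii).

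Your fallback (b) --- ``cite the standard fact for any $v$ and any $H$'' --- is precisely the point that needs an argument, because the references you have in mind ([PR], Lehn--Sorger, etc.) establish the symplectic-variety property for $v$-\emph{generic} polarizations. The paper's proof supplies exactly the missing reduction: it chooses a $v$-generic polarization $H'$ in a chamber adjacent to the wall containing $H$ and uses the birational morphism $\phi\colon M_{H'}\to M_H$, $F\mapsto \gr_H F$ (from \cite[Lemma~4.1]{Zowislok}), which is an isomorphism over $M_H^{st}$. Then $\phi^*\sigma$ agrees with the symplectic form $\sigma'$ on $(M_{H'})_{\reg}$, and since $M_{H'}$ is already known to be a symplectic variety, for any resolution $\rho$ of $M_{H'}$ the form $\rho^*\phi^*\sigma$ extends; as $\phi\circ\rho$ is a resolution of $M_H$, condition (iii) follows. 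Without this wall-crossing step (or a correct local analysis at strictly semistable points for non-generic $H$), your argument does not close. Incidentally, your worry about non-reduced support curves is a red herring: the relevant locus for (iii) is the strictly semistable locus, not the non-reduced fibres of the support map.
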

\begin{proof}
This is well-known when $H$ is $v$-generic: if $v$ is primitive, then $M_H$ is  an irreducible holomorphic symplectic manifold; if $v$ is not primitive, then $M_H$ is either a symplectic variety which admits a symplectic resolution of O'Grady 10 type (\cite{LehnSorger}), or it is an irreducible symplectic variety (see \cite{PR} and references therein). 

If $H$ is not $v$-generic, we check the conditions in Definition \ref{def_symplvariety}.
By assumption the linear system $|D|$ contains an integral curve $D_0$, thus $M_H^{st}$ is non-empty, since the pushforward of $\cO_{D_0}$ to $S$ is stable with Mukai vector $v$. Moreover, by \cite[Theorem 5.1]{AS-update}, $M_H$ is normal. Let $\sigma$ be the symplectic form on $M_H$.

Let $H'$ be a $v$-generic polarization that lies in a $v$-chamber adjacent to the wall that $H$ lies in. Then by \cite[Lemma~4.1]{Zowislok} (see also \cite[Proposition 2.5]{AS}), there exists a birational morphism $\phi\colon M_{H'}\rightarrow M_H$ sending $F$ to $\gr_{H}F$, the direct sum of the graded pieces of its Jordan--Hölder filtration with respect to $H$, 
which is an isomorphism over $M_H^{st}$.
It follows that $\phi^*\sigma$ is symplectic on $M_{H'}$,
and where it is defined, it is equal to the symplectic form $\sigma'$ on $(M_{H'})_{\reg}$.
If $v$ is primitive, $\phi$ is in fact a symplectic resolution of singularities of $M_H$. If $v$ is not primitive, consider any resolution $\rho \colon \widetilde{M_{H'}} \to M_{H'}$: since $M_{H'}$ is a symplectic variety, $\rho^*\sigma'=\rho^*\phi^*\sigma$ extends to a holomorphic 2-form on $\widetilde{M_{H'}}$.
As $\phi\circ \rho\colon\widetilde{M_{H'}}\to M_H$ is a resolution of $M_H$, this completes the proof.
\end{proof}

As it is done in \cite{B} and \cite{Sawon} for smooth moduli spaces $M_H$, one shows that the support morphism $\supp$ defined in \eqref{eq_supportmap} is a Lagrangian fibration on $M_H$, in the sense of the following definition:

\begin{definition}[{\cite[Definition 1.2]{Matsushita-higher-direct}}]\label{def_lagfib}
Let $X$ be a symplectic variety with symplectic form $\sigma$. 
Let $B$ be a normal variety. A proper surjective morphism $g: X\rightarrow B$  with connected fibers is said to be a \emph{Lagrangian fibration} if a general fibre $F$ of $g$ is a Lagrangian subvariety of $X$ with respect to $\sigma$. That is, $\dim F =\frac{1}{2}\dim X$ and the restriction of $\sigma$ to $F\cap X_{\reg}$ identically vanishes.
\end{definition}

In fact, Matsushita has shown that this is the only possible kind of fibration when the source is an irreducible holomorphic symplectic manifold, and his theorem has been generalized to primitive symplectic varieties in \cite{Kamenova-Lehn}.

\medskip
We will now see that the singular locus of a symplectic variety $M$ has a generically non-degenerate holomorphic 2-form, and this 2-form is preserved by any symplectic automorphism of $M$.
We would like to thank Ch.~Lehn for directing us to Step 2 of \cite{Kaledin}, which we use in the following proposition.

\begin{proposition}[{\cite{Kaledin}}] \label{prop_kaledin}
    Let $M$ be a symplectic variety with singular locus $\Sigma$. The symplectic form on $M$ induces a holomorphic 2-form on an open subset of $\Sigma$ which is generically non-degenerate. 
    More precisely, let $\pi\colon \widetilde{M} \to M$ be    
    a projective resolution and let $\sigma_{\widetilde{M}}$ be the holomorphic 2-form on $\widetilde M$ extending the symplectic form on $M$. 
    There exists a dense open subset $U\subset\Sigma$ and a symplectic form $\sigma_U$ on $U$ such that, denoting $V:=\pi^{-1}(U)_{\reg}$, we have ${(\pi^*\sigma_U)}_{|V}={\sigma_{\widetilde{M}}}_{|V}$.
    Moreover, the symplectic form $\sigma_U$ is uniquely defined.
\end{proposition}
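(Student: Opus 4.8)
The plan is to reduce the statement to a local analysis on a projective resolution, following Kaledin's work on the stratification of symplectic singularities. First I would recall the setup: $M$ is a symplectic variety with symplectic form $\sigma$, singular locus $\Sigma = M_{\sing}$, and $\pi\colon \widetilde M \to M$ a projective resolution with $\sigma_{\widetilde M}$ the (degenerate, in general) holomorphic $2$-form on $\widetilde M$ extending $\sigma|_{M_{\reg}}$ — the existence of such an extension is exactly condition (iii) of Definition \ref{def_symplvariety}. Because $M$ has rational Gorenstein (indeed canonical) singularities, $\pi_*\Omega^2_{\widetilde M} = \Omega^{[2]}_M$, and the kernel of $\sigma_{\widetilde M}$ as a map $T_{\widetilde M} \to \Omega^1_{\widetilde M}$ is a foliation-type distribution whose behavior over $\Sigma$ is what controls everything.

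The key step is to appeal to Kaledin's analysis (specifically Step 2 of \cite{Kaledin}), which shows that near a general point $x$ of any irreducible component of $\Sigma$, the symplectic singularity is, formally or analytically locally, a product of a smooth symplectic factor with a cone over a lower-dimensional symplectic variety; concretely, there is a dense open $U \subset \Sigma$ (which we may take inside a single component, or stratify component by component and take the union) along which $\Sigma$ is smooth and along which the restriction of the degenerate form $\sigma_{\widetilde M}$ to the resolution descends. Precisely, over $V := \pi^{-1}(U)_{\reg}$ the null-foliation of $\sigma_{\widetilde M}$ is tangent to the fibers of $\pi|_V$ (this is where one uses that generically along $\Sigma$ the resolution looks like blowing down a linear-space bundle, so the $2$-form is pulled back from the base), and hence there is a well-defined holomorphic $2$-form $\sigma_U$ on $U$ with $(\pi^*\sigma_U)|_V = (\sigma_{\widetilde M})|_V$. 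Non-degeneracy of $\sigma_U$ at the general point of $U$ follows because the rank of $\sigma_{\widetilde M}$ at a general point of $V$ equals $\dim \widetilde M$ minus the fiber dimension of $\pi$ there, which equals $\dim U$; equivalently, the local product structure exhibits $U$ (shrunk if necessary) as carrying the symplectic factor.

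For uniqueness of $\sigma_U$: if $\sigma_U$ and $\sigma_U'$ both satisfy $(\pi^*(-))|_V = (\sigma_{\widetilde M})|_V$, then $\pi^*(\sigma_U - \sigma_U')$ vanishes on the dense open $V \subset \pi^{-1}(U)$, hence $\sigma_U - \sigma_U'$ vanishes on a dense open of $U$, hence identically since $U$ is reduced and $\pi|_V$ is dominant onto $U$; more robustly, $\pi$ restricted to $V$ is surjective onto a dense open of $U$, so $\pi^*$ on $2$-forms over that open is injective. I would also remark that the open set $U$ can be enlarged to be independent of the chosen resolution, since any two resolutions are dominated by a third and the construction is compatible with further blow-ups over the smooth locus of $\Sigma$.

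The main obstacle I anticipate is making the descent of $\sigma_{\widetilde M}$ along $\pi|_V$ rigorous without reproving Kaledin's local structure theorem: one must know that the null-distribution of $\sigma_{\widetilde M}$ is integrable and coincides with the relative tangent sheaf $T_{V/U}$ generically, which is not formal — it relies on the fact that a general point of a symplectic singularity has, in the analytic or formal category, a neighborhood of the form $(\text{smooth symplectic}) \times (\text{cone})$, so that a resolution contracts only the cone directions and the form is constant in the smooth directions. I would handle this by citing \cite[\S3, especially Step 2]{Kaledin} for the stratification and the local product structure, and then only carry out the short sheaf-theoretic argument (existence and uniqueness of the descended form, and the rank count for non-degeneracy) in detail; the genericity statements about $B.C$-type transversality play no role here, this being a purely local fact about symplectic singularities.
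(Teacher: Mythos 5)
Your proposal reaches the right statement and, like the paper, ultimately defers the hard local input to \cite{Kaledin}; but the route you take for the existence of the descended form differs from the paper's, and the paper's is both shorter and more robust. The paper does not invoke the null-foliation of $\sigma_{\widetilde{M}}$ or Kaledin's formal product decomposition at all for existence: it simply observes that, since $M$ has canonical (klt, rational) singularities, the fibers of any projective resolution $\pi\colon\widetilde M\to M$ are rationally connected by \cite{HaconMcKernan} (see also Lemma~2.9 of \cite{Kaledin}), so a holomorphic $2$-form on $\pi^{-1}(U)$ descends generically to $U$; this defines $\sigma_U$, and only its generic non-degeneracy is then quoted from Step~2 on p.~145 of \cite{Kaledin}. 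Uniqueness is immediate from the defining property, essentially as you argue. The caveat with your version is the step you yourself flag: identifying the null-distribution of $\sigma_{\widetilde M}$ generically with $T_{V/U}$, and more fundamentally making the \emph{given} resolution interact with Kaledin's formal/analytic product structure (which is a statement about $M$, not about an arbitrary $\pi$), is not something you get for free by citing Step~2; your heuristic that "the resolution blows down a linear-space bundle" is not part of what Kaledin proves. The rational-connectedness descent bypasses exactly this issue, since it needs nothing about how $\pi$ relates to the local product, and it also makes your rank count unnecessary (non-degeneracy is then a single citation rather than a consequence of a fiber-dimension computation that is itself equivalent to what must be proved). So: correct in substance and in what it attributes to Kaledin, but if you write it up you should either replace the null-foliation descent by the Hacon--McKernan argument, or supply a genuine argument that the chosen resolution is compatible with the local product structure over a dense open subset of each stratum.
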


\begin{proof}
    The existence of the holomorphic 2-form on $V$ follows from the fact that the fibers of $\widetilde M \to M$ are rationally connected by \cite{HaconMcKernan} (see also Lemma 2.9 of \cite{Kaledin}). The fact that this 2-form is non-degenerate follows from Step 2 on pg. 145 of \cite{Kaledin}. Uniqueness follows from the definition of $\sigma_U$.
\end{proof}

\begin{remark} \label{rmk_kaledin}
    Note that the proof of Proposition~\ref{prop_kaledin} applies to any stratum of the singular locus of $M$.
    We will use this in the proof of Proposition~\ref{prop:fixed-locus-symplectic-variety} below. 
    Note also that Kaledin's result is stronger than what is stated above, since he shows that the singular locus of $M$ is stratified in smooth strata with holomorphic symplectic forms. However, for Proposition~\ref{prop:fixed-locus-symplectic-variety} we do not need the full result, but only that the holomorphic 2-form is generically non-degenerate.
\end{remark}

\begin{corollary} \label{cor_kaledin}
    Let $M$ be a symplectic variety with singular locus $\Sigma$. Suppose $\iota\colon M \to M$ is a symplectic automorphism 
    and let $\iota_\Sigma\colon \Sigma \to \Sigma$ be the induced automorphism on $\Sigma$. Then $\iota_\Sigma$ preserves the holomorphic $2$-form of Proposition \ref{prop_kaledin}.
\end{corollary}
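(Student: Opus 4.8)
The plan is to deduce this directly from the uniqueness clause in Proposition~\ref{prop_kaledin}. First I would set up notation: let $\sigma_M$ be the symplectic form on $M$, and let $\sigma_U$ be the holomorphic $2$-form on the dense open $U \subset \Sigma$ furnished by Proposition~\ref{prop_kaledin}, characterized by the compatibility ${(\pi^*\sigma_U)}_{|V} = {\sigma_{\widetilde M}}_{|V}$ for a projective resolution $\pi \colon \widetilde M \to M$, where $V = \pi^{-1}(U)_{\reg}$ and $\sigma_{\widetilde M}$ is the (holomorphic) $2$-form extending $\sigma_M$. Since $\iota$ is an automorphism preserving $\Sigma$, its restriction $\iota_\Sigma$ is an automorphism of $\Sigma$, and $\iota_\Sigma(U) \cap U$ is again a dense open subset of $\Sigma$; shrinking $U$ if necessary (which does not affect $\sigma_U$, again by uniqueness), I may assume $\iota_\Sigma(U) = U$.

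The key step is to observe that $\iota_\Sigma^* \sigma_U$ also satisfies the defining compatibility of Proposition~\ref{prop_kaledin}, so uniqueness forces $\iota_\Sigma^* \sigma_U = \sigma_U$. To make this precise I would lift $\iota$ to the resolution: by functoriality of resolutions (or by passing to a common resolution dominating both $\widetilde M$ and $\iota^* \widetilde M$, which I am free to do since the conclusion of Proposition~\ref{prop_kaledin} is independent of the chosen resolution), there is a birational map $\widetilde\iota \colon \widetilde M \dashrightarrow \widetilde M$ lifting $\iota$, regular over a big open subset, and in particular regular over $V$ after shrinking $U$. Because $\iota$ is symplectic, $\iota^* \sigma_M = \sigma_M$ on $M_{\reg}$, and hence $\widetilde\iota^* \sigma_{\widetilde M} = \sigma_{\widetilde M}$ as holomorphic $2$-forms on $\widetilde M$ (two holomorphic forms agreeing on a dense open agree everywhere). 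Now pulling back the compatibility relation by $\widetilde\iota$ and using $\pi \circ \widetilde\iota = \iota \circ \pi$ gives $\pi^*(\iota_\Sigma^* \sigma_U)_{|V} = \widetilde\iota^*(\pi^* \sigma_U)_{|V} = \widetilde\iota^*(\sigma_{\widetilde M})_{|V} = (\sigma_{\widetilde M})_{|V} = (\pi^* \sigma_U)_{|V}$. Since $\pi|_V$ is dominant onto $U$, this yields $\iota_\Sigma^* \sigma_U = \sigma_U$ on $U$, hence on all of $\Sigma$ by continuation, which is the assertion.

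The main obstacle is purely bookkeeping about the resolution: one must ensure that a single resolution can be chosen so that $\widetilde\iota$ is defined on a large enough locus (in particular on $V$) and so that $\pi \circ \widetilde\iota = \iota \circ \pi$ holds there. This is handled by replacing $\widetilde M$ with a resolution of the closure of the graph of $\widetilde\iota$ (equivalently, a common resolution of $\widetilde M$ and its image under $\iota$), which is legitimate precisely because Proposition~\ref{prop_kaledin} and Remark~\ref{rmk_kaledin} guarantee that $\sigma_U$ does not depend on the chosen resolution. No delicate analysis of the singularities of $\Sigma$ is needed; indeed, as noted in Remark~\ref{rmk_kaledin}, one only uses that $\sigma_U$ is the unique holomorphic $2$-form on $U$ pulling back to $\sigma_{\widetilde M}$ on $V$.
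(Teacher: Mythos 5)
Your proof is correct and follows essentially the same route as the paper, whose entire argument is that the claim ``follows from the uniqueness of the holomorphic form in Proposition~\ref{prop_kaledin}''; you have simply spelled out that uniqueness argument in detail (lifting $\iota$ to a resolution, using $\widetilde\iota^*\sigma_{\widetilde M}=\sigma_{\widetilde M}$, and invoking the defining compatibility on $V$). The bookkeeping about equivariant or common resolutions and shrinking $U$ is handled adequately, so no further changes are needed.
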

\begin{proof}
    This follows from the uniqueness of the holomorphic form in Proposition \ref{prop_kaledin}.
\end{proof}

Next, we show that the normalization of the fixed locus of a symplectic finite order automorphism of a symplectic variety is itself a symplectic variety. 

\begin{proposition}\label{prop:fixed-locus-symplectic-variety}
Let $\iota$ be a 
symplectic automorphism of finite order acting on a symplectic variety $M$ and let $Z\subset\fix(\iota)$ be an irreducible component of its fixed locus.
Suppose that $Z$ is not contained in the singular locus $\Sigma$ of $M$. Then the normalization $\mathcal{Z}$ of  $Z$ is a symplectic variety.
\end{proposition}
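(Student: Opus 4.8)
The plan is to reduce to a local statement near the fixed locus and then to invoke Cartan's linearization lemma together with Proposition~\ref{prop_kaledin}. First I would recall the three conditions in Definition~\ref{def_symplvariety} that must be verified for $\mathcal{Z}$: normality, the existence of a symplectic form, and the extension property for resolutions. Normality is automatic since $\mathcal{Z}$ is by definition the normalization of $Z$. So the content is in producing a symplectic form on $\mathcal{Z}_{\reg}$ and checking that it extends to any resolution.

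For the symplectic form, I would argue as follows. Let $M^{\circ}\subset M$ be the smooth locus and let $Z^{\circ}=Z\cap M^{\circ}$, which by hypothesis is a dense open subset of $Z$ (since $Z\not\subset\Sigma$) and is smooth, being the fixed locus of a finite-order automorphism acting on the smooth symplectic manifold $M^{\circ}$ — here one uses that the fixed locus of a finite group acting on a manifold is smooth, and that $Z^{\circ}$ is a union of connected components of $\fix(\iota|_{M^{\circ}})$. Since $\iota$ is symplectic, at each point $z\in Z^{\circ}$ the tangent space $T_zM^{\circ}$ decomposes $\iota$-equivariantly as $T_zZ^{\circ}\oplus N_z$ (invariant part plus its symplectic complement), and because $\iota^*\sigma=\sigma$ the form $\sigma$ restricts to a non-degenerate $2$-form on $T_zZ^{\circ}$ — indeed $T_zZ^{\circ}$ and $N_z$ are $\sigma$-orthogonal, so $\sigma|_{T_zZ^{\circ}}$ is non-degenerate. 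Thus $\sigma|_{Z^{\circ}}$ is a holomorphic symplectic form on $Z^{\circ}$. Pulling back along the normalization map (which is an isomorphism over $Z^{\circ}$, or at least over a dense open of it) and taking the reflexive hull, one obtains a reflexive $2$-form $\sigma_{\mathcal Z}$ on $\mathcal Z$ that is non-degenerate on a dense open subset of $\mathcal Z_{\reg}$; to get non-degeneracy on all of $\mathcal Z_{\reg}$ I would need to control the behaviour of $\sigma_{\mathcal Z}$ at points of $\mathcal Z_{\reg}$ lying over $Z\cap\Sigma$, which is where Proposition~\ref{prop_kaledin} and Remark~\ref{rmk_kaledin} enter: the symplectic form of $M$ induces a generically non-degenerate $2$-form on each stratum of $\Sigma$, and $Z\cap\Sigma$ is a union of (closures of) such strata intersected with fixed loci, so the form extends holomorphically and remains symplectic there after normalization.

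For the resolution/extension condition, I would take a resolution $\rho\colon\widetilde{\mathcal Z}\to\mathcal Z$ and argue that $\rho^*\sigma_{\mathcal Z}$ extends to a holomorphic $2$-form on $\widetilde{\mathcal Z}$. The cleanest route is to realize $\widetilde{\mathcal Z}$ (after further blowing up) inside an equivariant resolution $\pi\colon\widetilde M\to M$: by \cite{HaconMcKernan} the symplectic form of $M$ extends to a holomorphic $2$-form $\sigma_{\widetilde M}$ on $\widetilde M$, one can choose $\pi$ to be $\iota$-equivariant (equivariant resolution of singularities), the strict transform of $Z$ together with $\fix$ of the lifted automorphism gives a smooth variety mapping to $\mathcal Z$, and the restriction of $\sigma_{\widetilde M}$ to it is a holomorphic $2$-form restricting to $\sigma_{\mathcal Z}$ on the regular locus; comparing with an arbitrary resolution via a common dominating one finishes the argument. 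Alternatively — and this may be shorter — one invokes the fact that a normal variety with trivial canonical class that is dominated by such an equivariant picture automatically has canonical (rational Gorenstein) singularities, hence the extension is automatic by \cite[Prop.~1.3]{Beauville2000}-type reasoning together with the reflexive-forms extension theorem.

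The main obstacle I anticipate is controlling the form along $Z\cap\Sigma$: a priori $\mathcal Z_{\reg}$ can meet the preimage of $\Sigma$, and one must show $\sigma_{\mathcal Z}$ stays non-degenerate there rather than merely holomorphic. This is exactly what Kaledin's stratification (Proposition~\ref{prop_kaledin} and Remark~\ref{rmk_kaledin}) is designed to handle — each stratum of $\Sigma$ is itself smooth symplectic, the automorphism $\iota$ permutes strata and acts symplectically on each (Corollary~\ref{cor_kaledin}), so the same tangent-space decomposition argument applies stratum by stratum and patches to give non-degeneracy of $\sigma_{\mathcal Z}$ on all of $\mathcal Z_{\reg}$. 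A secondary technical point is ensuring the normalization does not introduce pathologies: since $\mathcal Z$ is normal with a reflexive symplectic form and is covered by the equivariant resolution picture, it is Gorenstein with rational singularities, and then the defining extension property of a symplectic variety follows formally.
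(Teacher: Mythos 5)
Your overall architecture (a Fujiki-type restriction argument on $Z\cap M_{\reg}$, Kaledin's stratification for the locus $Z\cap\Sigma$, and an embedded/equivariant resolution for the extension property) matches the paper's, and your resolution step is essentially the paper's: embed a resolution of $Z$ into a resolution of $M$, restrict the extended $2$-form, and use that the resolution factors through the normalization. The gap is in how you handle $Z\cap\Sigma$. You propose to apply the tangent-space decomposition ``stratum by stratum'' to conclude that $\sigma_{\mathcal Z}$ is non-degenerate at points of $\mathcal Z_{\reg}$ lying over $Z\cap\Sigma$; but the $2$-form furnished by Proposition~\ref{prop_kaledin} lives on the strata of $\Sigma$, so restricting it to the fixed locus of $\iota_\Sigma$ only produces a (generically non-degenerate) form on $Z\cap\Sigma_{\reg}$ itself, a lower-dimensional variety --- it says nothing about the form $\sigma_{\mathcal Z}$ on the tangent spaces of $\mathcal Z$ at such points, since $M$ is singular there and there is no ambient symplectic vector space in which to decompose $T_zZ$. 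Moreover, your preliminary step of ``taking the reflexive hull'' to define $\sigma_{\mathcal Z}$ on all of $\mathcal Z$ is itself unjustified at this stage: a holomorphic $2$-form on the complement of a divisor need not extend to a reflexive form (it can acquire poles), so you cannot even define $\sigma_{\mathcal Z}$ before knowing that $\nu^{-1}(Z\cap\Sigma)$ has codimension at least $2$.

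What the paper actually extracts from Kaledin is a dimension count, and this is the missing idea: since $\iota_\Sigma$ preserves the generically non-degenerate $2$-form on each stratum (Corollary~\ref{cor_kaledin}), every component of $Z\cap\Sigma_{\reg}$ (and of the intersection of $Z$ with every deeper stratum, Remark~\ref{rmk_kaledin}) carries a generically non-degenerate $2$-form, hence is even-dimensional. As $Z$ is even-dimensional and $Z\cap\Sigma$ is a proper closed subset, this forces $\codim_Z(Z\cap\Sigma)\geq 2$. With this bound, normality gives the extension of the form from $\nu^{-1}(Z\cap(M\setminus\Sigma))$ to a reflexive $2$-form on $\mathcal Z$, and non-degeneracy on all of $\mathcal Z_{\reg}$ is then automatic, because the degeneracy locus of a generically symplectic $2$-form is a divisor and cannot be contained in a codimension-$\geq 2$ set. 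Without this codimension argument your proof does not close; with it, the stratum-by-stratum discussion you sketch becomes exactly the right input.
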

\begin{proof} 
Let $\nu\colon \mathcal{Z}\rightarrow Z$ be the normalization morphism. 
By \cite[Proposition 2.6]{Fujiki}, there is a symplectic form \(\sigma_{W}\) on $W:=Z \cap (M\setminus\Sigma)$, obtained as the restriction of the symplectic form $\sigma_M$ on $M$. Hence there is a symplectic form $\sigma'$ on $\nu^{-1}(W)\subset \mathcal{Z}_{\reg}$. We claim that the codimension of the complement of $W$ in $Z$ (and hence also in $\mathcal{Z}$) is greater than or equal to $2$. 
Assuming the claim, it follows by  codimension reasons and normality that $\sigma'$ extends to a holomorphic form $\sigma\in H^0(\mathcal{Z}_{\reg},\Omega^{2}_{\mathcal{Z}_{\reg}} )= H^0(\mathcal{Z},\Omega^{[2]}_{\mathcal{Z}})$. Again by codimension reasons, $\sigma$ has to be non-degenerate on $\mathcal{Z}_{\reg}$. 

To prove the claim, we show that $Z \cap \Sigma_{\reg}$ has a holomorphic 2-form which is generically non-degenerate; this implies that $Z \cap \Sigma_{\reg}$ is even dimensional and hence that $Z \cap \Sigma_{\reg}$ has codimension greater than or equal to $2$ in $Z$.
By Corollary \ref{cor_kaledin}, $\iota$ induces an automorphism $\iota_\Sigma$ on $\Sigma$; moreover, $Z \cap \Sigma$ is a union of components of $\fix(\iota_\Sigma)$.
By Proposition \ref{prop_kaledin} the smooth locus of $\Sigma$ has a holomorphic $2$-form which is generically non-degenerate, and by Corollary \ref{cor_kaledin}, the automorphism $\iota_\Sigma$ preserves this form. As a consequence, every component of $\Fix(\iota_\Sigma) \cap \Sigma_{\reg}$ has a generically non-degenerate holomorphic $2$-form. 
By Remark \ref{rmk_kaledin}, the same argument works for the intersection of $Z$ with any other stratum of the singular locus, so the claim is proved.

We now show that the symplectic form $\sigma\in H^0(\mathcal{Z},\Omega^{[2]}_{\mathcal{Z}})$ extends to a holomorphic form on a resolution of the singularities of $Z$ (and hence of $\mathcal{Z}$).
Consider an embedded resolution of singularities of $Z$ in $M$, i.e., consider
 resolutions \(\widetilde{Z} \to Z\) and \(\pi\colon \widetilde{M}\to M\) of $Z$ and $M$, respectively, with a compatible embedding \(\widetilde{Z} \hookrightarrow \widetilde{M}\).
Since $M$ is a symplectic variety,  the symplectic form on its smooth locus  extends to a holomorphic 2-form on $\widetilde{M}$, which restricts to a holomorphic 2-form on $\widetilde Z$ extending $\sigma_{\reg}$. Since the resolution $\widetilde Z \to Z$ factors via the normalization morphism $\mathcal{Z} \to Z $, the symplectic form $\sigma'$ also extends.
\end{proof}

As a corollary we obtain that when $H=D$, the relative Prym variety $\mathcal{P}_D$ introduced in Definition~\ref{def_Prym} is a symplectic variety in the sense of Definition~\ref{def_symplvariety}:

\begin{proof}[Proof of Proposition~\ref{prop_simplvarBeauville}]
As explained after Definition~\ref{def_Prym}, when $H=D$, the locus $\fix^0(\tau)$ is closed.
Hence, the statement is an immediate consequence of Propositions~\ref{prop-M-symplectic-variety} and \ref{prop:fixed-locus-symplectic-variety}.
\end{proof}

In the case where $H\neq D$, restricting to the biggest locus of $M_H$ where $\tau$ is defined, we still find that $\fix^0(\tau)$ has a holomorphic 2-form which is non-degenerate on $\fix^0(\tau)_{\reg}$.
However, the induced 2-form $\sigma$ on $\cP_H$ can degenerate along divisors in $(\mathcal{P}_H)_{\reg}$ \cite[Example 3.3.7]{Sacca}.

It follows from Definition \ref{def_lagfib} and Proposition \ref{prop_simplvarBeauville} that the map $\eta\colon \cP_D\rightarrow |C|$ is a Lagrangian fibration. A very small modification of a celebrated result of Matsushita implies that, even in the case of symplectic varieties, Lagrangian fibrations are flat:

\begin{proposition}\label{prop_flatness}
    The morphism $\eta\colon\mathcal{P}_D\to |C|$ is equidimensional, and thus flat.
\end{proposition}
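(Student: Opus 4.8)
The plan is to adapt Matsushita's argument for the equidimensionality of Lagrangian fibrations on irreducible holomorphic symplectic manifolds to the present singular setting, using the symplectic form that exists on $(\mathcal{P}_D)_{\reg}$ by Proposition~\ref{prop_simplvarBeauville}. Since $\dim\mathcal{P}_D = 2\dim|C|$ and $\eta$ is proper and surjective, it suffices to rule out fibre components of dimension strictly greater than $\dim|C|$; once every fibre is equidimensional of dimension $\dim|C|$ over a normal (indeed smooth) base, flatness follows from miracle flatness (``Hironaka's criterion'', EGA~IV or \cite[Ch.~III, Ex.~10.9]{Hartshorne}), because $\mathcal{P}_D$ is Cohen--Macaulay — it has rational Gorenstein singularities as a symplectic variety.

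First I would recall that the general fibre of $\eta$ is a Prym variety, hence Lagrangian of dimension $\dim|C| = g(D)-g(C)$, so the issue is only over a proper closed subset of $|C|$. Suppose for contradiction that some irreducible component $F_0$ of a fibre $\eta^{-1}(b)$ has $\dim F_0 = \dim|C| + k$ with $k\geq 1$. The core of Matsushita's argument is local and symplectic-linear-algebraic: at a general smooth point $x\in F_0$ that also lies in $(\mathcal{P}_D)_{\reg}$ (such a point exists because $F_0\not\subset(\mathcal{P}_D)_{\sing}$ — here one must check that a maximal-dimensional fibre component cannot be swallowed by the singular locus, which follows since $(\mathcal{P}_D)_{\sing}$ is stratified by symplectic, hence even-dimensional pieces whose images in $|C|$ have dimension at most half their own, as in the proof of Corollary~\ref{cor_Fix=connected}), the tangent space $T_xF_0$ is isotropic for $\sigma$ — this is the standard fact that fibres of a map to the base are isotropic when the base has dimension $\tfrac12\dim\mathcal{P}_D$, proved by pulling back $\sigma$ along $\eta$ and using that $\eta^*\Omega^2_{|C|}$ has the right rank together with the fact that $\sigma$ restricted to $F_0$ is pulled back from $|C|$. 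An isotropic subspace has dimension at most $\tfrac12\dim\mathcal{P}_D = \dim|C|$, contradicting $\dim T_xF_0\geq \dim F_0 = \dim|C|+k > \dim|C|$.

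The one genuinely delicate point — and the step I expect to be the main obstacle — is justifying that the restriction of $\sigma$ to the smooth locus of $F_0$ is indeed pulled back from $|C|$, equivalently that $T_xF_0$ is $\sigma$-isotropic, \emph{in the singular setting}: Matsushita's proof uses the global holomorphic $2$-form on a compact hyperkähler manifold and the degeneration of the Hodge–de Rham spectral sequence, which are not directly available here. The cleanest fix is to pass to a resolution: by Proposition~\ref{prop_kaledin} and the proof of Proposition~\ref{prop:fixed-locus-symplectic-variety}, there is an embedded resolution $\pi\colon\widetilde{\mathcal{P}_D}\to\mathcal{P}_D$ on which $\sigma$ extends to a global holomorphic $2$-form $\widetilde\sigma$; the composite $\widetilde\eta=\eta\circ\pi\colon\widetilde{\mathcal{P}_D}\to|C|$ is a morphism to a smooth projective base of dimension $\tfrac12\dim\widetilde{\mathcal{P}_D}$, and one argues — following \cite[Theorem~2]{Matsushita-higher-direct} (see also \cite{Kamenova-Lehn} for the primitive symplectic case) — that $\widetilde\eta$ has isotropic general fibre and then, by the lemma that a dominant map whose general fibre is isotropic for a closed $2$-form of maximal rank must have all fibres of the expected dimension, that $\widetilde\eta$ is equidimensional over a smooth base. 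Since $\pi$ is a proper birational morphism and the exceptional locus maps into the singular locus of $\mathcal{P}_D$ (which, as noted above, cannot contain a maximal-dimensional fibre component), equidimensionality of $\widetilde\eta$ descends to equidimensionality of $\eta$. Finally, miracle flatness upgrades this to flatness. I would take care to state explicitly that the key inputs — $\mathcal{P}_D$ Cohen–Macaulay, base smooth, all fibres of dimension $\dim|C|$ — are exactly the hypotheses of the miracle flatness criterion, so no $v$-genericity or primitivity is needed.
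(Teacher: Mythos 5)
Your skeleton (resolution, Matsushita, then miracle flatness over the smooth base $|C|$ using that $\mathcal{P}_D$ is Cohen--Macaulay) is the right shape, and the final flatness step is exactly as in the paper. But the core step --- ruling out fibre components of dimension $>\dim|C|$ --- has a genuine gap. The assertion that at a general smooth point of a \emph{special} fibre component $F_0$ the tangent space is $\sigma$-isotropic is not a standard fact provable by ``pulling back $\sigma$ along $\eta$'': isotropy of the general fibre says nothing by itself about special fibre components, and the statement that every component of every fibre is isotropic is essentially equivalent to the equidimensionality you are trying to prove (this is Matsushita's theorem, whose proof is cohomological, not linear-algebraic). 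The same circularity affects your claim that a maximal-dimensional fibre component cannot lie in $(\mathcal{P}_D)_{\sing}$: the inequality $\dim Z\leq 2\dim\eta(Z)$ for a symplectic stratum $Z$ (cf.\ Corollary~\ref{cor_Fix=connected}) controls the \emph{general} fibre of $Z\to\eta(Z)$, not the particular fibre containing $F_0$. Most seriously, the proposed fix on the resolution rests on a false lemma: on $\widetilde{\mathcal{P}}_D$ the extended form $\widetilde\sigma$ is only \emph{generically} non-degenerate (it degenerates along the exceptional locus unless a symplectic resolution exists), and a dominant map whose general fibre is isotropic for a merely generically non-degenerate closed $2$-form need not be equidimensional: blowing up a point of a smooth Lagrangian fibration of dimension $2n$ produces exactly such data with a fibre component of dimension $2n-1$. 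For the same reason $\widetilde\eta=\eta\circ\rho$ itself is in general \emph{not} equidimensional (exceptional divisors of $\rho$ lying over points of $(\mathcal{P}_D)_{\sing}$ can have dimension up to $2n-1$ and map to single points of $|C|$), so the plan ``prove equidimensionality for $\widetilde\eta$, then descend to $\eta$'' cannot be carried out.

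What the paper transfers to the resolution is the cohomological, not the geometric, input: since $\mathcal{P}_D$ is a symplectic variety it has trivial canonical sheaf and rational (Gorenstein) singularities, hence $R\rho_*\omega_{\widetilde{\mathcal{P}}_D}=\omega_{\mathcal{P}_D}=\mathcal{O}_{\mathcal{P}_D}$; with this identity in hand, Matsushita's equidimensionality argument \cite{Matsushita-equidimensionality} --- whose inputs are statements about (higher direct images of) dualizing sheaves rather than pointwise isotropy of fibres --- applies verbatim to $\eta$ itself, and miracle flatness then concludes, as you say. To repair your write-up, replace the isotropy discussion and the ``maximal rank'' lemma by this reduction to the cited argument.
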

\begin{proof}  Let $\rho\colon\widetilde{\mathcal{P}}_D \to \mathcal{P}_D$ be a resolution of singularities.
    Since $\mathcal{P}_D$ is symplectic and hence has trivial canonical bundle and rational singularities, $R\rho_* \omega_{\widetilde{\mathcal{P}}_D }=\omega_{{\mathcal{P}}_D }=\mathcal{O}_{\mathcal{P}_D}$. Hence, we can apply the same argument as in \cite{Matsushita-equidimensionality} to prove equidimensionality. Flatness follows from miracle flatness.
\end{proof}

\begin{remark} \label{rmk_indpol} Recall that the open subset inside the Beauville--Mukai system $M_H$ parametrizing sheaves whose support is integral is independent of the polarization. Moreover, the involution $\tau$ is regular on this locus. 
    Thus, letting $U' \subset |C|$ be the locus parametrizing curves whose preimage is integral, the open subset $\eta^{-1}(U') \subset \cP_H$ is independent of $H$.
\end{remark}

\subsection{Proof of Theorem \ref{thm_psv} and a criterion for being an irreducible symplectic variety}\label{sec:psv}

Our next goal is to understand when $\mathcal{P}_D$ is a primitive or irreducible symplectic variety in the sense of Definitions \ref{def_psv} and \ref{def_irreduciblesymplectic}. The key ingredient for the next two statements is the following result:

\begin{proposition}[{\cite[Proposition 5.8]{Kebekus}}]
\label{prop_kebekus}
Let $g\colon Z\to X$ be a dominant morphism between klt normal irreducible varieties. Then the reflexive pullback
$g^{[*]}\colon H^0(X,\Omega_X^{[p]})\to H^0(Z,\Omega_Z^{[p]})$
is injective.
\end{proposition}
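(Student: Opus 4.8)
The plan is to reduce the injectivity of $g^{[*]}$ to a pointwise statement that is settled by generic smoothness; the klt hypothesis will be used only to ensure that the reflexive pullback $g^{[*]}$ is defined at all, and plays no further role in the injectivity argument itself. Fix a nonzero reflexive form $\sigma\in H^0(X,\Omega_X^{[p]})$; we may assume $0\le p\le\dim X$, since otherwise the group vanishes and there is nothing to prove. By definition $\sigma$ restricts to a nonzero $p$-form on $X_{\reg}$, so there is a dense open $X^\circ\subseteq X_{\reg}$ at every point $x$ of which the value $\sigma(x)$ in the fibre $\bigwedge^p\Omega_X\otimes\kappa(x)$ is nonzero. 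The aim is then to exhibit a point of $Z$ at which the honest pullback of $\sigma$ is already nonzero.

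First I would pass to smooth loci. The restriction $g|_{Z_{\reg}}\colon Z_{\reg}\to X$ is still dominant, so $g^{-1}(X^\circ)\cap Z_{\reg}$ is a nonempty open subset of $Z$; since we work over a field of characteristic zero and $Z_{\reg}$ is smooth, generic smoothness produces a nonempty open $Z^\circ\subseteq g^{-1}(X^\circ)\cap Z_{\reg}$ over which $g$ is a smooth morphism. Over $Z^\circ$ the relative cotangent sequence $0\to g^*\Omega_X\to\Omega_Z\to\Omega_{Z/X}\to 0$ is exact with locally free quotient, hence locally split, so $\bigwedge^p g^*\Omega_X\to\bigwedge^p\Omega_Z$ is a fibrewise-injective map of vector bundles on $Z^\circ$. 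Consequently, for any $z\in Z^\circ$ with image $x=g(z)\in X^\circ$, the pullback form $g^*(\sigma|_{X_{\reg}})$ is nonzero at $z$, its value being the image of $\sigma(x)\neq 0$ under this fibrewise injection. By construction the reflexive pullback $g^{[*]}\sigma$ agrees on $Z^\circ$ — which lies in the locus where $Z$ is smooth and $g$ maps into $X_{\reg}$ — with $g^*(\sigma|_{X_{\reg}})$, so $g^{[*]}\sigma\neq 0$. Hence $g^{[*]}$ is injective.

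The step I expect to be the genuine obstacle is not contained in the argument above: it is the very existence of $g^{[*]}$ for an arbitrary dominant $g$. For such a morphism the preimage $g^{-1}(X_{\sing})$ may be a divisor in $Z$, so extending the pullback form from $g^{-1}(X_{\reg})\cap Z_{\reg}$ to a reflexive form on all of $Z$ is not a codimension-$2$ bookkeeping matter but rests on the extension theorem for reflexive differentials on klt (indeed log canonical) spaces of Greb--Kebekus--Kov\'acs--Peternell \cite{GKKP}, which is exactly where the klt hypothesis is consumed. I would take the well-definedness of $g^{[*]}$ as the input furnished by that theory and reduce the Proposition to the pointwise generic-smoothness mechanism above, which is valid over any field of characteristic zero.
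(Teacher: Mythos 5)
The paper does not prove Proposition~\ref{prop_kebekus} at all: it is quoted verbatim from the literature (Kebekus, \emph{Pull-back morphisms for reflexive differential forms}), so there is no in-paper argument to compare yours against. On its own terms your proof is correct, and it is the standard one: a nonzero $\sigma\in H^0(X,\Omega_X^{[p]})=H^0(X_{\reg},\Omega^p_{X_{\reg}})$ is pointwise nonzero on a dense open $X^\circ\subseteq X_{\reg}$; generic smoothness in characteristic zero gives a nonempty open $Z^\circ\subseteq g^{-1}(X^\circ)\cap Z_{\reg}$ on which $g$ is smooth, where the locally split sequence $0\to g^*\Omega_X\to\Omega_Z\to\Omega_{Z/X}\to 0$ makes $\bigwedge^pg^*\Omega_X\to\bigwedge^p\Omega_Z$ fibrewise injective, so the honest pullback of $\sigma$ is nonzero at any point of $Z^\circ$. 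You are also right to isolate the one genuinely nontrivial ingredient — that $g^{[*]}$ exists at all, i.e.\ that the form defined on $g^{-1}(X_{\reg})\cap Z_{\reg}$ extends reflexively across the possibly divisorial locus $g^{-1}(X_{\sing})$ — as the place where the klt hypothesis and the Greb--Kebekus--Kov\'acs--Peternell extension theorem enter; this is exactly the content delegated to the citation, and the paper itself introduces $g^{[*]}$ as given before invoking the proposition. The only cosmetic point is that generic smoothness is usually stated as producing an open subset of the \emph{target} over which the map is smooth; passing to its preimage (or noting that the smooth locus of a morphism of smooth varieties is open in the source) gives your $Z^\circ$.
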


\begin{proposition}\label{prop_condition_for_psv}
Let $P$ be a projective symplectic variety. Suppose that there exists a dominant rational map $\varphi \colon M\dashrightarrow P$, where $M$ is an irreducible symplectic variety. Then $P$ is a primitive symplectic variety.
\end{proposition}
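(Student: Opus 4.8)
The plan is to push information about reflexive forms from the irreducible symplectic variety $M$ to $P$ via the dominant rational map $\varphi$, thereby controlling $H^0(P,\Omega^{[2]}_P)$, and to deduce $H^1(P,\mathcal O_P)=0$ from the vanishing of reflexive $1$-forms on $P$.

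First I would resolve the indeterminacy of $\varphi$, choosing a resolution of singularities $\pi\colon\widetilde M\to M$ such that $g:=\varphi\circ\pi\colon\widetilde M\to P$ is a (dominant) morphism. Since $M$ and $P$ are symplectic varieties, they are normal, projective, irreducible, with canonical---hence klt---singularities, and $\widetilde M$ is smooth. Applying Proposition \ref{prop_kebekus} to the dominant morphism $g$ between the klt normal irreducible varieties $\widetilde M$ and $P$ gives, for every $p$, an injection $g^{[*]}\colon H^0(P,\Omega^{[p]}_P)\hookrightarrow H^0(\widetilde M,\Omega^{p}_{\widetilde M})$. I would then compare the right-hand side with $M$: since $M$ is klt, the extension theorem for differential forms on klt spaces of Greb--Kebekus--Kov\'acs--Peternell \cite{GKKP} identifies $H^0(\widetilde M,\Omega^p_{\widetilde M})$ with $H^0(M,\Omega^{[p]}_M)$. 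As $M$ is an \emph{irreducible} symplectic variety, Definition \ref{def_irreduciblesymplectic} applied with $Y=X=M$ says that the algebra of reflexive forms on $M$ is generated by the symplectic form $\sigma_M$; in particular $H^0(M,\Omega^{[1]}_M)=0$ and $H^0(M,\Omega^{[2]}_M)=\mathbb C\,\sigma_M$. Chaining the maps yields $H^0(P,\Omega^{[1]}_P)=0$ and $\dim H^0(P,\Omega^{[2]}_P)\le 1$; since $P$ is symplectic it already carries a nonzero symplectic form $\sigma\in H^0(P,\Omega^{[2]}_P)$, hence $H^0(P,\Omega^{[2]}_P)=\mathbb C\,\sigma$ is generated by a symplectic form.

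It remains to show $H^1(P,\mathcal O_P)=0$. For this I would take a resolution $\rho\colon\widetilde P\to P$; by \cite{GKKP} again, $H^0(\widetilde P,\Omega^1_{\widetilde P})\cong H^0(P,\Omega^{[1]}_P)=0$, so Hodge symmetry on the smooth projective variety $\widetilde P$ gives $H^1(\widetilde P,\mathcal O_{\widetilde P})=0$; finally, $P$ has rational singularities (symplectic varieties do, by \cite[Prop.~1.3]{Beauville2000}), so $H^1(P,\mathcal O_P)\cong H^1(\widetilde P,\mathcal O_{\widetilde P})=0$. Together with the previous paragraph, $P$ is a normal projective variety with $H^1(P,\mathcal O_P)=0$ whose space of reflexive $2$-forms is generated by a symplectic form, i.e. a primitive symplectic variety in the sense of Definition \ref{def_psv}.

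The individual inputs---Kebekus' injectivity, Hodge symmetry, rational singularities of symplectic varieties---are standard, so the only point that needs genuine care is the comparison between honest forms on a resolution and reflexive forms on the (possibly badly singular) klt varieties $M$ and $P$, for which the Greb--Kebekus--Kov\'acs--Peternell extension theorem is the essential tool; beyond invoking it correctly I do not anticipate a real obstacle.
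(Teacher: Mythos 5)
Your proof is correct and follows essentially the same route as the paper's: resolve the indeterminacy to get a dominant morphism $\widetilde M\to P$ from a smooth model of $M$, apply Kebekus' injectivity (Proposition \ref{prop_kebekus}) to bound the reflexive $2$-forms on $P$ by those on $M$, and use that the reflexive form algebra of the irreducible symplectic variety $M$ is generated by its symplectic form. The only (immaterial) difference is in the vanishing $H^1(P,\mathcal O_P)=0$: the paper transports $h^{0,1}=0$ from $\widetilde M$ to a resolution $\widetilde P$ along the induced dominant morphism of smooth projective varieties, whereas you obtain $H^0(P,\Omega^{[1]}_P)=0$ from the same Kebekus injection and then conclude via the extension theorem, Hodge symmetry on $\widetilde P$, and rational singularities — both arguments are valid.
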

\begin{proof}
Let $\rho \colon \widetilde{P}\rightarrow P$ be a resolution of singularities, with $\widetilde{P}$  projective.
The map $M \dashrightarrow P$ induces a dominant rational map ${M} \dashrightarrow \widetilde{P}$. Let $\widetilde{M}$ be a resolution of the indeterminacies of this map, with $\widetilde{M}$ smooth. We have the commutative diagram
\begin{equation} \label{diagramres}
\xymatrix{\widetilde{M}\ar[d] \ar[r] & \widetilde{P}\ar[d]^{\rho} &\hspace{-21pt} \\
M\ar@{-->}[r] & P &
}    
\end{equation}
Note that $\widetilde{M}$ is birational to the irreducible symplectic variety $M$, and therefore $h^1(\widetilde M, \mathcal{O}_{\widetilde M})=0$. Since $\widetilde{M} \to \widetilde{P}$ is dominant, we have $h^1(\widetilde{P},\cO_{\widetilde{P}})=0$, and hence $h^1({P},\cO_{{P}})=0$, because $P$ has rational singularities.
The proposition follows from applying Proposition \ref{prop_kebekus} to the composition $\widetilde{M} \to P$. Indeed, since $P$ has canonical singularities and this composition is dominant, the pullback of differential forms is injective, so 
$h^0(P,\Omega^{[2]}_P)\leq  h^0(M,\Omega^{[2]}_M)=1$. 
\end{proof}

In order to show that $\mathcal{P}_D$ is an irreducible symplectic variety, we need the following general result, which was used in \cite{PR} to show that singular moduli spaces of semistable sheaves on K3 surfaces are irreducible symplectic varieties.

\begin{proposition}
\label{prop_keyprop}
Let $P$ be a projective symplectic variety with symplectic form $\sigma$.
Suppose that: 
\begin{enumerate}[1),itemsep=0pt]
\item There is a dominant rational map $M\dashrightarrow P$, where $M$ is an irreducible symplectic variety; 
\item $\pi_1(P_{\reg})=1$.
\end{enumerate}
Then $P$ is an irreducible symplectic variety.
\end{proposition}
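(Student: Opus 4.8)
The plan is to verify the defining property of an irreducible symplectic variety in Definition~\ref{def_irreduciblesymplectic}: for every finite quasi-\'etale morphism $g\colon Y\to P$, the exterior algebra of reflexive forms on $Y$ is spanned by $g^{[*]}\sigma$. Since $\pi_1(P_{\reg})=1$ by hypothesis~2), any finite quasi-\'etale cover $g\colon Y\to P$ is trivial over $P_{\reg}$: indeed $g$ restricts to a finite \'etale cover $g^{-1}(P_{\reg})\to P_{\reg}$ (quasi-\'etale means \'etale in codimension one, and $P_{\reg}$ is smooth so the ramification locus, being of codimension $\geq 2$ in $P$, cannot meet the preimage of the smooth locus where $g$ is \'etale — more precisely, purity of the branch locus forces $g$ to be \'etale over all of $P_{\reg}$), and a simply connected base has no nontrivial connected finite \'etale covers. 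Hence $g^{-1}(P_{\reg})\cong P_{\reg}\times(\text{finite set})$; as $Y$ is irreducible (part of the definition of quasi-\'etale morphism between normal irreducible varieties), this finite set is a point, so $g$ is an isomorphism over the regular locus and therefore, by normality and Zariski's main theorem, $g$ is itself an isomorphism. Thus the only quasi-\'etale cover to check is $Y=P$ with $g=\mathrm{id}$, and the condition reduces to showing that $H^0(P,\Omega_P^{[p]})$ is spanned by powers of $\sigma$ for all $p$.

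Next I would compute $h^0(P,\Omega_P^{[p]})$ for all $p$ using the dominant rational map from hypothesis~1). Fix a resolution $\rho\colon\widetilde{P}\to P$ with $\widetilde P$ projective, and resolve the indeterminacy of $M\dashrightarrow\widetilde P$ by a smooth projective $\widetilde M$ birational to $M$, giving a diagram as in \eqref{diagramres} with a dominant morphism $\widetilde M\to\widetilde P\to P$. By Proposition~\ref{prop_kebekus} applied to the dominant morphism $\widetilde M\to P$ (both klt: $P$ is symplectic hence has canonical, in particular klt, singularities, and $\widetilde M$ is smooth), the reflexive pullback $H^0(P,\Omega_P^{[p]})\hookrightarrow H^0(\widetilde M,\Omega^p_{\widetilde M})$ is injective. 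Since $\widetilde M$ is birational to the irreducible symplectic variety $M$, and reflexive forms are birational invariants among (the smooth models of) klt varieties, $H^0(\widetilde M,\Omega^p_{\widetilde M})\cong H^0(M,\Omega_M^{[p]})$, which by the irreducibility of $M$ is one-dimensional for $p$ even (spanned by $\sigma_M^{p/2}$) and zero for $p$ odd. Therefore $h^0(P,\Omega_P^{[p]})\leq 1$, equal to $1$ only possibly for $p$ even. Finally, for $p=2k$ even, the reflexive form $\sigma^k\in H^0(P,\Omega_P^{[2k]})$ is nonzero: its pullback to $\widetilde M$ is nonzero because $\sigma$ restricts to a genuine symplectic form on the open dense $P_{\reg}$, so $\sigma^k$ is nonzero as a section of $\Omega^{[2k]}_{P_{\reg}}$, hence nonzero on the dense open locus of $\widetilde M$ mapping to $P_{\reg}$. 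Combining, $h^0(P,\Omega_P^{[2k]})=1$ and is spanned by $\sigma^k$, so the exterior algebra of reflexive forms on $P$ is spanned by $\sigma$, which is exactly the irreducibility condition.

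The main obstacle I expect is making the reduction in the first paragraph fully rigorous — specifically, the claim that $\pi_1(P_{\reg})=1$ forces every finite quasi-\'etale $g\colon Y\to P$ to be trivial. The subtlety is that quasi-\'etale means \'etale only in codimension one, so a priori $g$ could ramify over a codimension $\geq 2$ subset of $P$; one must invoke that such a subset is necessarily contained in $P_{\mathrm{sing}}$ after removing it one still has $g$ \'etale over $P_{\reg}$ (using that $Y$ is normal and applying purity of the branch locus / Nagata--Zariski at the smooth points of $P$), and then the simple connectedness of $P_{\reg}$ kills the cover. Once this is in place, the rest is a formal application of Propositions~\ref{prop_kebekus} and the birational invariance of reflexive forms, together with the definition of irreducible symplectic variety for $M$; these steps are routine. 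I would also remark that this argument is essentially the one used in \cite{PR} for singular moduli spaces of sheaves on K3 surfaces, so one may alternatively cite that proof and indicate that it applies verbatim here.
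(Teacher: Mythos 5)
Your proposal is correct and follows essentially the same route as the paper: you reduce to the trivial cover using purity of the branch locus together with $\pi_1(P_{\reg})=1$, finiteness and normality, and then bound the reflexive forms on $P$ by applying Proposition~\ref{prop_kebekus} to the dominant morphism $\widetilde{M}\to P$ and using that $\widetilde M$ is birational to the irreducible symplectic variety $M$, exactly as in the paper's proof via Proposition~\ref{prop_condition_for_psv}. The only difference is that you spell out the birational invariance of reflexive forms and the nonvanishing of $\sigma^k$ explicitly, which the paper leaves implicit.
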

\begin{proof} Let $\psi\colon X\to P $ be a finite quasi-\'etale morphism. We need to show that the exterior algebra of reflexive holomorphic forms on $X$ is spanned by $\psi^{[*]}\sigma$.
By purity of the branch locus, the restriction $\psi^{-1}(P_{\reg})\to P_{\reg}$ is \'etale. 
Since $\pi_1(P_{\reg})=1$, it follows that $\psi^{-1}(P_{\reg})\to P_{\reg}$ is an isomorphism. 

It follows that $\psi$ is birational and, since it is finite and $P$ is normal, that it is in fact an isomorphism.
As a consequence, we are reduced to checking that the algebra of reflexive forms on $P$ is generated by the symplectic form $\sigma$. 
But this follows as in the proof of the Proposition \ref{prop_condition_for_psv}, using the composition $\widetilde M \to P$, the fact that $\widetilde{M}$ is birational to the irreducible symplectic variety $M$, and  Proposition \ref{prop_kebekus}. 
\end{proof}

Let us come back to the relative Prym variety $\mathcal{P}_D$ defined as in Definition~\ref{def_Prym}. Our first application of the criterion establishes in Proposition~\ref{prop_keyprop} above is to show that the (normalization of the) relative Prym varieties of \cite[Theorem 1.1]{ASF} are irreducible symplectic varieties:
\begin{corollary}\label{cor_ASFisISV}
Let $T$ be a general Enriques surface with a linear system $|C|$, let $f\colon S \to T$ be its K3 double cover, and let $|D|$ the pullback of $|C|$ to $S$. The associated relative Prym variety $\mathcal{P}_D$ is an irreducible symplectic variety.
\end{corollary}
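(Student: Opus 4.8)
The plan is to verify the two hypotheses of Proposition~\ref{prop_keyprop} for the variety $P=\mathcal{P}_D$ in the Enriques setting. The symplectic variety structure is already known: since the double cover here is \'etale, this is exactly the content of \cite{ASF} combined with Propositions~\ref{prop-M-symplectic-variety} and \ref{prop:fixed-locus-symplectic-variety} (applied to the appropriate component $\fix^0(\tau)$, noting that $i^*$ and $j$, hence $\tau$, are regular on $M_D$ since $H=D$). So what remains is: (1) produce a dominant rational map $M\dashrightarrow \mathcal{P}_D$ from an irreducible symplectic variety $M$, and (2) show that $(\mathcal{P}_D)_{\reg}$ is simply connected.

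For step (1), I would invoke \cite[Theorem~8.1]{ASF}, which constructs precisely such a dominant rational map $S^{[n]}\dashrightarrow \mathcal{P}_D$ from the Hilbert scheme of points on the K3 surface $S$ (with $n=g(D)-g(C)$ equal to half the dimension of $\mathcal{P}_D$). The Hilbert scheme $S^{[n]}$ is an irreducible holomorphic symplectic manifold, hence in particular an irreducible symplectic variety in the sense of Definition~\ref{def_irreduciblesymplectic}. This immediately gives hypothesis~1) of Proposition~\ref{prop_keyprop}. (Alternatively, one could cite \cite[Theorem~1.1]{ASF} directly if it already records the existence of such a map; the point is that the ASF paper does this construction.)

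For step (2), the simple connectedness of the regular locus, I would appeal to the work of \cite{ASF} itself: in \cite[Theorem~1.1]{ASF} (see also the discussion around \cite[\S5--7]{ASF}) it is shown that the relative Prym variety in the Enriques case is either smooth irreducible holomorphic symplectic or has a specific singular structure, and in all cases the relevant fundamental group vanishes; more precisely, ASF prove that an appropriate open dense subset has trivial fundamental group, and combined with the general fact \cite[0.7.B]{FuLaz} that $\pi_1$ of a normal variety is a quotient of $\pi_1$ of any open subset, one concludes $\pi_1((\mathcal{P}_D)_{\reg})=1$. Since $\mathcal{P}_D$ is normal, $(\mathcal{P}_D)_{\reg}$ is dense and open, so this suffices.

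The main obstacle is step (2): extracting from \cite{ASF} the precise statement that $\pi_1((\mathcal{P}_D)_{\reg})=1$ rather than just $\pi_1(\mathcal{P}_D)=1$ or simple connectedness of some smaller open locus. The strategy of \cite{ASF} computes the fundamental group of the open part fibered in honest Prym varieties and then analyzes the divisorial degenerations; one has to check that the added singular strata (which lie in the complement of $(\mathcal{P}_D)_{\reg}$ or contribute no new loops) do not create fundamental group. Once $\pi_1$ of a dense open subset of $(\mathcal{P}_D)_{\reg}$ is shown to vanish, \cite[0.7.B]{FuLaz} gives the vanishing for all of $(\mathcal{P}_D)_{\reg}$, and Proposition~\ref{prop_keyprop} then yields the conclusion. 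With both hypotheses in hand, Proposition~\ref{prop_keyprop} gives that $\mathcal{P}_D$ is an irreducible symplectic variety, completing the proof.
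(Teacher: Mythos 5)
Your proposal is correct and follows essentially the same route as the paper: the paper's proof simply verifies the two hypotheses of Proposition~\ref{prop_keyprop} by citing \cite[Theorem 7.1]{ASF} for the simple connectedness of the regular locus and the proof of \cite[Theorem 8.1]{ASF} for the dominant rational map from $S^{[g(D)-g(C)]}$, exactly as you do. Your worry in step (2) is resolved by the fact that ASF's Theorem 7.1 is precisely the statement about the smooth locus of the relative Prym variety, so no further argument via \cite[0.7.B]{FuLaz} is needed.
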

\begin{proof}
The two requirements of Proposition~\ref{prop_keyprop} are satisfied by \cite[Theorem 7.1 and proof of Theorem 8.1]{ASF}
\end{proof}

We now show that if $C$ and $D$ are very ample, then condition (1) in Proposition~\ref{prop_keyprop} holds. 
We will use this to prove Theorem~\ref{thm_psv}.

\begin{proposition}\label{prop_dominantMap}
Let $S$ be a  smooth K3 surface with an anti-symplectic involution $i$ and let $f\colon S\rightarrow T=S/i$ be the quotient map. Assume that $(S,i)$ is very general in the sense of Definition \ref{def_generalK3withsympinv}. Let $D$ be a smooth curve of genus $h$ on $S$ and $C$ a smooth curve of genus $g$ on $T$ such that $f^{-1}C=D$. If the linear systems $|C|$ and $|D|$ are very ample, then for any polarization $H$ on $S$ there is a dominant rational map $$\varphi\colon S^{[h-g]} \dashrightarrow \mathcal{P}_H.$$
\end{proposition}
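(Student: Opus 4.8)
The plan is to generalize the construction of \cite[Theorem 8.1]{ASF} from the Enriques case to the ramified case. The idea is as follows. Since $|D|$ is very ample, a general tuple of $h-g$ points $\{x_1,\dots,x_{h-g}\}\subset S$ imposes independent conditions on $|D|$, and the curves in $|D|$ through these points form a linear subsystem of dimension $\dim|D|-(h-g)$. We want to single out, among these, a curve of the form $f^*C_0$ with $C_0\in|C|$; equivalently, we need that the general $i$-invariant curve in $|D|$ passing through $x_1,\dots,x_{h-g}$ is unique. First I would verify, using Corollary~\ref{cor_dimCforB.C>0}, that $\dim|C|=g-C.K_T-1=h-g$, so that imposing $h-g$ general points on $S$ (equivalently, their $f$-images, which are $h-g$ general points on $T$) cuts out a unique curve $C_0\in|C|$, whose preimage $D_0:=f^*C_0$ contains the chosen points and is general enough to be smooth. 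This is where very ampleness of $|C|$ on $T$ enters.

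Next, given such a general $C_0$, we have the smooth double cover $D_0\to C_0$ with covering involution $i_0$, and $\prym(D_0/C_0)=\fix^0(\tau)|_{C_0}\subset J(D_0)$ by Proposition~\ref{fix_curves_connected}. The Abel--Jacobi-type map should send the configuration $\{x_1,\dots,x_{h-g}\}$ to the class of $\cO_{D_0}(\sum x_\nu - i_0(x_\nu))$, or more precisely to a suitable anti-invariant combination of the points, which lands in $\prym(D_0/C_0)\subset J(D_0)\subset M_H$. Pushing forward to $S$ this gives a point of $\fix^0(\tau)$, hence of $\cP_H$. One checks this assignment is well-defined on a dense open subset of $S^{[h-g]}$ (the locus of reduced tuples lying on a unique such $C_0$ with $D_0$ smooth) and algebraic, so it defines a rational map $\varphi\colon S^{[h-g]}\dashrightarrow\cP_H$. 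Since this map is independent of the polarization $H$ once restricted to the open locus of integral supports (Remark~\ref{rmk_indpol}), it suffices to treat one $H$, and the image is dense in $\cP_H$ because $\cP_H$ is irreducible of dimension $2(h-g)=\dim S^{[h-g]}$ and the map is generically finite onto its image: indeed, on each fiber over $C_0$, the differences $\sum x_\nu - i_0 x_\nu$ generate $\prym(D_0/C_0)$ as $C_0$ and the points vary, so the composition $S^{[h-g]}\dashrightarrow\cP_H\to|C|$ is dominant with dominant fiberwise behavior.

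The main obstacle I expect is the \emph{dominance} (equivalently, generic finiteness) statement: one must show the fiberwise Abel--Prym map $\Sym^{h-g}D_0\dashrightarrow\prym(D_0/C_0)$, $\sum x_\nu\mapsto\cO_{D_0}(\sum(x_\nu - i_0 x_\nu))$, is dominant. The target has dimension $g-C.K_T-1 = h-g$, and the source has the same dimension, so dominance is equivalent to this map being generically finite, which follows from the standard fact that the Abel--Jacobi map $\Sym^{h-g}D_0\to J(D_0)$ followed by projection to the Prym is dominant when $h-g\leq g(D_0)$ — here $h-g = g(D_0)-g(C_0)$, which is at most $g(D_0)$, so a dimension/Zariski-tangent-space computation (differentiating the Abel map and using that the anti-invariant part of $H^0(D_0,\omega_{D_0})$ has dimension exactly $h-g$) gives surjectivity of the differential at a general point. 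I would also need to be slightly careful that the construction genuinely lands in the component $\fix^0(\tau)$ containing the zero section, but this is automatic since $\cO_{D_0}(\sum(x_\nu-i_0x_\nu))$ deforms to $\cO_{D_0}$ and $\cO_{D_0}=s(C_0)\in\fix^0(\tau)$ by construction of $\fix^0(\tau)$.

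Finally, to conclude Theorem~\ref{thm_psv} from Proposition~\ref{prop_dominantMap}: take $H=D$ so that $\cP_D$ is a symplectic variety by Proposition~\ref{prop_simplvarBeauville}; then $\varphi\colon S^{[h-g]}\dashrightarrow\cP_D$ is a dominant rational map from the irreducible holomorphic symplectic manifold $S^{[h-g]}$ (which is in particular an irreducible symplectic variety), so Proposition~\ref{prop_condition_for_psv} applies directly and yields that $\cP_D$ is a primitive symplectic variety.
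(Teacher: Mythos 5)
Your proposal is correct and follows essentially the same route as the paper's proof: use $\dim|C|=h-g$ to cut out the unique curve $C_Z\in|C|$ through the images of $h-g$ general points, send $Z$ to $\cO_{D_Z}(Z-i^*Z)\in\prym(D_Z/C_Z)\subset\fix^0(\tau)$, and prove dominance fiberwise by showing the differential of the Abel--Prym map $\Sym^{h-g}D_0\dashrightarrow\prym(D_0/C_0)$ is surjective at a general point via evaluation of the $h-g$ anti-invariant $1$-forms. The only spot where your sketch is thinner than the paper is that surjectivity itself: the ``standard fact'' that the Abel--Jacobi image stays dominant after projecting to the Prym is not automatic (the projection could a priori contract it) and is exactly what must be proved, but the tangent-space computation you indicate is precisely the paper's argument -- full rank of the matrix of anti-invariant forms at general points, which the paper justifies by noting that otherwise the linear span of the points would meet $\mathbb{P}(V_+)$, contradicting generality.
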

\begin{proof}
We prove the proposition for $H=D$.
Since for any choice of a polarization $H$ on $S$, $\cP_H$ and $\cP_D$ are birational, the statement holds for any $\cP_H$. 

Set $V:=H^0(S, D)^\vee$.
Since $D$ is very ample, we have an embedding 
$$S \hookrightarrow \mathbb{P}(V) \cong \mathbb{P}^{h}.$$
Recall that the branch divisor $B$ satisfies $B=-2K_T$.
In particular, $f$ is branched along a section of $\omega_T^{\vee \otimes 2}$, so $f_*\cO_S= \cO_T \oplus \omega_T$. Then by the projection formula, it follows that  
\[f_*\cO_S(D)= f_*(f^*\cO_T(C) \otimes \cO_S)= \cO_T(C) \oplus \cO_T(C + K_T).\] 
Since $f$ is finite, we have
\[H^0(S, D) = H^0(T, f_*D)= H^0(T, C) \oplus H^0(T, C+K_T).\]
As $C$ is very ample, we have an embedding 
\[T \hookrightarrow \p(H^0(T, C)^\vee) \cong \mathbb{P}^{h-g}.\]
Then the involution $i$ extends to an involution of $\p(V)$ and $f$ is the restriction of the projection from $\p(H^0(T, C+K_T)^\vee)$. 
Denote by $V_+$ and $V_-$ the eigenspaces of the involution on $\mathbb{P}(V)$ corresponding to $1$ and $-1$.
Up to changing the linearization, we can assume $V_-=H^0(T, C)^\vee$, $V_+=H^0(T, C+ K_T)^\vee$.
We will construct a dominant rational map $\varphi\colon S^{[h-g]}\dashrightarrow \mathcal{P}_D$ fitting in the diagram
\begin{equation}
\label{cd_defofvarphi}    
\xymatrix@R=1mm{S^{[h-g]}\ar@{-->}[rr]^{\varphi}\ar@{-->}[ddr] & & \mathcal{P}_D\ar[ddl]^\eta\\
\!\!\!Z\ar@{|->}[ddr] & & \\
 & |C| \cong \p(V_-) & \\
 & \!C_Z &
}.
\end{equation}
Here $C_Z$ is defined as follows.
Consider the subset $U \subset S^{[h-g]}$ consisting of elements $Z=\{z_1, \ldots, z_{h-g}\} \in S^{[h-g]}$, where the $z_i$'s are distinct points on $S$ and such that:

\begin{enumerate}[1.]
    \item \(H_{f(Z)}:=\langle f(z_1), \ldots, f(z_{h-g})\rangle\) is a hyperplane in \(\mathbb{P}^{h-g}\);
    \item $Z \cap i(Z)=\emptyset.$ More precisely $i(z_k) \neq z_l$ for all $k, l\in \{1, \ldots, h-g\}$.
\end{enumerate}
We claim that $U$ is an open subset of $S^{[h-g]}$.
Indeed, let $W$ be the image of $U$ under the Hilbert--Chow morphism $HC\colon S^{[h-g]}\to \Sym^{h-g}(S)$, in particular, $HC^{-1}(W)=U$.
Set $W':=q^{-1}(W)$, where $q \colon S^{ h-g} \to \Sym^{h-g}(S)$ is the quotient map. 
Note that the complement of $W'$ in $S^{ h-g}$ is  $Y_1\cup Y_2$, where:
\begin{enumerate}[a.]
    \item $Y_1$ is the set consisting of $(h-g)$-tuples of points $z_1, \dots, z_{h-g}$ in $S$ such that $H_{f(Z)}$ is not a hyperplane in $\mathbb{P}^{h-g}$.
    \item $Y_2$ is the set consisting of $(h-g)$-tuples of points $z_1, \dots, z_{h-g}$ in $S$ 
    such that for some $k,l$ the pair $(z_k, z_l)$ belongs to the graph of the involution $i$.
\end{enumerate}
 Note that $Y_1$ and $Y_2$ are closed in $S^{ h-g}$, so $W'$ is open. We conclude that $W$ is open in the quotient, hence $U$ is open in $S^{[h-g]}$.

As a consequence, since $U$ dominates $|C|$, by Bertini's Theorem for general $Z \in U$ the curve
\[C_{Z}:=H_{f(Z)} \cap T\in|C|\] 
is smooth.
Define $D_{Z}:=f^{-1}(C_Z)=\langle \mathbb{P}(V_{+}),Z \rangle \cap S$.
Note that the locus of curves in $|C|$ intersecting $B$ non transversely is parametrized by the dual varieties of the irreducible components of $B$, so it has codimension $\geq 1$. Hence a general smooth curve $C_Z \in |C|$ intersects $B$ transversely, so $D_Z$ is smooth.

Then $\varphi\colon S^{[h-g]}\dashrightarrow \mathcal{P}_D$ is defined by 
$$U\ni Z \mapsto \mathcal{O}_{D_Z}(Z-i^*Z),$$ 
where $Z$ is considered as a divisor on $D_Z$. The sheaf $\mathcal{O}_{D_Z}(Z-i^*Z)$ is $\tau$-invariant by definition and stable by \cite[Lemma~1.1.13]{Sacca}, so it is an element in $\fix(\tau)$. The curve $C_Z$ is general, hence the irreducible component $K$ of $\fix(\tau)$ that contains $\prym(D_Z/C_Z)$ dominates the linear system $|C|$. 
The dimension of $K$ equals the dimension of $\fix(\tau)$, hence by Corollary~\ref{cor_Fix=connected} it follows that $K$ coincides with $\fix^0(\tau)$. Moreover, since $\mathcal{O}_{D_Z}(Z-i^*Z)$ is stable, it belongs to the regular part of $\fix^0(\tau)$, so it defines a point in the normalization $\mathcal{P}_D$.

We claim that $\varphi$ is dominant. 
Indeed, first consider the map $S^{[h-g]} \dashrightarrow |C|$ defined by $Z \mapsto C_Z$.
For general smooth curves $C_0 \in |C|$ and $D_0=f^{-1}C_0$, its fiber over $C_0$ is an open subset of $\Sym^{h-g}D_0$ (note that the map factors over $f^{[h-g]} \colon S^{[h-g]} \dashrightarrow T^{[h-g]}$).
Since $\varphi$ commutes with the fibrations on $\p(V_-)$ in \eqref{cd_defofvarphi}
and $\dim \mathcal{P}_D=2h-2g=\dim S^{[h-g]}$, in order to show that $\varphi$ is dominant it is enough to show that the rational map
\[\psi \colon \Sym^{h-g}D_0 \dashrightarrow \mathcal{P}_D; \quad Z_0:=x_1+\dots +x_{h-g}\mapsto \mathcal{O}_{D_0}(Z_0- i^*Z_0)\]
has image of dimension $h-g$. 
Indeed, this would imply that the image of $\varphi$ has dimension
$$\dim(\text{Im}(\psi))+ \dim(|C|)=2h-2g.$$
Note that, by definition, $\psi$ is the composition of the Abel--Jacobi map $u \colon \Sym^{h-g}D_0 \to J(D_0)$ and the projection $1-i^* \colon J(D_0) \to \prym(D_0/C_0)$. Consider distinct points $x_1, \dots, x_{h-g}$ in $D_0$ defining an element in $U$.
The rank of the differential $u_*$ of $u$ at the point $Z_0=x_1+ \dots+ x_{h-g}$ is equal to the rank of the matrix $(\omega_k(x_l))$, $k=1, \dots, h$, $l= 1, \dots, h-g$, where $\omega_1, \dots, \omega_h$ are a basis of $H^0(D_0, \omega_{D_0})$. 
Moreover, we can choose a basis such that $\omega_1, \dots, \omega_{g}$ are $i$-invariant, while $\omega_{g+1}, \dots, \omega_{h}$ are $i$-anti-invariant. 
Then the rank of $\psi_*$ is equal to the rank of the $(h-g) \times (h-g)$-matrix $(\omega_k(x_l))$, $k=g+1, \dots, h$, $l= 1, \dots, h-g$. This matrix has maximal rank for general $Z_0$, as otherwise the linear span of $x_1, \dots, x_{h-g}$ would intersect $\p(V_+)$, in contradiction with the generality assumption. 
\end{proof}

\begin{remark}\label{rmk_rationalmap_Cample} In the proof of Proposition \ref{prop_dominantMap} above, very ampleness of $|C|$ is needed in order to ensure that, for general $Z$, the curves $C_Z$ and $D_Z$ are irreducible; as a byproduct this implies that all the irreducible components of the branch divisor $B$ are contained in $\mathbb{P}(V_-)$. 
A similar proof, similar to the ones of \cite[Lemma 5.2]{MT} and \cite[Section 4.4.2]{Matteini}, gives the statement of Proposition~\ref{prop_dominantMap} also in the case when $C$ is ample and $B$ is irreducible, i.e.\ when the main invariant of $i$ satisfies $r=a$. 
\end{remark}

\begin{remark}
The degree of the map in Proposition~\ref{prop_dominantMap} turns out to be two in the two known cases in the literature (see the aforementioned \cite[Lemma 5.2]{MT} and \cite[Section 4.4.2]{Matteini}). Our expectation is that the degree of $\varphi$ is 
\[1+\left(\begin{array}{c}
  (2h-2)-(h-g)  \\
     h-g
\end{array}\right).\]
We also remark that the case $g=0$ is excluded by the assumptions of Proposition \ref{prop_dominantMap}, since in that case $D$ would be hyperelliptic, hence not very ample.
\end{remark}

We finish the section by using Proposition~\ref{prop_dominantMap} to prove Theorem~\ref{thm_psv}.

\begin{proof}[Proof of Theorem \ref{thm_psv}]
This is a consequence of Propositions~\ref{prop_simplvarBeauville},  \ref{prop_condition_for_psv} and \ref{prop_dominantMap}. 
\end{proof}

\section{Simple connectedness}\label{sec:pi1}

In the section we focus on the second condition of Proposition~\ref{prop_keyprop}, the criterion that will allow us to prove Theorem \ref{thm_isv_exactHp}.
The main result of this section is Theorem~\ref{thm_pi1}, 
which proves simple connectedness of the regular locus of the relative Prym varieties $\mathcal{P}_H$. In Sections \ref{section_curvesreduciblepreim} and \ref{section_curvessingularpreim}, we will express the technical conditions (i)--(iii) below in terms of positivity properties of the linear system $|C|$. Finally, in the last Section \ref{sec:examples}, we will give examples of linear systems which satisfy these conditions.

\begin{theorem}\label{thm_pi1}
Let $S$ be a very general K3 surface with an anti-symplectic involution $i$; let $f\colon S\to T:=S/i$ be the quotient map. Let $C$ be a curve on $T$ and assume that
\begin{enumerate}[(i)]
 \item The branch locus $B$ satisfies $B.C>2$;
 \item The locus of curves in $|C|$ whose preimage under $f$ is not integral has codimension at least 2 in $|C|$;
 \item 
 Let $Z\subset|C|$ be an irreducible component of the locus of curves with singular preimage for which $\codim_{|C|}Z=1$.
Then the general element of $Z$ is one of the following:
 \begin{enumerate}[(a)]
    \item[(1)] A smooth integral curve intersecting $B$ transversely except in one point, where the multiplicity is 2;
    \item[(2)] An integral curve intersecting $B$ transversely, with one node outside $B$ and no other singularities.
 \end{enumerate}
\end{enumerate}
Then for any polarization $H$ on $S$, the smooth locus ${(\cP_H)}_{\reg}$ of the associated relative Prym variety $\cP=\cP_H$ is simply connected. 
\end{theorem}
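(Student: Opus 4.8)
The plan is to compute $\pi_1\big((\cP_H)_{\reg}\big)$ by exhibiting an open subset $V \subset (\cP_H)_{\reg}$ whose complement has codimension $\geq 2$, so that $\pi_1(V) \twoheadrightarrow \pi_1\big((\cP_H)_{\reg}\big)$, and then to compute $\pi_1(V)$ via the Lagrangian fibration $\eta$. Over the open locus $U' \subset |C|$ of curves with integral preimage, $\eta$ restricts to a fibration in compactified Prym varieties. I would first stratify $U'$ by the type of singularity of the preimage curve: a dense open $U^0 \subset U'$ where $C_0$ and $D_0 = f^{-1}C_0$ are smooth (so the fibre is an abelian variety $\prym(D_0/C_0)$), and, by hypotheses (ii) and (iii), the only codimension-one strata to add back are the divisors $Z$ of type (1) and (2) from (iii), where by Lemmas~\ref{irr_prym_one_node} and \ref{irr_prym_transv_B} the fibre of $\overline{\fix^0(\tau)} \to |C|$ still has a unique component of maximal dimension — this is what makes the relevant locus inside $\cP_H$ rather than just inside $M_H$. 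Everything of codimension $\geq 2$ in $|C|$ (including the non-integral locus, by (ii)) can be discarded, as its preimage in $\cP_H$ has codimension $\geq 2$ provided $\eta$ is equidimensional, which follows from Proposition~\ref{prop_flatness} (or directly from the Lagrangian property).

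Next I would run the standard homotopy exact sequence argument for a fibration. Over $U^0$ one has a fibre bundle (in the analytic topology, after possibly shrinking to avoid monodromy-jumping) with abelian variety fibre $\prym$, giving
\[
\pi_1(\prym) \to \pi_1\big(\eta^{-1}(U^0)\big) \to \pi_1(U^0) \to 1.
\]
Since $|C|$ is a projective space, $\pi_1(|C|)=1$; the key point is then to control $\pi_1(U^0)$, i.e.\ the contribution of the complement $|C| \setminus U^0$, which is a divisor. This is where I would use the rational section $s\colon |C| \dashrightarrow \cP_H$ from Remark~\ref{remark_sectionsupportP}, defined over $U'$, to split off $\pi_1$ of the base and to kill the base contribution: a section shows $\pi_1\big(\eta^{-1}(U^0)\big)$ is generated by the image of $\pi_1(\prym)$ together with loops coming from the divisorial strata. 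The abelian-variety fibre contributes a quotient of $H_1(\prym,\Z) \cong \Z^{2(g(D)-g(C))}$; I must show this entire contribution dies. The mechanism is monodromy plus vanishing cycles: going around a general point of each codimension-one discriminant divisor $Z$ of type (1) or (2), the Prym fibre degenerates and a vanishing cycle (the cycle shrinking at the node, or the cycle associated to the branch-point collision) becomes nullhomotopic in $\eta^{-1}(U^0 \cup Z^{\circ})$. The condition $B.C > 2$ (hypothesis (i)) is exactly what guarantees there are enough such branch points, hence enough vanishing cycles of the ``(1)'' type, and the Picard--Lefschetz / Prym monodromy then spreads these over a full set of generators of $H_1(\prym,\Z)$, so that $\pi_1$ of the fibre maps to zero.

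Concretely, the computation of the local monodromy and of the vanishing cycles near a type-(1) degeneration (two branch points of $D_0 \to C_0$ colliding) and a type-(2) degeneration (a node of $D_0$ away from $B$, i.e.\ an ordinary node in the Prym double cover) is the technical heart; for the latter the relevant local model is a Prym variety acquiring a node, whose vanishing cycle is anti-invariant under $-i_0^*$, and one checks it is primitive in $H_1(\prym,\Z)$. Then an irreducibility/transitivity statement for the monodromy action of $\pi_1(U^0)$ on $H_1(\prym,\Z)$ — analogous to the classical irreducibility of the monodromy on $H_1$ of a Prym in a sufficiently positive family, and this is presumably the content of a separate ``Proposition~\ref{prop_fundgroupV}'' alluded to in the introduction — upgrades ``one vanishing cycle dies'' to ``all of $H_1(\prym)$ dies''. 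Finally I would assemble: $\pi_1(V) = \pi_1\big(\eta^{-1}(U^0)\big)/(\text{vanishing cycles}) = 1$, and then $\pi_1\big((\cP_H)_{\reg}\big)$ is a quotient of this, hence trivial. The main obstacle I anticipate is \emph{not} the base or the section bookkeeping but precisely the monodromy irreducibility step: one must verify, using the very-ampleness and positivity of $|C|$ implicit in (i)--(iii), that the type-(1) vanishing cycles generate $H_1(\prym,\Z)$ as a module over the monodromy group — degenerations of type (2) alone would only give the anti-invariant part relative to further nodes and might not suffice, so the interplay of the two families of vanishing cycles, together with the bound $B.C>2$ ensuring a genuinely positive-dimensional family of type-(1) divisors, is the crux.
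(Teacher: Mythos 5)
Your skeleton does follow the paper's route: restrict to $\cP'=\eta^{-1}(U')$ and use the codimension-$2$ complement to get a surjection on $\pi_1$, then exploit the fibration over $U'$ together with the zero-section and the irreducibility of the fibres over general points of the codimension-one strata (Lemmas~\ref{irr_prym_one_node} and~\ref{irr_prym_transv_B}, plus flatness) so that a Leibman-type exact sequence applies, and finally kill the fibre contribution using Picard--Lefschetz for the type-(1) and type-(2) degenerations. However, the heart of the proof is exactly the step you defer to an unproven ``monodromy irreducibility/transitivity'' statement, and that is a genuine gap. After Leibman, what must be shown is that the commutators $[\pi_1(P_0),\pi_1(U)]$, which by the Picard--Lefschetz formulas are precisely the classes $(c\cdot[\alpha_r])[\alpha_r]$ and $(c\cdot[\alpha_s])([\alpha_s]-[i\alpha_s])$ with $c\in H_1(D,\Z)_-$, generate all of $\pi_1(P_0)=H_1(D,\Z)_-$. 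The paper does this not by any irreducibility of the Prym monodromy representation (no such statement is proved or used; Proposition~\ref{prop_fundgroupV} is not that --- it only says the loops $\lambda_r,\lambda_s,i\lambda_s$ generate $\pi_1(V)$), but by: (a) showing $H_1(D,\Z)$ is generated by vanishing cycles, via simple connectedness of $M_H$ (Proposition~\ref{prop_Msimplyconn}) and Leibman applied to $M_H\to|D|$; (b) a genuinely topological argument with the real model of the involution (Propositions~\ref{invariant vanishing cycle} and~\ref{anti_invariant_H_1}) showing that $\{[\alpha_r],[\alpha_s]-[i\alpha_s]\}$ generate the anti-invariant homology, with the $\alpha_r$ representable by anti-invariant smooth non-separating circles; and (c) producing, for each generator, an anti-invariant class $c$ with $c\cdot\alpha=1$. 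None of this is supplied or replaceable by a citation in your proposal, and a classical ``irreducibility of Prym monodromy'' result is not available off the shelf for these families.

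Relatedly, your use of hypothesis (i) is off. $B.C>2$ is not there to guarantee ``enough type-(1) divisors'' or a positive-dimensional family of them; it enters only in Lemma~\ref{lemma_intersection1}: since a type-(1) vanishing cycle $\alpha_r$ is anti-invariant and passes through exactly two fixed points of $i$, if $C.B=2$ these are the only ramification points on $D$ and every anti-invariant class has \emph{even} intersection with $\alpha_r$, so the commutators produce only $2\Z\,[\alpha_r]$ and the argument fails (see the remark closing Section~\ref{sec:pi1}). The bound $C.B>2$ provides a ramification point off $\alpha_r$, which is what allows one to build an anti-invariant cycle meeting $\alpha_r$ in a single point. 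Also, for type-(2) degenerations the individual vanishing cycle $\alpha_s$ is not anti-invariant; only $\alpha_s-i\alpha_s$ is, and on anti-invariant classes the composed Dehn twists give $c\mapsto c+(c\cdot[\alpha_s])([\alpha_s]-[i\alpha_s])$ --- a point your sketch does not track, but which is essential to see what the type-(2) strata actually contribute.
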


We follow the proof of Theorem~7.1 in \cite{ASF}.
We will suppress the base point from the notation of fundamental groups.

\medskip
Throughout this section, we denote by $U'\subset|C|$ the locus of curves whose preimage under $f$ is integral. 
So by assumption, the complement $|C|\setminus U'$ has codimension at least 2 in $|C|$.
By $U\subset U'$ we denote the locus of curves in $|C|$ whose preimage is integral and smooth.
The complement $W:=U'\setminus U$ consists of curves in $|C|$ whose pullback under $f$ is integral and singular. 
By assumption, the irreducible components $W_i$ of $W$ that have codimension 1 in $U'$ come in two types, depending on what their general element looks like:

\begin{enumerate}
 \item We say $W_i$ is of \emph{type 1} if its general point corresponds to a smooth curve $C$ in $T$ which intersects $B$ transversely except in one point, where it has multiplicity 2.
 \item We say $W_i$ is of \emph{type 2} if its general point corresponds to a curve $C$ with one node away from the branch locus $B$ and no other singularities, and which intersects $B$ transversely. 
\end{enumerate}

One can determine the type of singularities of the curve $f^{-1}C$ using the following lemma:

\begin{lemma}\label{lemma_milnornumber}
Let $p\in C$ be a smooth point such that $C$ intersects $B$ at $p$ with multiplicity $n$. Then the point $q$ of $D$ above $p$ has Milnor number $n-1$.
\end{lemma}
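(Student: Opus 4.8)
The plan is to work locally analytically near the point $p \in C \cap B$ and describe $D = f^{-1}C$ as a curve in $S$, then compute the local intersection multiplicity that governs its singularity.

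First I would set up local coordinates. Since $S \to T$ is a double cover branched along the smooth curve $B$, near a point of $B$ we may choose analytic coordinates $(x,y)$ on $T$ so that $B = \{y = 0\}$, and then $S$ is locally $\{z^2 = y\}$, with $f(x,z) = (x, z^2)$ and the covering involution $i(x,z) = (x,-z)$. The branch point of $f$ over $p$ is the origin, where $z = 0$. Next I would write the curve $C$ locally as $\{h(x,y) = 0\}$ for some analytic function $h$ with $h(0,0) = 0$ and $\mathrm{d}h(0) \neq 0$ (as $p$ is a smooth point of $C$). The hypothesis that $C$ meets $B = \{y=0\}$ at $p$ with multiplicity $n$ means precisely that the restriction $h(x,0)$ vanishes to order $n$ at $x = 0$, i.e. $h(x,0) = x^n \cdot (\text{unit})$.

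Then $D = f^{-1}C$ is cut out near $q = (0,0) \in S$ (in the coordinates $(x,z)$ on $S$) by $\tilde h(x,z) := h(x, z^2) = 0$. I would now compute the Milnor number $\mu_q(D)$, i.e. $\dim_{\mathbb C} \mathbb C\{x,z\}/(\partial_x \tilde h, \partial_z \tilde h)$, or equivalently — since $D$ is a reduced plane curve germ — use $\mu = 2\delta - r + 1$ where $\delta$ is the delta-invariant and $r$ the number of branches, or most directly just identify the singularity type. Writing $h(x,y) = c_1 x + c_2 y + (\text{higher order})$: if $c_2 \neq 0$ (i.e. $C$ is transverse to $B$, $n=1$), then $\tilde h = c_2 z^2 + \dots$ has a smooth point or is regular in the relevant sense — one checks $\mu = 0$, consistent with $n - 1 = 0$. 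In general, using the Weierstrass preparation / Newton polygon of $\tilde h(x,z) = h(x,z^2)$: the condition $h(x,0) = x^n(\text{unit})$ together with $\partial_y h(0,0) \neq 0$ (which holds when $n \geq 2$ since $\mathrm dh(0)\neq 0$ forces $\partial_y h(0,0)\neq0$ once $\partial_x h$ vanishes to order $\geq 1$) shows that $\tilde h$ is, up to a unit and analytic change of coordinates, equivalent to $z^2 - x^n$, i.e.\ an $A_{n-1}$ singularity. The Milnor number of $A_{n-1}$ is exactly $n-1$.

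The main obstacle is making rigorous the claim that $\tilde h(x,z) = h(x,z^2)$ is analytically equivalent to $z^2 - x^n$: one must argue that the higher-order terms in $y$ of $h$ do not change the singularity type, which follows from finite determinacy of the $A_{n-1}$ singularity (it is $n$-determined) together with a careful look at the Newton polygon of $h(x,z^2)$ — the relevant vertices are $(0,2)$ coming from $\partial_y h(0,0) y \mapsto z^2$ and $(n,0)$ coming from $h(x,0) = x^n(\text{unit})$, and all other monomials lie above the segment joining them, so they are negligible. Once this normal form is established, reading off $\mu_q = n-1$ is immediate, and by the genus formula this also records how such a point contributes to $g(D) - g(\tilde D)$ under normalization, which is how the lemma will be used in the subsequent codimension computations.
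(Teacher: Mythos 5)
Your proposal is correct and takes essentially the same route as the paper: local coordinates in which the double cover is $z^2=y$ branched along $\{y=0\}$, reduction of $f^{-1}C$ near $q$ to the normal form $z^2-x^n$ (an $A_{n-1}$ point), and the conclusion $\mu=n-1$; the paper simply asserts the normal form (``we may assume $C=Z(y-x^n)$'') and computes $\mu$ from the Jacobian ideal, whereas you justify the reduction, which is most cleanly done not by Newton-polygon/determinacy bookkeeping but by Weierstrass preparation in $y$, writing $h=\mathrm{unit}\cdot\bigl(y-\phi(x)\bigr)$ with $\mathrm{ord}\,\phi=n$, so that $D$ is locally $z^2=\phi(x)\sim x^n$. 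One cosmetic slip: transversality ($n=1$) corresponds to $\partial_x h(0,0)\neq 0$, not $\partial_y h(0,0)\neq 0$, but that case is immediate since $h(x,z^2)$ then has a nonzero linear term.
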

\begin{proof}
 Locally, we can assume the degree 2 cover is given by
 \begin{align*}
   \spec\mathbb{C}[x,y,z]\supset Z(z^2-y)&\to \spec\mathbb{C}[x,y]  \\
  (x,y,z)&\mapsto (x,y),
\end{align*}
with branch curve the $x$-axis. We may assume $C$ is the curve $Z(y-x^n)$ and $p=(0,0)$. 
 The inverse image of $Z(y-x^n)$ is $Z(z^2-y,y-x^n)$, 
 which is isomorphic to $Z(z^2-x^n)\subset\spec\mathbb{C}[x,z]$.
 Its singular point $q$ has Milnor number
 \[\mu(q)=\dim_{\mathbb{C}}\frac{\mathbb{C}[[x,z]]}{(2z,nx^{n-1})}=n-1.\qedhere\]
\end{proof}

It follows that for a general curve $C$ in a branch $W_i$ of type 1, 
the curve $f^{-1}C\subset S$ has one node and no other singularities.
If $C$ is a general curve in a branch $W_i$ of type 2, then $f^{-1}C\subset S$ has two nodes and no other singularities.

\subsection{First reduction step}

Denote by $\eta\colon \cP\to |C|$ the map induced by the support morphism.
The inverse image $\cP':=\eta^{-1}(U')$ is contained in $\cP_{\reg}$ (see Remark~\ref{remark_sectionsupportP}). 
The pushforward map $\pi_1(\cP')\to\pi_1(\cP_{\reg})$ is surjective by \cite[Proposition~2.10]{Kollar}; hence, it suffices to prove that $\pi_1(\cP')=1$. To do this, we will use Leibman's theorem \cite[Section~0.3]{Leibman}:
\begin{theorem}[Leibman]
    Let $p\colon E\to B$ be a surjective morphism of connected smooth manifolds.
    Suppose there exists an open subset $V\subset B$ such that the restriction
    $E_V\to V$ is a locally trivial fibre bundle with fibre $F$, and such that $B\setminus V$  has real codimension at least 2 in $B$.
    In addition, suppose that $p$ has a global section $s\colon B\to E$ whose image intersects every irreducible component of $E\setminus p^{-1}(V)$ of real codimension 2 in $E$. 
    Consider the short exact sequence
    \[1\to \pi_1(F)\to \pi_1(E_V)\overset{s_*}{\leftrightarrows} \pi_1(V)\to 1.\]
    Let $K:=\ker(\pi_1(V)\to \pi_1(B))$, considered as a subgroup of $\pi_1(E_V)$ via $s_*$. Let $R=[\pi_1(F),K]$ be the commutator subgroup of $\pi_1(F)$ and $K$ in $\pi_1(E_V)$.
    Then there exists an exact sequence
       \[ 1\to R\to \pi_1(F)\to \pi_1(E)\overset{s_*}{\leftrightarrows} \pi_1(B)\to 1 \]
\end{theorem}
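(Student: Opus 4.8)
The plan is to combine the homotopy long exact sequence of the bundle $E_V\to V$ with a van Kampen argument for the removal of the real-codimension-$2$ loci upstairs and downstairs, and then to run the whole comparison inside the semidirect product furnished by the section.

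First I would fix the base point on $s$ and record the structure behind the short exact sequence in the statement. Since $E_V\to V$ is a locally trivial bundle with connected fibre $F$ and the section $s|_V$ splits $p_*$ on every $\pi_n$ (so the connecting map $\pi_2(V)\to\pi_1(F)$ vanishes), the homotopy long exact sequence collapses to the split sequence $1\to\pi_1(F)\to\pi_1(E_V)\overset{s_*}{\leftrightarrows}\pi_1(V)\to1$, and the splitting identifies $\pi_1(E_V)\cong G:=\pi_1(F)\rtimes_\rho\pi_1(V)$, where $\rho$ is the monodromy. Here $\pi_1(F)$ is normal, so $K$ (viewed in $\pi_1(E_V)$ via $s_*$) normalizes it and $R=[\pi_1(F),K]$ is normal in $\pi_1(F)$; moreover $K\trianglelefteq\pi_1(V)$, so $\rho$ preserves $R$ and descends to an action $\bar\rho$ of $\pi_1(B)=\pi_1(V)/K$ on $\pi_1(F)/R$. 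This is what makes the target group $H:=(\pi_1(F)/R)\rtimes_{\bar\rho}\pi_1(B)$ well defined.

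Next I would pass from the open loci to the total spaces. Both $B\setminus V$ and $E\setminus E_V=p^{-1}(B\setminus V)$ have real codimension $\geq2$, and removing a real-codimension-$2$ submanifold gives a $\pi_1$-surjection whose kernel is the normal closure of the meridians of its codimension-$2$ components, while strata of real codimension $\geq3$ contribute nothing. Downstairs this gives $K=\langle\!\langle\mu_j\rangle\!\rangle$ with $\mu_j$ the meridians of the real-codimension-$2$ components of $B\setminus V$; upstairs it gives $\pi_1(E)=\pi_1(E_V)/N$, where $N$ is the normal closure of the meridians $\gamma_i$ of the real-codimension-$2$ components $\Delta_i$ of $E\setminus E_V$. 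The crux, where the section hypothesis enters, is to compute $N$ inside $G$: since $s(B)$ meets each $\Delta_i$, after a transversality reduction I would take the base point and the meridian $\gamma_i$ on $s(B)$, so that $\gamma_i=s_*(\mu_i)$ is the $s_*$-image of a meridian $\mu_i$ of $p(\Delta_i)$, and by equidimensionality of $p$ (as in Proposition~\ref{prop_flatness}) these $\mu_i$ exhaust the meridians of the real-codimension-$2$ components of $B\setminus V$, whence $\langle\!\langle\mu_i\rangle\!\rangle=K$ and $N=\langle\!\langle s_*(\mu_i)\rangle\!\rangle$. Comparing with the natural surjection $q\colon G\twoheadrightarrow H$, whose kernel is $\langle R,s_*(K)\rangle$, I get $N\subseteq\ker q$ because each $s_*(\mu_i)\in s_*(K)\subseteq\ker q$; conversely $s_*(K)=s_*\langle\!\langle\mu_i\rangle\!\rangle\subseteq N$ and, $N$ being normal, $[\theta,k]=\theta k\theta^{-1}k^{-1}\in N$ for every $\theta\in\pi_1(F)$ and $k\in s_*(K)$, so $R\subseteq N$ and thus $\ker q\subseteq N$. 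Hence $N=\ker q$, $\pi_1(E)=G/N\cong H$, and intersecting with $\pi_1(F)$ yields $\pi_1(F)\cap N=R$, i.e.\ the split exact sequence $1\to R\to\pi_1(F)\to\pi_1(E)\overset{s_*}{\leftrightarrows}\pi_1(B)\to1$.

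I expect the main obstacle to be the geometric matching of meridians in the crux step, rather than the final group-theoretic bookkeeping (which is the commutator identity $[\theta,s_*(\mu_i)]=\theta\,\rho(\mu_i)(\theta)^{-1}$ in the semidirect product). Concretely, one must justify that the section can be taken transverse to each real-codimension-$2$ component $\Delta_i$ so that its meridian is literally $s_*(\mu_i)$, and one must control the codimensions carefully: flatness/equidimensionality of $p$ is what guarantees that the upstairs meridians project onto a full normal generating set of $K$ and that no stray generators survive over higher-codimension strata where the fibre dimension jumps. These are exactly the points where the hypotheses ``$B\setminus V$ of codimension $\geq2$'' and ``$s$ meets every codimension-$2$ component of $E\setminus E_V$'' are used.
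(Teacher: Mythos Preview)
The paper does not prove this statement: it is quoted verbatim from Leibman \cite[Section~0.3]{Leibman} and used as a black box, so there is no in-paper proof to compare against. Your sketch is essentially the standard argument (split the fibration sequence by the section, kill meridians on both levels, and identify the upstairs kernel with $\langle R, s_*(K)\rangle$), and the overall logic is sound.

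One point deserves tightening. You invoke equidimensionality of $p$ (citing Proposition~\ref{prop_flatness}) to argue that the downstairs loops $\mu_i$ exhaust a normal generating set of $K$, and then use $K=\langle\!\langle\mu_i\rangle\!\rangle$ to deduce $s_*(K)\subseteq N$. This is both unnecessary and out of place: Proposition~\ref{prop_flatness} concerns the specific Prym fibration, not the abstract hypotheses of Leibman's theorem, and the theorem as stated does not assume equidimensionality. The inclusion $s_*(K)\subseteq N$ follows directly from the commutative square
\[
\xymatrix{
\pi_1(E_V)\ar[r]\ar[d]_{p_*} & \pi_1(E)\ar[d]^{p_*}\\
\pi_1(V)\ar[r]\ar@/_/[u]_{s_*} & \pi_1(B)\ar@/_/[u]_{s_*}
}
\]
since for $\mu\in K$ the image of $s_*(\mu)$ in $\pi_1(E)$ is $s_*$ of the image of $\mu$ in $\pi_1(B)$, which is trivial. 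Likewise, each $\mu_i=p_*(\gamma_i)$ lies in $K$ simply because $\gamma_i$ dies in $\pi_1(E)$, without any appeal to which components of $B\setminus V$ it bounds around. With this correction your argument is self-contained under the stated hypotheses; the transversality issue you flag for identifying $\gamma_i$ with $s_*(\mu_i)$ is indeed the genuine technical point, and it is handled in Leibman's paper.
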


We apply Leibman's theorem to the family $\cP'\to U'$. Indeed, this has a section $s\colon U'\to \cP'$, given by sending $C\in U'$ to the pushforward of the structure sheaf $\mathcal{O}_{f^{-1}C}$ to $S$ (see also \eqref{zerosection_supportmap}). Moreover, we will now show that the section $s(U')$ intersects every irreducible component of $\cP'\setminus \eta^{-1}(U)$ of real codimension $2$ in $\cP'$.

\begin{lemma}
\label{lemma_generic-irreducibility_fibers_codim1}
    Let $C_0$ be a general point in an irreducible component of $W=U'\setminus U$ of codimension 1 in $U'$.
    Then the fibre $\cP'_{C_0}$ is irreducible.
\end{lemma}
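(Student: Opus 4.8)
The plan is to reduce the irreducibility of the fibre $\cP'_{C_0}$ to the structure of the fixed locus $\Fix(\tau_0)$ on a single compactified Jacobian, and then invoke the results already established in Section \ref{subsubsec_Prymfib}. Since $C_0$ is a general point of a codimension-$1$ component $W_i$ of $W = U'\setminus U$, by assumption (iii) of Theorem \ref{thm_pi1} (equivalently, by the type-1/type-2 dichotomy recalled after the statement) we are in one of two situations: either $C_0$ is smooth and $D_0 := f^{-1}C_0$ is integral with exactly one node (type 1), or $C_0$ has exactly one node away from $B$ and $D_0$ is integral with exactly two nodes (type 2). In either case $C_0 \in U'$, so $D_0$ is integral and locally planar, and the fibre $\cP'_{C_0}$ is by definition the irreducible component $\overline{\prym(D_0/C_0)} = \Fix^0(\tau_0)$ of $\Fix(\tau_0) \subset \overline{J(D_0)}$ containing $[\cO_{D_0}]$, where $\tau_0$ is the involution $L \mapsto i_0^*\sheafhom(L,\cO_{D_0})$ of Proposition-Definition \ref{involution_integral_case}.

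The first step is to observe that, since $B.C > 2$ by assumption (i), the covering involution $i_0$ on $D_0$ (and on its partial normalizations) is ramified, hence not étale: the number of branch points of $D_0 \to C_0$ is $B.C = -2C.K_T > 2 > 0$. This is precisely the hypothesis needed to apply Lemma \ref{irr_prym_one_node} in the type-1 case (where $C_0$ is smooth and $D_0$ has one node) and Lemma \ref{irr_prym_transv_B} in the type-2 case (where $C_0$ has one node and $D_0$ has two nodes). Both lemmas give that $\Fix^0(\tau_0)$ is the \emph{unique} irreducible component of $\Fix(\tau_0)$ of maximal dimension. The remaining point is to upgrade ``unique component of maximal dimension'' to ``the fibre $\cP'_{C_0}$ is irreducible''. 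Here I would use that $\cP'_{C_0}$ is by construction exactly the component $\Fix^0(\tau_0)$ — not all of $\Fix(\tau_0)$ — so the statement ``$\cP'_{C_0}$ is irreducible'' is immediate from its definition as an irreducible component, \emph{provided} one knows that the scheme-theoretic fibre $\eta^{-1}(C_0)$ of $\cP \to |C|$ has no embedded or lower-dimensional extra components sticking out; this is where one invokes the flatness/equidimensionality of $\eta$ (Proposition \ref{prop_flatness}, applied after base change, or rather its proof which only uses that $\cP_H$ is symplectic away from the relevant loci) together with Corollary \ref{cor_Fix=connected} to guarantee that $\fix^0(\tau)$ dominates $|C|$ with equidimensional fibres, so that $\cP'_{C_0} = \overline{\fix^0(\tau)} \cap \eta^{-1}(C_0)$ is pure of dimension $g(D)-g(C)$ and coincides with the single top-dimensional component $\Fix^0(\tau_0)$ of $\Fix(\tau_0)$.

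I expect the main obstacle to be precisely this last bookkeeping point: one must be careful that the fibre of the \emph{globalized, compactified, normalized} object $\cP = \cP_H$ over $C_0$ really is $\Fix^0(\tau_0)$ and not some larger closed subscheme of $\overline{J(D_0)}$ picking up boundary components, and that normalization does not disconnect it. The cleanest route is: (a) work first on $\fix^0(\tau) \subset M_H$ before compactifying, noting that over the locus of integral curves $\tau$ is regular and the fibre over $C_0$ is literally $\Fix^0(\tau_0)$ inside $\overline{J(D_0)} = \supp^{-1}(D_0)$; (b) use Lemmas \ref{irr_prym_one_node} and \ref{irr_prym_transv_B} to conclude $\Fix^0(\tau_0)$ is irreducible of the expected dimension and is all of the top-dimensional part; (c) since $C_0 \in U'$, the fibre $\cP'_{C_0}$ lies in $\cP_{\reg}$ (Remark \ref{remark_sectionsupportP}) and equals the normalization of $\Fix^0(\tau_0)$, which is irreducible because normalization preserves irreducibility of an irreducible variety. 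One small extra check, needed to even apply the two lemmas, is that for $C_0$ general in $W_i$ the singularities of $D_0$ are exactly nodes and that the induced involutions on the successive normalizations remain non-étale — but the first follows from Lemma \ref{lemma_milnornumber} and the description of types 1 and 2, and the second from $B.C > 0$ as noted above.
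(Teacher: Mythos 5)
Your proposal is correct and follows essentially the paper's argument: reduce to the fixed locus $\Fix(\tau_0)$ on the single compactified Jacobian $\overline{J(D_0)}$, invoke Lemmas~\ref{integral Prym}, \ref{irr_prym_one_node} and \ref{irr_prym_transv_B} (with the non-\'etale hypothesis supplied by $B.C>2$) to see that $\Fix^0(\tau_0)$ is the unique component of maximal dimension $g_a(D_0)-g_a(C_0)$, and then compare with a lower bound on the dimensions of the components of the fibre --- the only difference being that you obtain this bound from flatness (Proposition~\ref{prop_flatness} together with Remark~\ref{rmk_indpol}), whereas the paper uses the more elementary upper semicontinuity of fibre dimension for the irreducible variety $\fix^0(\tau)$ dominating $|C|$. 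One caution: your assertions that $\cP'_{C_0}$ equals $\Fix^0(\tau_0)$ ``by definition/by construction'' are not correct as stated --- the fibre of the global component $\fix^0(\tau)$ over $C_0$ could a priori pick up further components of $\Fix(\tau_0)$, and ruling this out is exactly the content of the lemma --- but your subsequent purity-of-fibre-dimension argument does supply the missing step, so the proof as a whole stands.
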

\begin{proof}
We use the notation of Proposition-Definition~\ref{involution_integral_case}. 
The fibre $M_H(v)_{C_0}$ is the compactified Jacobian $\overline{J(D_0)}$ of $D_0=f^*C_0$,
which has induced involution $\tau_0=\tau|_{\overline{J(D_0)}}$.
The fibre $\cP'_{C_0}=\fix^0(\tau)_{C_0}$ consists of all irreducible components of $\fix(\tau_0)$ contained in $\fix^0(\tau)$.
By uppersemicontinuity of fiber dimension, all components of $\cP'_{C_0}$ have dimension at least $g_a(D_0)-g_a(C_0)$, the dimension of the fibre over a smooth curve $C\in U$.
By Lemmas~\ref{integral Prym}, \ref{irr_prym_one_node} and \ref{irr_prym_transv_B}, $\fix(\tau_0)$ has only one irreducible component of maximal dimension $g_a(D_0)-g_a(C_0)$, which implies the statement.
\end{proof}

\begin{proposition}
    Under the assumptions of Theorem \ref{thm_pi1}, the section $s(U')$ intersects every irreducible component of $\cP'\setminus \eta^{-1}(U)$ of codimension $1$ in $\cP'$.
\end{proposition}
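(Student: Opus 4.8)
The plan is to analyze the codimension-$1$ irreducible components of $\cP'\setminus\eta^{-1}(U)=\eta^{-1}(W)$ one at a time, according to the stratification of $W=U'\setminus U$ into components $W_i$, and to exhibit a point of each such component that lies on the zero-section $s(U')$. Since $\eta$ is equidimensional (Proposition~\ref{prop_flatness} gives this for $H=D$, and by Remark~\ref{rmk_indpol} the relevant open set $\eta^{-1}(U')$ is independent of $H$), every irreducible component of $\eta^{-1}(W)$ dominates some component of $W$; and a component of $\eta^{-1}(W_i)$ has codimension $1$ in $\cP'$ only if it dominates a codimension-$1$ component $W_i$ of $W$ and its general fibre over $W_i$ has the same dimension as the general fibre of $\eta$. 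By Lemma~\ref{lemma_generic-irreducibility_fibers_codim1}, for $C_0$ a general point of such a $W_i$ the fibre $\cP'_{C_0}$ is irreducible of dimension $g_a(D_0)-g_a(C_0)$; hence there is exactly one irreducible component $E_i$ of $\eta^{-1}(W_i)$ of codimension $1$ in $\cP'$, namely the closure of $\bigcup_{C_0\in W_i\ \mathrm{general}}\cP'_{C_0}$.

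It therefore suffices to check, for each codimension-$1$ component $W_i$ (of type $1$ or type $2$ as in Theorem~\ref{thm_pi1}(iii)) and for general $C_0\in W_i$ with $D_0=f^{-1}C_0$, that the point $[\iota_*\cO_{D_0}]=s(C_0)$ lies in $\cP'_{C_0}=\fix^0(\tau_0)$. Now $D_0$ is integral and locally planar (it has one node in the type $1$ case, two nodes in the type $2$ case, by Lemma~\ref{lemma_milnornumber}), so $\cO_{D_0}$ is a rank-one torsion-free sheaf of degree $0$ on $D_0$, hence defines a point of the compactified Jacobian $\overline{J(D_0)}$, and it is manifestly a fixed point of $\tau_0$ since $i_0^*\sheafhom(\cO_{D_0},\cO_{D_0})\cong\cO_{D_0}$. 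The only thing to verify is that this fixed point lies on the distinguished component $\fix^0(\tau_0)$ — the one that, as $C_0$ varies over the smooth locus $U$, specializes from $\prym(D_0/C_0)$ and contains the image of the section. Equivalently, one must rule out that $s(C_0)$ is absorbed into one of the other (lower-dimensional, by Lemmas~\ref{irr_prym_one_node} and \ref{irr_prym_transv_B}) components of $\fix(\tau_0)$.

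I would argue this by a limit/specialization argument. Take a general one-parameter family $C_t\in U$ for $t\neq 0$ degenerating to $C_0\in W_i$ at $t=0$, with smooth total space, and consider the induced family of compactified Jacobians and the relative fixed locus of $\tau$. The section $s$ is regular over all of $U'$ (Remark~\ref{remark_sectionsupportP}), so $s(C_t)\in\prym(D_t/C_t)\subset\fix^0(\tau)$ for $t\neq 0$, and hence the limit $\lim_{t\to 0}s(C_t)=s(C_0)$ lies in the closure $\fix^0(\tau)$; since $s(C_0)$ maps to $C_0$ under $\eta$, it lies in $\fix^0(\tau)\cap\eta^{-1}(C_0)=\cP'_{C_0}$. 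This is exactly the statement we need. The main obstacle is a bookkeeping point rather than a conceptual one: one must make sure the identification $\cP'_{C_0}=\fix^0(\tau)\cap\eta^{-1}(C_0)$ really is the unique codimension-$1$ component $E_i$ of $\eta^{-1}(W)$, i.e. that no \emph{other} component of $\fix(\tau)$ over $W_i$ — possibly sitting in $(M_H)_{\sing}$ — sneaks up to codimension $1$; this is controlled by the dimension estimates of Lemmas~\ref{irr_prym_one_node}, \ref{irr_prym_transv_B} together with Corollary~\ref{cor_Fix=connected}, and by assumption (ii) of Theorem~\ref{thm_pi1}, which forces the non-integral locus (where the fibre structure is worst) into codimension $\geq 2$ and hence irrelevant here. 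Once that is in place, the section $s(U')$ meets $E_i$ at $s(C_0)$ for every $i$, which is the claim.
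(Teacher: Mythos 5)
Your argument is correct and is essentially the paper's proof: flatness of $\eta$ over $U'$ (Proposition~\ref{prop_flatness} and Remark~\ref{rmk_indpol}) together with generic irreducibility of the fibres over the codimension-one components of $U'\setminus U$ (Lemma~\ref{lemma_generic-irreducibility_fibers_codim1}) force every codimension-one component of $\cP'\setminus\eta^{-1}(U)$ to contain the general fibre over some $W_i$, which meets the section. The only difference is that your closing specialization argument is redundant: $s(C_0)\in\fix^0(\tau)\cap\eta^{-1}(C_0)=\cP'_{C_0}$ holds tautologically, since $s$ is a regular section of $\eta$ over all of $U'$ landing in the regular locus (Remark~\ref{remark_sectionsupportP}) and $\fix^0(\tau)$ is by definition the component of $\fix(\tau)$ containing the image of $s$.
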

\begin{proof}
    By Proposition \ref{prop_flatness} and Remark \ref{rmk_indpol}, the restriction $\cP' \to U'$ of $\eta$ to $U'$ is flat. Moreover, Lemma \ref{lemma_generic-irreducibility_fibers_codim1} shows that over the general point $C_0$ in any codimension-1 component of $U'\setminus U$, 
    the fibre $\cP'_{C_0}$ is irreducible.
    As a consequence, denoting by 
    $\cP'_{\mathrm{irr}}\subset\cP'$ the open subset of irreducible fibers, the complement $\cP'\setminus \cP'_{\mathrm{irr}}$ has codimension $\geq 2$, and this yields the statement. 
\end{proof}

Over the locus $U\subset U'$, the family $\cP_U:=\eta^{-1}(U)\to U$ is a locally trivial fibration of real manifolds,
with fibre $P_0:=\prym(D/C)$ for any curve $C$ in $U$ with preimage $D$. 
Hence we have exact sequences
\[1\to \pi_1(P_0)\to\pi_1(\cP_U)\overset{s_*}{\leftrightarrows} \pi_1(U)\to 1\]
and, by Leibman's theorem,
\begin{equation}\label{sequence_Leibman}
1\to R\to \pi_1(P_0)\to\pi_1(\cP')\overset{s_*}{\leftrightarrows} \pi_1(U')\to 0.
\end{equation}
Here $R$ is the commutator subgroup
\[[\pi_1(P_0),K]\subset\pi_1(\cP_U)\]
where $K=\ker(\pi_1(U)\to \pi_1(U'))$, considered a subgroup of $\pi_1(\cP_U)$ via $s_*$.
By assumption, the complement of $U'$ in $|C|$ has complex codimension at least 2. 
Hence, $U'$ is simply connected and $K=\pi_1(U)$, so the exact sequence \eqref{sequence_Leibman} reduces to
\[1\to [\pi_1(P_0),\pi_1(U)]\to \pi_1(P_0)\to \pi_1(\cP')\to 1.\]
In order to show $\pi_1(\cP')=1$, it thus suffices to show that 
$[\pi_1(P_0),\pi_1(U)]\to \pi_1(P_0)$ is surjective.
Note that we can identify $\pi_1(P_0)$ with $H_1(D,\mathbb{Z})_-\subset H_1(D,\mathbb{Z})=\pi_1(J(D))$.
Indeed, by \cite[Theorem 3.1.2]{Matteini}, \cite[(3.5)]{LO} we have
$P_{0}={H^{0}(\Omega_D)}^*_{-}/H_{1}(D,\mathbb{Z})_{-}$. 
Below, we will give generators for both $\pi_1(U)$ and $H_1(D,\mathbb{Z})_-$.

\subsection{Generators of the anti-invariant homology}

Since $U'$ is simply connected, the group $\pi_1(U)$ is generated by simple loops around the irreducible branches of $W=U'\setminus U$ of codimension 1. Formally, one may find such loops as follows. We fix a generic line $\ell\subset|C|$ 
contained in $U'$, such that $\ell$ intersects $W$ transversely. 
Then the pushforward map
\[\pi_1(\ell\setminus (\ell\cap W))=\pi_1(\ell\cap U)\to \pi_1(U)\]
is surjective by \cite[Theorem~1.1.B]{FuLaz}. 
The group $\pi_1(\ell\setminus (\ell\cap W))$ is generated by classes of simple loops around the points in $\ell\cap W$; the classes of the pushforwards of these loops in $U$ generate $\pi_1(U)$.

Denote by $V'\subset |D|$ the locus of integral curves in $|D|$, and by $V\subset V'$ the locus of smooth integral curves.
So the pullback $g:=f^*\colon |C|\hookrightarrow |D|$ sends $U'$ into $V'$ and $U$ into $V$. We will find the image of the homomorphism $g_*\colon \pi_1(U)\to \pi_1(V)$ by describing the image of a set of generators of $\pi_1(U)$, and eventually use this to understand $\pi_1(V)$ and $H_1(D,\mathbb{Z})_-$. We will need to distinguish the generators of $\pi_1(U)$ coming from branches of type 1 and 2. 
Let us denote by $\{x_r\}_{r\in\mathfrak{r}}$ the set of points in $\ell\cap W$ which lie on a branch of $W$ of type 1; the corresponding loops in $U$ around these branches we denote by $\{\gamma_r\}_{r\in\mathfrak{r}}$. 
Similarly, denote by $\{x_s\}_{s\in\mathfrak{s}}$ and $\{\gamma_s\}_{s\in\mathfrak{s}}$ the points and loops corresponding to branches of type 2. 
So the system of generators of $\pi_1(U)$ that we will work with is
\[\mathcal{G}:=\{[\gamma_r],[\gamma_s]\mid r\in\mathfrak{r},s\in\mathfrak{s}\}.\]

Note that $(|D|\setminus V')\cap g(|C|)$, is identified via $g$ with the locus of curves in $|C|$ with non-integral preimage, whose codimension in $|C|$ is at least 2 by assumption.
It follows that $|D|\setminus V'$ has codimension at least 2 in $|D|$. 
Hence $V'$ is simply connected and $\pi_1(V)$ is generated by simple loops around the codimension 1 irreducible branches of $W^D:=V'\setminus V$.
This set consists of integral curves in $|D|$ which are singular.

\begin{lemma}
The general element of $W^D$ is a curve with one node and no other singularities.
\end{lemma}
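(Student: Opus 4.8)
The plan is to analyze the codimension-one locus $W^D \subset V'$ of singular integral curves in $|D|$ directly, via the description of $W = U' \setminus U \subset |C|$ furnished by hypotheses (ii) and (iii), together with the pullback map $g = f^* \colon |C| \hookrightarrow |D|$. First I would recall that $g$ identifies $U'$ with a locally closed subset of $V'$, and that every integral singular curve $D_0 \in |D|$ that lies in the image of $g$ is of the form $f^{-1}C_0$ for some $C_0 \in |C|$ with $f^{-1}C_0$ integral and singular, i.e. $C_0 \in W$. By hypothesis (iii), the codimension-one irreducible components of $W$ are of type 1 or type 2, and by Lemma~\ref{lemma_milnornumber} (combined with the discussion immediately following it) the general member of a type-1 component has preimage with exactly one node, while the general member of a type-2 component has preimage with exactly two nodes. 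So the candidates for general elements of codimension-one components of $W^D$ contributed by the image of $g$ are curves with one node or with two nodes.

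The main point is then to rule out the two-nodal locus as a codimension-one component of $W^D$ — equivalently, to show that the type-2 locus $g(W_i)$ has codimension at least $2$ in $|D|$, while the type-1 locus $g(W_i)$ is genuinely codimension one with one-nodal general member, and that no other codimension-one component of $W^D$ can arise. For the first part I would argue via a dimension count: a curve $D_0 \in |D|$ with two nodes imposes (generically) two independent conditions, so the two-nodal Severi-type stratum of $|D|$ has codimension $2$; since $g(W_i)$ for $W_i$ of type 2 lands inside this stratum, it has codimension $\geq 2$ in $|D|$ and hence does not contribute a codimension-one component. For the type-1 components, the general preimage has a single node, so it lies in the one-nodal stratum (codimension $1$ in $|D|$), giving exactly the asserted description. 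Finally, any codimension-one component of $W^D$ not met by $g(|C|)$ would be irrelevant to us — but in fact $W^D$ is being considered as a subset relevant only through its intersection with $g(|C|)$, so I would phrase the statement as: \emph{every codimension-one irreducible component of $W^D$ that meets $g(|C|)$ has one-nodal general element}, and remark that this is what is used in the sequel. Alternatively, since $|D| \setminus V'$ has codimension $\geq 2$ (already established in the excerpt) and the only singular integral curves relevant to the monodromy computation are those in the image of $g$, the cleaner route is to work throughout with $g(U') \subset V'$ and its boundary.

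The step I expect to be the main obstacle is the dimension count showing that the locus of two-nodal integral curves in $|D|$ has codimension exactly $2$ (so that type-2 branches contribute nothing in codimension one), because one must ensure the two nodes of $f^{-1}C_0$ impose independent conditions on $|D|$ — this uses that the two nodes of the preimage are the two ramification-free points of $D_0$ over the single node of $C_0$, and that the very general hypothesis on $(S,i)$ (Definition~\ref{def_generalK3withsympinv}) prevents any degeneration forcing these conditions to be dependent. Once that is in hand, the identification of the general element of each codimension-one branch of $W^D$ as a one-nodal curve follows formally from Lemma~\ref{lemma_milnornumber} and the classification of the $W_i$ in (iii). I would also invoke the openness argument for irreducibility of fibres (as in Lemma~\ref{lemma_generic-irreducibility_fibers_codim1}) only if needed to confirm that the one-nodal general member is integral, but since hypothesis (ii) already guarantees integrality of the preimage away from a codimension-$2$ locus, this is automatic.
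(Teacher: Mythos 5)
Your proposal takes a genuinely different route from the paper, but it has a real gap. The lemma is a statement about the \emph{general} member of every codimension-one component of $W^D\subset|D|$, whereas you try to read this off from the very special members lying in $g(|C|)$. Since $g(|C|)$ has codimension $g(C)$ in $|D|$, the pullbacks $f^*C_0$ with $C_0\in W$ are far from general in any codimension-one component of $W^D$: knowing that these special members have one or two nodes constrains the general member only through semicontinuity of the $\delta$-invariant (which you never invoke), and even then it only gives $\delta\le 1$ or $\delta\le 2$ for the general member; it does not exclude a cusp (in the $\delta=1$ case) or a tacnode/cusp-type singularity (in the $\delta\le 2$ case). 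Your dimension count addresses only the stratum of curves with two distinct nodes, and its justification (``the two nodes impose independent conditions\dots the very general hypothesis on $(S,i)$ prevents degeneration'') is not an argument: the independence of nodes, and more generally the fact that the locus of integral curves in $|D|$ with $\delta\ge 2$ has codimension at least $2$, is exactly the Severi-variety input that has to be imported. Nothing in your proposal rules out cusps, which is the other half of what must be proved.

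Two further points. First, the proposed weakening (``only components of $W^D$ meeting $g(|C|)$ matter'') is not a faithful reformulation: the later monodromy argument (Proposition \ref{prop_fundgroupV} and the generation of $H_1(D,\mathbb{Z})$ by vanishing cycles via Leibman and Picard--Lefschetz) requires that \emph{every} codimension-one branch of $W^D$ have nodal general member, so that each generator of $\pi_1(V)$ acts by a Dehn twist. (In fact the weakening is vacuous: the closure of any codimension-one component of $W^D$ is a hypersurface in $|D|$, so it meets the line $g(\ell)\subset V'$, and the intersection lies in $V'$, hence in the component itself --- but you neither note nor use this.) Second, the assertion that ``the type-1 locus $g(W_i)$ is genuinely codimension one'' in $|D|$ is false whenever $g(C)\ge 1$; what has codimension one is the component of $W^D$ containing it, and it is the general member of that component which must be analyzed. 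The paper sidesteps all of this by arguing intrinsically in $|D|$: by \cite{DedieuSernesi} (Propositions 4.5 and 4.6), a codimension-one component of the locus of integral singular curves in $|D|$ has general member of geometric genus $g(D)-1$ and immersed, hence with exactly one node. If you wish to keep your route through $g(|C|)$, you would still need these (or equivalent) facts, together with the semicontinuity step relating special and general members.
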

\begin{proof}
If $Z\subset W^D$ is an irreducible component of codimension 1 in $|D|$ and $g'$ is the geometric genus of a general curve $D'$ in $Z$, then $Z$ is contained in the Zariski closure of the locus of irreducible curves in $|D|$ of geometric genus $g'$.
Then by \cite[Proposition~4.5]{DedieuSernesi}, we have $g'=g(D)-1$. 
This means that $D'$ has one singularity with $\delta$-invariant $\delta=1$, that is, a node or a cusp.
By \cite[Proposition~4.6]{DedieuSernesi} $D'$ is immersed, meaning that the differential of its normalization morphism is everywhere injective.
Therefore $D'$ cannot have a cusp.
\end{proof}

For an element $[\gamma]\in \mathcal{G}$, the  image $g_*[\gamma]$ under $g_*\colon \pi_1(U)\to \pi_1(V)$ depends on the type of branch that $\gamma$ is associated with. 
We first consider a simple loop $\gamma_r$ around a branch $W_{\gamma_r}$ of the first type. So the general element of $W_{\gamma_r}$ corresponds to a curve $C_0\in|C|$ whose preimage $D_0=f^{-1}C_0\in |D|$ has one node and no other singularities. 
By \cite[Example~1.3]{CS} the point $[D_0]\in |D|$ is a smooth point of $W^D$.
Let $W^D_{\gamma_r}$ be the irreducible branch of $W^D$ that $[D_0]$ lies in.
Then $g_*[\gamma_r]=[\lambda_r]$ for some simple loop $\lambda_r$ around $W^D_{\gamma_r}$.

Next, consider a loop $\gamma_s$ around a branch $W_{\gamma_s}$ of the second type. So the general element of $W_{\gamma_s}$ corresponds to a curve $C_0$ whose preimage
$D_0=f^{-1}C_0$ has two nodes and no other singularities.
The point $[D_0]$ lies in the transverse intersection of two irreducible branches $W^D_{\gamma_s,1}$ and $W^D_{\gamma_s,2}$ of $W^D$ (see \cite[Example~1.3]{CS}).
As in \cite[proof of Theorem~7.1]{ASF}, these branches are interchanged by the involution $i$, and $g_*[\gamma_s]=[\lambda_s][i\lambda_s]$ for some loop $\lambda_s$ around, say, $W^D_{\gamma_s,1}$.

\medskip
Summarizing the above, we have found
\begin{align*}
    g_*[\gamma_r]&=[\lambda_r]\\
    g_*[\gamma_s]&=[\lambda_s][i\lambda_s]
\end{align*}
where $\lambda_r$ and $\lambda_s$ are 
simple loops around irreducible branches of $W^D$.
Similarly as in \cite{ASF}, we can show:
\begin{proposition}\label{prop_fundgroupV}
 The loops $\{\lambda_r,\lambda_s,i\lambda_s\mid r\in\mathfrak{r},s\in\mathfrak{s}\}$ generate the fundamental group $\pi_1(V)$.
\end{proposition}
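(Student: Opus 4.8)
The plan is to run the same argument as in \cite[proof of Theorem~7.1]{ASF}, transported to the ramified setting. The starting point is that $V'$ is simply connected (because $|D|\setminus V'$ has codimension $\geq 2$ in $|D|$, as established above), so $\pi_1(V)$ is generated by simple loops around the codimension-$1$ irreducible branches of $W^D=V'\setminus V$; by the previous lemma, each such branch generically parametrizes a curve in $|D|$ with a single node. Thus it suffices to show that every codimension-$1$ irreducible component $W^D_\alpha$ of $W^D$ occurs among the branches $W^D_{\gamma_r}$ (the images of type-$1$ branches of $W$) or among the pairs $W^D_{\gamma_s,1}, W^D_{\gamma_s,2}$ (the images of type-$2$ branches of $W$), i.e.\ that $g\colon|C|\hookrightarrow|D|$ together with the involution $i^*$ on $|D|$ accounts for all the nodal divisors in $|D|$ that meet $g(|C|)$ in the relevant way.

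First I would recall that $g(|C|)=\p(f^*H^0(T,C))\subset|D|=\p(H^0(S,D))$ is exactly the fixed locus $|D|^{i^*}$ of the involution $i^*$ on $|D|$ restricted to the "$C$-eigenspace", and that $|D|\setminus V'$ pulled back to $|C|$ is the locus of curves with non-integral preimage, which has codimension $\geq 2$. Hence a general point of any codimension-$1$ branch $W^D_\alpha$ of $W^D$ whose intersection with $g(|C|)$ is nonempty either (a) lies in $g(|C|)$, in which case the corresponding curve $C_0\in|C|$ has integral but singular preimage, so $[C_0]$ lies in a codimension-$1$ branch $W_i$ of $W=U'\setminus U$, which by hypothesis (iii) of Theorem \ref{thm_pi1} is of type $1$ or type $2$; or (b) meets $g(|C|)$ properly. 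In case (a), by Lemma \ref{lemma_milnornumber} and the discussion following it, a type-$1$ branch produces a one-nodal preimage — so $W^D_\alpha=W^D_{\gamma_r}$ for the associated $r$ — while a type-$2$ branch produces a two-nodal preimage, whose point lies in the transverse intersection $W^D_{\gamma_s,1}\cap W^D_{\gamma_s,2}$, and $i$ interchanges the two branches, so $W^D_\alpha$ is one of $W^D_{\gamma_s,1}$, $W^D_{\gamma_s,2}$, hence equals one of $\lambda_s$, $i\lambda_s$ up to the loop generator. Case (b) needs to be excluded: I would argue that a nodal branch $W^D_\alpha$ meeting $g(|C|)$ properly would force the general member of $W^D_\alpha$ to be $i^*$-invariant only on a proper sublocus, but since a node of an $i^*$-invariant curve $D_0$ lying over a generic point is itself either fixed by $i_0$ (forcing $C_0$ tangent to $B$, type $1$) or swapped with another node (type $2$), every component of $W^D$ meeting $g(|C|)$ in codimension $1$ inside $g(|C|)$ already arises in case (a); components of $W^D$ disjoint from $g(|C|)$, or meeting it in higher codimension, contribute loops that are trivial in $\pi_1(V)$ after we also use \cite[Theorem~1.1.B]{FuLaz} to reduce to a generic line through $g(|C|)$ — more precisely, one restricts to a generic line $\ell\subset|C|\subset|D|$ and uses that the pushforward $\pi_1(\ell\cap V)\to\pi_1(V)$ is surjective, so only branches of $W^D$ actually met by such an $\ell$, equivalently met by $g(|C|)$, contribute.

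Concretely I would organize the proof as follows. Step 1: record that $\pi_1(V)$ is generated by simple loops around codimension-$1$ components of $W^D$ (using simple connectivity of $V'$ and \cite[Proposition~2.10]{Kollar} or \cite[Theorem~1.1.B]{FuLaz}). Step 2: fix a generic line $\ell\subset g(|C|)\cong|C|$ with $\pi_1(\ell\cap V)\twoheadrightarrow\pi_1(V)$, so it is enough to express each generator of $\pi_1(\ell\cap V)$, i.e.\ each loop around a point of $\ell\cap W^D$, in terms of the $\lambda_r,\lambda_s,i\lambda_s$. Step 3: for $p\in\ell\cap W^D$, the curve $C_p=g^{-1}(D_p)$ lies in $\ell\cap W\subset U'\setminus U$, and by hypothesis (iii) it is of type $1$ or $2$; invoking the local analysis already in the text (the computation of $g_*[\gamma_r]$ and $g_*[\gamma_s]$ via \cite[Example~1.3]{CS}), the loop around $p$ is $[\lambda_r]$ or $[\lambda_s][i\lambda_s]$, both of which are products of the claimed generators. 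Step 4: conversely each $\lambda_r$, $\lambda_s$, $i\lambda_s$ arises this way, so these loops lie in $\pi_1(V)$ and together generate it.

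The main obstacle will be Step 3, specifically confirming that a generic line $\ell$ in $g(|C|)$ actually meets \emph{every} codimension-$1$ component of $W^D$ that is relevant to generating $\pi_1(V)$, and that no "extra" nodal branch of $|D|$ (one not passing through the fixed locus $g(|C|)$ in codimension one, but perhaps tangent to it, or containing it) contributes a nontrivial loop. This is exactly the place where hypotheses (ii) and (iii) of Theorem \ref{thm_pi1} are used: (ii) guarantees that $g(|C|)\cap(|D|\setminus V')$ is small enough not to interfere, and (iii) guarantees the branches of $W$ are only of the two controlled types, so that the image $g(W)$ decomposes into the $W^D_{\gamma_r}$ and $W^D_{\gamma_s,1}\cup W^D_{\gamma_s,2}$ exactly as in \cite{ASF}. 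I expect the proof to be a careful transcription of \cite[proof of Theorem~7.1]{ASF} with the étale double cover replaced by the ramified one; the ramification only changes the local picture at type-$1$ branches (a curve tangent to $B$ giving a node upstairs, handled by Lemma \ref{lemma_milnornumber}), which has already been incorporated into the preceding paragraphs of this section, so the remaining bookkeeping is routine.
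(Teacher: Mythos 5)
There is a genuine gap at the decisive step. In your Step 2 you fix a line $\ell$ that is generic in $g(|C|)$ and assert that $\pi_1(\ell\cap V)\twoheadrightarrow\pi_1(V)$, citing \cite[Theorem~1.1.B]{FuLaz}. That theorem applies to a line in general position in the ambient projective space $|D|$; your $\ell$ lies in the proper linear subspace $g(|C|)$ and is forced to pass through the codimension-two strata $W^D_{\gamma_s,1}\cap W^D_{\gamma_s,2}$ (the two-nodal curves), which a truly generic line in $|D|$ avoids. For such special lines the surjectivity can genuinely fail: for the complement of two lines $L_1\cup L_2$ in $\mathbb{P}^2$ and $\ell$ a line through $L_1\cap L_2$, the loop in $\ell$ around the intersection point maps to the product of the two meridians, which is trivial in $\pi_1(\mathbb{P}^2\setminus(L_1\cup L_2))\cong\mathbb{Z}$, so $\pi_1(\ell\setminus(L_1\cup L_2))\to\pi_1(\mathbb{P}^2\setminus(L_1\cup L_2))$ is not onto. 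This is exactly the configuration at your type-2 points: from $\ell$ alone you only see the products $[\lambda_s][i\lambda_s]$, and you cannot conclude that these together with the $[\lambda_r]$ generate $\pi_1(V)$, let alone separate $\lambda_s$ from $i\lambda_s$. Your dismissal of ``extra'' branches (``only branches of $W^D$ actually met by such an $\ell$ contribute'') is circular, since it presupposes the very surjectivity that needs proof.

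The paper supplies the missing idea: it moves $\ell$ slightly inside $|D|$, off the subspace $g(|C|)$, to a line $m$ intersecting $W^D$ transversely. Since every codimension-one component of $\overline{W^D}$ is a hypersurface of $|D|$ and therefore meets $g(\ell)$, and since $g(\ell)\subset V'$, these intersections occur at the points $g(x_r)$, $g(x_s)$, where by \cite[Example~1.3]{CS} the local branches of $W^D$ are exactly $W^D_{\gamma_r}$, respectively $W^D_{\gamma_s,1}$ and $W^D_{\gamma_s,2}$; consequently $m\cap W^D$ consists of one point $y_r$ near each $g(x_r)$ and two points $y_s^1,y_s^2$ near each $g(x_s)$, and nothing else. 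Now \cite[Theorem~1.1.B]{FuLaz} legitimately applies to $m$, the simple loops around $y_r$, $y_s^1$, $y_s^2$ map to $\lambda_r$, $\lambda_s$, $i\lambda_s$, and the proposition follows. So the skeleton of your argument (reduce to a line, use the local description of $W^D$ at one- and two-nodal curves) matches the paper, but without the perturbation off $g(|C|)$ the appeal to the Lefschetz-type theorem is invalid and the proof does not go through as written.
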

\begin{proof}
Recall that in order to define the system of generators $\mathcal{G}$ of $\pi_1(U)$, we fixed a generic line $\ell\subset|C|$ contained in $U'$.
 We move this line $\ell$ slightly in $|D|$ to a line $m$ which intersects $W^D$ transversely. By the observations above, we have 
 \[m\cap W^D=\{y_r,y_s^1,y_s^2\mid r\in\mathfrak{r},s\in\mathfrak{s}\}\]
 where $y_r$ lies in $W^D_{\gamma_r}$ and $y_s^i$ lies in $W^D_{\gamma_s,i}$. 
 Under the map
 \begin{equation}\label{surjection_fundgroep}
  \pi_1(m\setminus \{y_r,y_s^1,y_s^2\})\to \pi_1(V),
 \end{equation}
the class of a simple loop around $y_r$ is mapped to the class of $\lambda_r$, similarly for $\lambda_s$, and the class of a simple loop around $y_s^2$ is mapped to the class of $i\lambda_s$.
Since the map \eqref{surjection_fundgroep} is surjective \cite[Theorem~1.1.B]{FuLaz} and the classes of simple loops around each point of $m\cap W^D$ generate $\pi_1(m\setminus \{y_r,y_s^1,y_s^2\})$, the claim follows. 
\end{proof}

Denote by $\mathcal{D}\to|D|$ the universal family over $|D|$.
As explained in \cite[Section~7]{ASF}, applying Leibman's theorem to the support map $M_H\to |D|$ and using that $M_H$ is simply connected (Proposition~\ref{prop_Msimplyconn}), 
one shows that $\pi_1(J(D))=H_1(D,\mathbb{Z})$ is generated by the vanishing cycles of the family $\mathcal{D}|_V\to V$ corresponding to the loops $\{\lambda_r,\lambda_s,i\lambda_s\}$.
We choose such vanishing cycles and call them
$\{\alpha_r,\alpha_s,i\alpha_s\}$.
Indeed, if $\alpha_s$ is a vanishing cycle for $\lambda_s$, one shows that $i\alpha_s$ is a vanishing cycle for $i\lambda_s$:
if $[C_0]$ is a general point in $W_{\gamma_s}$, take a small disk $\mathbb{D}\subset|C|$ that intersects $W_{\gamma_s}$ transversely at $[C_0]$.
Then $f^{-1}\mathbb{D}$ is a small anti-invariant disk in $|D|$ intersecting $W_{\gamma_s}^D$ transversely at $[D_0]$.
It corresponds to a family of smooth $i$-anti-invariant curves on $S$ degenerating to the curve $D_0$ which has two nodes, interchanged by $i$.
It follows that if $\alpha_s$ is a vanishing cycle for one node, then $i\alpha_s$ is a vanishing cycle for the other node.
Note that the intersection number $[\alpha_s].[i\alpha_s]$ is 0.

\begin{proposition} \label{invariant vanishing cycle}
    The homology classes of vanishing cycles $\{\alpha_r\mid r\in\mathfrak{r}\}$ can be represented by non-separating simple closed curves which are $i$-anti-invariant and smooth.
\end{proposition}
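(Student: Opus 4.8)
The plan is to analyze a vanishing cycle $\alpha_r$ coming from a loop $\lambda_r$ around a branch $W^D_{\gamma_r}$, where the general element degenerates $D\in V$ to an integral curve $D_0$ with a single node $q_0$, and where $q_0$ is the preimage of a point $p_0\in C$ at which a smooth curve $C_0\in|C|$ meets the branch locus $B$ with multiplicity $2$ (type 1). The key structural input is that the degenerating family can be taken to be $f$-invariant: take a small disk $\mathbb{D}\subset|C|$ meeting $W_{\gamma_r}$ transversely at $[C_0]$, and pull it back to an anti-invariant disk $f^*\mathbb{D}\subset|D|$ meeting $W^D_{\gamma_r}$ transversely at $[D_0]$. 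Thus the whole Lefschetz degeneration happens $i$-equivariantly, and the node $q_0$ is a fixed point of $i$ (it lies on $D\cap\fix(i)$ since $p_0\in C\cap B$), so $i$ acts on the local Milnor fiber. This is exactly the mechanism that, for type-2 branches, produced two nodes swapped by $i$; here the single node is $i$-fixed.

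The concrete local model is the one used in Lemma~\ref{lemma_milnornumber}: near $q_0$ the total space of the family looks like $z^2 = x^2 - t$ (the node appearing at $t=0$ as the tangential intersection with $B=\{$the $x$-axis in $T\}$ degenerates), with $i$ acting by $z\mapsto -z$. The vanishing cycle at $t\neq 0$ is the real circle $\{z^2 = x^2 - t,\ x,z\ \text{suitably real}\}$, on which $i$ acts by $z\mapsto -z$; this is an honest circle and the $\mathbb{Z}/2$-action is (up to isotopy of the vanishing cycle on the nearby fiber) the obvious reflection, hence $\alpha_r$ can be chosen $i$-anti-invariant — more precisely, $i$ maps the class $[\alpha_r]$ to $-[\alpha_r]$ in $H_1(D,\mathbb{Z})$, so after possibly replacing $\alpha_r$ by a homologous cycle we may take it set-wise $i$-stable with $i$ reversing orientation; simultaneously one checks it is represented by a smooth embedded circle. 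Non-separatedness is automatic: a vanishing cycle of a one-nodal degeneration of an integral curve is non-separating, because the nearby smooth curve has geometric genus exactly one less than $g(D)$ (this is the input from \cite{DedieuSernesi} already used above), so contracting $\alpha_r$ yields an irreducible nodal curve rather than a reducible one — equivalently $[\alpha_r]\neq 0$ in $H_1(D,\mathbb{Z})$ and it is primitive.

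The steps, in order: (1) exhibit the $i$-equivariant Lefschetz pencil through $[D_0]$ via pullback of a generic disk in $|C|$, noting the node is $i$-fixed; (2) write the equivariant local normal form $z^2=x^2-t$ with $i\colon z\mapsto -z$, and identify the vanishing cycle with an explicit circle in the Milnor fiber on which $i$ acts by an orientation-reversing reflection; (3) use Ehresmann/parallel transport along $\lambda_r$ to transport this local $i$-anti-invariant circle to an embedded smooth $i$-anti-invariant representative of $\alpha_r$ in $D$ (here one uses an $i$-invariant metric on $S$ to make parallel transport equivariant); (4) deduce non-separatedness from the drop in geometric genus by $1$ as in the previous lemma, or equivalently from $[\alpha_r]\neq 0$. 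I expect step (2)–(3) to be the main obstacle: one must be careful that "$i$-anti-invariant" is interpreted correctly — the cycle as a \emph{subset} is $i$-invariant, but $i$ reverses its orientation, so $i_*[\alpha_r]=-[\alpha_r]$, which is what places $[\alpha_r]$ in the $(-1)$-eigenspace $H_1(D,\mathbb{Z})_-$ — and that the ambiguity in choosing the vanishing cycle (only well-defined up to the monodromy action and up to sign) does not destroy either the smoothness or the equivariance of the chosen representative. Once a single equivariant, smooth, embedded, non-separating representative is produced locally, equivariant parallel transport finishes the argument.
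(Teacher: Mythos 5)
Your proposal is correct and follows essentially the same route as the paper: you take the $i$-equivariant degeneration obtained by pulling back a small disk in $|C|$ transverse to the type-1 branch, write the equivariant local normal form at the node (which is $i$-fixed since it lies over the tangency point of $C_0$ with $B$), and exhibit the vanishing cycle as an explicit smooth embedded circle preserved set-wise by $i$ with orientation reversed, with non-separateness coming from the nontriviality of the class (equivalently the genus drop by one). The paper's proof is the same argument with the local model $(x,y)\mapsto x^2+y^2$, involution $(x,y)\mapsto(x,-y)$, and the real circle $\sqrt{t}\cdot S^1$ as the anti-invariant representative.
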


\begin{proof}
    Fix a cycle $\alpha_r$. It corresponds to a loop $\lambda_r$ such that $[\lambda_r]=g_*[\gamma_r]$ where $\gamma_r$ is a loop around a branch $W_{\gamma_r}$ of type 1.
    We can assume that $\gamma_r$ bounds a small disk $\mathbb{D}\subset|C|$ that intersects $W_{\gamma_r}$ transversely at a general point $[C_0]\in W_{\gamma_r}$.
    The family of curves in $|C|$ over $\mathbb{D}$ consists of smooth curves only, whereas the central fiber $D_0$ of the double cover of this family acquires a node $p$. To simplify the notation, we will identify $\mathbb{D}$ with its image $g(\mathbb{D})\subset|D|$.

    Locally near the node $p\in D_0$ and its image $f(p)\in C_0$, we can view the restriction of the universal family $\mathcal{D}\rightarrow |D|$ to $\mathbb{D}$ as $\mathbb{C}^2\rightarrow \mathbb{C},\; (x,y)\mapsto x^2+y^2$, the restriction of the universal family $\mathcal{C}|_\mathbb{D}\rightarrow \mathbb{D}\subset |C|$ as $\mathbb{C}^2\rightarrow \mathbb{C},\; (x,y)\mapsto x^2+y$, 
    and the family of double covers $\mathcal{D}|_{\mathbb{D}}\rightarrow \mathcal{C}|_{\mathbb{D}}$ as $\mathbb{C}^2\rightarrow \mathbb{C}^2,\;(x,y)\mapsto (x,y^2)$,
    with covering involution $(x,y)\mapsto(x,-y)$. 
    The fiber $D_t\subset \mathcal{D}$ over a point $t\in\mathbb{D}$ contains an anti-invariant circle $S(t)=\sqrt{t}\cdot S^1$ where $S^1=\{(x,y)\in \mathbb{R}^2\subset\mathbb{C}^2\mid x^2+y^2=1 \}$. 
    The class of the circle $S(t)\subset D_{t}$ is a vanishing cycle of $p$ by construction. Since $p$ is a node, we have $[S_{t}]=\pm[\alpha_r]$.
    Recall that a simple closed curve on $D_t$ is separating if and only if its class is $0$ in $H_1(D_t,\mathbb{Z})$. Therefore $S(t)$ is an $i$-anti-invariant non-separating simple closed curve as required.
\end{proof}

\begin{figure}
  \centering
  \includegraphics [width=0.9\textwidth]{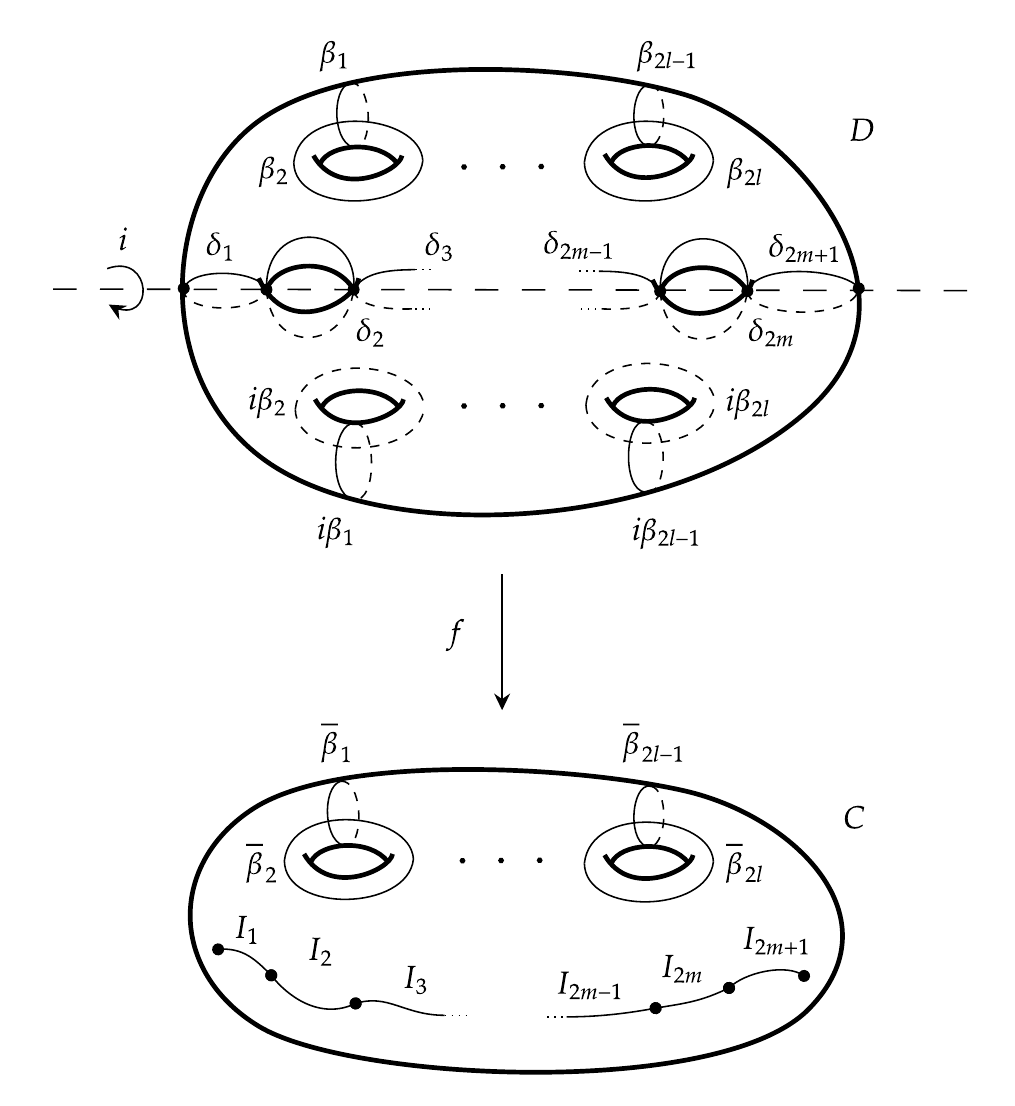}
  \caption{Double cover $f$}
  \label{Cover}
\end{figure}

It follows that the classes in $H_1(D,\mathbb{Z})$ of $\{\alpha_r,\alpha_s-i\alpha_s\mid r\in\mathfrak{r},\;s\in\mathfrak{s}\}$ lie in the anti-invariant homology $H_1(D,\mathbb{Z})_-$.
In fact, we will show that these classes generate the anti-invariant homology.

\begin{proposition} \label{anti_invariant_H_1}
    The group $H_1(D,\mathbb{Z})_-$ is generated by the classes of the cycles 
    \[\{\alpha_r,\alpha_s-i\alpha_s\mid r\in\mathfrak{r},\;s\in\mathfrak{s}\}.\]
\end{proposition}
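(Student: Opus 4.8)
The strategy is to compare the action of the monodromy/fundamental group with the already-established fact (coming from Leibman's theorem applied to $M_H \to |D|$, as recalled just before Proposition~\ref{invariant vanishing cycle}) that the full homology $H_1(D,\Z) = \pi_1(J(D))$ is generated by the vanishing cycles $\{\alpha_r, \alpha_s, i\alpha_s \mid r \in \mathfrak r,\ s \in \mathfrak s\}$. Write $\Lambda := H_1(D,\Z)$, with its $i_*$-action and the decomposition (up to 2-torsion, or rationally) $\Lambda_+ \oplus \Lambda_-$ into invariant and anti-invariant parts. Let $N \subset \Lambda_-$ be the subgroup generated by $\{\alpha_r,\ \alpha_s - i\alpha_s\}$; by Proposition~\ref{invariant vanishing cycle} each $\alpha_r$ is anti-invariant, and $\alpha_s - i\alpha_s$ is manifestly anti-invariant, so indeed $N \subseteq \Lambda_-$. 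We must show $N = \Lambda_-$.

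First I would record the analogous statement on the invariant side. The generators $\{\alpha_r,\alpha_s,i\alpha_s\}$ of $\Lambda$ span, together with $N$, a subgroup containing $\alpha_r$, $\alpha_s - i\alpha_s$, and $\alpha_s + i\alpha_s$ (since $2\alpha_s = (\alpha_s+i\alpha_s) + (\alpha_s - i\alpha_s)$ lies in it, and $\alpha_s+i\alpha_s$ is $i$-invariant). Thus $\Lambda$ is generated by $N$ together with the invariant classes $\{\alpha_s + i\alpha_s \mid s \in \mathfrak s\}$; call $N' \subseteq \Lambda_+$ the span of the latter. So $\Lambda$ is generated by $N \oplus N'$ with $N \subseteq \Lambda_-$, $N' \subseteq \Lambda_+$. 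Now take any $v \in \Lambda_-$. Write $v = n + n'$ with $n \in N$, $n' \in N'$ (possible up to 2-torsion issues, which I will handle by working with the primitive anti-invariant sublattice, i.e.\ $\Lambda_- = \{x : i_*x = -x\}$, and using that $x = n+n'$ forces $-x = i_*n + i_*n' = -n + n'$, hence $2n' = 0$, hence $n' = 0$ in the torsion-free lattice $\Lambda_+$). Therefore $v = n \in N$, giving $N = \Lambda_-$.

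The point that requires genuine care — and which I expect to be the main obstacle — is the 2-torsion: a priori $N \oplus N'$ could be a finite-index subgroup of $\Lambda$ rather than all of it, and then the decomposition $v = n + n'$ is only valid after inverting $2$, so one concludes $\Lambda_- \otimes \Z[\tfrac12] = N \otimes \Z[\tfrac12]$ but perhaps not integrally. To get the integral statement one follows \cite{ASF} (see the proof of \cite[Theorem~7.1]{ASF}): the key extra input is that the $\alpha_r$ are represented by \emph{non-separating} simple closed curves (Proposition~\ref{invariant vanishing cycle}), so that the Picard--Lefschetz transformation associated to $\lambda_r$ is a symplectic transvection in $\alpha_r$, and similarly the product of transvections in $\alpha_s$ and $i\alpha_s$ (which commute, since $[\alpha_s].[i\alpha_s] = 0$) governs the type-2 loops; pulling these monodromy operators back through $g_* \colon \pi_1(U) \to \pi_1(V)$ and acting on $\Lambda$ shows that $\pi_1(U)$ acts on $\Lambda_-$ with the $\alpha_r$ and $\alpha_s - i\alpha_s$ as its "roots", so by the irreducibility of the monodromy action (the curves in $|D|$ form a sufficiently ample family, via \cite[Theorem~1.1.B]{FuLaz} and the Zariski-type results already cited) the orbit generates $\Lambda_-$ integrally. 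Concretely: since $\Lambda$ is generated by $\{\alpha_r, \alpha_s, i\alpha_s\}$, projecting onto $\Lambda_-$ via $x \mapsto \tfrac12(x - i_*x)$ is not integral, but instead one uses that for each generator $\alpha$ with $i_*\alpha$ among the generators, the \emph{difference} $\alpha - i_*\alpha$ and, when $\alpha = i_*\alpha$ (the anti-invariant $\alpha_r$ by Proposition~\ref{invariant vanishing cycle}), the element $\alpha$ itself, already lie in $N$; and every element of $\Lambda_-$ is an integral combination of expressions of exactly these two shapes because any $w = \sum a_r\alpha_r + \sum b_s \alpha_s + \sum c_s\, i\alpha_s \in \Lambda_-$ satisfies $i_* w = -w$, forcing $\sum a_r \alpha_r + \sum b_s\, i\alpha_s + \sum c_s \alpha_s = -w$, hence $b_s = -c_s$ for all $s$ (comparing coefficients in the basis — legitimate once one passes to a basis extracted from the generating set, which exists since $\Lambda$ is a lattice) and thus $w = \sum a_r \alpha_r + \sum b_s(\alpha_s - i\alpha_s) \in N$. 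This completes the proof modulo the basis-extraction bookkeeping, which I would spell out following \cite[Section~7]{ASF}.
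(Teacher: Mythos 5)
Your reduction correctly isolates where the difficulty lies --- the naive eigenspace argument only identifies $N=\langle \alpha_r,\alpha_s-i\alpha_s\rangle$ with $H_1(D,\Z)_-$ after inverting $2$ --- but neither of your proposed repairs actually closes that gap. The appeal to ``irreducibility of the monodromy action'' is unsupported (no such irreducibility is established anywhere), and in any case it is a rational statement that cannot upgrade generation of a finite-index subgroup to integral generation. The ``concrete'' argument is where the proof genuinely breaks: $\{\alpha_r,\alpha_s,i\alpha_s\}$ is only a generating set of $H_1(D,\Z)$, not a basis --- vanishing cycles satisfy many relations, and there are in general far more of them than $2g(D)$ --- so the expression $w=\sum a_r\alpha_r+\sum b_s\alpha_s+\sum c_s\,i\alpha_s$ is not unique and ``comparing coefficients'' to force $b_s=-c_s$ is not legitimate. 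The suggested remedy of ``extracting a basis from the generating set'' fails twice over: a generating set of a free abelian group need not contain a basis at all (e.g.\ $\{2,3\}\subset\Z$), and even when one can be extracted it will not be stable under $i$, so the coefficient comparison still collapses. More importantly, the desired statement is \emph{not} a formal consequence of having an $i$-stable generating set made of anti-invariant classes and swapped pairs, even one generating all of $H_1(D,\Z)$: the failure of $\langle\alpha_r\rangle+(1-i)H_1(D,\Z)$ to exhaust $H_1(D,\Z)_-$ is measured by a $2$-torsion obstruction of the type $\ker(1+i)/\mathrm{im}(1-i)$, which is nonzero for general $\Z[\Z/2]$-lattices, so genuine geometric input about the configuration of the vanishing cycles is unavoidable.

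That input is exactly what the paper's proof supplies and what your proposal lacks. The paper realizes the covering involution topologically as a rotation of an embedded surface in $\mathbb{R}^3$ (Dugger), producing an $i$-adapted integral basis $\{\delta_k,\beta_j,i\beta_j\}$ of $H_1(D,\Z)$ with the $\delta_k$ anti-invariant and the $\beta_j, i\beta_j$ swapped, and arranges the type-2 vanishing cycles $\alpha_s$ to lie in one sheet $D_1$ of the cover away from the fixed arcs, so that $\langle\alpha_s\rangle\subseteq\langle\beta_j\rangle$. The crucial step is the integral equality $\langle\alpha_s\rangle=\langle\beta_j\rangle$, proved by pushing forward to $H_1(C,\Z)$: the full set of vanishing cycles generates $H_1(D,\Z)$, the classes $\alpha_r$ are killed by $f_*$ by Proposition~\ref{invariant vanishing cycle}, and $f_*$ restricted to $\langle\beta_j\rangle$ is an isomorphism onto $H_1(C,\Z)$. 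This gives $\langle\alpha_s,i\alpha_s\rangle_-=(1-i)\langle\alpha_s\rangle$ integrally, after which any anti-invariant class, written in \emph{some} (not necessarily unique) way in the generators, visibly lies in $\langle\alpha_r,\alpha_s-i\alpha_s\rangle$ --- no coefficient comparison is needed. To complete your argument you would have to prove something equivalent to this equality; as written, the proposal has a genuine gap precisely at the integral step you yourself flag as the main obstacle.
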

\begin{proof}
    Topologically, the covering involution $i$ of $D$ is induced by a rotation in $\mathbb{R}^3$. More concretely, there is an embedding $\phi\colon D\hookrightarrow\mathbb{R}^3$ such that $\phi(D)$ is smooth, symmetric with respect to the $xy$- and $xz$-planes, and $i$ corresponds to the rotation by $180^{\circ}$ about the $x$-axis. A modern reference for this classical result is \cite[Theorem~5.7]{Dugger} (see also Remark \ref{trivial_cover}).

    In Figure~\ref{Cover}, an explicit picture of the involution is drawn.
    We will use the notation from this figure for the rest of the proof. The axis of rotation contains all the fixed points of $i$ and the $i$-anti-invariant cycles $\delta_1,\ldots,\delta_{2m+1}$ each contain two of the fixed points. The union $I$ of the embedded closed intervals $I_k:=f(\delta_k)$, $1\leq k \leq 2m+1$ in $C$ is again an embedded interval.
    Remark that the restriction $$f': D\setminus{\bigcup_{k=1}^{2m+1}\delta_k\rightarrow C\setminus I}$$ of $f$ is a trivial double cover and the preimage of $C\setminus I$ consists of two connected components $D_1$ and $D_2$. In particular, we have $\beta_j\subset D_1$ and $i\beta_j\subset D_2$ for $1\leq j\leq 2l$.
    
    The set of homology classes of cycles $\{\beta_j,i\beta_j, \delta_k \mid 1\leq j\leq2l, 1\leq k\leq 2m\}$ is a basis of $H_1(D,\mathbb{Z})$. Let $\bar{\beta}_j:=f_*\beta_j=f_*i\beta_j$. The homology classes of elements $\{\bar{\beta}_j \mid 1\leq j\leq2l\}$ form a basis of $H_1(C,\mathbb{Z})$.
    If $\bar{\alpha}_s$ is a vanishing cycle for $\gamma_s$, we can assume that it does not intersect $I$. Then $f^{-1}(\bar{\alpha}_s)$ consists of two connected components $\alpha_s\subset D_1$ and $i\alpha_s\subset D_2$ which are disjoint from the cycles $\delta_k\subset f^{-1}(I)$.
    
    In what follows, $\alpha_s$, $\beta_j$, and $\delta_k$ stand for the homology classes of the corresponding cycles. Additionally, when talking about linear spans of homology classes, it is implied that the the sum is taken over all relevant indices $1\leq j\leq2l$, $1\leq k\leq 2m$ and $s\in \mathfrak{s}$. 
    Note that $H_1(D,\mathbb{Z})$ is the orthogonal direct sum $\langle\delta_k\rangle\oplus\langle\beta_j\rangle\oplus\langle i\beta_j\rangle$,
    and that $\langle \alpha_s\rangle\subset\langle \delta_k,i\beta_j\rangle^{\perp}=\langle\beta_j\rangle$.
    We claim that the composition
    $$
    \langle \alpha_s \rangle \hookrightarrow \langle \beta_j \rangle \xrightarrow{f_*} H_1(C,\mathbb{Z})
    $$
    is surjective. Indeed, on the one hand, $f_*\langle \alpha_s, i\alpha_s, \alpha_r \rangle=f_*\langle \beta_j, i\beta_j, \delta_k \rangle=H_1(C,\mathbb{Z})$. On the other hand, the classes $\alpha_r$ are in the kernel of $f_*$ by Proposition \ref{invariant vanishing cycle} and $f_*\langle \alpha_s,i\alpha_s,\alpha_r \rangle=f_*\langle\alpha_s\rangle$.
    Since $f_*|_{\langle \beta_j \rangle}$ is an isomorphism, we now have $\langle \alpha_s \rangle = \langle \beta_j \rangle$.
    Note that the $i$-anti-invariant subspace $\langle \beta_j, i\beta_j \rangle_-$ coincides with $(1-i)\langle \beta_j \rangle$ since $\{\beta_j, i\beta_j\}$ is a basis of $\langle \beta_j, i\beta_j \rangle$. Hence we also have $\langle \alpha_s, i\alpha_s \rangle_-=(1-i)\langle \alpha_s \rangle$. 
    Now consider a class
    $$\alpha=\sum_ra_r[\alpha_r]+\sum_sb_s[\alpha_s]+\sum_s c_s[i\alpha_s]\in H_1(D,\mathbb{Z}).$$
    Assume that $\alpha$ is anti-invariant, which  holds if and only if the class $A=\sum b_s[\alpha_s]+\sum c_s[i\alpha_s]$ is anti-invariant.
    It follows that $A$ lies in $\langle \alpha_s, i\alpha_s \rangle_-=(1-i)\langle \alpha_s \rangle$. Therefore, $H_1(D,\mathbb{Z})_-$ is generated by $\{[\alpha_r],[\alpha_s]-[i\alpha_s]\mid r\in\mathfrak{r},s\in\mathfrak{s}\}$.
\end{proof}

\begin{remark} \label{trivial_cover}
    The result \cite[Theorem~5.7]{Dugger} is more general than required for the proposition above. 
    What we actually use is that, in our setting, the double cover $f$ is trivial over the complement of the union of the pairwise non-intersecting embedded closed intervals in $C$, each connecting a pair of ramification points.
    We outline an algebraic proof of this fact.
    A double cover $p(L,s)$ of $C$ branched at $2d$ points is determined by a line bundle $L$ of degree $d$ and a global section $s$ of $L^2$.  If $s=t^2$ with $t\in H^0(C,L)$, then the double cover corresponding to $s$ is trivial over the complement of the vanishing locus of $t$.
    We claim that the space $\mathcal{X}_{C,2d}$ parametrizing double covers of $C$ with $2d$ (possibly colliding) branch points is connected, so we can deform the cover $f$ to a double cover of the form $p(L,t^2)$.
    To see that $\mathcal{X}_{C,2d}$ is connected, we can argue as follows.
    The space $\mathcal{X}_{C,2d}$ is isomorphic to the fiber product of $\Sym^{2d}C$ and $\Pic^d(C)$ over $\Pic^{2d}(C)$, where $\Sym^{2d}C \to \Pic^{2d}(C)$ is the natural map and $\Pic^d(C) \to \Pic^{2d}(C)$ is multiplication by $2$. Moreover, by \cite[Remark~1]{Cornalba}, moving a ramification point of $p(L,s)$ along a simple closed loop $\gamma\in D$, results in the line bundle $L$ being tensored by the $2$-torsion element corresponding to $\gamma/2$ in $\Pic^0(C)$. Hence, $\mathcal{X}_{C,2d}$ is connected and we are done.
\end{remark}

\subsection{Monodromy action \& conclusion of proof}

Recall that we need to prove that the map 
$[\pi_1(P_0),\pi_1(U)]\to \pi_1(P_0)$ is surjective.
In other words, letting $c$ vary over $\pi_1(P_0)$ and $\gamma$ over our generators of $\pi_1(U)$, and denoting by $\widetilde{\gamma}$ the image $s_*\gamma$ of $\gamma$ under the section $s\colon U\to \cP_U$, we need to show that the elements $c^{-1}\widetilde{\gamma}^{-1}c\widetilde{\gamma}$ generate all of $\pi_1(P_0)$.
Note that the element $\widetilde{\gamma}^{-1}c\widetilde{\gamma}$ lies in $\pi_1(P_0)$ and is the result of the monodromy action of $\gamma\in \pi_1(U)$ on $c\in \pi_1(P_0)$ -- indeed, the monodromy action is given by 
\begin{align}\label{eq_monodromy}
    \PL\colon \pi_1(U)&\to\aut(\pi_1(P_0))\\
    [\gamma] &\mapsto \bigl\{c\mapsto \widetilde{\gamma}^{-1}c\widetilde{\gamma}\bigr\}. \nonumber
\end{align}
Using that $\pi_1(P_0)$ can be identified with $H_1(D,\mathbb{Z})_-\subset H_1(D,\mathbb{Z})=\pi_1(J(D))$, 
we identify the action \eqref{eq_monodromy} of $[\gamma]$ on $\pi_1(P_0)=H_1(D,\mathbb{Z})_-$ with the corresponding action of $g_*[\gamma]$ on $H_1(D,\mathbb{Z})$.
This action can be described using Picard--Lefschetz theory. We use the conventions of \cite[\S10.9]{ACG}.

\medskip
First, if $\gamma=\gamma_r$ is a loop around a branch of $W$ of type 1, then $g_*[\gamma_r]=[\lambda_r]$ where $\lambda_r$ can be viewed as the boundary of a small disk $K$ in $D$ intersecting $W^D_{\gamma_r}$ transversely in one general point. Thus, $\lambda_r$ corresponds to a family of smooth curves degenerating to a curve with one node. 
The action of $[\lambda_r]$ on $H_1(D,\mathbb{Z})$ is the action $T_{\alpha_r}$ induced by the Dehn twist along the vanishing cycle $\alpha_r$. This is given by
\[c\mapsto c+(c\cdot[\alpha_r])[\alpha_r].\]
Second, if $\gamma_s$ is a loop around a branch of $W$ of type 2, then $g_*[\gamma_s]=[\lambda_s][i\lambda_s]$ where similarly, $\lambda_s$ and $i\lambda_s$ correspond to families of smooth curves degenerating to nodal curves.
The action of $[\lambda_s][i\lambda_s]$ on $H_1(D,\mathbb{Z})$ is the composition $T_{i\alpha_s}\circ T_{\alpha_s}$. As $[i\alpha_s]\cdot[\alpha_s]=0$, this is given by 
$c\mapsto c+(c\cdot[\alpha_s])[\alpha_s] + (c\cdot [i\alpha_s])[i\alpha_s]$.
If $c$ is an anti-invariant class, then we have $(c.[i\alpha_s])=-(c.[\alpha_s])$, so we find (see also \cite[(7.13)]{ASF}) 
\[T_{i\alpha_s}\circ T_{\alpha_s}(c) = c+(c\cdot[\alpha_s])([\alpha_s] - [i\alpha_s]).\]

We now compute the images $c^{-1}\tilde{\gamma}^{-1}c\tilde{\gamma} \in\pi_1(P_0)$, or equivalently, 
$-c+\PL([\gamma])(c)\in H_1(D,\mathbb{Z})$,
for $c\in H_1(D,\mathbb{Z})_-$ and $[\gamma]$ ranging over our generators of $\pi_1(U)$. We find
\begin{align*}
 -c+\PL([\gamma_r])(c) &= (c\cdot[\alpha_r])[\alpha_r];\\
 -c+\PL([\gamma_s])(c) &= (c\cdot[\alpha_s])([\alpha_s ]- [i\alpha_s]).
\end{align*}
We want to show that the elements $-c+\PL([\gamma])(c)$ generate $H_1(D,\mathbb{Z})_-$. 
In fact, we can prove the following:

\begin{proposition}\label{prop_generatorsPL}
All generators $\{[\alpha_r],[\alpha_s]-[i\alpha_s]\mid r\in\mathfrak{r},s\in\mathfrak{s}\}$ of $H_1(D,\mathbb{Z})_-$ are of the form $-c+\PL([\gamma])(c)$ for some $c\in H_1(D,\mathbb{Z})_-$.
\end{proposition}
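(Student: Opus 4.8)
The plan is to show that each generator of $H_1(D,\mathbb{Z})_-$ from Proposition~\ref{anti_invariant_H_1} arises as $-c+\PL([\gamma])(c)$ for a suitable anti-invariant cycle $c$, by exploiting the Picard--Lefschetz formulas computed just above together with the geometric description of the vanishing cycles $\alpha_r$ from Proposition~\ref{invariant vanishing cycle}. Concretely, for a generator of type 1, namely $[\alpha_r]$, we have $-c+\PL([\gamma_r])(c)=(c\cdot[\alpha_r])[\alpha_r]$, so it suffices to exhibit an anti-invariant class $c$ with $c\cdot[\alpha_r]=\pm 1$; then $-c+\PL([\gamma_r])(c)=\pm[\alpha_r]$, and since we only need to generate a subgroup, the sign is irrelevant. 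Similarly, for a generator of type 2, namely $[\alpha_s]-[i\alpha_s]$, we have $-c+\PL([\gamma_s])(c)=(c\cdot[\alpha_s])([\alpha_s]-[i\alpha_s])$, so again it suffices to find an anti-invariant $c$ with $c\cdot[\alpha_s]=\pm1$.

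The key step is therefore the construction, for each $r$ (resp. each $s$), of an anti-invariant cycle pairing to $\pm1$ with $[\alpha_r]$ (resp. with $[\alpha_s]$). First I would use that $\alpha_r$ is represented by a non-separating, $i$-anti-invariant, smooth simple closed curve on $D$ (Proposition~\ref{invariant vanishing cycle}): because it is non-separating, its class is primitive in $H_1(D,\mathbb{Z})$ and there exists some $c'\in H_1(D,\mathbb{Z})$ with $c'\cdot[\alpha_r]=1$. The problem is to replace $c'$ by an anti-invariant class with the same (odd, hence after adjustment equal to $\pm1$) intersection number. Here I would decompose $c'=c'_+ + c'_-$ into its $i$-invariant and $i$-anti-invariant parts in $H_1(D,\mathbb{Q})_{\pm}$ (after possibly doubling to clear denominators, which only changes the pairing by a factor of $2$, so one has to be a little careful); since $[\alpha_r]$ is anti-invariant, the invariant part $c'_+$ pairs trivially with it (the intersection form pairs the $+$ and $-$ eigenspaces to zero), so $c'_-\cdot[\alpha_r]=c'\cdot[\alpha_r]$ is odd. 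Then I would invoke the concrete topological model of the cover from the proof of Proposition~\ref{anti_invariant_H_1} --- the embedding $\phi\colon D\hookrightarrow\mathbb{R}^3$ with $i$ acting by a $180^\circ$ rotation, and the splitting $H_1(D,\mathbb{Z})=\langle\delta_k\rangle\oplus\langle\beta_j\rangle\oplus\langle i\beta_j\rangle$ --- to see that the anti-invariant lattice $H_1(D,\mathbb{Z})_-$ together with the restriction of the intersection form is nondegenerate and that one can find an \emph{integral} anti-invariant class pairing to $\pm1$ with any primitive anti-invariant vanishing cycle. Alternatively, and perhaps more cleanly, one can argue directly on the model: since $\alpha_s$ lies in $D_1$ and is disjoint from all $\delta_k$, it lies in the span of the $\beta_j$, and the dual basis element (an anti-invariant combination of $i\beta_j$'s) does the job; an analogous picture handles $\alpha_r$.

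I expect the main obstacle to be exactly the integrality issue in the previous paragraph: producing an \emph{anti-invariant} class over $\mathbb{Z}$ (not merely over $\mathbb{Q}$) that pairs to $\pm1$ with $[\alpha_r]$, rather than to $\pm2$. This is where the explicit topological model of Figure~\ref{Cover} and the basis $\{\beta_j,i\beta_j,\delta_k\}$ of $H_1(D,\mathbb{Z})$ become essential, because they let one track the parity of intersection numbers and see that the vanishing cycles $\alpha_r$, being anti-invariant and non-separating, are in fact primitive inside the anti-invariant lattice $H_1(D,\mathbb{Z})_-$ and admit an integral dual. Once this is settled for the $\alpha_r$ and (with the observation $[\alpha_s]\cdot[i\alpha_s]=0$) for the $\alpha_s$, Proposition~\ref{prop_generatorsPL} follows by combining the two displayed Picard--Lefschetz computations with Proposition~\ref{anti_invariant_H_1}: every element of the generating set $\{[\alpha_r],[\alpha_s]-[i\alpha_s]\}$ is hit, up to sign, by some $-c+\PL([\gamma])(c)$, and signs do not matter for generation. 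This in turn shows $[\pi_1(P_0),\pi_1(U)]\to\pi_1(P_0)$ is surjective, hence $\pi_1(\mathcal{P}')=1$ and $(\mathcal{P}_H)_{\reg}$ is simply connected, as claimed in Theorem~\ref{thm_pi1}.
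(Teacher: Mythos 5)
Your reduction of the proposition to the Picard--Lefschetz formulas, and the handling of the type~2 generators $[\alpha_s]-[i\alpha_s]$ (finding an anti-invariant $c$ with $c\cdot[\alpha_s]=\pm1$, e.g.\ by dualizing inside $\langle\beta_j\rangle$ and anti-symmetrizing, which works because $\alpha_s$ is supported in $D_1$ away from the $\delta_k$), follow the paper's strategy and are fine. The genuine gap is exactly where you locate "the main obstacle": the existence, for each $r$, of an \emph{integral} anti-invariant class pairing to $\pm1$ with $[\alpha_r]$. In the paper this is Lemma~\ref{lemma_intersection1}, and it is not a lattice-theoretic consequence of primitivity: the intersection form restricted to $H_1(D,\mathbb{Z})_-$ is not unimodular, so "$[\alpha_r]$ is primitive in the anti-invariant lattice, hence admits an integral dual" is not a valid inference. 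Indeed the desired statement is simply \emph{false} when $C.B=2$ (the remark closing Section~\ref{sec:pi1}: with only two fixed points there is a single $\delta$ in the basis, $[\alpha_r]=a\delta+\sum b_j(\beta_j-i\beta_j)$, and its pairing with every anti-invariant class is even), so no argument that does not use the hypothesis $C.B>2$ can succeed -- and your proposal never invokes it. Your suggested "analogous picture" for $\alpha_r$ in the model $\{\beta_j,i\beta_j,\delta_k\}$ also breaks down: unlike $\alpha_s$, the cycle $\alpha_r$ passes through two ramification points and is not contained in $D_1$, so it has a nonzero $\delta$-component and there is no dual basis element available in the anti-invariant lattice.

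What the paper does instead is a direct geometric construction: using Proposition~\ref{invariant vanishing cycle}, pick a fixed point $p$ of $i$ on $\alpha_r$, use $C.B>2$ to find a ramification point $q\notin\alpha_r$, draw a path $\zeta$ from $p$ to $q$ leaving $p$ along the anti-invariant direction transverse to $\alpha_r$ and otherwise avoiding $\alpha_r$ (possible since $\alpha_r$ is non-separating, so $D\setminus\alpha_r$ is connected), and close it up by concatenating with $i\circ\zeta$; the resulting loop $\zeta_c$ is anti-invariant and meets $\alpha_r$ transversely in the single point $p$, giving $[\zeta_c]\cdot[\alpha_r]=\pm1$. To repair your proposal you would need to supply this (or an equivalent) construction in place of the asserted integral dual; the rest of your argument then goes through as in the paper.
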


For the generators $[\alpha_s]-[i\alpha_s]$, this is proven as in \cite[Proof of Theorem~7.1]{ASF}: one finds an element $c\in H_1(D,\mathbb{Z})_-$ such that $c.[\alpha_s]=1$, and then $-c+\PL([\gamma_s])(c)$ equals $[\alpha_s]-[i\alpha_s]$.
For the generators $[\alpha_r]$, we use the following lemma:

\begin{lemma}\label{lemma_intersection1}
    If $C.B>2$, then for every $r\in\mathfrak{r}$, there exists $c\in H_1(D,\mathbb{Z})_-$ such that $c.[\alpha_r]=1$.
\end{lemma}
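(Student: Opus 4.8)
The plan is to represent the anti-invariant vanishing cycle $\alpha_r$ concretely, as the preimage under $f$ of a simple arc on the base of the double cover joining two of its branch points, and then to exhibit the dual class $c$ as the preimage of a second such arc chosen to meet the first in exactly one branch point.

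First I would replace the reference curve by a general smooth $C_1\in U$ lying close to the generic point of the type-$1$ branch $W_{\gamma_r}$, and set $D_1=f^{-1}C_1$. This is harmless: $U$ is connected, so parallel transport along a path in $U$ gives an isometry $H_1(D_1,\mathbb{Z})\to H_1(D,\mathbb{Z})$ that commutes with $i_*$ and sends $[\alpha_r]$ to the chosen vanishing cycle; hence it suffices to build $c$ on $D_1$. Now $f_1\colon D_1\to C_1$ is branched at the $b:=C.B$ transverse points of $C_1\cap B$, and, by the local computation already carried out in the proof of Proposition~\ref{invariant vanishing cycle} (cf.\ also Lemma~\ref{lemma_milnornumber}), the class $[\alpha_r]$ is represented by $f_1^{-1}(\ell)$, where $\ell\subset C_1$ is a short simple arc joining the two branch points $p_1,p_2$ that collide as $C_1$ specializes to the generic (tangential) member of $W_{\gamma_r}$ and avoiding the other branch points. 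Since the double cover has nontrivial monodromy around $p_1$ and $p_2$, the two sheets over the interior of $\ell$ come together at both endpoints, so $f_1^{-1}(\ell)$ is a single simple closed curve, and $i$ acts on it as a reflection with fixed points over $p_1,p_2$; this re-confirms $[\alpha_r]\in H_1(D_1,\mathbb{Z})_-$.

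Here is where the hypothesis enters: $C.B=-2C.K_T$ is even and $>2$, so $b\geq 4$, and in particular there is a third branch point $p_3$ of $f_1$. I would choose a simple arc $\ell'\subset C_1$ from $p_2$ to $p_3$ avoiding all other branch points and meeting $\ell$ only at $p_2$, and there approaching $p_2$ from a direction different from that of $\ell$. Exactly as for $\ell$, the preimage $\beta:=f_1^{-1}(\ell')$ is a simple closed curve on which $i$ acts as a reflection, so $[\beta]\in H_1(D_1,\mathbb{Z})_-$. The final step is the intersection count: $\alpha_r\cap\beta=f_1^{-1}(\ell)\cap f_1^{-1}(\ell')=f_1^{-1}(\ell\cap\ell')=f_1^{-1}(p_2)$, which is a single point $q$ because $p_2$ is a branch point; in the local normal form $w\mapsto w^2$ at $q$ the arcs $\ell$ and $\ell'$ lift to two distinct real lines through the origin of the $w$-plane, which meet transversely, so $[\alpha_r]\cdot[\beta]=\pm1$. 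Taking $c:=[\beta]$ or $c:=-[\beta]$ accordingly yields $c\in H_1(D_1,\mathbb{Z})_-$ with $c\cdot[\alpha_r]=1$, and transporting back to $D$ finishes the argument.

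I do not expect a serious obstacle here; the content is entirely topological and close in spirit to the picture behind Propositions~\ref{invariant vanishing cycle} and \ref{anti_invariant_H_1}. The points requiring a little care are purely bookkeeping: checking that $f_1^{-1}(\ell)$ and $f_1^{-1}(\ell')$ are each connected (which is the nontrivial-monodromy remark above) and that the intersection at $q$ is transverse (immediate from $w\mapsto w^2$ once the two arcs hit $p_2$ along different directions). It is also worth noting, for context, why $C.B>2$ rather than $C.B\geq 2$ is used: when $b=2$ there is no third branch point, and the naive averaging $c'\mapsto c'-i_*c'$ applied to a topological dual $c'$ of $\alpha_r$ only produces a class pairing to $2$ with $[\alpha_r]$, not to $1$.
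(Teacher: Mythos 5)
Your proof is correct and is essentially the paper's argument: both exploit $C.B>2$ to find a third ramification point and construct an anti-invariant simple closed curve through one of the two fixed points of $i$ on $\alpha_r$, meeting $\alpha_r$ transversely in exactly that point. The paper phrases the construction upstairs on $D$ (a path $\zeta$ from a fixed point of $i$ on $\alpha_r$ to the extra ramification point, avoiding $\alpha_r$ via connectedness of $D\setminus\alpha_r$, then symmetrized by $i$), which is precisely your $\beta=f_1^{-1}(\ell')$ viewed from above; your additional step of representing $\alpha_r$ itself as $f_1^{-1}(\ell)$ is consistent with the local model in Proposition~\ref{invariant vanishing cycle} but is not needed in the paper's version.
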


\begin{proof} 
    By Proposition \ref{invariant vanishing cycle}, we can assume that $\alpha_r$ is anti-invariant, non-separating, and smooth.
    We will construct an anti-invariant curve $\zeta_c$ which intersects $\alpha_r$ at exactly one point.
    
    Let $p$ be one of the two fixed points of $i$ on $\alpha_r$.
    Locally around $p\in D$, the involution $i$ looks like    $\mathbb{C}\ni z\mapsto -z$; the anti-invariant curves are the lines $\{z=x+\sqrt{\smash{-}1}y\mid y=0\}$ and $\{z=x+\sqrt{\smash{-}1}y\mid x=0\}$. 
    The cycle $\alpha_r$ must be tangent to one of these at $p$, say to $\{z=x+\sqrt{\smash{-}1}y\mid y=0\}$.  
    Since $C.B>2$, there exists a ramification point $q\in D$ not lying on $\alpha_r$.
    Now draw a path $\zeta\colon[0,1]\to D$ connecting $p$ and $q$, satisfying the following conditions:
    \begin{enumerate}
        \item on a small open neighbourhood of $p$, the image of $\zeta$ agrees with $\{z=x+\sqrt{\smash{-}1}y\mid x=0, y\geq 0\}$;
        \item the image of $(0,1]$ under $\zeta$ lies in $D\backslash\alpha_r$ (this is possible because $D\backslash\alpha_r$ is connected).
    \end{enumerate}
    Then we define the closed curve $\zeta_c\colon [0,1]\to D_0$ as follows: 
    \[\zeta_c(t) = \begin{cases}
    \zeta(2t)       & \quad \text{if } t\in[0,\frac12];\\
    i(\zeta(2(1-t))  & \quad \text{if } t\in[\frac12,1].
    \end{cases}
    \]
    This curve is anti-invariant by construction, and as $i$ preserves $D\backslash\alpha_r$, we see that the image of $(0,1)$ under $\zeta_c$ lies in $D\backslash\alpha_r$. Therefore $p$ is the only intersection point of $\zeta_c$ and $\alpha_r$. Locally around $p$, the cycle $c$ agrees with $\{z=x+\sqrt{\smash{-}1}y\mid x=0\}$, hence we find $[\zeta_c].[\alpha_r]=\pm 1$. Finally, we take $c=\pm[\zeta_c]$. 
\end{proof}

Now for $c$ as in Lemma~\ref{lemma_intersection1}, we have $-c+\PL([\gamma_r])(c)=[\alpha_r]$, which completes the proof of Proposition~\ref{prop_generatorsPL}.
We conclude that the map 
$[\pi_1(P_0),\pi_1(U)]\to \pi_1(P_0)$ is surjective.
This finishes the proof of Theorem~\ref{thm_pi1}.

\begin{remark}
    The statement above fails if $C.B=2$. As in the proof of Proposition \ref{anti_invariant_H_1}, one can view the involution $i$ as a rotation in $\mathbb{R}^3$, and thus choose a basis of homology of $D$ consisting of classes represented by anti-invariant simple closed curves $\delta_k$ and pairs of curves $\beta_j, i\beta_j$ that get switched by the rotation. Under our assumptions there are only two fixed points on $D$, so there is only one anti-invariant curve in the chosen basis which we denote by $\delta$. A vanishing cycle $\alpha_r$ is anti-invariant and its homology class is equal to $a\delta+\Sigma b_j(\beta_j-i\beta_j)$. Its intersection with any anti-invariant homology class $c$ is even since $c$ has the form $k\delta+\Sigma l_j(\beta_j-i\beta_j)$ and $\delta.\delta=0$.
\end{remark}

\section{Curves with non-integral preimage}
\label{section_curvesreduciblepreim}

In view of assumption (ii) of Theorem \ref{thm_pi1}, in this section we study the locus of curves in $|C|$ whose inverse image under $f$ is not integral. We show that under some numerical conditions on $|C|$, this locus has codimension at least 2. Together with the results of the next Section \ref{section_curvessingularpreim}, this will allow us in Section \ref{sec:examples} to give examples of linear systems for which the corresponding relative Prym varieties are irreducible symplectic varieties.

We recall the definition of $n$-connectedness:

\begin{definition}{\cite{Bombieri}} \label{n_connected}
    Let $n\in\Z_{>0}$. A linear system $|L|$ is called \emph{$n$-connected} if $C_1.C_2\geq n$ for all effective divisors $C_1+C_2\in |L|$.
\end{definition}

The main result, which is used in the proof of Theorem \ref{thm_Pisv}, is the following.

\begin{theorem}\label{thm_curvesreduciblepreim}
Let $C\subset T$ be a smooth curve with $C^2>0$ satisfying
 \begin{enumerate}[(a)]
 \item $C.B>0$ and $C.B_0>0$ for every rational irreducible component $B_0$ of $B$;
 \item $C^2$ and $C.B$ are not both equal to 4;
  \item $|C|$ is 2-connected;
  \item if $B^2\leq 0$, then $|C|$ is not hyperelliptic. 
  \end{enumerate}
 Then the locus of curves in $|C|$ whose inverse image under $f$ is not integral has codimension at least 2 in $|C|$.
\end{theorem}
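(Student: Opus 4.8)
The plan is to decompose the locus in question according to the combinatorial type of the splitting of $f^*\Gamma$. Suppose $\Gamma \in |C|$ has non-integral preimage $D' = f^*\Gamma$. There are two cases: either $\Gamma$ itself is non-integral in $T$, or $\Gamma$ is integral but $D'$ splits. In the first case $\Gamma = \Gamma_1 + \Gamma_2$ with both $\Gamma_i$ effective (or $\Gamma$ is non-reduced), and since $|C|$ is $2$-connected one has $\Gamma_1.\Gamma_2 \geq 2$; a dimension count then shows that $\{\Gamma_1\} \times \{\Gamma_2\}$ sweeps out a locus of codimension $\Gamma_1.\Gamma_2 - 1 + (\text{correction from } h^1)$ inside $|C|$, and the $2$-connectedness hypothesis (c) forces this to be $\geq 1$; one must also rule out that it is exactly $1$, which is where hypothesis (b) ($C^2$ and $C.B$ not both $4$) enters, together with the standard analysis of when equality holds in Bombieri-type estimates. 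So I would first reduce to the case that $\Gamma$ is \emph{integral} but $f^*\Gamma = D_1 + D_2$ is reducible.

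In that remaining case, since $f$ has nonempty ramification (we are in the rational-quotient situation), $D_1$ and $D_2$ must be interchanged by $i$, because an $i$-invariant component would descend and contradict integrality of $\Gamma$; hence $D_1 = D$, $D_2 = i(D)$ for some curve $D \subset S$ with $f_*D = \Gamma$, so $D + i(D) \in |f^*C|$ and $D.i(D) = \frac12(C.B) + (\text{local contributions})$ — more precisely $f^*\Gamma$ is singular exactly along $D \cap i(D)$, which contains $D \cap \mathrm{Fix}(i)$, i.e.\ at least the $C.B$ ramification points counted appropriately. The key point is then that the family of such $D$ is parametrized by a linear system $|N|$ on $S$ with $f^*C = N + i^*N$ in $\mathrm{Pic}(S)$, and $\dim |N|$ is controlled by $N^2 = \frac12((f^*C)^2 - 2 N.i^*N) = C^2 - N.i^*N$. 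I would bound $\dim|N|$ from above via Riemann–Roch on $S$ and compare with $\dim|C|$; the codimension of the image of the "add $i(D)$" map is then $\dim|C| - \dim|N|$, and the computation reduces to showing $N.i^*N$ is large enough, using $N.i^*N \geq C.B/2 > 1$ from hypothesis (a) together with $C.B \neq 4$ or $C^2 \neq 4$ from (b).

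The role of hypothesis (a) in its full strength — $C.B_0 > 0$ for every rational component $B_0$ of $B$ — is to guarantee that $D$ actually meets every component of the ramification divisor, so that no component of $D \cap i(D)$ can be avoided; without this one could have splittings supported away from part of $B$ and the codimension estimate would degrade. Hypothesis (d), that $|C|$ is not hyperelliptic when $B^2 \leq 0$, is needed to exclude the degenerate situation where $|N|$ is unexpectedly large because the map $\varphi_N \colon S \to \mathbb{P}^{\dim|N|}$ fails to be birational onto its image: a $2:1$ map would let $N$ move in half the expected dimension. Concretely I would invoke the classification of linear systems on K3 surfaces (Saint-Donat) to say that $|N|$ either behaves as expected or is hyperelliptic/elliptic-pencil-like, and the branch-locus hypotheses rule out the exceptional cases.

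\textbf{Main obstacle.} The hard part will be the precise bookkeeping in the reducible-but-$\Gamma$-integral case: one must pin down $\dim|N|$ as a function of the numerical invariants $C^2, C.K_T, C.B$ uniformly across all $75$ Nikulin families, handle the possible presence of fixed components or base points of $|N|$ on $S$, and isolate exactly the numerically borderline configurations — this is where the somewhat ad hoc-looking exclusion "$C^2 \neq 4$ or $C.B \neq 4$" comes from. I expect the proof to proceed by Riemann–Roch plus a short case analysis of the borderline values, with the $2$-connectedness and non-hyperellipticity hypotheses invoked precisely to kill the finitely many exceptional numerical types; the geometric content (components of $D'$ swapped by $i$, singularities concentrated on $\mathrm{Fix}(i)$) is straightforward, but making the codimension bound sharp requires care.
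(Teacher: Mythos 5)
Your top-level decomposition (non-integral $\Gamma$ versus integral $\Gamma$ with split preimage, the split components being swapped by $i$) matches the paper, but both halves of your sketch have real gaps, and you have misassigned the roles of the hypotheses. In the split case, the paper uses the very-generality assumption $\NS(S)=H^2(S,\Z)^{i^*}$ to conclude that $A_0$ and $i^*A_0$ are \emph{linearly equivalent}, so $A_0^2=A_0.i^*A_0=\tfrac12 C^2$, Riemann--Roch on the K3 gives $\dim|A_0|=\tfrac14C^2+1$ exactly, and the codimension is exactly $\tfrac14(C^2+C.B)-1$; this is where hypothesis (b) enters, and nowhere else. Your substitute bound $N.i^*N\geq C.B/2$ only yields codimension $\geq \tfrac12 C.B-1$, which is $0$ or $1$ when $C.B=2$ or $4$ -- values allowed by the hypotheses -- so as written your count does not prove the statement in this case. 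Your worry about $\varphi_N$ being $2{:}1$ and your appeal to Saint-Donat are beside the point: on a K3 the dimension of $|N|$ is given by Riemann--Roch independently of hyperellipticity, and hypothesis (d) is not used on the K3 side at all.

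The non-integral case, which is where most of the paper's work lies, is essentially missing from your proposal. For $C_1+C_2\in|C|$ with $C_1.B>0$ the relevant codimension is $\dim|C|-\dim|C_1|-\dim|C_2| = C_1.C_2 - h^0(C_2,\omega_T|_{C_2})$, so $2$-connectedness gives codimension $\geq 2$ only when $h^0(C_2,\omega_T|_{C_2})=0$ (e.g.\ when $C_2.B>0$). The genuinely delicate subcases are $C_2.B\leq 0$: (i) $C_2$ an integral curve with $C_2^2<0$, which is forced to be a rational component of $B$ -- this is precisely where the full strength of (a), $C.B_0>0$ for every rational component, is needed (to get $C.C_2>0$ and hence codimension $C.C_2+1\geq 2$), not in the split case as you claim; and (ii) $C_2$ integral with $C_2^2=0$, $C_2.B=0$ and $\omega_T|_{C_2}$ trivial, i.e.\ a genus-one pencil, where $\dim|C_2|=1$ and the codimension drops to $C_1.C_2-1$; if $C.C_2=2$ this pencil exhibits $C$ as hyperelliptic, which is exactly what hypothesis (d) excludes (and one checks $B^2\leq 0$ in this situation, matching its formulation). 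Your claim that (b) rules out the borderline non-integral configurations is therefore incorrect -- (b) plays no role there -- and your proposal identifies neither of the two problematic subloci, nor the reduction (via countability of $\Pic(T)$) to the case where $C_2$ is integral. These are not bookkeeping details but the core of the argument, so the proof as proposed does not go through.
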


We distinguish two subloci, which we treat separately: the locus of non-integral curves in $|C|$, and the locus of integral curves whose inverse image is non-integral. We give the proof of Theorem \ref{thm_curvesreduciblepreim} at the end of the current section.

\subsection{Integral curves with non-integral preimage}

Suppose $C_0$ is an integral curve on $T$.
If $f^{-1}(C_0)$ is not integral, it is either non-reduced or reducible.

\begin{remark}\label{remark_non_reduced}
If $f^{-1}(C_0)$ is non-reduced, $C_0$ must be contained in the branch locus $B$ of $f$, that is $C_0$ is an irreducible component of $B$. 
Since there are only finitely many of these, we find that if $\dim|C|\geq 2$, then the locus of integral curves in $C$ whose preimage under $f$ is non-reduced has codimension at least 2.
\end{remark}

The codimension of the locus of integral curves with reducible inverse image depends on the intersection numbers $C^2$ and $C.B$.

\begin{proposition} \label{prop_irredcurveswithredpullback}
 Let $C$ be a curve on $T$ with $C^2>0$ and $C.B>0$.
 Let $Z$ be the locus of integral curves in $|C|$ whose inverse image under $f$ is reducible, and assume $Z$ is non-empty.
 \begin{enumerate}[(i)]
 \item If $C^2$ and $C.B$ are both equal to 4, then $Z$ has codimension 1 in $|C|$; 
 \item otherwise, $Z$ has codimension at least 2.
 \end{enumerate}
\end{proposition}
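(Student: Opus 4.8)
\emph{Plan.} The strategy is to parametrize $Z$ by a linear system on $S$, using crucially that $(S,i)$ is very general, and then compare dimensions. Let $C_0\in Z$. Since $C_0$ is integral while $D_0:=f^{-1}C_0$ is not, $D_0$ is either non-reduced --- which forces $C_0$ to be one of the finitely many irreducible components of $B$, contributing only a locus of codimension $\geq 2$ (as $\dim|C|\geq 2$, cf.\ Remark~\ref{remark_non_reduced}) --- or reducible. In the latter case, restricting $f$ over the connected curve $C_0\setminus(\sing(C_0)\cup B)$ and noting that a connected \'etale double cover is either connected or the disjoint union of two copies of the base, one gets $f^*C_0=D_0'+i^*D_0'$ for a unique irreducible curve $D_0'\neq i^*D_0'$ with $f|_{D_0'}\colon D_0'\to C_0$ birational. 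As $\NS(S)=H^2(S,\Z)^{i^*}$, we have $i^*[D_0']=[D_0']$, which together with $[D_0']+i^*[D_0']=[f^*C]=[D]$ gives $2[D_0']=[D]$. Hence either $[D]$ is not divisible by $2$ in $\NS(S)$, so $Z=\emptyset$ and there is nothing to prove, or $[D_0']$ equals the fixed class $[M]:=\tfrac12[D]$, independent of $C_0$, with $M^2=\tfrac14 D^2=\tfrac12 C^2$; in particular $4\mid C^2$. Conversely $f_*D_0'=C_0$ and $f^*\colon\Pic(T)\to\Pic(S)$ is injective ($\Pic(T)$ being torsion-free), so $D_0'\mapsto f(D_0')$ defines a rational map $|M|\dashrightarrow|C|$ whose image contains $Z$ and whose general fibre is $\{D_0',i^*D_0'\}$. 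Thus $\dim Z\leq\dim|M|$.

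Next comes the dimension count. The irreducible curve $D_0'$ has $(D_0')^2=\tfrac12 C^2>0$, so $M$ is nef and big; hence $h^1(\cO_S(M))=0$, and since $-M$ is not effective, Riemann--Roch on the K3 surface gives $\dim|M|=\tfrac12 M^2+1=\tfrac14 C^2+1$. On the other hand $B=-2K_T$ gives $C.K_T=-\tfrac12 C.B$, so $g(C)=\tfrac12 C^2-\tfrac14 C.B+1$ by adjunction, and Corollary~\ref{cor_dimCforB.C>0} yields $\dim|C|=g(C)-C.K_T-1=\tfrac12 C^2+\tfrac14 C.B$. Since $\dim|C|$ and $\tfrac12 C^2$ are integers, $4\mid C.B$ as well, so $C^2+C.B$ is a multiple of $4$ that is $\geq 8$, and $\codim_{|C|}Z=\dim|C|-\dim Z\geq\tfrac14(C^2+C.B)-1$. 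If $C^2+C.B\geq 12$ this is $\geq 2$, proving (ii) away from the excluded value; and if $C^2+C.B=8$, equivalently $C^2=C.B=4$, it is $\geq 1$, consistent with (i).

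For (i) it remains to check that $Z$ has dimension exactly $\dim|M|=2$, i.e.\ that a general $D_0'\in|M|$ really produces a curve in $Z$. When $C^2=C.B=4$ we have $M^2=2$, so by Saint-Donat $|M|$ is a base-point-free net with smooth irreducible general member, and $i^*$ acts non-trivially on $|M|\cong\p^2$ (otherwise every member would be $i$-invariant and hence have integral preimage, contradicting $Z\neq\emptyset$); thus for general $D_0'\in|M|$ the curve $C_0=f(D_0')$ is integral with $f^{-1}C_0=D_0'+i^*D_0'$ reducible, so $C_0\in Z$ and $\dim Z=2=\dim|C|-1$. The main obstacles are precisely these three points: showing that the lattice-divisibility $[D]\in 2\,\NS(S)$ is the \emph{only} constraint defining $Z$, so that $Z$ is governed by the whole of $|M|$; the vanishing $h^1(\cO_S(M))=0$ that fixes $\dim|M|$ exactly; and the reverse inequality $\dim Z\geq\dim|M|$ in case (i).
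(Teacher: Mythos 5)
Your proof is correct and follows essentially the same route as the paper: write $f^*C_0=A_0+i^*A_0$, use $\NS(S)=H^2(S,\Z)^{i^*}$ to identify $A_0$ with a member of the fixed linear system $|\tfrac12 f^*C|$, and compare $\dim|\tfrac12 f^*C|=\tfrac14C^2+1$ with $\dim|C|=\tfrac12C^2+\tfrac14C.B$. You are in fact somewhat more careful than the paper on two points it leaves implicit --- the divisibility argument showing $C^2+C.B=8$ forces $C^2=C.B=4$, and the reverse inequality $\dim Z\geq\dim|M|$ needed for part (i) --- but these are refinements of the same argument, not a different approach.
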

\begin{proof}
 Assume $C_0\in Z$ is an integral curve in $|C|$ such that $f^*C_0$ is reducible. Then $f^*C_0$ is of the form
 \[f^*C_0=A_0+i^*A_0\]
 for some integral curve $A_0\subset S$ with $f_*A_0=C_0$ and $i^*A_0\neq A_0$ (as curves). As $\Pic(S)=\Pic(S)_+$
 by assumption,
 $A_0$ and $i^*A_0$ are linearly equivalent. It follows that $|2A_0|=|f^*C_0|=|f^*C|$.
 Note that
 \[A_0^2=\frac14(2A_0)^2=\frac14(f^*C)^2=\frac12C^2.\]
Moreover, the adjunction formula gives
 \[2g(C)-2=C.(C+K_T)=C^2-\frac12C.(-2K_T)=C^2-\frac12C.B.\]

Under the pullback map $f^*\colon |C| \to |2A|$, $Z$ can be identified with the image of the injective map $|A|\to |2A|$, $A'\mapsto A'+i^*A'$.
Using Corollary~\ref{cor_dimCforB.C>0}, the codimension of $Z$ is 
\[
\dim|C|-\dim |A|=\frac12C^2+\frac14C.B-\frac14C^2-1=\frac14C^2+\frac14C.B-1
\]
which is equal to $1$ if and only if $C^2=C.B=4$.
\end{proof}

\subsection{Non-integral curves}

In this subsection, $C\subset T$ is a smooth integral curve satisfying $C.B>0$.
We will find mild assumptions under which the set of non-integral members of $|C|$ has codimension at least two. The main results are summarized in Proposition \ref{prop_2connectedmerged}.

Our first goal is to find, in Corollary~\ref{cor_differencedimensions}, a numerical criterion \eqref{eq_ourgoal} for the locus of non-integral members of $|C|$ to have codimension at least two.
We need a technical lemma for a linear system $|C|$ on a rational surface $T$. 
    Let $\Sigma$ be an irreducible component of the locus of non-integral curves in $|C|$, and assume $\Sigma\neq\emptyset$. In this subsection, when we write $C_1+C_2 \in \Sigma$, we assume both $C_i$ to be effective.

    \begin{lemma}\label{lemma_dommap_twocurves}
    There are curves $C_1,C_2\subset T$ with $C_1+C_2\in\Sigma$ such that the map
    \begin{equation} 
    p_{C_1,C_2}\colon|C_1|\times|C_2|\to \Sigma,\;\;
    (C_1',C_2')\mapsto C_1'+C_2',
    \end{equation}
    which is finite onto its image, is dominant.
    Moreover, we may assume $C_1$ is integral.
\end{lemma}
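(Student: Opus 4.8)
The plan is to exhibit $\Sigma$ as the \emph{image} of a single product of complete linear systems under the addition map, and then to arrange that one of the two factors has integral general member.

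First I would record the elementary but crucial fact that, because $T$ is a rational surface, $H^1(\cO_T)=0$ and hence $\NS(T)=\Pic(T)$; in particular every effective divisor whose class equals $[C]$ in $\NS(T)$ lies in the complete linear system $|C|$. Consequently, for each pair $(\alpha,\beta)$ of nonzero effective classes with $\alpha+\beta=[C]$, addition of divisors defines a morphism $q_{\alpha,\beta}\colon|\alpha|\times|\beta|\to|C|$, $(A,B)\mapsto A+B$; its source is an irreducible projective variety, so $\Ima(q_{\alpha,\beta})$ is an irreducible closed subset of $|C|$, and it clearly consists of non-integral curves. Conversely, every non-integral member of $|C|$ is the sum of two nonzero effective divisors, so the non-integral locus $\Sigma_{\mathrm{ni}}\subset|C|$ (which is closed, integrality of the fibres being an open condition in the universal family) equals $\bigcup_{(\alpha,\beta)}\Ima(q_{\alpha,\beta})$. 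Since there are only countably many classes $\alpha$, this is a countable union of irreducible closed subsets.

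Next I would use that $\Sigma$ is an \emph{irreducible component} of $\Sigma_{\mathrm{ni}}$: since a complex variety is never a countable union of proper closed subvarieties, the covering $\Sigma=\bigcup_{(\alpha,\beta)}\bigl(\Sigma\cap\Ima(q_{\alpha,\beta})\bigr)$ forces $\Sigma\subseteq\Ima(q_{\alpha,\beta})$ for some single pair, and maximality of $\Sigma$ then gives $\Sigma=\Ima(q_{\alpha,\beta})$, i.e.\ $q_{\alpha,\beta}$ is dominant (even surjective) onto $\Sigma$. Among all pairs with this property I would choose one minimizing $\alpha\cdot A$ for a fixed ample class $A$ on $T$. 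If the general member of $|\alpha|$ were not integral, then the non-integral locus of $|\alpha|$ would be all of $|\alpha|$, and the same countable-union argument would produce a decomposition $\alpha=\alpha'+\alpha''$ into nonzero effective classes with $q_{\alpha',\alpha''}\colon|\alpha'|\times|\alpha''|\to|\alpha|$ dominant; composing with $q_{\alpha,\beta}$ and rewriting $(A',A'',B)\mapsto A'+(A''+B)$ shows that $q_{\alpha',\,\alpha''+\beta}$ is again dominant onto $\Sigma$, while $\alpha'\cdot A<\alpha\cdot A$ because $\alpha''$ is nonzero effective and $A$ ample — contradicting minimality. Hence a general $C_1\in|\alpha|$ is integral; taking a general $C_2\in|\beta|$ we have $|C_1|=|\alpha|$ and $|C_2|=|\beta|$, so $p_{C_1,C_2}=q_{\alpha,\beta}$ is dominant onto $\Sigma$ with $C_1$ integral. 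Finiteness onto the image comes for free: $q_{\alpha,\beta}$ is proper and its fibres are finite, since a fixed effective divisor has only finitely many sub-divisors of a prescribed class.

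The steps are essentially bookkeeping; the one point I would take care to justify is the assertion that an irreducible complex variety cannot be written as a countable union of proper closed subvarieties. I would deduce this from Noether normalization, reducing to the fact that $\mathbb{A}^n(\C)$ is not a countable union of hypersurfaces, which follows by choosing the coordinates of a point successively so as to avoid countably many algebraic conditions. Everything else — that $q_{\alpha,\beta}$ is everywhere defined, that images of products of projective spaces are irreducible and closed, and the inequality $\alpha'\cdot A<\alpha\cdot A$ — is routine.
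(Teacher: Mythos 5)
Your argument is correct and is essentially the paper's: both cover $\Sigma$ by the countably many images (countably many because $\Pic(T)$ is countable) of the addition maps on products of complete linear systems, and both conclude by the fact that an irreducible complex variety cannot be a countable union of proper closed subsets. The only divergence is how integrality of $C_1$ is secured: the paper gets it for free by using only decompositions $C_1+C_2$ in which $C_1$ is chosen to be an integral component of the given non-integral member, whereas you obtain it a posteriori by a descent argument minimizing $\alpha\cdot A$ for an ample class $A$ --- a valid, slightly longer, alternative for that step.
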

\begin{proof}
    We consider the map $p_{C_1,C_2}$ for any $C_1+C_2\in\Sigma$ with $C_1$ integral.
    The union of the images of these $p_{C_1,C_2}$ is all of $\Sigma$. 
    As $\Pic(T)$ is countable, there are only countably many products $|C_1|\times|C_2|$, implying that for some choice of $C_1+C_2$, the map $p_{C_1,C_2}$ is dominant.
\end{proof}

\begin{corollary}\label{cor_differencedimensions}
    Let $|C|$ be a linear system on a rational surface $T$.
    The locus of non-integral curves in $|C|$ has codimension at least two if and only if the inequality
    \begin{equation} \label{eq_ourgoal}
        \dim|C|-\dim|C_1|-\dim|C_2|\geq 2    
    \end{equation}
    holds for every non-integral member $C_1+C_2\in|C|$.   
\end{corollary}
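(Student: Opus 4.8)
\textbf{Proof proposal for Corollary~\ref{cor_differencedimensions}.}
The plan is to reduce the statement to the single dominant map $p_{C_1,C_2}$ produced by Lemma~\ref{lemma_dommap_twocurves}, and to compare the dimension of its image with the dimension of its source. First I would treat the ``if'' direction. Suppose the locus of non-integral curves has a component $\Sigma$ of codimension $\leq 1$ in $|C|$; I must exhibit a non-integral member $C_1+C_2\in|C|$ violating \eqref{eq_ourgoal}. Apply Lemma~\ref{lemma_dommap_twocurves} to $\Sigma$: there exist effective $C_1,C_2$ with $C_1+C_2\in\Sigma$ such that $p_{C_1,C_2}\colon|C_1|\times|C_2|\to\Sigma$ is dominant and finite onto its image. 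Since $p_{C_1,C_2}$ is finite onto its image and dominant, $\dim|C_1|+\dim|C_2|=\dim|C_1|\times|C_2|\geq\dim\Sigma\geq\dim|C|-1$, so $\dim|C|-\dim|C_1|-\dim|C_2|\leq 1$, i.e.\ \eqref{eq_ourgoal} fails for this member. (Here I use that $C_1+C_2$ is genuinely non-integral because it lies in $\Sigma$, a locus of non-integral curves.)

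For the ``only if'' direction, assume \eqref{eq_ourgoal} holds for every non-integral member $C_1+C_2\in|C|$, and let $\Sigma$ be any irreducible component of the locus of non-integral curves; I must show $\codim_{|C|}\Sigma\geq 2$. Apply Lemma~\ref{lemma_dommap_twocurves} to $\Sigma$ to get $C_1,C_2$ with $C_1+C_2\in\Sigma$ and $p_{C_1,C_2}$ dominant and finite onto its image. Finiteness onto the image gives $\dim\Sigma=\dim\overline{\operatorname{Im}(p_{C_1,C_2})}=\dim(|C_1|\times|C_2|)=\dim|C_1|+\dim|C_2|$. Since $C_1+C_2$ is a non-integral member of $|C|$, the hypothesis gives $\dim|C|-\dim|C_1|-\dim|C_2|\geq 2$, hence $\dim|C|-\dim\Sigma\geq 2$, as desired. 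As $\Sigma$ was an arbitrary component, this proves the whole locus has codimension at least two.

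The only real point requiring care — and the place I would be most careful — is the bookkeeping around the map $p_{C_1,C_2}$: one must use that it is \emph{finite onto its image} (so source and image have equal dimension) together with \emph{dominance} onto $\Sigma$ (so the image is dense in $\Sigma$, giving $\dim\Sigma=\dim|C_1|+\dim|C_2|$). Both facts are supplied verbatim by Lemma~\ref{lemma_dommap_twocurves}, so no new input is needed; the argument is essentially a dimension count, with the substantive geometric content already packaged in that lemma. One should also note that the locus of non-integral curves is closed (being a finite union, over the countably many relevant classes, of images of the proper maps $p_{C_1,C_2}$, and in fact a finite union by Noetherianity once one restricts to classes with $C_i$ effective and $C_1.C_2$ bounded), so speaking of its irreducible components and their codimensions is legitimate.
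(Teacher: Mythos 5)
Your two paragraphs prove the same implication twice. In the first paragraph you show, by contraposition, that if some component $\Sigma$ of the non-integral locus has codimension $\leq 1$ then \eqref{eq_ourgoal} fails for some non-integral member; in the second paragraph you show directly that if \eqref{eq_ourgoal} holds for every non-integral member then every component has codimension $\geq 2$. Both arguments establish the implication ``\eqref{eq_ourgoal} for all members $\Rightarrow$ codimension $\geq 2$'' (the ``if'' direction of the corollary), and both are correct: Lemma~\ref{lemma_dommap_twocurves} gives a decomposition $C_1+C_2\in\Sigma$ with $p_{C_1,C_2}$ dominant and finite onto its image, so $\dim\Sigma=\dim|C_1|+\dim|C_2|$, and the dimension count you perform is exactly the intended use of that lemma.

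What is missing is the converse (``only if'') direction: assuming the non-integral locus has codimension $\geq 2$, you must verify \eqref{eq_ourgoal} for \emph{every} non-integral member $C_1+C_2\in|C|$, not only for the distinguished decomposition produced by Lemma~\ref{lemma_dommap_twocurves}. This direction cannot use dominance (an arbitrary decomposition need not dominate a component), but it does not need it: for any non-integral member $C_1+C_2$, every curve in the image of $p_{C_1,C_2}\colon|C_1|\times|C_2|\to|C|$ is non-integral, and since $p_{C_1,C_2}$ is finite onto its image, that image has dimension $\dim|C_1|+\dim|C_2|$; it is therefore contained in the non-integral locus, whose codimension is $\geq 2$ by hypothesis, giving $\dim|C_1|+\dim|C_2|\leq\dim|C|-2$, i.e.\ \eqref{eq_ourgoal}. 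Adding this short argument (and deleting one of your two redundant paragraphs) completes the proof; your closing remark on closedness of the non-integral locus is fine but tangential, since integrality is an open condition in the family over $|C|$.
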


Hence we need to investigate when the inequality \eqref{eq_ourgoal} holds.

\medskip
If $C=C_1+C_2$ is a non-integral curve with $C.B>0$, then one of $C_1.B$ and $C_2.B$ must be positive. We will always assume that $C_1.B>0$. Then we have the following formula for $\dim|C|$:

\begin{lemma} \label{lemma_dimCunderassumptions}
Assume $C_1+C_2\in |C|$ with $C_1.B>0$. Then $|C|$ has dimension
\[\dim|C_1|+C.C_2-g(C_2)+1=\dim|C_1|+\dim|C_2|+C_1.C_2-h^0(C_2,\omega_T|_{C_2}).\]
If we also assume $C_2.B>0$, then $\dim |C|$ equals $\dim|C_1|+\dim|C_2|-C_1.C_2$.
\end{lemma}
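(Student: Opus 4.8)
The plan is to compute $\dim|C|$ by applying Lemma \ref{lemma_dimlinearsystemT} to $C$, and then to rewrite the resulting expression in terms of invariants of $C_1$ and $C_2$. By Lemma \ref{lemma_dimlinearsystemT}, $\dim|C| = C^2 + 1 - g(C) + h^0(C,\omega_T|_C)$. Since $C = C_1 + C_2$, I would expand $C^2 = C_1^2 + 2C_1.C_2 + C_2^2$ and use the adjunction formula on $T$ to write $2g(C) - 2 = C.(C+K_T)$, similarly $2g(C_i)-2 = C_i.(C_i+K_T)$; combining these gives $g(C) = g(C_1) + g(C_2) + C_1.C_2 - 1$. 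The key point for the first displayed formula is to control the term $h^0(C,\omega_T|_C)$: here I would invoke the hypothesis $C_1.B > 0$, i.e. $C_1.(-2K_T) > 0$, so $\deg(\omega_T|_{C_1}) = K_T.C_1 < 0$, whence $h^0(C_1,\omega_T|_{C_1}) = 0$. Restricting the natural sequence $0 \to \omega_T|_{C_1}(-C_2) \to \omega_T|_C \to \omega_T|_{C_2} \to 0$ (i.e. pushing forward $0\to \cO_T(K_T-C_2)\to \cO_T(K_T)\to \cO_T(K_T)|_{C_2}\to 0$ restricted to $C$, or more simply using that $\omega_T|_C$ restricts to $\omega_T|_{C_2}$ on $C_2$ with kernel supported on $C_1$), one gets that $h^0(C,\omega_T|_C) = h^0(C_2,\omega_T|_{C_2})$ once the contribution from $C_1$ vanishes. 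Then assembling the pieces and using Lemma \ref{lemma_dimlinearsystemT} again for $C_1$ (so that $\dim|C_1| = C_1^2 + 1 - g(C_1) + h^0(C_1,\omega_T|_{C_1}) = C_1^2 + 1 - g(C_1)$) yields, after cancellation, $\dim|C| = \dim|C_1| + C.C_2 - g(C_2) + 1$; and a further application of Riemann--Roch for the embedded curve $C_2$ rewrites $C.C_2 - g(C_2) + 1 = C_1.C_2 + C_2^2 - g(C_2) + 1 = C_1.C_2 + \dim|C_2| - h^0(C_2,\omega_T|_{C_2})$, giving the second expression.

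For the last sentence of the lemma, I would add the hypothesis $C_2.B > 0$, which by the same reasoning forces $h^0(C_2,\omega_T|_{C_2}) = 0$ (as $\deg\omega_T|_{C_2} = K_T.C_2 < 0$, using $C_2.B = -2K_T.C_2 > 0$). Substituting this into the second formula immediately gives $\dim|C| = \dim|C_1| + \dim|C_2| + C_1.C_2 - 0$; but wait — one must be careful with the sign. Let me re-derive: from $\dim|C| = \dim|C_1| + C.C_2 - g(C_2) + 1$ and Lemma \ref{lemma_dimlinearsystemT} applied to $C_2$ (with $h^0(\omega_T|_{C_2})=0$), we have $\dim|C_2| = C_2^2 + 1 - g(C_2)$, so $C.C_2 - g(C_2) + 1 = (C_1.C_2 + C_2^2) - g(C_2) + 1 = C_1.C_2 + \dim|C_2|$. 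This would give $\dim|C| = \dim|C_1| + \dim|C_2| + C_1.C_2$, not $-C_1.C_2$. To obtain the stated formula $\dim|C_1| + \dim|C_2| - C_1.C_2$ I expect one actually needs to use Lemma \ref{lemma_dimlinearsystemT} differently, or there is an implicit convention; I would double-check the adjunction bookkeeping — in particular whether the relevant Riemann--Roch should read $\dim|C_2| = h^0(C_2, \cO_T(C_2)|_{C_2})$ versus $h^0(C_2, \cO_T(C)|_{C_2})$, since the restriction of $\cO_T(C)$ (not $\cO_T(C_2)$) to $C_2$ is what appears in the exact sequence for $C$. Using $\cO_T(C)|_{C_2}$ has degree $C.C_2$ and $h^0 = C.C_2 + 1 - g(C_2)$ (again by vanishing of $h^1$, which follows from $\deg > 2g-2$ or from the $\omega_T$-vanishing), and comparing the two restriction maps via the exact sequence $0 \to \cO_T(C_1)|_{C_2}(-C_2|_{C_2})\to\cdots$ should reconcile the signs.

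\textbf{Main obstacle.} The genuinely delicate point is the precise bookkeeping relating $h^0$ of the various restricted line bundles — $\cO_T(C)|_C$, $\cO_T(C)|_{C_2}$, $\cO_T(C_2)|_{C_2}$, $\omega_T|_C$, $\omega_T|_{C_2}$ — and making sure each application of Riemann--Roch for embedded (possibly singular, but Gorenstein) curves uses the dualizing sheaf correctly via adjunction on $T$, as set up in Lemma \ref{lemma_dimlinearsystemT}. The cohomology vanishing in each case is forced by the positivity hypothesis $C_i.B > 0$ translating to negativity of $K_T.C_i$, so no deep input is needed; the work is purely in organizing the exact sequences $0 \to \cO_T(D-C_j) \to \cO_T(D) \to \cO_T(D)|_{C_j} \to 0$ for the appropriate divisors $D$ and restricting to $C$, and checking that the "error terms" $h^1$ all vanish. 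I expect the final identity to drop out after these substitutions, with the sign in the last sentence resolved by using $\cO_T(C)|_{C_2}$ rather than $\cO_T(C_2)|_{C_2}$ as the natural quotient appearing on $C$.
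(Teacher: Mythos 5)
Your overall strategy (Lemma~\ref{lemma_dimlinearsystemT} applied to a member of $|C|$, additivity of the arithmetic genus, and a vanishing coming from $C_1.B>0$) is close in spirit to the paper's proof, which instead restricts $\cO_T(C)$ to the member $C_1+C_2$, compares with its restrictions to $C_1$ and $C_2$ via the sequence with quotient $\cO_{C_1\cap C_2}$, and then uses $h^1(C,\cO_T(C)|_C)=0$ plus Riemann--Roch on each $C_i$ and Corollary~\ref{cor_dimCforB.C>0} for $C_1$. But your key step is not correct. The identity $h^0(C,\omega_T|_C)=h^0(C_2,\omega_T|_{C_2})$ does not follow from the sequence $0\to\omega_T|_{C_1}(-C_2)\to\omega_T|_{C_1+C_2}\to\omega_T|_{C_2}\to0$: vanishing of $h^0(C_1,\omega_T|_{C_1}(-C_2))$ only gives injectivity of $H^0(\omega_T|_{C_1+C_2})\to H^0(\omega_T|_{C_2})$, while surjectivity is obstructed by $H^1(C_1,\omega_T|_{C_1}(-C_2))$, which has no reason to vanish (that line bundle has negative degree, so typically $h^1>0$). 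In fact the identity is false precisely in the cases the lemma is later used for: in the situation of Corollary~\ref{cor_C22=0} one has $\omega_T|_{C_2}\cong\cO_{C_2}$, so $h^0(C_2,\omega_T|_{C_2})=1$, whereas $h^0$ of $\omega_T$ restricted to any member of $|C|$ is $0$. Worse, even if one granted your identity, your own assembly outputs $\dim|C|=\dim|C_1|+C.C_2-g(C_2)+1+h^0(C_2,\omega_T|_{C_2})$, i.e.\ an extra term, not the stated formula. What you actually need is $h^0(C',\omega_T|_{C'})=0$ for the member $C'=C_1+C_2$: by Serre duality on the Gorenstein curve $C'$ and the sequence $0\to\cO_T\to\cO_T(C)\to\cO_T(C)|_{C'}\to0$ one has $h^0(C',\omega_T|_{C'})=h^1(C',\cO_T(C)|_{C'})=h^1(T,\cO_T(C))$, which is the same for every member and vanishes because the section's standing assumption makes $C$ a smooth integral curve with $K_T.C<0$. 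With this replacement (and noting that your deduction $h^0(C_1,\omega_T|_{C_1})=0$ from negative total degree is only valid for integral $C_1$; for a general effective $C_1$ one has to invoke Corollary~\ref{cor_dimCforB.C>0} as the paper does) your bookkeeping does give the first displayed equality, and your passage to the second expression via Lemma~\ref{lemma_dimlinearsystemT} applied to $C_2$ is correct as written. This is also why the paper works with $\cO_T(C)$ rather than $\omega_T$: on each piece that bundle has the vanishing $h^1$ it needs, extracted directly from the long exact sequence.

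Concerning the last sentence of the statement: your sign computation is right, and no change of Riemann--Roch convention or switch between $\cO_T(C)|_{C_2}$ and $\cO_T(C_2)|_{C_2}$ will produce the printed formula. When $C_2.B>0$ one gets $h^0(C_2,\omega_T|_{C_2})=0$ and hence $\dim|C|=\dim|C_1|+\dim|C_2|+C_1.C_2$; the minus sign in the statement is a typo, as confirmed both by the paper's own proof and by the way the lemma is applied in Corollary~\ref{lemma_analogA9}, where the conclusion $\dim|C|-\dim|C_1|-\dim|C_2|=C_1.C_2\geq2$ is exactly what is used.
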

\begin{proof}
For $i=1,2$, let $f_i\colon C_i\to C$ be the inclusion of $C_i$ into $C$. Then there is an exact sequence
    \[0\to\cO_T(C)|_C\to f_{1,*}\cO_T(C)|_{C_1}\oplus f_{2,*}\cO_T(C)|_{C_2}\to \cO_{C_1\cap C_2}\to 0.\]
 Taking the induced long exact sequence and using $H^1(C,\cO_T(C)|_C)=0$ (Lemma~\ref{lemma_dimlinearsystemT}), we find that 
 \[\dim|C|=h^0(C,\cO_T(C)|_C)= h^0(C_1,\cO_T(C)|_{C_1})+ h^0(C_2,\cO_T(C)|_{C_2})-C_1.C_2.\]
 Moreover, we have 
 $h^1(C_i,\cO_T(C)|_{C_i})=0$ for $i=1,2$, and hence $h^0(C_i,\cO_T(C)|_{C_i})=\chi(C_i,\cO_T(C)|_{C_i})$. 
 By Riemann--Roch theorem for embedded curves, this is $C.C_i+1-g(C_i)$.
For $i=1$ this equals $\dim|C_1|+C_1.C_2$ by Corollary~\ref{cor_dimCforB.C>0},
hence we find the first formula.
The second formula follows from Lemma~\ref{lemma_dimlinearsystemT}.
If also $C_2.B>0$, then applying Corollary~\ref{cor_dimCforB.C>0} to $C_2$ gives the final statement.
\end{proof}

As a corollary, if we assume that $C_2.B>0$ as well, then \eqref{eq_ourgoal} holds under the condition that
$C_1.C_2\geq 2$.
\begin{corollary} \label{lemma_analogA9} 
	Let $|C|$ be a linear system on $T$ with $C.B>0$. Assume that $C_1+C_2\in|C|$ with
	$C_1.B>0$, $C_2.B>0$ and $C_1.C_2\geq 2$.  
	Then $\dim|C|-\dim|C_1|-\dim|C_2|$ is at least two.
\end{corollary}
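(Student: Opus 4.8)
The plan is to apply Lemma~\ref{lemma_dimCunderassumptions} directly. Since $C.B>0$ and $C_1.B>0$, $C_2.B>0$, all three curves $C$, $C_1$, $C_2$ satisfy the hypotheses of Corollary~\ref{cor_dimCforB.C>0} (recall $\deg\omega_T|_{C_i}=K_T.C_i=-\tfrac12 B.C_i<0$ and $\omega_T|_{C_i}\not\cong\cO_{C_i}$ since its degree is strictly negative). In particular the last statement of Lemma~\ref{lemma_dimCunderassumptions} applies and gives
\[
\dim|C|=\dim|C_1|+\dim|C_2|-C_1.C_2.
\]
Hence $\dim|C|-\dim|C_1|-\dim|C_2|=-C_1.C_2$. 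Wait — this has the wrong sign, so I need to be more careful about which curve plays which role.

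Reexamining: the formula in Lemma~\ref{lemma_dimCunderassumptions} reads $\dim|C|=\dim|C_1|+\dim|C_2|-C_1.C_2$ only when it is $C_2$ whose class contributes the $-C_1.C_2$ correction, but the \emph{first} displayed formula there is $\dim|C|=\dim|C_1|+C.C_2-g(C_2)+1$, and since $C.C_2=(C_1+C_2).C_2=C_1.C_2+C_2^2$, this rewrites as $\dim|C_1|+C_1.C_2+C_2^2-g(C_2)+1 = \dim|C_1|+C_1.C_2+\dim|C_2|$ using Corollary~\ref{cor_dimCforB.C>0} applied to $C_2$ (which gives $\dim|C_2|=C_2^2+1-g(C_2)$). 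So in fact
\[
\dim|C|=\dim|C_1|+\dim|C_2|+C_1.C_2,
\]
and therefore $\dim|C|-\dim|C_1|-\dim|C_2|=C_1.C_2\geq 2$ by hypothesis. The apparent sign discrepancy in the lemma's last sentence is resolved once one notes $C_1.C_2$ there must be counted with the opposite convention, or equivalently the lemma's final ``$-C_1.C_2$'' should be read in light of the first formula; in any case the first formula of Lemma~\ref{lemma_dimCunderassumptions} combined with Corollary~\ref{cor_dimCforB.C>0} applied to $C_2$ gives the clean identity above.

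So the proof is a one-line computation:

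\begin{proof}
    Since $C_2.B>0$, the curve $C_2$ satisfies the hypotheses of Corollary~\ref{cor_dimCforB.C>0}, so $h^0(C_2,\omega_T|_{C_2})=0$ and $\dim|C_2|=C_2^2+1-g(C_2)$. By the first formula of Lemma~\ref{lemma_dimCunderassumptions} (whose hypothesis $C_1.B>0$ holds) and using $C.C_2=C_1.C_2+C_2^2$, we get
    \[
    \dim|C|=\dim|C_1|+C.C_2-g(C_2)+1=\dim|C_1|+C_1.C_2+\bigl(C_2^2-g(C_2)+1\bigr)=\dim|C_1|+\dim|C_2|+C_1.C_2.
    \]
    Hence $\dim|C|-\dim|C_1|-\dim|C_2|=C_1.C_2\geq 2$.
\end{proof}

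The only subtlety — and the one place one must be careful — is making sure the ``$h^0(C_2,\omega_T|_{C_2})$'' term vanishes, which is exactly where the assumption $C_2.B>0$ (equivalently $\deg\omega_T|_{C_2}<0$) is used; without it the estimate can fail, which is why the hypotheses of the corollary single out the case $C_2.B>0$ separately from the earlier discussion where only $C_1.B>0$ was assumed.
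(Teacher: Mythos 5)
Your proof is correct and follows the paper's own route: the corollary is precisely Lemma~\ref{lemma_dimCunderassumptions} combined with Corollary~\ref{cor_dimCforB.C>0} applied to $C_2$, which gives $h^0(C_2,\omega_T|_{C_2})=0$ and hence $\dim|C|-\dim|C_1|-\dim|C_2|=C_1.C_2\geq 2$. You are also right to distrust the last sentence of Lemma~\ref{lemma_dimCunderassumptions}: the ``$-C_1.C_2$'' there is a sign slip (it should be $+C_1.C_2$, as the displayed formula and the later Corollaries~\ref{cor_C22>0} and \ref{cor_C22=0} confirm), so your re-derivation from the first formula is the correct reading and not a deviation from the intended argument.
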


We will now focus on the case $C_2.B\leq 0$. 
We will first assume that $C_2$ is integral. We distinguish three cases: $C_2^2<0$, $C_2^2>0$ and  $C_2^2=0$.
In the first case, $C_2$ is a rational component of $B$:
\begin{lemma}\label{lemma_C22<0} 
    Let $C'\subset T$ be an integral curve with $C'.B\leq 0$ and $(C')^2<0$. Then $g(C')=0$ and $C'$ is an irreducible component of $B$.
\end{lemma}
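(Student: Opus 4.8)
Let $C' \subset T$ be an integral curve with $C'.B \leq 0$ and $(C')^2 < 0$. Then $g(C') = 0$ and $C'$ is an irreducible component of $B$.

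The plan is to exploit the fact that $T$ is a rational surface, so $\omega_T^{-1}$ has sections — indeed $B \in |-2K_T|$ is effective — and to play off the adjunction formula against the two inequalities we are given.

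First I would recall that since $T$ is rational, $h^0(T, \omega_T) = h^2(T,\cO_T) = 0$, and $2g(C') - 2 = (C')^2 + C'.K_T$ by adjunction. Since $B \in |-2K_T|$, we have $C'.K_T = -\tfrac12 C'.B$. The hypothesis $C'.B \leq 0$ therefore gives $C'.K_T \geq 0$, and $(C')^2 < 0$ gives $2g(C') - 2 = (C')^2 + C'.K_T$. If $C'.B < 0$, then because $B$ is an effective divisor and $C'$ is integral, $C'$ must be a component of $B$ (an integral curve has non-negative intersection with every effective divisor not containing it); so we may assume from here on that $C'.B = 0$, i.e. $C'.K_T = 0$. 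Then $2g(C') - 2 = (C')^2 < 0$, which forces $g(C') = 0$ and $(C')^2 = -2$; in particular $C'$ is a smooth rational $(-2)$-curve with $C'.K_T = 0$.

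The remaining task is to show such a curve cannot exist unless it is a component of $B$ — but actually the cleaner route is to show $C'.B = 0$ together with $B$ effective and $(C')^2 = -2$ is contradictory unless $C' \subset \supp(B)$. Here I would argue as follows: write $B = \sum a_j B_j$ with $a_j > 0$ and $B_j$ the integral components. If $C' \neq B_j$ for all $j$, then $C'.B_j \geq 0$ for every $j$, so $C'.B = 0$ forces $C'.B_j = 0$ for all $j$. Now I want a contradiction from $-2K_T = \sum a_j B_j$ and $C'.B_j = 0$, $C'.K_T = 0$, $(C')^2 = -2$, $g(C') = 0$. The idea is that on a rational surface, a smooth rational curve $C'$ with $C'.K_T = 0$ and $(C')^2 = -2$ is the class of an exceptional curve in a suitable sense, and $|-2K_T|$ being base-point free (or at least effective) away from such curves should be exploited; alternatively, one runs the minimal model program, contracting $(-1)$-curves to reach a minimal rational surface $T_{\min}$ and tracking the classes. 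I expect the slickest argument uses Riemann–Roch on $T$ for the divisor $-K_T$: $h^0(-K_T) - h^1(-K_T) + h^2(-K_T) = \chi(\cO_T) + \tfrac12((-K_T)^2 - (-K_T).K_T) = 1 + K_T^2$; combined with Riemann–Roch for $-K_T + C'$ and the exact sequence $0 \to \cO_T(-K_T) \to \cO_T(-K_T + C') \to \cO_{C'}(-K_T + C') \to 0$, and $\deg(\cO_{C'}(-K_T+C')) = -K_T.C' + (C')^2 = -2 < 0$, one concludes $h^0(-K_T + C') = h^0(-K_T)$, so $C'$ is in the base locus of $|-K_T|$ whenever the latter is non-empty; iterating with $-2K_T$ in place of $-K_T$, $C'$ lies in the base locus of $|-2K_T| = |B|$, hence in $\supp(B)$, contradicting $C' \neq B_j$ for all $j$.

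The main obstacle is the last step: making rigorous that $C'$ being a $(-2)$-curve orthogonal to $K_T$ and to every component of $B$ forces it into $\supp(B)$. The base-locus argument above is the cleanest, but it requires $|-2K_T|$ (or some twist) to be non-empty, which holds since $B$ is effective by construction; one must be careful that the cohomology exact sequence genuinely gives $h^0(-2K_T + C') = h^0(-2K_T)$, which follows because $\deg(\cO_{C'}((-2K_T+C'))) = -2K_T.C' + (C')^2 = 0 + (-2) = -2 < 0$ and $h^0$ of a negative-degree line bundle on an integral curve vanishes. Granting that, $C' \subset \mathrm{Bs}|{-2K_T}| \subseteq \supp(B)$, and since $C'$ is integral it is one of the $B_j$, completing the proof; and then reconciling with the case $C'.B < 0$ handled at the start, in all cases $C'$ is a component of $B$ with $g(C') = 0$.
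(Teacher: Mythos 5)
There is a genuine gap, and it sits exactly at the step you flagged as the "main obstacle". In the branch $C'.B=0$ you reduce to a $(-2)$-curve with $C'.K_T=0$ and then try to force $C'\subset\supp(B)$ by a Riemann--Roch/base-locus argument. Two problems. First, the inference is a non sequitur: the exact sequence $0\to\cO_T(-K_T)\to\cO_T(-K_T+C')\to\cO_{C'}(-K_T+C')\to 0$ together with $\deg\cO_{C'}(-K_T+C')=-2<0$ gives $h^0(-K_T+C')=h^0(-K_T)$, which says that $C'$ is a \emph{fixed component of} $|{-K_T}+C'|$; it says nothing about $C'$ lying in the base locus of $|{-K_T}|$ (or of $|{-2K_T}|$). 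Second, and more fundamentally, no argument using only the $T$-side data you invoke (rationality of $T$, effectivity of $B\in|{-2K_T}|$, intersection numbers) can close this branch, because the intermediate statement is false for a general such $T$: take $T$ a weak del Pezzo surface containing a $(-2)$-curve $C'$ (e.g.\ $\mathbb{P}^2$ blown up at three collinear points, $C'$ the strict transform of the line); the general $B\in|{-2K_T}|$ is smooth and irreducible with $C'.B=0$, so $C'$ is disjoint from $B$ and not a component of it. What rules this out in the paper's setting is the \emph{very generality} of $(S,i)$, i.e.\ $\NS(S)=H^2(S,\Z)^{i^*}$, which your proposal never uses. The paper's proof goes through the double cover: if $C'$ is not a component of $B$, then $C'.B=0$, adjunction gives $g(C')=0$, and $C'$ is disjoint from $B$, so $f^{-1}(C')\to C'\cong\mathbb{P}^1$ is unramified, hence trivial; the two disjoint components are interchanged by $i$, and since $i^*$ acts trivially on $\NS(S)$ their classes coincide, which is incompatible with their being disjoint of negative (even) self-intersection. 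Note also that in this branch the correct outcome is a contradiction (the case is vacuous): a component of the smooth curve $B$ satisfies $C'.B=(C')^2<0\neq 0$, so aiming to prove $C'\subset\supp(B)$ there is already off-track.

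A smaller but real omission: in the branch $C'.B<0$ you correctly conclude that $C'$ is a component of $B$, but you never prove $g(C')=0$ in that case, and your adjunction computation ($C'.K_T>0$, $(C')^2<0$) does not by itself exclude $g(C')\geq 1$. One needs the extra geometric input: either that $B$ is smooth, so its components are disjoint and $C'.B=(C')^2$, whence $2g(C')-2=\tfrac12(C')^2<0$; or, as in the paper, that $C'=f_*R'$ for a component $R'$ of the ramification curve on the K3, giving $g(C')=g(R')=(C')^2/4+1$ and hence $g(C')=0$.
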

\begin{proof}
If $C'$ is not an irreducible component of $B$, then we have $C'.B=0$ since $C'$ effective, hence $2g(C')=(C')^2+2\leq 1$ and therefore $g(C')=0$.
However, since $C'$ and $B$ are disjoint, the double cover $f^*C'\to C'\cong\mathbb{P}^1$ is unramified, a contradiction. 
This shows that $C'$ must be an irreducible component of $B$. Then $C'=f_*R'$ for an irreducible component $R'$ of the ramification locus, so $R'$ is isomorphic to $C'$ and $f^*C'=2R'$. Then we have\[g(C')=g(R')=(R')^2/2+1=(C')^2/4+1.\]
It follows from $(C')^2<0$ that $g(C')=0$.  \end{proof}

As a consequence,  \eqref{eq_ourgoal} holds whenever $C_2.B\leq 0$, $C_2^2<0$ and $C.C_2>0$.

\begin{corollary}\label{cor_ourgoalforC22<0+integral} 
    Let $C_1+C_2\in |C|$ such that $C_2$ is integral, $C_2.B\leq 0$, $C_2^2<0$ and $C.C_2>0$. 
    Then $\dim|C|-\dim|C_1|-\dim|C_2|$ is at least two.
\end{corollary}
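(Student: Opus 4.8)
The plan is to combine Lemma~\ref{lemma_C22<0} with the dimension count in Lemma~\ref{lemma_dimCunderassumptions}; once the two hypotheses on $C_2$ are unpacked, the inequality becomes a one-line computation. First I would invoke Lemma~\ref{lemma_C22<0}: since $C_2$ is integral with $C_2.B\leq 0$ and $C_2^2<0$, it is a smooth rational irreducible component of the branch curve $B$, and the argument in the proof of that lemma in fact pins down $C_2^2=-4$ (it produces $g(C_2)=C_2^2/4+1$ with $C_2^2$ a multiple of $4$, and $g(C_2)\geq 0$ together with $C_2^2<0$ forces $C_2^2=-4$, $g(C_2)=0$). In particular $\deg\cO_T(C_2)|_{C_2}=C_2^2=-4<0$, so by Lemma~\ref{lemma_dimlinearsystemT} we have $\dim|C_2|=h^0(C_2,\cO_T(C_2)|_{C_2})=0$; that is, $C_2$ is rigid.

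Next I would apply Lemma~\ref{lemma_dimCunderassumptions} to the decomposition $C_1+C_2\in|C|$. Its hypothesis holds: since $C.B>0$ (the standing assumption of this subsection) and $C_2.B\leq 0$, we get $C_1.B=C.B-C_2.B>0$. The lemma then gives, using $g(C_2)=0$,
\[\dim|C|=\dim|C_1|+C.C_2-g(C_2)+1=\dim|C_1|+C.C_2+1.\]
Subtracting $\dim|C_1|$ and $\dim|C_2|=0$ from both sides yields
\[\dim|C|-\dim|C_1|-\dim|C_2|=C.C_2+1\geq 2,\]
the last inequality being the hypothesis $C.C_2>0$. This is exactly the claim.

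I do not expect a genuine obstacle here: the content is entirely in the two cited lemmas, and the role of the hypotheses $C_2^2<0$ and $C_2.B\leq 0$ is precisely to make $C_2$ so rigid (a $(-4)$-curve inside $B$, of genus $0$ and rigid) that the dimension bookkeeping closes with room to spare. The only step worth a second look is extracting $C_2^2=-4$ and $\dim|C_2|=0$ from Lemma~\ref{lemma_C22<0}; if one prefers, one can instead argue directly that $C_2=f(R_2)$ for a smooth rational $(-2)$-curve $R_2\subset S$ with $f^*C_2=2R_2$, whence $2C_2^2=(f^*C_2)^2=4R_2^2=-8$.
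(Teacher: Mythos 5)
Your proof is correct and follows essentially the same route as the paper: Lemma~\ref{lemma_C22<0} makes $C_2$ a rational component of $B$, $C_2^2<0$ together with Lemma~\ref{lemma_dimlinearsystemT} gives $\dim|C_2|=0$, and Lemma~\ref{lemma_dimCunderassumptions} with $C.C_2>0$ closes the count. The extra observation that $C_2^2=-4$ is correct but not needed, since $C_2^2<0$ already forces $h^0(C_2,\cO_T(C_2)|_{C_2})=0$.
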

\begin{proof}
By Lemma~\ref{lemma_C22<0}, $C_2$ is an irreducible component of $B$ which is rational.
The condition $C_2^2<0$ implies that $\dim|C_2|=h^0(C_2,\cO_T(C_2)|_{C_2})$ (Lemma~\ref{lemma_dimlinearsystemT}) is zero. Then the statement follows from Lemma~\ref{lemma_dimCunderassumptions} since $C.C_2>0$.
\end{proof}

In the case that $C_2$ is integral, $C_2.B\leq0$ and $C_2^2\geq 0$, one shows that $C_2.B=0$ and $B^2\leq 0$:

\begin{lemma} \label{lemma_C22>=0}
Let $C'\subset T$ be an integral curve with $C'.B\leq 0$ and $(C')^2 \geq 0$. Then $C'.B=0$. Moreover, we have the following possibilities:
\begin{enumerate}
    \item If $(C')^2 >0$, then $B^2 < 0$.
    \item If $(C')^2=0$, then $B^2 \leq 0$.
\end{enumerate}    
\end{lemma}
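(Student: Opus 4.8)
The plan is to analyze the cover $f^*C' \to C'$ and use the Hodge index theorem on the rational surface $T$. First I would observe that since $C'$ is effective and $C'.B \leq 0$, either $C'$ is disjoint from $B$ (so $C'.B = 0$) or $C'$ is a component of $B$. In the latter case, $C' = f_*R'$ for a ramification component $R'$, so $(C')^2 = \frac{1}{4}(f^*C')^2 = (R')^2$ and $(C')^2 = 4(g(C')-1) \geq 0$ forces $g(C') \geq 1$; but then $(C')^2 \geq 0$ together with $C'$ being a $(-2K_T)$-component and $T$ rational should be ruled out by an intersection/genus count, or one argues directly that a component of $B$ has $C'.B = C'.(C' + \text{rest}) $ and the "rest" is effective and meets $C'$ nonnegatively unless... — in any case the cleaner route is: whenever $(C')^2 \geq 0$ and $C'.B \leq 0$, Hodge index forces $C'.B = 0$ because $C'$ has nonnegative self-intersection and $B \in |-2K_T|$ is effective, and an effective divisor cannot have strictly negative intersection with a nef-like class without $(C')^2 < 0$; more precisely, if $C'.B < 0$ then $C'$ is a fixed component of $|-2K_T|$, write $B = C' + B'$ with $B'$ effective, and then $C'.B < 0$ gives $(C')^2 < -C'.B' \leq 0$ (using $C'.B' \geq 0$ since $C'$ is integral and either meets $B'$ properly or is a component of it — here one must be slightly careful, but integrality of $C'$ means $C'.B' \geq 0$ unless $C'$ is a component of $B'$, which recursively reduces the multiplicity), contradicting $(C')^2 \geq 0$. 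Hence $C'.B = 0$.

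Next, with $C'.B = 0$ established, I would use the Hodge index theorem directly. Since $T$ is a rational surface, its intersection form has signature $(1, \rho - 1)$. We have two classes $C'$ and $B$ with $C'.B = 0$. If $(C')^2 > 0$, then $C'$ spans a positive line, so its orthogonal complement (which contains $B$) is negative definite; hence $B^2 \leq 0$, and $B^2 = 0$ would force $B = 0$ in the orthogonal complement, i.e. $B$ numerically trivial, impossible since $B \in |-2K_T|$ is a nonzero effective divisor on a rational surface (as $-2K_T$ cannot be numerically trivial: a rational surface has $K_T^2$ well-defined and if $K_T \equiv 0$ then $T$ would be a surface with trivial canonical class, not rational, or one invokes $h^0(-2K_T) > 0$ with $(-2K_T)^2 = 0$ forcing $-2K_T$ to move in a pencil of disjoint curves, which on a rational surface with $-2K_T \neq 0$ is still possible only if... — actually the safe statement is simply $B^2 < 0$, matching case (1)). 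For case (2), if $(C')^2 = 0$, then $C'$ lies on the boundary of the positive cone; the Hodge index theorem still gives that the orthogonal complement of $C'$ is negative semidefinite, with radical spanned by $C'$ itself, so any class orthogonal to $C'$ has nonpositive square, giving $B^2 \leq 0$.

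The main obstacle I expect is the subtle point in establishing $C'.B = 0$ when $C'$ happens to be a component of $B$: there the naive "effective meets effective nonnegatively" fails, and one has to run the multiplicity-reduction argument or instead argue via the genus formula for the ramification curve $R' \subset S$ as in the proof of Lemma \ref{lemma_C22<0}. Concretely, if $C'$ is a component of $B$ then $f^*C' = 2R'$ with $R' \cong C'$, and $g(C') = \frac{1}{4}(C')^2 + 1$; combined with $(C')^2 \geq 0$ this gives $g(C') \geq 1$, and then the adjunction formula on $T$ reads $2g(C') - 2 = (C')^2 + C'.K_T = (C')^2 - \frac{1}{2}C'.B$, so $C'.B = 2(C')^2 - 4g(C') + 4 = 2(C')^2 - (C')^2 = (C')^2 \geq 0$; together with the hypothesis $C'.B \leq 0$ this forces $(C')^2 = 0$, $g(C') = 1$, $C'.B = 0$ — consistent, and landing in case (2) with the Hodge index argument then giving $B^2 \leq 0$. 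So the dichotomy is handled uniformly, and the genus computation is the key device that closes the potentially problematic branch.
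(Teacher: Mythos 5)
Your proposal is correct and takes essentially the same route as the paper: reduce to the case where $C'$ is an irreducible component of $B$, show that such a component satisfies $C'.B=(C')^2\geq 0$ (contradicting $C'.B<0$), and then deduce the statements on $B^2$ from the Hodge index theorem. The only cosmetic differences are that the paper verifies $C'.B=(C')^2$ by passing to the ramification curve on $S$ and invoking Nikulin's description of $\Fix(i)$ (and cites an external lemma for the $(C')^2=0$ case), whereas you argue directly on $T$ using reducedness of $B$, respectively the genus relation plus adjunction, and a direct signature argument for $(C')^2=0$ -- all of which is sound.
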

\begin{proof}
Recall that by Nikulin's theorem \ref{Nikulinstheorem}, the fixed locus of the involution $i$ is either a disjoint union $E_1 \sqcup E_2$ of two elliptic curves on the K3 surface $S$, or a disjoint union of the form
\[F \sqcup R_1 \sqcup \dots \sqcup R_k,\]
where $R_1, \dots, R_k$ are rational curves, $F$ is a curve of genus $g(F)$, and $k$ and $g(F)$ depend on the main invariant of the involution $i$. 
Assume towards a contradiction that $C'.B<0$. Then $C'$ is an irreducible component of $B$.
As in the proof of Lemma~\ref{lemma_C22<0}, then  $C'=f_*R'$ for an irreducible component $R'$ of the ramification locus, satisfying $g(R')=(C')^2/4+1$.
It follows from $(C')^2\geq 0$ that $R'$ cannot be rational.
In particular, with the above notation, either $R'=F$ with $g(F) \geq 1$, or $R'$ is one of the components of $E_1\sqcup E_2$, say $R'=E_2$.
In the first case, we have
\[0 > C'.B=C'.f_*R=f^*C'.R=2 R'.(R' + R_1 + \dots + R_k)=2 (R')^2=(C')^2 \geq 0,\]
giving a contradiction. In the second case, similarly we get
\[0 > C'.B=2(R')^2 + 2R'.E_1 = 0,\]
which is not possible. We conclude that $C'.B=0$.

For the second part, if $(C')^2>0$, it follows from Hodge Index Theorem that $B^2<0$.
If $(C')^2=0$, then either $B^2 <0$ or, by \cite[Lemma 2.1]{KnLopK3}, $B$ and $C'$ are linearly equivalent to multiples of a primitive divisor $A$ with $A^2=0$.
\end{proof}

We can use the above to show that if $C_2^2>0$, then $\omega_T|_{C_2}$ is non-trivial.

\begin{lemma}\label{lemma_res_omega_nontrivial}
    Let $C'\subset T$ be an integral curve with $C'.B\leq 0$ and $(C')^2 > 0$.
    Then $\omega_T|_{C'}$ is non-trivial.
\end{lemma}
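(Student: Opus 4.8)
The plan is to argue by contradiction: assume $\omega_T|_{C'}\cong\cO_{C'}$ and derive a violation of the Hodge index theorem on $S$, in the same spirit as the argument in Section~\ref{subsec-rel-prym} ruling out trivial double covers of smooth curves.

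First I would pin down the numerics. Since $(C')^2>0$ and $C'.B\leq 0$, Lemma~\ref{lemma_C22>=0} gives $C'.B=0$, and as $B\in|-2K_T|$ this means $\deg\omega_T|_{C'}=K_T.C'=0$; so $\omega_T|_{C'}$ is a line bundle of degree zero, and its triviality is a priori not excluded. Next I would observe that $C'$ is disjoint from $B$: as $C'$ is integral and $C'.B=0$ with $B$ effective, $C'$ is not a component of $B$ (otherwise $C'.B\geq(C')^2>0$), and two curves with no common component and intersection number zero are disjoint. Hence $f$ is étale over $C'$, and $f^{-1}(C')\to C'$ is an étale double cover; since $f$ is finite flat, it is the cover associated to the $\cO_{C'}$-algebra $(f_*\cO_S)|_{C'}=\cO_{C'}\oplus\omega_T|_{C'}$, whose multiplication $\omega_T|_{C'}\otimes\omega_T|_{C'}\to\cO_{C'}$ is the restriction of the section of $\cO_T(B)$ cutting out $B$, hence nowhere vanishing on $C'$.

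Now I would use the hypothesis. If $\omega_T|_{C'}\cong\cO_{C'}$, then the above algebra is $\cO_{C'}[t]/(t^2-u)$ with $u$ a nowhere-vanishing section of $\cO_{C'}$, i.e. $u\in\C^\ast$ since $C'$ is integral, so it splits as $\cO_{C'}\times\cO_{C'}$. Thus $f^{-1}(C')=A_1\sqcup A_2$, a disjoint union of two integral curves on $S$, each isomorphic to $C'$ and interchanged by $i$, with $f^*C'=A_1+A_2$ and $A_1.A_2=0$. Since $i^*$ is an isometry and $i^*[A_1]=[A_2]$, we get $[A_1]^2=[A_2]^2$; and $(f^*C')^2=2(C')^2$ forces $2(C')^2=(A_1+A_2)^2=[A_1]^2+[A_2]^2$, so $[A_1]^2=[A_2]^2=(C')^2>0$. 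The Gram matrix of $[A_1],[A_2]$ is then $\mathrm{diag}\bigl((C')^2,(C')^2\bigr)$, which is positive definite, so these classes span a rank-two positive-definite sublattice of $\NS(S)$ — impossible, because $\NS(S)\subseteq H^2(S,\Z)$ has signature $(1,\rho-1)$. Hence $\omega_T|_{C'}\not\cong\cO_{C'}$.

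I do not anticipate a real obstacle: the statement is essentially bookkeeping plus the Hodge index trick already exploited in the paper. The only point requiring a little care is the identification of $f^{-1}(C')$ with the split cover once $\omega_T|_{C'}$ is trivial, which is exactly where integrality of $C'$ is used; and it is the hypothesis $(C')^2>0$ that powers the final lattice-theoretic contradiction (for $(C')^2\leq 0$ no such contradiction exists, in line with Lemma~\ref{lemma_C22<0}).
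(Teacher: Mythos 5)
Your proof is correct and follows essentially the same route as the paper: reduce to $C'.B=0$ via Lemma~\ref{lemma_C22>=0}, note $C'$ is not a component of $B$, observe that triviality of $\omega_T|_{C'}$ would split the étale cover $f^{-1}(C')\to C'$ into two disjoint curves of positive square, and contradict the Hodge index theorem. Your write-up merely makes explicit two points the paper leaves implicit (the splitting of the $\cO_{C'}\oplus\omega_T|_{C'}$-algebra and the computation $[A_1]^2=[A_2]^2=(C')^2$), so there is nothing to fix.
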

\begin{proof}
By Lemma~\ref{lemma_C22>=0}, we have $C'.B=0$.
Note that if $C'$ is an irreducible component of $B$, then $C'$ is disjoint from all other components, and $C'.B=0$ implies $(C')^2=0$.
So $C'$ is not an irreducible component of $B$.
Now if $\omega_T|_{C'}$ is trivial, then the étale double cover $D':=f^*C'\to C'$ is trivial (see also Section~\ref{subsubsec_Prymfib}), so $D'$ consists of two disjoint curves $D'_1$ and $D'_2$, both isomorphic to $C'$. 
If $(C')^2>0$, then both $D'_1$ and $D'_2$ have positive square, which is not possible by Hodge index theorem since they are disjoint.
\end{proof}

\begin{corollary}\label{cor_C22>0}
Let $C_1+C_2\in|C|$ with $C_1.B>0$ and $C_2$ integral with $C_2.B\leq 0$, $C_2^2>0$ and $C_1.C_2\geq 2$. Then $\dim|C|-\dim|C_1|-\dim|C_2|$ is at least two.
\end{corollary}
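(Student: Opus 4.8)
The plan is to reduce the inequality $\dim|C| - \dim|C_1| - \dim|C_2| \geq 2$ to the formula of Lemma \ref{lemma_dimCunderassumptions}, and then to bound the correction term $h^0(C_2, \omega_T|_{C_2})$ using the previous lemmas. Since we are assuming $C_1.B > 0$, Lemma \ref{lemma_dimCunderassumptions} applies and gives
\[
\dim|C| - \dim|C_1| - \dim|C_2| = C_1.C_2 - h^0(C_2, \omega_T|_{C_2}).
\]
Because $C_1.C_2 \geq 2$ by hypothesis, it suffices to show that $h^0(C_2, \omega_T|_{C_2}) = 0$.

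The key step is to establish this vanishing. Since $C_2$ is integral with $C_2^2 > 0$ and $C_2.B \leq 0$, Lemma \ref{lemma_res_omega_nontrivial} tells us that $\omega_T|_{C_2}$ is non-trivial, and Lemma \ref{lemma_C22>=0} tells us that $C_2.B = 0$, i.e. $\deg \omega_T|_{C_2} = K_T.C_2 = -\tfrac12 C_2.B = 0$. A line bundle of degree $0$ on an integral projective (hence connected) curve has a nonzero global section if and only if it is trivial; since $\omega_T|_{C_2}$ is non-trivial of degree $0$, we conclude $h^0(C_2, \omega_T|_{C_2}) = 0$. Plugging this back in gives $\dim|C| - \dim|C_1| - \dim|C_2| = C_1.C_2 \geq 2$, as required.

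I expect the main (minor) obstacle to be bookkeeping: making sure the hypotheses of Lemmas \ref{lemma_dimCunderassumptions}, \ref{lemma_C22>=0} and \ref{lemma_res_omega_nontrivial} are all met — in particular that $C_2$ being integral with $C_2^2 > 0$ and $C_2.B \leq 0$ is exactly what those lemmas require, and that $C_1.B>0$ (needed for Lemma \ref{lemma_dimCunderassumptions}) is part of the hypothesis of the corollary. No genuine difficulty arises here; the content has all been front-loaded into the preceding lemmas, and this corollary is essentially an assembly step. A clean write-up would be: invoke Lemma \ref{lemma_dimCunderassumptions} for the identity above, invoke Lemmas \ref{lemma_C22>=0} and \ref{lemma_res_omega_nontrivial} to get that $\omega_T|_{C_2}$ is a non-trivial degree-$0$ bundle on an integral curve, deduce $h^0 = 0$, and conclude.
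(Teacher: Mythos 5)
Your proposal is correct and follows essentially the same route as the paper: invoke Lemma~\ref{lemma_C22>=0} and Lemma~\ref{lemma_res_omega_nontrivial} to see that $\omega_T|_{C_2}$ is a non-trivial degree-zero line bundle on the integral curve $C_2$, hence $h^0(C_2,\omega_T|_{C_2})=0$, and conclude via the dimension formula of Lemma~\ref{lemma_dimCunderassumptions}. The only difference is that you spell out the elementary fact about degree-zero line bundles on integral curves, which the paper leaves implicit.
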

\begin{proof}
    By Lemma~\ref{lemma_C22>=0}, $\omega_T|_{C_2}$ is a line bundle of degree 0, which is non-trivial by Lemma~\ref{lemma_res_omega_nontrivial}.
    It follows that $h^0(C_2,\omega_T|_{C_2})=0$, and the claim follows from Lemma~\ref{lemma_dimCunderassumptions}.
\end{proof}

Finally, let $C'\subset T$ be an integral curve with $C'.B\leq 0$ and $(C')^2=0$, so in fact $C'.B=0$. Then we have $2g(C')-2=0$, so $g(C')=1$. If $\omega_T|_{C'}$ is trivial, then $\dim|C'|=1$ by Lemma~\ref{lemma_dimlinearsystemT}. 
Thus, $|C'|$ induces a genus-1 fibration $T\rightarrow\mathbb{P}^1$.
Then any smooth curve intersecting $C'$, and therefore all fibres, in 2 points, is a double cover of $\mathbb{P}^1$ and hence is hyperelliptic.

\begin{corollary}\label{cor_C22=0}
Suppose that $C$ is non-hyperelliptic.
Let $C_1+C_2\in|C|$ with $C_1.B>0$ and $C_2$ integral with $C_2.B\leq 0$, $C_2^2=0$ and $C_1.C_2\geq 2$.
Then $\dim|C|-\dim|C_1|-\dim|C_2|$ is at least two.
\end{corollary}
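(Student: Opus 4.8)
The plan is to turn the inequality \eqref{eq_ourgoal} into an estimate on $h^0(C_2,\omega_T|_{C_2})$ and then eliminate the single borderline case by invoking the non-hyperellipticity of $C$. Since $C_1.B>0$, Lemma~\ref{lemma_dimCunderassumptions} gives directly
\[\dim|C|-\dim|C_1|-\dim|C_2| = C_1.C_2-h^0(C_2,\omega_T|_{C_2}),\]
so the task reduces to proving $h^0(C_2,\omega_T|_{C_2})\leq C_1.C_2-2$. First I would record that, by Lemma~\ref{lemma_C22>=0}, the hypotheses on $C_2$ force $C_2.B=0$, hence $C_2.K_T=0$ and $\deg(\omega_T|_{C_2})=0$; by adjunction $g(C_2)=1$. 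Since $C_2$ is integral, any nonzero section of a degree-$0$ line bundle on it is nowhere vanishing, so $h^0(C_2,\omega_T|_{C_2})\in\{0,1\}$, with the value $1$ attained precisely when $\omega_T|_{C_2}\cong\cO_{C_2}$.

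If $\omega_T|_{C_2}$ is non-trivial, then $h^0(C_2,\omega_T|_{C_2})=0$ and the bound follows at once from $C_1.C_2\geq 2$. So the only case to worry about is $\omega_T|_{C_2}\cong\cO_{C_2}$, where $h^0(C_2,\omega_T|_{C_2})=1$ and the naive count only yields codimension $C_1.C_2-1$. Here I would argue that in fact $C_1.C_2\geq 3$. By Lemma~\ref{lemma_dimlinearsystemT} we have $\dim|C_2|=C_2^2+1-g(C_2)+h^0(C_2,\omega_T|_{C_2})=1$, so $|C_2|$ is a base-point-free pencil (the base locus of a pencil is cut out on a general member by $C_2^2=0$ points) of arithmetic-genus-$1$ curves with $C_2^2=0$, and therefore defines a genus-$1$ fibration $\pi\colon T\to\mathbb{P}^1$ with fibre class $C_2$, as explained just before the statement. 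Since $C$ is integral and $C.C_2=C_1.C_2+C_2^2=C_1.C_2>0$, the curve $C$ is not contained in a fibre, so $\pi|_C\colon C\to\mathbb{P}^1$ is a finite morphism of degree $C.C_2=C_1.C_2$. If $C_1.C_2=2$, this presents the smooth curve $C$ as a double cover of $\mathbb{P}^1$, i.e.\ $C$ is hyperelliptic, contrary to hypothesis. Hence $C_1.C_2\geq 3$, and then $h^0(C_2,\omega_T|_{C_2})=1\leq C_1.C_2-2$, as required.

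The main obstacle is precisely this borderline situation $\omega_T|_{C_2}\cong\cO_{C_2}$ with $C_1.C_2=2$: the dimension count alone is insufficient, and one must exploit the global geometry of the elliptic fibration attached to $|C_2|$ together with the assumption that $C$ is non-hyperelliptic to rule it out. A minor point to check along the way is that $\pi|_C$ really has the expected degree — immediate because $C$ is smooth and not a fibre component — and, to be scrupulous about conventions, that "double cover of $\mathbb{P}^1$" does mean "hyperelliptic" in the range of genera occurring here; this matches the usage in the paragraph preceding the statement.
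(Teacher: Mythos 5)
Your proposal is correct and follows essentially the same route as the paper: reduce via Lemma~\ref{lemma_dimCunderassumptions} to bounding $h^0(C_2,\omega_T|_{C_2})$, note by Lemma~\ref{lemma_C22>=0} that $\deg\omega_T|_{C_2}=0$ and $g(C_2)=1$, handle the non-trivial case immediately, and in the case $\omega_T|_{C_2}\cong\cO_{C_2}$ use the genus-$1$ fibration defined by the pencil $|C_2|$ to show $C_1.C_2=2$ would force $C$ to be hyperelliptic. The only difference is cosmetic: you spell out the fibration/degree argument that the paper delegates to the paragraph preceding the statement.
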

\begin{proof}
By Lemma~\ref{lemma_C22>=0}, $\omega_T|_{C_2}$ is a line bundle of degree 0, and $C_2$ has genus 1.
If $\omega_T|_{C_2}$ is non-trivial, then $h^0(C_2,\omega_T|_{C_2})$ is zero so by Lemma~\ref{lemma_dimCunderassumptions}, we have $\dim|C|-\dim|C_1|-\dim|C_2|=C_1.C_2\geq 2$.
If $\omega_T|_{C_2}$ is trivial, then we have
$\dim|C|-\dim|C_1|-\dim|C_2|=C_1.C_2-1$.
Now as explained above, if $C_1.C_2=C.C_2$ equals 2, then $C$ is hyperelliptic, which we assumed is not the case. Hence, we must have $C_1.C_2\geq 3$, and therefore
$\dim|C|-\dim|C_1|-\dim|C_2|\geq 2$.
\end{proof}

For general, not necessarily integral $C_2$ with $C_2.B\leq 0$, we will restrict to the integral case using the following consequence of Lemma~\ref{lemma_dommap_twocurves}:

\begin{corollary} \label{cor_reduciblecase}
Let $C_1+C_2\in|C|$ with $C_1.B>0$ and $C_2.B\leq 0$. Then we can find curves $C_1'$ and $C_2'$ satisfying
\begin{enumerate}[i)]
\item $C_1'+C_2'\in |C|$, $C_1'.B>0$ and $C_2'.B\leq 0$;
\item $C_2'$ is integral;
\item $\dim|C_1|+\dim|C_2|\leq \dim|C_1'|+\dim|C_2'|$.
\end{enumerate}
\end{corollary}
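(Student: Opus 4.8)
The plan is to reduce to the integral case by peeling off irreducible components of $C_2$ one at a time, each time not decreasing $\dim|C_1|+\dim|C_2|$, and to apply Lemma~\ref{lemma_dommap_twocurves} to replace the pair $(C_1,C_2)$ by a pair where the second curve is integral. First I would note that, by Lemma~\ref{lemma_dommap_twocurves} applied to the component $\Sigma$ of the locus of non-integral curves in $|C|$ containing $C_1+C_2$, we may assume from the outset that there exist curves $A,C_2$ with $A+C_2\in\Sigma$, $A$ \emph{integral}, and $p_{A,C_2}\colon |A|\times|C_2|\to\Sigma$ dominant; in particular $\dim|A|+\dim|C_2|\geq \dim\Sigma$, which (combined with $\dim\Sigma\geq \dim|C_1|+\dim|C_2|$ for our original decomposition, since the original $p_{C_1,C_2}$ is finite onto its image inside $\Sigma$) gives the required inequality iii) once we have arranged the integrality. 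So really the content is: starting from $C_1+C_2\in|C|$ with $C_1.B>0$, $C_2.B\leq 0$, produce $C_1'+C_2'\in|C|$ with $C_2'$ integral, $C_1'.B>0$, $C_2'.B\leq 0$, and $\dim|C_1'|+\dim|C_2'|\geq \dim|C_1|+\dim|C_2|$.

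The key step is an induction on the number of irreducible components of $C_2$. If $C_2$ is already integral there is nothing to do. Otherwise write $C_2=A_0+C_2''$ with $A_0$ an integral component. Since $C_2.B\leq 0$ and $B$ is effective, at least one of $A_0.B$, $C_2''.B$ is $\leq 0$; I would distinguish cases. If $A_0.B\leq 0$: then $A_0$ is a good candidate to split off toward $C_1$ — set $C_1'=C_1+$ (all components of $C_2$ meeting $B$ positively, bundled with $C_1$) and $C_2'=$ (the remaining components, which have $C'.B\le 0$). More carefully, I would argue: among the integral components of $C_2$, let $C_2'$ be any single one with $C_2'.B\leq 0$ — such exists because if every component $A$ of $C_2$ had $A.B>0$ then $C_2.B>0$ — and let $C_1'=C_1+(C_2-C_2')$. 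Then $C_1'.B=C_1.B+(C_2-C_2').B$; this is $>0$ since $C_1.B>0$ and $(C_2-C_2').B\geq 0$ (it is a sum of the remaining components, each of which may have either sign, but... ). Here I need to be slightly careful: it is not automatic that $(C_2-C_2').B\geq 0$. The clean fix is to choose $C_2'$ to be the component with the \emph{most negative} $A.B$; then moving all other components to $C_1$ keeps $C_1'.B = C_1.B + \sum_{A\ne C_2'}A.B \ge C_1.B + (\#\text{comps}-1)\cdot(C_2'.B)$... still not obviously positive. So instead I would do the one-component induction step cleanly: peel off a single integral component and track the dimension count, rather than trying to split in one shot.

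For the dimension bookkeeping in the induction step, the tool is again Lemma~\ref{lemma_dommap_twocurves}/the finiteness of $p_{C_1,C_2}$: for \emph{any} effective decomposition $C=C_1+C_2$, the map $|C_1|\times|C_2|\to|C|$ is finite onto its image, so $\dim|C_1|+\dim|C_2|\leq\dim\Sigma_{C_1,C_2}$, the component of the non-integral locus containing $C_1+C_2$; and by Lemma~\ref{lemma_dommap_twocurves} there is a distinguished decomposition $C=\tilde C_1+\tilde C_2$ with $\tilde C_1$ integral, $p_{\tilde C_1,\tilde C_2}$ dominant onto that same $\Sigma$, hence $\dim|\tilde C_1|+\dim|\tilde C_2|=\dim\Sigma\geq\dim|C_1|+\dim|C_2|$. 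Applying this with the roles arranged so that the integral piece $\tilde C_1$ is the one with $\tilde C_1.B>0$ (note $\tilde C_1+\tilde C_2\in\Sigma$ lies in the non-integral locus, and since $C.B=C_1.B+C_2.B$ with $C_1.B>0\ge C_2.B$ we have $C.B$'s sign forces at least one of $\tilde C_1.B,\tilde C_2.B$ positive; if it is $\tilde C_2.B$, swap names and note $\tilde C_2$ need not be integral — this is the subtle point) one extracts $C_1'=\tilde C_1$ if $\tilde C_1.B>0$, recursing on $\tilde C_2$ which has strictly fewer components; and if instead $\tilde C_2.B>0$ then $\tilde C_1.B\le 0$ with $\tilde C_1$ integral, so $C_2'=\tilde C_1$ already works and we are done with one application. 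Iterating, the process terminates with $C_2'$ integral, $C_2'.B\le 0$, $C_1'.B>0$, and the dimension inequality accumulated along the way.

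\textbf{Expected main obstacle.} The genuinely delicate point is controlling the \emph{sign} of the intersection with $B$ when redistributing components: after replacing $(C_1,C_2)$ by the Lemma~\ref{lemma_dommap_twocurves}-pair $(\tilde C_1,\tilde C_2)$, the integral curve $\tilde C_1$ might be the one with $\tilde C_1.B\le 0$ while $\tilde C_2$ (the non-integral one we want to recurse on) has $\tilde C_2.B>0$ — the opposite of what the recursion wants. Resolving this cleanly — i.e.\ showing one can always maneuver so that either the integral piece satisfies $\ge 0$... $>0$ on $B$ and we recurse, or it satisfies $\le 0$ on $B$ and we stop — requires a careful case analysis using that $B$ is effective and that $\Pic(S)=\Pic(S)_+$ (so that components of $f^*C$ come in $i^*$-conjugate pairs), and is where I would spend most of the write-up. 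Everything else — the dimension inequalities — is formal once Lemma~\ref{lemma_dommap_twocurves} and the finiteness of $p_{C_1,C_2}$ are in hand.
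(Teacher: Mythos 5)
There is a genuine gap, and you flag it yourself: the ``sign-control'' step is left unresolved, and it is precisely the step that makes the statement non-trivial. Your main route applies Lemma~\ref{lemma_dommap_twocurves} to a component $\Sigma$ of the non-integral locus of $|C|$, i.e.\ you re-decompose $C$ itself; this gives the dimension inequality iii) but destroys all control over $\tilde C_1.B$ and $\tilde C_2.B$, and the case $\tilde C_1.B>0$, $\tilde C_2.B>0$ leaves you with neither a stopping point nor a valid recursion hypothesis (moreover $\tilde C_2$ comes from an unrelated decomposition, so ``strictly fewer components'' is unjustified and termination is unclear). Your alternative route --- peeling a single irreducible component $C_2'$ of $C_2$ with $C_2'.B\le 0$ and setting $C_1'=C_1+(C_2-C_2')$ --- has the right shape but fails on iii): the sum map $|C_2-C_2'|\times|C_2'|\to|C_2|$ is finite onto its image but need not be dominant, so $\dim|C_2-C_2'|+\dim|C_2'|\ge\dim|C_2|$ is not available, and the dimension count collapses.

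Both problems have simple fixes that the paper uses. For the sign: you do not need $(C_2-C_2').B\ge0$ at all; since intersection numbers only depend on linear equivalence classes, $C_1'.B=(C-C_2').B=C.B-C_2'.B\ge C.B>0$, using the standing assumption $C.B>0$ of this subsection together with $C_2'.B\le0$. So once $C_2'$ is integral with $C_2'.B\le0$, dumping \emph{everything} else into $C_1'$ automatically gives $C_1'.B>0$; no redistribution of components by sign is ever needed. For the dimensions: apply Lemma~\ref{lemma_dommap_twocurves} not to $|C|$ but to $|C_2|$ (iterated, by induction on degree, in the case $|C_2|$ has no integral member --- if it does, take that member as $C_2'$ and $C_1'=C_1$ and you are done). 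This produces integral curves $C_{2,1},\dots,C_{2,n}$ with $C_{2,1}+\cdots+C_{2,n}\in|C_2|$ and $\dim|C_2|=\sum_i\dim|C_{2,i}|$ (dominance gives $\ge$, finiteness gives $\le$). Since $\sum_i C_{2,i}.B=C_2.B\le0$, some $C_{2,n}$ has $C_{2,n}.B\le0$; set $C_2'=C_{2,n}$ and $C_1'=C_1+C_{2,1}+\cdots+C_{2,n-1}$. Then i) holds by the sign computation above, ii) holds by construction, and iii) follows from $\dim|C_1|+\dim|C_2|=\dim|C_1|+\sum_i\dim|C_{2,i}|\le\dim|C_1'|+\dim|C_2'|$, using only finiteness of the sum map for the first $n-1$ factors. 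In short: the ingredients you list (Lemma~\ref{lemma_dommap_twocurves} and finiteness of $p$) are the right ones, but they must be applied to $|C_2|$ rather than to $|C|$, and the obstacle you expected to dominate the write-up disappears once you remember $C.B>0$.
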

\begin{proof}
If $|C_2|$ contains an integral curve $C_2'$ then $C_1'=C_1$ and $C_2'$ satisfy all the required properties. 
If $|C_2|$ does not contain integral curves, then by Lemma~\ref{lemma_dommap_twocurves}, using induction, there exist integral curves $C_{2,1},\dots,C_{2,n}$ such that $C_{2,1}+\dots+C_{2,n}\in|C_2|$ and $\dim|C_2|=\dim\left(|C_{2,1}|\times\ldots\times|C_{2,n}|\right)$.
At least one of the curves $C_{2,i}$ has the property $C_{2,i}.B\leq 0$, say this curve is $C_{2,n}$. Set $C_2'=C_{2,n}$ and $C_1'=C_1+C_{2,1}+\ldots+C_{2,n-1}$. Then 
we have $C_1'.B=(C-C_{2,n}).B>0$ and
\begin{align*}
\dim|C_1|+\dim|C_2|&=\dim|C_1|+\dim|C_{2,1}|+\ldots+\dim|C_{2,n}|\\
&\leq\dim|C_1+C_{2,1}+\ldots+C_{2,n-1}|+\dim|C_{2,n}|\\
&=\dim|C_1'|+\dim|C_2'|. \qedhere
\end{align*}
\end{proof}

\begin{proposition}\label{prop_2connectedmerged}
Assume that $C$ intersects all rational components of $B$ positively, that $|C|$ is 2-connected, and if $B^2\leq 0$, that additionally $|C|$ is not hyperelliptic. 
Then the set of non-integral 
members in $|C|$ has codimension at least two.
\end{proposition}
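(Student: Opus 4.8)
The plan is to reduce everything to Corollary~\ref{cor_differencedimensions}: by that corollary it suffices to verify the inequality \eqref{eq_ourgoal}, namely $\dim|C|-\dim|C_1|-\dim|C_2|\geq 2$, for every non-integral member $C_1+C_2\in|C|$, where as in the running convention of this subsection $C_1,C_2$ are effective (and nonzero). Two observations will be used throughout. First, since $|C|$ is $2$-connected, any such decomposition satisfies $C_1.C_2\geq 2$. Second, since $C.B>0$, at least one of $C_1.B$, $C_2.B$ is positive, so I may and will assume $C_1.B>0$; in particular $C_1\neq 0$.

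If $C_2.B>0$, then $C_1.B>0$, $C_2.B>0$ and $C_1.C_2\geq 2$, so Corollary~\ref{lemma_analogA9} gives \eqref{eq_ourgoal} immediately. The remaining, main case is $C_2.B\leq 0$. Here I would first apply Corollary~\ref{cor_reduciblecase} to replace $(C_1,C_2)$ by a pair $(C_1',C_2')$ with $C_1'+C_2'\in|C|$, $C_1'.B>0$, $C_2'.B\leq 0$, with $C_2'$ \emph{integral}, and with $\dim|C_1|+\dim|C_2|\leq\dim|C_1'|+\dim|C_2'|$; so it is enough to prove \eqref{eq_ourgoal} for the new pair. Since $C_1'.B>0$ we have $C_1'\neq 0$, so $2$-connectedness still yields $C_1'.C_2'\geq 2$. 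Renaming, we are reduced to $C_1+C_2\in|C|$ with $C_1.B>0$, $C_1.C_2\geq 2$, and $C_2$ integral with $C_2.B\leq 0$, and we conclude according to the sign of $C_2^2$. If $C_2^2<0$, then Lemma~\ref{lemma_C22<0} forces $C_2$ to be a rational irreducible component of $B$, whence $C.C_2>0$ by the hypothesis that $C$ meets every rational component of $B$ positively, and Corollary~\ref{cor_ourgoalforC22<0+integral} applies. If $C_2^2>0$, then Corollary~\ref{cor_C22>0} applies directly. If $C_2^2=0$, then Lemma~\ref{lemma_C22>=0} gives $C_2.B=0$ and $B^2\leq 0$, so the hypothesis guarantees that $C$ is non-hyperelliptic and Corollary~\ref{cor_C22=0} applies. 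In every case \eqref{eq_ourgoal} holds, and Corollary~\ref{cor_differencedimensions} concludes the proof.

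I do not anticipate a serious obstacle: the mathematical content is already contained in the lemmas established earlier in this subsection, and the argument above only organizes the case distinction. The only points requiring a little care are (i) checking that after the reduction of Corollary~\ref{cor_reduciblecase} the $2$-connectedness hypothesis still applies to the new pair (which is why I record that $C_1'\neq 0$), and (ii) in the case $C_2^2=0$, matching the $B^2\leq 0$ conclusion of Lemma~\ref{lemma_C22>=0} with the hypothesis that $C$ is non-hyperelliptic when $B^2\leq 0$. The assumption on rational components of $B$ is used exactly once, in the sub-case $C_2^2<0$.
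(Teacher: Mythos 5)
Your proposal is correct and follows essentially the same route as the paper's proof: reduce to the inequality of Corollary~\ref{cor_differencedimensions}, use Corollary~\ref{lemma_analogA9} when $C_2.B>0$, reduce to integral $C_2$ via Corollary~\ref{cor_reduciblecase}, and then split on the sign of $C_2^2$ using Lemmas~\ref{lemma_C22<0}, \ref{lemma_C22>=0} and Corollaries~\ref{cor_ourgoalforC22<0+integral}, \ref{cor_C22>0}, \ref{cor_C22=0}. Your explicit check that $2$-connectedness still yields $C_1'.C_2'\geq 2$ after the reduction is a point the paper leaves implicit, but it is the same argument.
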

\begin{proof}
Let $C_1+C_2\in|C|$, with $C_1.B>0$. We need to show that \eqref{eq_ourgoal} holds. 
If $C_2.B>0$, this follows from Corollary~\ref{lemma_analogA9}. If $C_2.B\leq 0$, we may assume by Corollary~\ref{cor_reduciblecase} that $C_2$ is integral.
If $C_2^2$ is nonnegative, then by Lemma~\ref{lemma_C22>=0} we have $B^2\leq 0$, so by assumption, $|C|$ is not hyperelliptic. Then \eqref{eq_ourgoal} holds by Corollaries~\ref{cor_C22>0} and \ref{cor_C22=0}.
If $C_2^2$ is negative, then by Lemma~\ref{lemma_C22<0}, $C_2$ is a rational component of $B$, hence we have $C.C_2>0$ by assumption. Then \eqref{eq_ourgoal} follows from Corollary~\ref{cor_ourgoalforC22<0+integral}. 
\end{proof}

\begin{proof}[Proof of Theorem~ \ref{thm_curvesreduciblepreim}]
We want to prove that the locus of curves in $|C|$ whose inverse image under $f$ is not integral has codimension at least 2. 
This locus is a union of three subloci: the locus of integral curves whose inverse image is non-reduced, the locus of integral curves whose inverse image is reducible, and the locus of non-integral curves.
Under the assumptions (a)--(d), these three loci all have codimension at least 2 in $|C|$, by Remark~\ref{remark_non_reduced}, Proposition~\ref{prop_irredcurveswithredpullback} and Proposition~\ref{prop_2connectedmerged}, respectively.
\end{proof}

\section{Curves with singular preimage}
\label{section_curvessingularpreim}

In view of assumption (iii) of Theorem \ref{thm_pi1}, in this section, we study the locus of curves in $|C|$ whose pullback by $f$ is singular. The goal is to describe the general element of each codimension $1$ irreducible component of this locus.
We will prove the following:

\begin{theorem} \label{thm_codim1loci}
 Let $C\subset T$ be a very ample effective divisor. 
Assume that the locus of non-integral curves in $|C|$ has codimension at least 2.
 Let $Z\subset |C|$ be an irreducible component of the locus of curves in $|C|$ whose inverse image under $f$ is singular. If $Z$ has codimension 1 in $|C|$, then one of the following holds:
 \begin{enumerate}[(i)]
  \item The general element of $Z$ is an integral curve with one node and no other singularities, intersecting the branch locus $B$ transversely;
  \item The general element of $Z$ is a smooth curve intersecting $B$ transversely except in one point, where the intersection multiplicity is 2.
 \end{enumerate}
\end{theorem}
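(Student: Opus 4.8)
The plan is to trace the source of the singularities of $f^{*}\Gamma$ as $\Gamma$ ranges over $Z$, separating singularities of $\Gamma$ itself from non-transverse contact of $\Gamma$ with $B$, and to bound the codimension in $|C|$ of each resulting locus. First I would reduce to integral curves: since $Z$ has codimension $1$ while, by hypothesis, the locus of curves with non-integral preimage has codimension $\geq 2$, the generic $\Gamma\in Z$ has $f^{*}\Gamma$ integral, hence $\Gamma$ integral. Next observe that $B$ is smooth, being isomorphic to $\Fix(i)$, the fixed locus of the order-two automorphism $i$ of the smooth surface $S$ (equivalently, a double cover with smooth total space has smooth branch divisor). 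Therefore, for such $\Gamma$, Lemma~\ref{lemma_milnornumber} shows that $f^{*}\Gamma$ is singular if and only if either $\Gamma$ is singular, or $\Gamma$ is smooth but meets $B$ with intersection multiplicity $\geq 2$ at some point. Writing $\mathcal{S}_{\mathrm{s}}\subset|C|$ for the closure of the locus of integral singular curves and $\mathcal{S}_{\mathrm{t}}$ for the closure of the locus of smooth curves meeting $B$ non-transversely, the locus in the statement agrees, outside the codimension-$\geq 2$ locus of curves with non-integral preimage, with $\mathcal{S}_{\mathrm{s}}\cup\mathcal{S}_{\mathrm{t}}$. Hence the irreducible set $Z$ is a codimension-$1$ component of $\mathcal{S}_{\mathrm{s}}$ or of $\mathcal{S}_{\mathrm{t}}$.

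Suppose $Z$ is a component of $\mathcal{S}_{\mathrm{s}}$. Since $|C|$ is very ample, I would argue as in Section~\ref{sec:pi1}: the general member $\Gamma$ of a codimension-$1$ component of $\mathcal{S}_{\mathrm{s}}$ has geometric genus $g(\Gamma)-1$ by \cite[Proposition~4.5]{DedieuSernesi}, hence a unique singular point with $\delta$-invariant $1$, a node or a cusp, and it is immersed by \cite[Proposition~4.6]{DedieuSernesi}, which rules out the cusp; so $\Gamma$ has exactly one node and no other singularity. It remains to show that, generically on $Z$, the node lies off $B$ and $\Gamma$ meets $B$ transversely. For this I would run parameter counts on the incidence varieties $\{(\Gamma,p)\mid p\in B,\ \Gamma\text{ singular at }p\}$ and $\{(\Gamma,p)\mid \Gamma\text{ nodal off }B\text{ and }(\Gamma\cdot B)_{p}\geq 2\}$: using that $|C|$ is positive enough, imposing a singularity at a variable point of $B$, respectively an extra tangency to $B$, cuts the dimension down to codimension $\geq 2$ in $|C|$. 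Since $Z$ has codimension $1$, neither locus can equal $Z$, so the general $\Gamma\in Z$ is as in alternative (i); accordingly $f^{*}\Gamma$ acquires two nodes, consistently with the discussion after Lemma~\ref{lemma_milnornumber}.

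Suppose instead $Z$ is a component of $\mathcal{S}_{\mathrm{t}}$. Then the general $\Gamma\in Z$ is smooth (as $Z\not\subseteq\mathcal{S}_{\mathrm{s}}$) and meets $B$ non-transversely. A parameter count on $\{(\Gamma,p)\mid p\in B,\ (\Gamma\cdot B)_{p}\geq 2\}$ shows this locus has codimension exactly $1$ in $|C|$, while the sublocus of curves having a point of contact of order $\geq 3$ with $B$, or two distinct points of tangency with $B$, has codimension $\geq 2$. Hence the general $\Gamma\in Z$ meets $B$ transversely except at one point, where the multiplicity is exactly $2$; this is alternative (ii), and by Lemma~\ref{lemma_milnornumber} the curve $f^{*}\Gamma$ then has a single node.

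The main obstacle is the package of codimension estimates. The input from \cite{DedieuSernesi} controls the singularities of a general member of a codimension-$1$ family of singular curves inside the comparatively small linear system $|C|$ on $T$, and one must verify that its positivity hypotheses hold for a very ample $|C|$ on a rational surface (or rerun its geometric-genus and immersion argument in that setting). Likewise, the estimates that push the node off $B$ and bound the order of contact with $B$ rely on $|C|$ imposing independent conditions at points and at length-two and length-three subschemes supported on $B$; this is precisely where very ampleness of $|C|$ — together with the exclusion of the low-positivity degenerate cases, such as $D$ hyperelliptic — is used. Carrying out these counts carefully, uniformly over the relevant families, is the technical heart of the proof.
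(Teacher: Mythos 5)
Your reduction steps are the same as the paper's: generic members of $Z$ are integral, $B$ is smooth, and Lemma~\ref{lemma_milnornumber} splits the problem into the discriminant locus and the locus of smooth curves meeting $B$ non-transversely; the one-node/immersed description of the general singular member is also fine in spirit (the paper quotes \cite[Corollary~2.8]{VoisinII} rather than Dedieu--Sernesi, whose hypotheses you rightly flag as needing verification on a rational surface). The gap is in the ``package of codimension estimates'', which is not a technical verification but the actual content of the theorem, and the mechanism you propose for it does not work. You claim the counts follow because $|C|$ ``imposes independent conditions at points and at length-two and length-three subschemes supported on $B$'' by very ampleness; but very ampleness only separates length-two subschemes, and no stronger positivity is available in the hypotheses. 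Concretely, the key estimate is that singular curves which are also tangent to a component $B_0$ of $B$ form a locus of codimension $\geq 2$: this locus is (essentially) $\Delta\cap\overline{B_0}^{\vee}$, and the only thing that can go wrong is that the irreducible hypersurface $\overline{B_0}^{\vee}$ is entirely contained in the discriminant $\Delta$, i.e.\ that \emph{every} member of $|C|$ tangent to $B_0$ is singular. A naive incidence-variety count cannot exclude this, because it would require independence of the three conditions ``singular at $q$'' and the two conditions ``tangent to $B_0$ at $p$'', which very ampleness does not give. This is exactly what the paper's Proposition~\ref{prop_singularnontransvcurves} is about: either one finds a point $p\in B_0$ with $T_{\overline{p}}\overline{B_0}\not\subset\overline{T}$ and produces a \emph{smooth} curve tangent to $B_0$ by a Bertini argument on the pencil of hyperplanes through $T_{\overline{p}}\overline{B_0}$, or $T_{\overline{p}}\overline{B_0}\subset\overline{T}$ for all $p$, in which case every such tangent section is non-integral and one invokes the hypothesis that non-integral curves have codimension $\geq 2$. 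Note that your proposal never uses that hypothesis after the initial reduction, whereas it is genuinely needed here (think of a component $B_0$ whose image is a line in $|C|^{\vee}$: every hyperplane tangent to it contains it, so all these sections are reducible, and no dimension count can see this without the non-integrality assumption).

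The same criticism applies, less severely, to your treatment of the tangency locus: that a general tangent hyperplane section has a single ordinary (multiplicity-two) contact, and that contacts of order $\geq 3$ or at two points form a codimension-$\geq 2$ locus, is not a formal parameter count but a characteristic-zero theorem on dual varieties of curves, which the paper imports from \cite[Corollaire~3.2.1, Th\'eor\`eme~2.5]{Katz} (flexes and lines make the naive ``three conditions'' count break down pointwise); and for tangency to two \emph{different} components $B_i,B_j$ the paper must rule out $\overline{B_i}^{\vee}\subseteq\overline{B_j}^{\vee}$, which it does by biduality \cite[Theorem~1.1]{GeKaZe}, not by counting. So while your decomposition mirrors the paper's (Propositions~\ref{prop_singularnontransvcurves} and \ref{prop_1pointmult2}), the proposal leaves the central codimension statements unproved, and the route you indicate for them (independence of conditions from positivity of $|C|$) is not available under the stated hypotheses.
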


The proof consists of Proposition~\ref{prop_singularnontransvcurves}, which concerns the discriminant locus $\Delta$ of singular curves,
and Proposition~\ref{prop_1pointmult2}, which is about the locus $\Delta'$ of curves intersecting the branch locus $B$ non-transversely.
Indeed, by Lemma~\ref{lemma_milnornumber}, all other curves have smooth inverse image.

\medskip
Our first goal is to show that a general curve in the discriminant locus $\Delta$ intersects $B$ transversely.
We will do this by showing that the intersection $\Delta\cap\Delta'$ has codimension at least 2 in $|C|$.
The general element of $\Delta$ 
is an integral curve with one node and no other singularities \cite[Corollary~2.8]{VoisinII}.
Recall that $B$ is smooth, so a curve $C$ intersects $B$ non-transversely at a point $p$ if and only if the tangent space $T_pB$ is contained in $T_pC$.

\medskip
Let $N=\dim|C|$ and consider the embedding 
$\varphi_C\colon T\hookrightarrow|C|^{\vee}=\mathbb{P}^N$.
Denote by $\overline{T}$ the image of $T$ under $\varphi_{C}$, and similarly, let $\overline{p}=\varphi_C(p)$ for any point $p\in T$.
Singular curves in $|C|$ are parametrized by the dual $\overline{T}^{\vee}\subset\check{\mathbb{P}}^{N}$ of $T$ (see e.g.\ \cite[\S1.1]{GeKaZe}).
More precisely, for a hyperplane $H\subset|C|^{\vee}$, $H\cap \overline{T}$ is singular at a point $\overline{p}\in\overline{T}$ if and only if 
$T_{\overline{p}}\overline{T}\subset H$.

Let $B_0$ be an irreducible component of $B$, with image $\overline{B_0}$ under $\varphi_C$. 
The locus of curves in $|C|$ that intersect $B_0$ non-transversely is parametrized by the dual variety 
$\overline{B_0}^{\vee}\subset|C|^{\vee\vee}\cong|C|$. 
Again, more precisely, 
the curve $H\cap\overline{T}$ intersects $\overline{B_0}$ non-transversely at $\overline{p}$ if and only if $T_{\overline{p}}\overline{B_0}\subset H$.

\begin{proposition} \label{prop_singularnontransvcurves}
Let $|C|$ be a very ample linear system on $T$ such that the locus of non-integral curves in $|C|$ has codimension at least 2. 
Let $B_0\subset T$ be an irreducible component of the branch locus $B$. The locus of singular curves in $|C|$ that intersect $B_0$ non-transversely has codimension at least 2 in $|C|$.
\end{proposition}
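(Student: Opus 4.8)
The natural approach is to estimate the dimension of the incidence variety parametrizing pairs (curve, point) where the curve is both singular and tangent to $B_0$. Write $N = \dim|C|$ and use the embedding $\varphi_C\colon T\hookrightarrow |C|^\vee = \mathbb{P}^N$ with image $\overline T$; set $\overline{B_0} = \varphi_C(B_0)$. For a point $p\in B_0$, the conditions "$H\cap\overline T$ is singular at $\overline p$" and "$H\cap\overline T$ is tangent to $\overline{B_0}$ at $\overline p$" are both the single condition $T_{\overline p}\overline{B_0}\subset H$, since $\overline{B_0}\subset\overline T$ and so $T_{\overline p}\overline{B_0}\subset T_{\overline p}\overline T$; thus \emph{tangency to $B_0$ at a point of $B_0$ automatically forces the hyperplane section to be singular there}. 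The plan is therefore to define
\[
I := \{(H,\overline p)\in |C|\times \overline{B_0}\mid T_{\overline p}\overline{B_0}\subset H\},
\]
with projections $\mathrm{pr}_1\colon I\to|C|$ and $\mathrm{pr}_2\colon I\to\overline{B_0}$. The fibre of $\mathrm{pr}_2$ over $\overline p$ is a linear subspace of $|C|$ of codimension equal to $\dim\langle T_{\overline p}\overline{B_0}\rangle + 1$. Since $|C|$ is very ample, $\overline{B_0}$ is a nondegenerate (in its span) smooth curve and $T_{\overline p}\overline{B_0}$ is a genuine projective line for general $\overline p$, so this fibre has codimension $2$ in $|C|$; hence $\dim I = \dim|C| - 2 + \dim\overline{B_0} = N - 1$.

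**Controlling the image.** It follows that $\overline{\mathrm{pr}_1(I)} = \Delta'_{B_0}$, the locus of curves tangent to $B_0$ somewhere, has dimension at most $N-1$, i.e. codimension at least $1$ — which is not yet enough. The point is that $\mathrm{pr}_1$ is generically finite onto a codimension-$1$ locus only if the tangency happens at a single point and that single tangency is the only imposed condition. So the strategy is to split $I$ according to the behaviour at $\overline p$: either (a) the generic member of a component of $\Delta'_{B_0}$ has a second tangency point to $B_0$, or a tangency together with a node elsewhere, or a higher-order tangency, or the curve becomes non-integral — each of which is one more linear or codimension-one condition, landing in codimension $\geq 2$ — or (b) the generic member of $\Delta'_{B_0}$ meets $B_0$ transversely away from one simple tangency point. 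In case (b), a curve tangent to $B_0$ with a simple tangency and no other degeneracy is singular \emph{only at that tangency point}, and there it has an ordinary node (the two branches being $\overline{B_0}$'s tangent direction appearing with multiplicity $2$ in the intersection with $\overline T$ — more precisely a node by genericity, as in \cite[Corollary~2.8]{VoisinII}). But such a curve is \emph{not} in the locus we are bounding: we want \emph{singular} curves that intersect $B_0$ \emph{non-transversely}, and in case (b) being singular is equivalent to being tangent to $B_0$, so the singular tangent curves form exactly $\Delta'_{B_0}$ itself. Wait — I need to be careful: Proposition~\ref{prop_singularnontransvcurves} asks about the locus of \emph{singular} curves that are tangent to $B_0$, which is $\Delta\cap\Delta'_{B_0}$. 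The above shows $\Delta'_{B_0}$ itself has codimension $\geq 1$, and a general member of each codim-1 component of $\Delta'_{B_0}$ is either non-integral (excluded by hypothesis, codim $\geq 2$), or has its \emph{only} singularity at the tangency point. So a general member of a codim-1 component of $\Delta'_{B_0}$ is a \emph{smooth} curve tangent to $B_0$ at one point — hence \emph{not} in $\Delta$. Therefore $\Delta\cap\Delta'_{B_0}$, being a proper closed subset of $\Delta'_{B_0}$ (missing its general codim-1 point), has codimension $\geq 2$.

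**Executing the dimension counts.** Concretely, the proof will run as follows. First, establish the $\dim I = N-1$ computation above, using very ampleness to guarantee $T_{\overline p}\overline{B_0}$ spans a line for general $\overline p$, so that $\mathrm{pr}_2$ is a projective-bundle-like map with fibres of codimension $2$. Second, observe $\mathrm{codim}_{|C|}\Delta'_{B_0}\geq 1$. Third, for each irreducible component $Z$ of $\Delta'_{B_0}$ of codimension exactly $1$, analyze its general member $C_0$: the map $\mathrm{pr}_1$ must be generically finite over $Z$, forcing (for general $C_0\in Z$) a unique point $p_0$ with $T_{p_0}B_0\subset$ the hyperplane; any of the alternatives — a second such point, a tangency of order $\geq 3$ (one extra condition), a node of $C_0$ off $B$ (codim $1$ inside a codim $1$ locus $\Rightarrow$ codim $2$), or $C_0$ non-integral (codim $\geq 2$ by hypothesis) — all put us in codimension $\geq 2$, so the general member of $Z$ is a curve whose \emph{sole} singular behaviour with respect to $B_0$ is a simple tangency at $p_0$ and which is otherwise smooth with transverse intersection with $B$; in particular its preimage under $f$ has exactly one node by Lemma~\ref{lemma_milnornumber}, and $C_0$ itself is smooth. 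Hence a general member of $Z$ is not singular, so $\Delta\cap Z\subsetneq Z$ is a proper closed subset, giving $\mathrm{codim}_{|C|}(\Delta\cap\Delta'_{B_0})\geq 2$.

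\textbf{The main obstacle.} The delicate point is the third step: ruling out that a codimension-$1$ family of \emph{singular} tangent curves exists in which the singularity is forced \emph{in addition} to the tangency, rather than \emph{at} the tangency point. This requires a careful local analysis at $p_0\in B_0$: when the hyperplane contains $T_{\overline p_0}\overline{B_0}$, the section $H\cap\overline T$ acquires a singularity at $\overline p_0$ \emph{automatically} (this is the key geometric input — tangency of a hyperplane to a curve lying on the surface forces a singularity of the hyperplane section of the surface at that point), so the tangency and the singularity are \emph{the same} condition, not independent ones. Making this precise — that for a general $C_0$ in a codim-$1$ component of $\Delta'_{B_0}$ the node of $C_0$ is exactly at $p_0$ and $C_0$ has no other singularity — will use genericity together with the standard fact (\cite[Corollary~2.8]{VoisinII}) that a general singular curve in a very ample system has a single node; combined with the observation that demanding a node \emph{elsewhere} costs an extra parameter, this forces the node to sit at the tangency point. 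Once this is clear, the conclusion that the general member of any codim-$1$ component of $\Delta'_{B_0}$ is smooth (since the "forced" singularity is the tangency, and one can deform to make the intersection with $B_0$ transverse while staying in the same component only if that component had codimension $\geq 2$) follows, and the proposition is proved.
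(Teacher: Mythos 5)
There is a genuine gap, and it sits exactly at what you call the ``key geometric input.'' You assert that for $\overline{p}\in\overline{B_0}$ the two conditions ``$H\cap\overline{T}$ is singular at $\overline{p}$'' and ``$H\cap\overline{T}$ is tangent to $\overline{B_0}$ at $\overline{p}$'' coincide, both being $T_{\overline{p}}\overline{B_0}\subset H$. This is false: singularity of the hyperplane section at $\overline{p}$ is equivalent to $H$ containing the embedded tangent \emph{plane} $T_{\overline{p}}\overline{T}$ (three linear conditions), whereas tangency to $B_0$ only requires $H$ to contain the tangent \emph{line} $T_{\overline{p}}\overline{B_0}$ (two conditions); a hyperplane through the tangent line but not the tangent plane cuts a curve that is smooth at $\overline{p}$ yet meets $B_0$ there with multiplicity $2$ (think of a line tangent to a conic in $\mathbb{P}^2$). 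Indeed, if your claim were true, the type-(ii) components of Theorem~\ref{thm_codim1loci} (smooth curves simply tangent to $B$) could never exist; what a simple tangency forces is a node of the \emph{preimage} $f^{-1}C_0$ on $S$ (Lemma~\ref{lemma_milnornumber}), not of $C_0$ itself, and your write-up conflates the two. The slip also makes the argument internally inconsistent: you first conclude that the general tangent curve has a node at the tangency point and then, from ``its only singularity is at the tangency point,'' that it is smooth — a non sequitur. The appeal to \cite[Corollary~2.8]{VoisinII} cannot repair this, since that statement concerns the generic member of the discriminant $\Delta$, not of the dual variety $\overline{B_0}^{\vee}$.

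What actually has to be proved — and is the entire content of the paper's argument — is precisely the step you assert: that the general member of (each codimension-$1$ part of) $\overline{B_0}^{\vee}$ is a \emph{smooth} curve, or else that $\overline{B_0}^{\vee}$ already has codimension $\geq 2$ or lies in the non-integral locus. The paper does this by fixing a point $p\in B_0$ and running Bertini on the subsystem $|C|_1$ of curves tangent to $B_0$ at $p$: one must control the base locus $\mathbf{T}_p\cap\overline{T}$ (finitely many points when $T_{\overline{p}}\overline{B_0}\not\subset\overline{T}$) and check that curves singular at one of those base points form a proper closed subset of $|C|_1$. Crucially, there is a second case your proposal never confronts: if $T_{\overline{p}}\overline{B_0}\subset\overline{T}$ for \emph{every} $p\in B_0$, then the tangent line is a component of every tangent hyperplane section, and either some section equals that line (giving a smooth tangent curve) or all tangent sections are reducible — and only here does the hypothesis on the codimension of the non-integral locus enter. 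You mention non-integrality only in passing inside case (a); without the tangent-line-in-the-surface dichotomy and the Bertini/base-locus analysis, the decisive step of your outline is unsupported, so the proposal does not constitute a proof.
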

\begin{proof}
Since $\overline{B_0}^{\vee}\subset|C|^{\vee\vee}\cong|C|$ is irreducible of codimension at least 1, it suffices to show that either $\overline{B_0}^{\vee}$ has codimension at least 2, or $\overline{B_0}^{\vee}$ is not contained in $\Delta$, that is, there is a curve in $|C|$ intersecting $B_0$ non-transversely which is smooth.
We distinguish two cases.

First, suppose there is a point $p\in B_0$ such that $T_{\overline{p}}\overline{B_0}\not\subset \overline{T}$. 
Note that when $N:=\dim|C|$ equals 2, so $\varphi_{C}$ is an isomorphism, this cannot happen, so we have $N\geq 3$.
We will show that the subsystem $|C|_1\subset|C|$ of curves intersecting $B_0$ non-transversely at $p$ contains a smooth curve.
Then $|C|_1$ corresponds to the subsystem $|H|_1\subset|H|$ of hyperplanes containing $T_{\overline{p}}\overline{B_0}$, which has dimension $N-2$. 
By Bertini's theorem, there is a dense open subset $\cU\subset |C|_1\cong \mathbb{P}^{N-2}$ such that every element of $\cU$ is smooth away from the base locus. 
The base locus of $|H|_1$ is the closure $\mathbf{T}_p$ of $T_{\overline{p}}\overline{B_0}$ in $|C|^{\vee}$; hence the base locus of $|C|_1$ is $\mathbf{T}_p\cap\overline{T}$. 
Since $T_{\overline{p}}\overline{B_0}\not \subset \overline{T}$, it follows that the base locus of $|C|_1$ consists of finitely many points $p_1:=p,p_2\dots,p_k$.
The locus of curves in $|C|$ which are singular at $p_i$ corresponds to $P_i:=\{H\in |H|\mid T_{\overline{p_i}}\overline{T}\subset H\}\cong\mathbb{P}^{N-3}$.
So the locus of curves in $|C|$ that intersect $B_0$ non-transversely at $p$ and are smooth at each $p_i$ is $|C|_1\setminus\bigcup_i\left(P_i\cap|C|_1\right)$, which is a dense open subset of $|C|_1$. Its intersection with $\cU\subset|C|_1$ consists of smooth curves that intersect $B_0$ non-transversely at $p$, and this intersection is non-empty.

If $T_{\overline{p}}\overline{B_0}\subset \overline{T}$ for all $p\in B_0$,
then for any $H\in\overline{B_0}^{\vee}$, the curve $H\cap\overline{T}$ contains $T_{\overline{p}}\overline{B_0}$ for some $p$.
Then the closure $\mathbf{T}_p$ of $T_{\overline{p}}\overline{B_0}$ in $|C|^{\vee}=
\check{\mathbb{P}}^{N}$ is an irreducible component of $H\cap\overline{T}$.
If $\mathbf{T}_p=H\cap\overline{T}$ for some $p$, then its pullback to $T$ is a smooth curve intersecting $B_0$ non-transversely. 
If $\mathbf{T}_p$ is always strictly contained in $H\cap\overline{T}$, then for all $H\in\overline{B_0}^{\vee}$, the curve $H\cap\overline{T}$ is reducible.
So the curves in $|C|$ intersecting $B_0$ non-transversely are contained in the locus of reducible curves, which has codimension at least 2 by assumption.
\end{proof}

Next, we will show that if $C_0$ is a general element of the locus $\Delta'\subset|C|$ intersecting $B$ non-transversely,
then $C_0$ has exactly one point of non-transverse intersection with $B$, of multiplicity 2.
By Proposition~\ref{prop_singularnontransvcurves},
we may assume that $C_0$ is smooth.

\begin{proposition}\label{prop_1pointmult2}
Let $|C|$ be a very ample linear system on $T$ such that the locus of non-integral curves in $|C|$ has codimension at least 2.
The locus of curves in $|C|$ which intersect $B$ non-transversely in more than one point or in a point with multiplicity bigger than two, has codimension at least 2 in $|C|$.
\end{proposition}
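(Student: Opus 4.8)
The plan is to bound two loci separately: the locus $\Delta'_{\geq 2}$ of curves meeting $B$ non-transversely in at least two distinct points, and the locus $\Delta'_{\mathrm{mult}\geq 3}$ of curves meeting $B$ non-transversely at a single point with intersection multiplicity at least three. By Proposition~\ref{prop_singularnontransvcurves} we may assume throughout that the curves in question are smooth, since the singular curves intersecting $B$ non-transversely already form a codimension-$2$ locus. We work with the embedding $\varphi_C\colon T\hookrightarrow|C|^\vee=\mathbb{P}^N$, writing $\overline{T}$ for the image and $\overline{p}$ for $\varphi_C(p)$, and we use the translation already recalled in the text: a hyperplane $H$ gives a curve meeting a component $B_0$ of $B$ non-transversely at $\overline p$ if and only if $T_{\overline p}\overline{B_0}\subset H$.

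For $\Delta'_{\geq 2}$, first I would stratify by which pair of components of $B$ (possibly the same component twice) contains the two points of non-transverse intersection. Fix components $B_0,B_1$ of $B$ and two points $p\in B_0$, $q\in B_1$ with $\overline p\neq\overline q$. The condition that a curve in $|C|$ meets $B_0$ non-transversely at $p$ \emph{and} $B_1$ non-transversely at $q$ corresponds to the linear subsystem $|H|_{p,q}$ of hyperplanes containing the linear span $\langle T_{\overline p}\overline{B_0}, T_{\overline q}\overline{B_1}\rangle$ in $\mathbb{P}^N$. When these two tangent lines span a $3$-plane, $|H|_{p,q}$ has codimension $4$ in $|H|$, so imposing non-transversality at one point of each of two varying components cuts down by $(4-\dim B_0-\dim B_1)=(4-1-1)=2$; carefully, the family of such $(p,q)$ has dimension $2$, so the total locus has codimension $\geq 4-2=2$. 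The degenerate possibility is that $T_{\overline p}\overline{B_0}$ and $T_{\overline q}\overline{B_1}$ always span only a $2$-plane (a single line, i.e.\ they coincide) for all $p,q$; this forces the relevant tangent lines to be collinear in a way that, by very ampleness, can only happen if the corresponding hyperplane section is forced to be reducible — but reducible curves form a codimension-$\geq 2$ locus by hypothesis, so we are again done. This is the same bifurcation structure as in the proof of Proposition~\ref{prop_singularnontransvcurves}, and I would reuse that mechanism rather than re-prove it.

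For $\Delta'_{\mathrm{mult}\geq 3}$, fix a component $B_0$ of $B$ and a point $p\in B_0$; the condition that a curve in $|C|$ meets $B_0$ at $p$ with multiplicity at least $3$ is the condition that the hyperplane $H$ contains the second-order osculating space of $\overline{B_0}$ at $\overline p$, which (when $\overline{B_0}$ is not contained in a plane, so that this osculating space is genuinely $2$-dimensional) is a codimension-$3$ condition on $H$. Letting $p$ vary in the $1$-dimensional family $B_0$, the total locus has codimension $\geq 3-1=2$. The exceptional case is when the second osculating space of $\overline{B_0}$ at every point is only a line, i.e.\ $\overline{B_0}$ is itself a line; but a line is rational, hence cannot be a component of $B$ unless it is rational, and even then — arguing as in the proof of Proposition~\ref{prop_singularnontransvcurves} — either $\mathbf{T}_p$ itself is the full hyperplane section (giving a smooth curve tangent to higher order, which we can then check has the right codimension directly) or the hyperplane sections through it are all reducible, which is a codimension-$\geq 2$ locus. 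In all cases, stratifying over the finitely many components of $B$ and taking the union preserves codimension $\geq 2$.

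The main obstacle is the bookkeeping in the degenerate cases — controlling what happens when the osculating/tangent spaces of the (images of) components of $B$ fail to be in general position with respect to $\overline T$, i.e.\ when the expected codimension count is not realized. The cleanest route is to observe that the failure of the naive dimension count always forces the relevant hyperplane section to be reducible (equivalently, the tangent/osculating space to $\overline{B_0}$ is a proper component of $H\cap\overline T$), and then to invoke the hypothesis that reducible (non-integral) curves in $|C|$ form a codimension-$\geq 2$ locus; this is exactly the dichotomy already exploited in Proposition~\ref{prop_singularnontransvcurves}, so I would phrase the argument to run in parallel with it. Together with Proposition~\ref{prop_singularnontransvcurves}, this proves the proposition: a general curve in $\Delta'$ is smooth and meets $B$ non-transversely at exactly one point with multiplicity exactly $2$, and combined with Lemma~\ref{lemma_milnornumber} this yields Theorem~\ref{thm_codim1loci}.
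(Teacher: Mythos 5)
Your route is genuinely different from the paper's: instead of quoting the classical theory of dual varieties, you try to prove the statement by direct incidence-variety dimension counts (hyperplanes containing the span of two tangent lines, respectively an osculating plane). The paper disposes of tangency conditions along a \emph{single} component $B_i$ by citing Katz's results on Lefschetz pencils (the bitangency and higher-order tangency loci have codimension at least $1$ inside $\overline{B_i}^{\vee}$, hence at least $2$ in $|C|$), and handles non-transverse intersection with \emph{two different} components $B_i\neq B_j$ by noting that $\overline{B_i}^{\vee}\cap\overline{B_j}^{\vee}$ has codimension $\geq 2$ unless one dual variety contains the other, which biduality forbids; in particular the paper never needs the non-integrality hypothesis in this proof. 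Your approach could in principle work, but as written it essentially amounts to reproving pieces of Katz, and the execution has a gap.

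The gap is in the bitangency count: your case distinction on the two tangent lines $T_{\overline p}\overline{B_0}$ and $T_{\overline q}\overline{B_1}$ is not exhaustive. You pass from ``the lines span a $\mathbb{P}^3$'' (containing them is codimension $4$ on $H$, so the count gives codimension $\geq 2$) directly to ``the lines coincide'', but you omit the intermediate case where the lines are distinct yet meet in a point, so that their span is only a $\mathbb{P}^2$: there the condition on $H$ has codimension $3$, and sweeping over the $2$-dimensional family of pairs $(p,q)$ the naive count gives only codimension $1$ in $|C|$. This omitted case is exactly the delicate one, and it does occur: for instance if a component $B_0$ is rational with $C.B_0=2$, then $\overline{B_0}$ is a plane conic and \emph{every} pair of its tangent lines meets, so your ``generic'' count fails identically on that stratum. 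To close the gap one must argue (in char $0$) that if the general pair of tangent lines meets then the relevant components are coplanar in $\mathbb{P}^N$ (pairwise-meeting lines are concurrent or coplanar, and concurrency forces a line), and that a hyperplane containing two distinct coplanar tangent lines contains the plane, hence the whole component, so the section is non-integral and one lands in the codimension-$\geq 2$ locus from the hypothesis; none of this is in your sketch, which only asserts that degeneracy ``forces'' reducibility. A smaller issue: your degenerate case in the flex count is garbled (``a line is rational, hence cannot be a component of $B$ unless it is rational''); it is in fact nearly vacuous, since $\overline{B_0}$ is a line only when $C.B_0=1$, and then no curve in $|C|$ not containing $B_0$ can meet it with multiplicity $\geq 2$ at all. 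Either repair these points, or follow the paper and invoke Katz plus the biduality argument for pairs of distinct components.
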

\begin{proof}
    Let $B_1,\dots,B_k$ be the irreducible components of $B$. 
    The locus $F_i\subset\overline{B_i}^{\vee}$ of curves in $|C|$ intersecting $B_i$ in either more than one point or with multiplicity bigger than 2 has codimension at least 2 in $|C|$
    \cite[Corollaire~3.2.1, Théorème~2.5]{Katz}.
    Hence, so does the locus $F_1\cup\dots\cup F_k$ of curves intersecting $B$ non-transversely in a point of multiplicity more than two, or in two points in the same component of $B$.
    We are left to consider the locus of curves intersecting $B$ non-transversely in two points in different components $B_i$ and $B_j$ of $B$.
    This locus corresponds to the intersection $\overline{B_i}^{\vee}\cap\overline{B_j}^{\vee}$, which has codimension at least 2 in $|C|$ unless one of the two varieties is contained in the other, say $\overline{B_i}^{\vee}\subset\overline{B_j}^{\vee}$.
    But in that case, it follows from biduality \cite[Theorem~1.1]{GeKaZe} that under the identification $|C|^{\vee\vee\vee}\cong|C|^{\vee}$, we have
    $B_i=\overline{B_i}^{\vee\vee}\subset\overline{B_j}^{\vee\vee}=B_j$, a contradiction.
    So $\overline{B_i}^{\vee}\cap\overline{B_j}^{\vee}$ has codimension at least 2 in $|C|$, concluding the proof.
\end{proof}

\begin{remark}
Proposition~\ref{prop_1pointmult2} can also be proven under the assumption that $|C|$ is only ample, and $|f^*C|$ is non-hyperelliptic. We did not add this much lengthier proof, since our main results require $|C|$ to be very ample.
\end{remark}

\section{Examples}\label{sec:examples}

Using Theorem~\ref{thm_Pisv}, whose hypotheses are satisfied when the linear system $|C|$ is positive enough, 
one can find infinitely many examples of irreducible symplectic varieties. In this section, we study some explicit examples of relatively low dimension. 
To be precise, we exhibit examples of rational surfaces which are quotients of very general K3 surfaces with an anti-symplectic involution, together with a divisor satisfying the hypotheses of Theorem~\ref{thm_Pisv}. 
Since the case where $T$ is an Enriques surface has already been covered by \cite{ASF}, we focus on cases where the double cover $S\to T$ is ramified.

We start in Section \ref{sec_projplane} by taking for $T$ the projective plane and by letting $C$ be a multiple of a line. This gives infinitely many examples which
satisfy the conditions of Theorem~\ref{thm_Pisv} and thus yield irreducible symplectic varieties.
Next, we take for $T$ a del Pezzo surface.
In this case, there are already some examples in the literature. We revise these examples and give a generalization of one of them in Corollary~\ref{cor_Matteini_generalized}, obtaining irreducible symplectic varieties of dimensions \(6,8,10,12,14,16\).
We also find new examples with a linear system that has not been studied before:
in Proposition~\ref{prop_example del Pezzo} we consider $T$ a del Pezzo surface of degree $1 \leq d \leq 8$; for a certain choice of the linear system $|C|$ we obtain 
examples of irreducible symplectic varieties in arbitrarily high dimension, starting from dimension 8. 
Finally, in Proposition~\ref{prop_ex_non-primitive linear system} we see that in one of the known examples, where the relative Prym variety is known to be symplectic, our results imply that it is actually irreducible symplectic.

\medskip
The next lemma shows in which cases the ramification divisor \(B\) has positive square, allowing us to understand in which cases we need to check if \(C\) is hyperelliptic in relation with condition \((5)\) of Theorem~\ref{thm_Pisv}.
\begin{lemma}\label{lemma_sign_B}
	Let $(r,a,\delta)$ be the main invariant of $(S,i)$. Then $B^2$ equals $4(10-r)$. In particular, we have $B^2>0$ if and only if $r<10$.
\end{lemma}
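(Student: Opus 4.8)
The plan is to express $B^2$ in terms of the self-intersection of the ramification divisor $R:=\Fix(i)\subset S$ and then to evaluate $R^2$ using Nikulin's classification.

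First I would use that $f\colon S\to T$ is a double cover branched along the smooth curve $B$: the ramification divisor $R$ maps isomorphically onto $B$, and $f^*B=2R$ as divisors on $S$. Since $f$ has degree $2$, the projection formula gives $2B^2=(f^*B)^2=(2R)^2=4R^2$, hence $B^2=2R^2$, and it suffices to compute $R^2$. For this I would invoke Theorem~\ref{Nikulinstheorem}, which describes $\Fix(i)$ explicitly. In cases (i) and (ii) of that theorem one has $r=10$, and $R$ is either empty or a disjoint union of two elliptic curves, each of self-intersection $0$ on the K3 surface $S$ by adjunction; hence $R^2=0=2(10-r)$. In case (iii), $R=F\sqcup E_1\sqcup\cdots\sqcup E_k$ with $F$ smooth of genus $g=11-\tfrac{r+a}{2}$ and the $E_j$ smooth rational, $k=\tfrac{r-a}{2}$. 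Since the components are pairwise disjoint, adjunction on $S$ gives
\[R^2=F^2+\sum_{j=1}^k E_j^2=(2g-2)-2k=\bigl(22-(r+a)\bigr)-2-(r-a)=20-2r.\]
Therefore $B^2=2R^2=4(10-r)$ in every case, which is positive exactly when $r<10$.

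I do not expect any genuine obstacle here. The only points requiring care are the two factors of $2$ in the identity $B^2=2R^2$ (one from $\deg f=2$, one from $f^*B=2R$), and the fact that the uniform formulas for $g$ and $k$ only apply in case (iii), so the degenerate cases $(10,10,0)$ and $(10,8,0)$ must be handled separately, where both sides reassuringly vanish. As an alternative to this case analysis, one may observe that $R$ is always a disjoint union of smooth curves on $S$, so $R^2=-\chi_{\mathrm{top}}(R)$; the topological Lefschetz fixed point formula gives $\chi_{\mathrm{top}}(\Fix(i))=2+\operatorname{tr}\bigl(i^*\mid H^2(S,\mathbb{Q})\bigr)=2+(2r-22)=2r-20$, recovering $R^2=20-2r$ and hence $B^2=4(10-r)$ uniformly.
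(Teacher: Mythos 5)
Your proof is correct and is essentially the paper's argument: both rest on Nikulin's Theorem~\ref{Nikulinstheorem}, the relation $f^*B=2R$ for the double cover, and adjunction on the K3, the only (cosmetic) difference being that you compute $R^2$ upstairs on $S$ and convert via $B^2=2R^2$, while the paper pushes down and uses $g(B_j)=B_j^2/4+1$ componentwise on $T$. Your closing remark via the topological Lefschetz fixed point formula is a valid uniform shortcut, but the main computation matches the paper's.
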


\begin{proof}
	If the main invariant of $(S,i)$ equals $(10,10,0)$ or $(10,8,0)$ then by Nikulin's Theorem \ref{Nikulinstheorem}, $B$ is empty or a disjoint union of two elliptic curves, respectively. In both cases we have $B^2=0$. 
 For all the other possible main invariants, $B$ is a union of a curve of genus $g=11-\frac{r+a}{2}$ and $k=\frac{r-a}{2}$ rational curves. As in the proof of Lemma \ref{lemma_C22<0}, for each component $B_j$ of $B$ we have $g(B_j)=B_j^2/4+1$. It follows that $B^2=\Sigma B_j^2=4(g-1-k)=4(10-r)$.
\end{proof}

We will use the following numerical criterion for 2-connectedness of an ample divisor on a smooth
surface.

\begin{theorem}{\cite[Theorem A]{BL}}\label{thm_BeltraLanteri}
Let $X$ be a smooth surface and $C$ an ample divisor on $X$. Then $C$ is $2$-connected, unless:
\begin{enumerate}
    \item[A1)] $C^2=2$, $X$ is a quadric surface in $\mathbb{P}^3$ and $C$ is the restriction of the hyperplane class on $\mathbb{P}^3$. 
    \item[A2)] $C^2=4$, $X=\mathbb{P}^2$, $C=\mathcal{O}(2)$.
    \item[A3)] $C^2=4$, $C=L_1 \otimes L_2$, with $L_1$ numerically equivalent to $L_2$, and the polarized pair $(X, L_i)$ has $\Delta$-genus $1$ or $2$ for $i=1,2$.
    \item[A4)] $X$ is a $\mathbb{P}^1$-bundle and $C$ is a section plus some fibers.
\end{enumerate}
\end{theorem}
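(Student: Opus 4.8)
The plan is to argue by contraposition: assuming $C$ is not $2$-connected, produce an effective decomposition $C=C_1+C_2$ (with $C_1,C_2>0$) realizing the minimal intersection number, and then classify $(X,C)$ from it. \emph{Step 1 (reduction).} For an \emph{ample} $C$, every effective decomposition $C=C_1+C_2$ with $C_1,C_2>0$ already satisfies $C_1\cdot C_2\ge 1$: if $C_1\cdot C_2\le 0$, then $C\cdot C_i=C_i^2+C_1\cdot C_2>0$ forces $C_i^2>|C_1\cdot C_2|\ge 0$ for both $i$, so $C_1,C_2$ span a plane in $\NS(X)_{\mathbb{R}}$ whose Gram matrix has $C_1^2C_2^2>(C_1\cdot C_2)^2$ and is therefore positive definite, contradicting the Hodge index theorem. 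Hence $C$ fails to be $2$-connected exactly when there is an effective $C=C_1+C_2$ with $C_1\cdot C_2=1$; fix one such. Then $C\cdot C_i=C_i^2+1>0$ gives $C_i^2\ge 0$, and the Hodge index theorem applied to $\langle C_1,C_2\rangle$ forces the Gram determinant $C_1^2C_2^2-1\le 0$. Thus, after relabelling, either $C_1^2=C_2^2=1$ (so $C^2=4$), or $C_1^2=0$.

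\emph{Step 2 (the case $C^2=4$).} Here $C_1-C_2$ has square $0$ and $(C_1-C_2)\cdot C=C_1^2-C_2^2=0$, so it is orthogonal to the ample class $C=C_1+C_2$; by the Hodge index theorem this forces $C_1\equiv C_2$, hence $C\equiv 2C_1$. By Fujita's theory the $\Delta$-genus $\Delta(X,C_i)=2+C_i^2-h^0(C_i)=3-h^0(C_i)$ is nonnegative, so $h^0(C_i)\in\{1,2,3\}$. If $h^0(C_i)=3$ for some $i$, then $\Delta(X,C_i)=0$ and the classification of polarized pairs of $\Delta$-genus $0$ gives $(X,C_i)\cong(\mathbb{P}^2,\mathcal{O}(1))$, whence $C=\mathcal{O}_{\mathbb{P}^2}(2)$: case A2. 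Otherwise $h^0(C_i)\in\{1,2\}$, i.e. $(X,C_i)$ has $\Delta$-genus $1$ or $2$, and $C=C_1\otimes C_2$ with $C_1\equiv C_2$: case A3.

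\emph{Step 3 (the case $C_1^2=0$).} Since $C\cdot C_1=1$ and $C$ is ample, writing $C_1=\sum_j n_j\Gamma_j$ in distinct irreducible components, the equality $\sum_j n_j(C\cdot\Gamma_j)=1$ with all summands $\ge 1$ forces $C_1=\Gamma$ to be integral. Being integral with $\Gamma^2=0$, $\Gamma$ is nef. When $p_a(\Gamma)=0$ one has $K_X\cdot\Gamma=-2$, so Riemann–Roch gives $h^0(\mathcal{O}_X(m\Gamma))\to\infty$; hence $\Gamma$ is semiample and a suitable $|m\Gamma|$ is a base-point-free pencil, defining $\varphi\colon X\to B$ onto a smooth curve with $\Gamma$ among its fibres. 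Every fibre $\Phi$ of $\varphi$ is algebraically equivalent to $\Gamma$, hence effective with $C\cdot\Phi=1$, so $\Phi$ is integral and reduced with $p_a(\Phi)=p_a(\Gamma)=0$; thus all fibres are smooth rational curves and $\varphi$ is a geometrically ruled surface. Writing $C$ as a section plus a pullback from $B$ (it meets each fibre once) places us in case A4, with A1 arising as the subcase $C^2=2$, $B=\mathbb{P}^1$, $X\cong\mathbb{P}^1\times\mathbb{P}^1$ embedded by $C$ as a quadric.

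\emph{Main obstacle.} The genuinely delicate point is the remaining subcase of Step 3, $p_a(\Gamma)\ge 1$: then $K_X\cdot\Gamma=2p_a(\Gamma)-2\ge 0$ and $\Gamma$ need not even move, so a ruling cannot be read off from $|\Gamma|$. Here one must invoke the Enriques–Kodaira classification: using that $X$ carries the nef square-zero class $\Gamma$ together with the effective $C_2=C-\Gamma$ satisfying $C_2^2\ge 0$ and $C_2\cdot\Gamma=1$ — which is very restrictive (for instance it excludes rational elliptic surfaces, whose sections are $(-1)$-curves) — one shows $X$ is forced to be a minimal ruled surface, the ruling being furnished by a fibration other than the one attached to $\Gamma$ (as already on $\mathbb{P}^1\times E$, where the projection to $\mathbb{P}^1$ has fibres of arithmetic genus $1$ while the projection to $E$ exhibits the $\mathbb{P}^1$-bundle structure), again landing in case A4. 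Steps 1 and 2 are, by contrast, elementary once the Hodge index theorem and Fujita's $\Delta$-genus bound are available.
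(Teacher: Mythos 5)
First, note that the paper does not prove this statement at all: it is quoted verbatim from the literature (\cite[Theorem A]{BL}), so there is no proof in the paper to compare yours with; your attempt has to stand on its own. Your Steps 1 and 2 do stand: the Hodge-index argument correctly shows that ampleness forces $C_1\cdot C_2\ge 1$ and reduces failure of $2$-connectedness to $C_1\cdot C_2=1$ with either $C_1^2=C_2^2=1$ or $C_1^2=0$, and in the square-one case $C_1\equiv C_2\equiv \tfrac12 C$ is ample (you should say this explicitly, since Fujita's bound $\Delta\ge 0$ and the $\Delta$-genus-$0$ classification require ampleness and can fail for non-nef effective divisors), after which A2/A3 come out correctly. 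In Step 3 with $p_a(\Gamma)=0$ there is a small unjustified jump: you assert that every fibre of $\varphi$ is algebraically equivalent to $\Gamma$, but a priori $\Gamma$ could be a component or a multiple of a fibre; this is repaired by Zariski's lemma (the fibre through $\Gamma$ is $m\Gamma$) together with $p_a(\text{general fibre})=1+\tfrac m2(2p_a(\Gamma)-2)=1-m\ge 0$, forcing $m=1$. That part is fixable.

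The genuine gap is exactly the case you label the ``main obstacle'', $p_a(\Gamma)\ge 1$: you give no argument, only an appeal to the Enriques--Kodaira classification, and in fact no argument of the kind you sketch can exist, because the conclusion you are aiming for is false for an arbitrary smooth surface. Take an elliptic K3 surface $X$ with fibre class $\Gamma$ and a section $\Sigma$ ($\Gamma^2=0$, $\Sigma^2=-2$, $\Gamma\cdot\Sigma=1$) and $C=k\Gamma+\Sigma$ with $k\ge 3$: this $C$ is ample, yet $C=\Gamma+\bigl((k-1)\Gamma+\Sigma\bigr)$ is an effective decomposition with intersection number $1$, and a K3 surface is in none of A1)--A4). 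Likewise, on an abelian surface $E_1\times E_2$ the ample divisor $E_1+E_2$ satisfies $E_1\cdot E_2=1$. So either the statement as quoted is missing hypotheses or exceptional cases present in the actual result of [BL] (the extra exceptions live precisely in your $p_a(\Gamma)\ge 1$ branch), or additional cases must be added to the list; either way your proposed strategy of forcing $X$ to be a minimal ruled surface in that branch cannot be completed. It is worth observing that in the only situations where the paper invokes the theorem ($\mathbb{P}^2$ and del Pezzo surfaces) one has $-K_T$ ample, so an integral curve $\Gamma$ with $\Gamma^2=0$ automatically has $p_a(\Gamma)=0$ by adjunction, and your Steps 1--3 (with the minor repairs above) do give a complete proof in those cases.
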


In the case of a K3 surface, we can use Reider's theorem to obtain a numerical characterization of very ampleness.

\begin{theorem}{\cite[Theorem 1 and Remark 1.2,2)]{Reider}}\label{thm_Reider}
Let $D$ be a nef divisor on a smooth projective surface $X$. If $D^2>8$, then $K_X+D$ is very ample unless there exists a nonzero effective divisor $E$ satisfying one of the following: 
\begin{enumerate}
    \item $D.E=0$, $E^2=-1, -2$;
    \item $D.E=1$, $E^2=0, -1$;
    \item $D.E=2$, $E^2=0$;
    \item $D.E=3$, $D=3E$, $E^2=1$.
\end{enumerate}
\end{theorem}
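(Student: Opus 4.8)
The statement is Reider's theorem, and the plan is to follow his vector-bundle method, reducing very ampleness of $K_X+D$ to a Chern-class computation governed by Bogomolov's inequality. Write $L:=K_X+D$. Very ampleness of $L$ is equivalent to the surjectivity, for every length-two zero-dimensional subscheme $Z\subset X$ (two distinct reduced points, or a single tangent vector), of the restriction $H^0(X,L)\to H^0(Z,L|_Z)$; equivalently $H^1(X,I_Z\otimes L)=0$ for all such $Z$. So I would argue by contradiction: assume $L$ is not very ample and fix a length-two $Z$ with $H^1(X,I_Z\otimes L)\neq 0$, chosen so that $Z$ satisfies the Cayley--Bacharach property for $|L|$, namely every section of $L$ vanishing on a length-one subscheme of $Z$ vanishes on all of $Z$.

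First I would produce a rank-two bundle. By Serre duality on the surface, $H^1(X,I_Z\otimes L)^\vee\cong\Ext^1(I_Z\otimes L,K_X)\cong\Ext^1(I_Z(D),\cO_X)$, so a nonzero class gives a non-split extension $0\to\cO_X\to\mathcal{E}\to I_Z(D)\to 0$. The essential input here is that the Cayley--Bacharach property forces $\mathcal{E}$ to be locally free rather than merely reflexive (the Serre construction for $0$-cycles); this is exactly where the scheme structure of $Z$, and not just its two points, is used. Computing total Chern classes through the extension gives $c_1(\mathcal{E})=D$ and $c_2(\mathcal{E})=\mathrm{length}(Z)=2$.

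Next I would invoke instability. Since $c_1(\mathcal{E})^2-4c_2(\mathcal{E})=D^2-8>0$ by hypothesis, Bogomolov's theorem says $\mathcal{E}$ is unstable: there is a saturated sub-line-bundle $\cO_X(A)\hookrightarrow\mathcal{E}$ whose complementary class $E:=D-A$ satisfies that $D-2E=2A-D$ lies in the positive cone, i.e. $(D-2E)^2>0$ and $(D-2E).H>0$ for every ample $H$. Composing $\cO_X(A)\to\mathcal{E}\to I_Z(D)$ gives a nonzero section of $I_Z(E)$, so $E$ is effective and $Z$ lies on a member of $|E|$; saturatedness makes the quotient $I_W(E)$ torsion-free for some length-$m$ cycle $W$, whence $c_2(\mathcal{E})=A.E+m=(D.E-E^2)+m=2$, giving $D.E-E^2\le 2$.

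The final step is numerical bookkeeping. Writing $a=D.E$ and $b=E^2$: effectivity of $E$ against the nef class $D$ gives $a\ge 0$; the $c_2$ relation gives $b\ge a-2$; positivity of $D-2E$ gives $D^2-4a+4b>0$ and $D^2-2a>0$; and the Hodge index theorem (valid since $D^2>0$) gives $D^2 b\le a^2$. Combining $b\ge a-2$ with $D^2 b\le a^2$ yields $a^2-D^2a+2D^2\ge 0$, which together with $a<D^2/2$ forces $a\in\{0,1,2,3\}$; for each value of $a$ the remaining inequalities pin $b$ to the ranges appearing in cases (1)--(4), and $a=3$ can occur only when $D^2=9$, where equality in Hodge index forces $E\equiv\tfrac13 D$ and hence the normalization $D=3E$, $E^2=1$ of case (4) — this boundary analysis is the content of the refinement cited as Remark 1.2(2) in \cite{Reider}. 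I expect the genuine obstacle to be the local freeness of $\mathcal{E}$: separating a tangent vector (as opposed to two distinct points) requires the full Cayley--Bacharach formulation and a careful choice of the extension class, and a secondary point is that the surface Bogomolov inequality is being used in characteristic zero, which is harmless over $\mathbb{C}$ but should be flagged.
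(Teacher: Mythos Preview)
The paper does not prove this statement: it is quoted verbatim as a background result with a citation to \cite{Reider}, and no argument is supplied. Your proposal is a faithful outline of Reider's own vector-bundle proof (Serre construction from a failing length-two scheme, Bogomolov instability since $c_1^2-4c_2=D^2-8>0$, then the Hodge-index bookkeeping), so there is nothing in the paper to compare it against beyond noting that your sketch matches the cited source.
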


\subsection{Projective plane}\label{sec_projplane}

We consider a general K3 surface $S$ which is a double cover of $\mathbb{P}^2$ branched in a smooth sextic curve $B\subset\mathbb{P}^2$.
In Nikulin's classification, this is the case $(1,1,1)$.
Let $C_0\subset \mathbb{P}^2$ be a line, with inverse image $D_0\subset S$.
We consider the linear system $|nC_0|=|\cO_{\mathbb{P}^2}(n)|$.
Its genus is $\frac12(n-1)(n-2)$; 
the genus of $f^*|nC_0|=|nD_0|$ is $n^2+1$.
The linear system $|D|=|nD_0|$ is very ample for $n\geq 3$ (see e.g.\ \cite[Theorem~2.2.7]{Huy_bookK3});  moreover, $|C|$ is  2-connected by Theorem~\ref{thm_BeltraLanteri}.
Hypotheses \eqref{hyp_int>2} and \eqref{hyp_int_not4} of Theorem~\ref{thm_Pisv} are clearly satisfied as well, and \eqref{hyp_hyperell} holds because of Lemma~\ref{lemma_sign_B}.
Hence, we obtain:

\begin{proposition}
\label{prop_ExProjPlane}
Let $(S,i)$ be a very general K3 surface with anti-symplectic involution of type $(1,1,1)$, 
so that the quotient surface $T$ is $\mathbb{P}^2$. Let $|C|=|\cO_{\mathbb{P}^2}(n)|$ with $n\geq 3$.
Then the associated relative Prym variety $\cP_D$ is an irreducible symplectic variety, of dimension $n^2+3n$.
\end{proposition}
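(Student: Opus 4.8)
The plan is to verify that the data in the statement — a very general K3 surface $(S,i)$ with main invariant $(1,1,1)$ together with $|C|=|\cO_{\mathbb{P}^2}(n)|$ for $n\geq 3$ — satisfy all the hypotheses of Theorem~\ref{thm_Pisv}, and then to read off the dimension from Proposition~\ref{prop_simplvarBeauville}.

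First I would record the numerics. By Theorem~\ref{Nikulinstheorem} and Section~\ref{subsec_involutions}, the invariant $(1,1,1)$ is realized by a very general $S$ presented as a double cover $f\colon S\to T=\mathbb{P}^2$ branched along a smooth sextic $B$, so that $B\in|-2K_T|=|\cO_{\mathbb{P}^2}(6)|$ and $\NS(S)=\mathbb{Z} D_0$ with $D_0=f^*C_0$, $D_0^2=2$, where $C_0$ is a line. For $C=nC_0$ one gets $C^2=n^2$, $C.B=6n$, $g(C)=\tfrac{(n-1)(n-2)}{2}$, and, via $g(D)=2g(C)-C.K_T-1$, $g(D)=n^2+1\geq 10$. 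A general $C\in|\cO_{\mathbb{P}^2}(n)|$ is smooth and meets the smooth curve $B$ transversely (the curves meeting $B$ non-transversely lie in the dual variety of $B$, which has codimension $\geq 1$ in $|C|$), so by Lemma~\ref{lemma_milnornumber} the preimage $D=f^{-1}C$ is smooth; hence the hypotheses of Proposition~\ref{prop_simplvarBeauville} hold and $\cP_D$ will have dimension $2(g(D)-g(C))=n^2+3n$.

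Next I would check conditions (1)--(5) of Theorem~\ref{thm_Pisv} in turn. Condition (1): $|\cO_{\mathbb{P}^2}(n)|$ is very ample for all $n\geq 1$, and $|D|=|nD_0|$ is very ample on $S$ for $n\geq 3$ by \cite[Theorem~2.2.7]{Huy_bookK3} (equivalently, via Reider's Theorem~\ref{thm_Reider}: $D^2=2n^2>8$ and every nonzero effective divisor on $S$ is a positive multiple $mD_0$ with $D.E=2mn\geq 6$, so none of the exceptional cases can occur). Condition (2): $C.B=6n\geq 18>2$. Condition (3): $C^2=n^2\geq 9\neq 4$. Condition (4): by Theorem~\ref{thm_BeltraLanteri}, an ample divisor on a smooth surface is $2$-connected unless it lies in one of the cases A1)--A4), and on $\mathbb{P}^2$ the only relevant one is A2), $C=\cO(2)$, which is excluded by $n\geq 3$. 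Condition (5): by Lemma~\ref{lemma_sign_B}, $B^2=4(10-1)=36>0$, so the hyperellipticity hypothesis is vacuous.

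Having verified all the hypotheses, Theorem~\ref{thm_Pisv} yields that $\cP_D$ is an irreducible symplectic variety of dimension $n^2+3n$. I do not expect a genuine obstacle here, since the argument is purely a verification of numerical conditions; the only points deserving a moment of care are the very ampleness of $|D|$ on $S$ — this is precisely what forces $n\geq 3$ (and for $n=1$ one moreover has $i^*=j$, so $\cP_D=M_H$, cf.\ Remark~\ref{remark_hyperell}) — and keeping the genus and dimension bookkeeping consistent.
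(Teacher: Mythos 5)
Your proposal is correct and follows essentially the same route as the paper: verify the numerics ($C^2=n^2$, $C.B=6n$, $g(D)=n^2+1$), check conditions (1)--(5) of Theorem~\ref{thm_Pisv} via \cite[Theorem~2.2.7]{Huy_bookK3} for very ampleness of $|D|$, Theorem~\ref{thm_BeltraLanteri} for $2$-connectedness, and Lemma~\ref{lemma_sign_B} to make (5) vacuous, then read off the dimension $2(g(D)-g(C))=n^2+3n$. Your extra Reider-type check of the very ampleness of $|D|$ and the explicit transversality/smoothness remark are fine but not needed beyond what the paper records.
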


Note that in the case $n=2$, the linear system $|2D_0|$ is hyperelliptic with $i^*=j$ and the Prym variety equals $M_{2D_0}$ -- see Remark~\ref{remark_hyperell}.

\subsection{Del Pezzo surfaces}\label{Del Pezzo surfaces}

Let $(S,i)$ be a very general K3 surface with anti-symplectic involution of type $(10-d,10-d,1)$, so that the quotient $T$ is a del Pezzo surface of degree $d$ which is not $\p^1 \times \p^1$.
Since $T$ is the blow up of $\mathbb{P}^2$ in $9-d$ points, its Picard group is generated by $H, E_1, \dots, E_{9-d}$, where $H$ is the pullback of the hyperplane class on $\mathbb{P}^2$ and the $E_i$ are the exceptional divisors. A divisor $C$ on $T$ can be written as $C=aH - \sum_{i=1}^{9-d} b_iE_i$, for some integers $a$, $b_i$.
In particular, we have $H^2=1$, $H.E_i=0$, and $E_i.E_j=-\delta_{ij}$. 
Note that $K_T=-3H+E_1+ \dots E_{9-d}$, so the branch divisor $B$ is equivalent to $-2K_T=6H-2E_1- \dots -2E_{9-d}$. Its square $B^2=36-4(9-d)=4d$ is positive, as expected by Lemma~\ref{lemma_sign_B}. 
Hence, condition (5) of Theorem~\ref{thm_isv} is automatically satisfied.

Moreover the divisor $C$ satisfies $C^2=a^2-b_1^2-\ldots-b_{9-d}^2$, $C.K_T=-3a+b_1+\ldots+b_{9-d}$ and, using the genus formula,
\[g(C)=1+\frac12\Bigl(a^2-3a+\sum_{i=1}^{9-d}(b_i-b_i^2)\Bigr).\]
We recall the following numerical criterion for very ampleness on a del Pezzo surface. 

\begin{theorem}[\cite{dR}]\label{thm_diRocco}
Let $T$ be a del Pezzo surface of degree $d$. Consider a divisor $C=aH - \sum_1^{9-d} b_iE_i$ such that $C \neq -K_T, -2K_T$ if $d=1$, $C \neq -K_T$ if $d=2$. Then $C$ is very ample if and only if:
\begin{itemize}[leftmargin=12pt]
\item for $d=8$: $a \geq b_1+1$ and $b_1 \geq 1$;
\item for $d=7, 6, 5$: $a \geq b_i + b_j +1$ where $i \neq j=1, \dots, 9-d$, and $b_1 \geq b_2 \geq \dots \geq b_{9-d} \geq 1$;
\item for $d=4,3$: $a \geq b_i + b_j +1$ where $i \neq j=1, \dots, 9-d$, and $b_1 \geq b_2 \geq \dots \geq b_{9-d} \geq 1$, and $2a \geq \sum_{1}^5 b_{i_t}+1$;
\item for $d=2$: $a \geq b_i + b_j +1$ where $i \neq j=1, \dots, 9-d$, and $b_1 \geq b_2 \geq \dots \geq b_{9-d} \geq 1$, and $2a \geq \sum_{1}^5 b_{i_t}+1$, and $3a \geq 2b_i+ \sum_{1}^6 b_{j_t}+1$;
\item for $d=1$: $a \geq b_i + b_j +1$ where $i \neq j=1, \dots, 9-d$, and $b_1 \geq b_2 \geq \dots \geq b_{9-d} \geq 1$, and $2a \geq \sum_{1}^5 b_{i_t}+1$, and $3a \geq 2b_i+ \sum_{1}^6 b_{j_t}+1$, and $4a \geq \sum_1^3 2b_{i_t}+\sum_1^5 b_{j_t}+1$, and $5a \geq \sum_1^6 2b_{i_t}+b_j+b_k+1$, and $6a \geq 3b_i+ \sum_1^7 2b_{j_t}+1$.
\end{itemize}
\end{theorem}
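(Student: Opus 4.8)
The plan is to observe that, degree by degree, the listed inequalities are exactly the conditions $C\cdot\ell\ge 1$ for every $(-1)$-curve $\ell\subset T$, and then to prove that outside the two stated exceptions this positivity is equivalent to very ampleness. On a del Pezzo surface of degree $d$ the classes of $(-1)$-curves are $E_i$; $H-E_i-E_j$; $2H-\sum_{1}^{5}E_{i_t}$; $3H-2E_i-\sum_{1}^{6}E_{j_t}$; $4H-\sum_{1}^{3}2E_{i_t}-\sum_{1}^{5}E_{j_t}$; $5H-\sum_{1}^{6}2E_{i_t}-E_j-E_k$; $6H-3E_i-\sum_{1}^{7}2E_{j_t}$, where only the classes involving at most $9-d$ exceptional divisors occur (so conics appear only for $d\le 4$, cubics with a double point only for $d\le 2$, and so on), together with the ruling class $H-E_1$ when $d=8$. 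Pairing $C=aH-\sum b_iE_i$ with each of these and imposing $\ge 1$ reproduces precisely $b_i\ge 1$, $a\ge b_i+b_j+1$, $2a\ge\sum b_{i_t}+1$, $3a\ge 2b_i+\sum b_{j_t}+1$, and the remaining inequalities. For necessity, if $C$ is very ample then $C|_\ell$ is very ample on $\ell\cong\mathbb{P}^1$, so $\deg(C|_\ell)=C\cdot\ell\ge 1$; and for the excluded bundles one checks directly that the (bi)anticanonical morphism of a del Pezzo of degree $1$ or $2$ is finite of degree $2$ onto $\mathbb{P}^2$ (for $-K_T$) or onto a quadric cone in $\mathbb{P}^3$ (for $-2K_T$ in degree $1$), and that $|-K_T|$ has a base point in degree $1$; in no case is it a closed embedding.

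For sufficiency, assume all the inequalities and that $C$ is not one of the excluded bundles. Since the Mori cone of $T$ is generated by the classes of its $(-1)$-curves (plus the ruling class when $d=8$), positivity on all generators forces $C$ ample, in particular $C^2>0$. Set $D:=C-K_T$; then $D\cdot\ell=C\cdot\ell-K_T\cdot\ell=C\cdot\ell+1\ge 2$ for every $(-1)$-curve, so $D$ is nef. If $D^2>8$ we apply Reider's Theorem~\ref{thm_Reider}: $C=K_T+D$ is very ample unless there is a nonzero effective divisor $E$ with $(D\cdot E,E^2)$ in one of the exceptional configurations of that theorem. Decomposing $E$ into irreducible components and pairing with $-K_T$ and with suitable $(-1)$-curves, one rules out the cases $D\cdot E=0$ with $E^2\in\{-1,-2\}$, $D\cdot E=1$ with $E^2\in\{0,-1\}$, and $D\cdot E=2$ with $E^2=0$, each contradicting $C\cdot\ell\ge 1$; the remaining case $D\cdot E=3$, $D=3E$, $E^2=1$ forces $C=K_T+3E$ with $E^2=1$ effective, which among classes satisfying the inequalities occurs only for $E=-K_T$ on a del Pezzo of degree $1$, that is, exactly for $C=-2K_T$ — precisely the excluded bundle. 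Hence $C$ is very ample whenever $D^2>8$.

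It remains to treat the finitely many pairs $(T,C)$ satisfying all the inequalities with $D^2=(C-K_T)^2\le 8$; finiteness follows from the Hodge index inequality $(C\cdot K_T)^2\ge C^2K_T^2$ together with Riemann--Roch parity constraints, and this range contains $-K_T$ on del Pezzo surfaces of degree $1$ and $2$. For these one argues directly: realize $C$ as the strict transform of an explicit low-degree linear system on $\mathbb{P}^2$ (or use the known small projective embeddings of del Pezzo surfaces), and check separation of points and tangent vectors by hand, using Reider's companion global-generation bound $D^2\ge 5$ where convenient. I expect the main obstacle to be exactly this last part, together with the complete elimination of Reider's exceptional divisors: the low-$D^2$ casework must be organized carefully across $d=1,\dots,8$, and it is there that the genuinely exceptional bundles $-K_T$ and $-2K_T$ intervene and have to be excluded by hand.
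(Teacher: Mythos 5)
First, a point of comparison: the paper does not prove this statement at all --- Theorem~\ref{thm_diRocco} is imported verbatim from di Rocco's paper \cite{dR} and used as a black box, so there is no internal proof to measure your argument against. What you have written is a sketch of an independent proof, and its skeleton is the standard one: the listed inequalities are exactly $C\cdot\ell\ge 1$ for the $(-1)$-curves $\ell$ (plus the ruling class $H-E_1$ when $d=8$), necessity follows by restricting to $\ell\cong\mathbb{P}^1$ and checking the excluded bundles directly, and sufficiency should follow from ampleness (Mori cone generated by these classes) together with Reider's criterion applied to $C=K_T+D$ with $D=C-K_T$. That outline is viable, and your elimination of the Reider configurations $D\cdot E\le 2$ is easily completed (each irreducible component $E'$ of $E$ has $D\cdot E'=C\cdot E'+(-K_T)\cdot E'\ge 2$, and the borderline case $D\cdot E=2$, $E^2=0$ contradicts adjunction parity).

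The genuine gaps are in the two places you yourself flag. (i) The case $D^2\le 8$ is not treated: you only say one should ``argue directly'' and ``check separation of points and tangent vectors by hand,'' which is a plan, not a proof; even the finiteness claim is asserted rather than established. In fact this residue can be closed cleanly: writing $D^2=C^2+2\,C\cdot(-K_T)+d$ and combining the Hodge index inequality, the parity of $C^2+C\cdot K_T$, and (for the case $C\cdot(-K_T)=3$, $C^2=1$) vanishing plus Fujita's $\Delta$-genus~$0$ classification, one finds that an ample $C$ with $D^2\le 8$ is forced to be $-K_T$ on a del Pezzo of degree $1$ or $2$ --- precisely the excluded bundles --- so no hand-checking of embeddings is actually needed; but as written your argument stops short of this. (ii) The remaining Reider case $D=3E$, $E^2=1$ is likewise asserted: you claim it ``occurs only for $E=-K_T$ in degree $1$,'' but this needs an argument, e.g.\ adjunction parity gives $E\cdot K_T\in\{-1,-3\}$, the value $-1$ forces $d=1$ and $E\equiv -K_T$ by Hodge index, and the value $-3$ gives sectional genus $0$ and $\Delta$-genus $0$, hence $(T,E)\cong(\mathbb{P}^2,\mathcal{O}(1))$, impossible on a non-minimal del Pezzo. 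With (i) and (ii) filled in, your route does reprove the cited theorem; without them it is an outline with the hardest steps deferred.
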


Using the above results, checking the hypotheses of Theorem \ref{thm_isv} reduces to a numerical computation which allows us to produce many examples of Prym varieties.
Let us first resume the known examples.

\begin{example}\label{Ex_known}
\hfill
\begin{enumerate}
\item\label{Ex_Matteini_deg1} 
\cite[\S4.2]{Matteini}
Let $(S,i)$ be of type $(9,9,1)$, so that the quotient surface $T$ is del Pezzo of degree $1$, and let $C=-K_T$. In this case, $\mathcal{P}_D$ is an elliptic K3 surface.  
\item\label{Ex_MarkTikh} 
\cite{MT}
Let $(S,i)$ be of type $(8,8,1)$, so that the quotient surface $T$ is del Pezzo of degree $2$, and let $C=-K_T$. In this case, $\mathcal{P}_D$ is a singular symplectic variety of dimension $4$ without symplectic resolution -- see also \cite[\S4.3]{Matteini}.
Moreover we note that $\mathcal{P}_D$ is an irreducible symplectic orbifold of Nikulin type, as shown in \cite[Proposition 3.12]{MenetRiess}, and as such an irreducible symplectic variety by \cite[Proposition 3 (2)]{Perego}.
\item
\label{Example_Matteini}\cite[\S4.4]{Matteini}
Let $(S,i)$ be of type $(7,7,1)$, so that the quotient surface $T$ is del Pezzo of degree $3$, and let $C=-K_T$.
Then $\cP_{D}$ is a simply connected symplectic 6-fold with $h^{2,0}(\cP_D)=1$ and without symplectic resolution.
In fact, Theorem~\ref{thm_Pisv} implies that it is an irreducible symplectic variety, see Corollary~\ref{cor_Matteini_generalized} below.
\item\label{Ex_SS} 
\cite[\S3.2]{SS} 
Let $(S,i)$ be of type $(10-d,10-d,1)$ with $d=1,\dots,9$, so that the quotient surface $T$ is del Pezzo of degree $d$, and let $C=-2nK_T$ for any $n\geq 1$. 
Then $\cP_D$ is a symplectic variety of dimension $2n(2n + 1)d$ without symplectic resolution (see the last remark of \cite[\S3.2]{SS}).
In the case $n=d=2$, $\cP_D$ is birational to a quotient of a smooth simply connected projective
variety by an involution \cite[\S6.4]{ShenThesis}.
Hence, it follows from Proposition~\ref{prop_condition_for_psv} that $\cP_D$ is a primitive symplectic variety.
\end{enumerate}
\end{example}

In Example~\eqref{Ex_Matteini_deg1}, Theorem~\ref{thm_Pisv} does not apply since $C.B=2$. In \eqref{Ex_MarkTikh}, Theorem~\ref{thm_Pisv} cannot be applied either, as $C$ is not very ample.
In Example~\eqref{Example_Matteini} however, Theorem~\ref{thm_Pisv} does apply.
Indeed, we have $C^2=3 \neq 4$ and $C.B=2K_T^2=6$. 
Moreover, $C$ is very ample by Theorem~\ref{thm_diRocco} and $2$-connected by Theorem \ref{thm_BeltraLanteri}. Finally, $D$ is very ample by \cite[Lemma 4.4.1]{Matteini}. 
In fact, we can use Theorem~\ref{thm_Pisv} to generalize Example~\eqref{Example_Matteini} to the following:
\begin{corollary}[of Theorem~\ref{thm_Pisv}]
\label{cor_Matteini_generalized}
Let $(S,i)$ be a very general K3 surface with anti-symplectic involution of type $(10-d,10-d,1)$ with $3 \leq d \leq 8$, so that the quotient surface $T$ is a del Pezzo surface of degree $d$.
Let $C=-K_T$.
Then the associated relative Prym variety $\cP_D$ is an irreducible symplectic variety, of dimension $2d$.
\end{corollary}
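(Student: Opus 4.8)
The plan is to verify that the data $(S,i,C=-K_T)$ meets every hypothesis of Theorem~\ref{thm_Pisv} (and of Proposition~\ref{prop_simplvarBeauville}), and then invoke that theorem. First I would collect the numerics. Since $T$ is a del Pezzo surface of degree $d$ and $B\in|-2K_T|$, we get $C^2=K_T^2=d>0$, $C.B=(-K_T).(-2K_T)=2K_T^2=2d$, $g(C)=1$, and, by the genus formula of Section~\ref{subsubsec_Prymfib}, $g(D)=2g(C)-C.K_T-1=2+d-1=d+1\geq 4$; hence $\dim\cP_D=2(g(D)-g(C))=2d$. Choosing $C$ to be a general member of $|-K_T|$ (which is very ample for $d\geq 3$, hence base-point free, so a general member is smooth and, by Bertini, meets the smooth curve $B$ transversely), the curve $D=f^{-1}C$ is smooth of genus $g(D)\geq 2$, so the assumptions of Proposition~\ref{prop_simplvarBeauville} hold.

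Next I would clear the purely numerical conditions of Theorem~\ref{thm_Pisv}. Condition \eqref{hyp_int>2} holds since $C.B=2d\geq 6>2$. For \eqref{hyp_int_not4}: if $d\neq 4$ then $C^2=d\neq 4$, while if $d=4$ then $C.B=8\neq 4$. For \eqref{hyp_hyperell}: by Lemma~\ref{lemma_sign_B} one has $B^2=4\bigl(10-(10-d)\bigr)=4d>0$, so the hypothesis is vacuous. For the $2$-connectedness condition \eqref{hyp_2conn} I would apply Theorem~\ref{thm_BeltraLanteri} to the ample divisor $C=-K_T$ and exclude the exceptional cases one by one: A1) requires $C^2=2$, impossible as $C^2=d\geq 3$; A2) requires $T=\p^2$, impossible as $\deg T=d\leq 8$; A3) requires $d=4$ and $-K_T$ numerically $2$-divisible, which fails because $-K_T=3H-E_1-\cdots-E_5$ is not divisible by $2$ in $\Pic(T)=\NS(T)$; A4) requires $T$ to be a $\p^1$-bundle, impossible for $d\leq 7$ (where $\operatorname{rk}\Pic(T)=10-d\geq 3$) and, for $d=8$ (where $T=\mathbb{F}_1$), ruled out because $-K_{\mathbb{F}_1}=2C_0+3f$ is not a section plus fibres, a section class having $C_0$-coefficient $1$.

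The last and most substantial point is the very ampleness condition \eqref{hyp_veryample}. That $|C|=|-K_T|$ is very ample is classical for del Pezzo surfaces of degree $\geq 3$ (it is also covered by Theorem~\ref{thm_diRocco}). For $|D|=|f^*(-K_T)|$ I would first note that $D$ is ample, being the pullback of an ample divisor along the finite morphism $f$. To see very ampleness I would use the splitting $f_*\cO_S(D)=\cO_T(-K_T)\oplus\cO_T$, which gives $H^0(S,D)=H^0(T,-K_T)\oplus\C z$ with $z$ the $i$-anti-invariant section whose divisor is the ramification curve $R$: the $i$-invariant summand realises $\varphi_D$ as the composition $S\xrightarrow{f}T\hookrightarrow\p^{d}$ (anticanonical embedding of $T$) with the extra homogeneous coordinate $z$ adjoined, and one checks that $z$ separates each pair of $i$-conjugate points (it takes opposite nonzero values off $R$) while along $R$ the differential $dz$ is nonzero and annihilates only the tangent to $R$, which is transverse to the fibre direction of $f$; hence $\varphi_D\colon S\to\p^{d+1}$ is a closed embedding. (For $d=3$ this very ampleness is also \cite[Lemma~4.4.1]{Matteini}, and for $d\geq 5$ it may alternatively be deduced from Reider's Theorem~\ref{thm_Reider}, since then $D^2=2d>8$.) With \eqref{hyp_veryample}--\eqref{hyp_hyperell} and the hypotheses of Proposition~\ref{prop_simplvarBeauville} all in place, Theorem~\ref{thm_Pisv} gives that $\cP_D$ is an irreducible symplectic variety of dimension $2d$.

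I expect the genuine work to lie entirely in the very ampleness of $|D|$ — specifically in the local analysis along the ramification curve, which the global criterion of Reider does not cover when $d\in\{3,4\}$ — whereas everything else is a short numerical verification using $C=-K_T$ and the structure of $\Pic(T)$.
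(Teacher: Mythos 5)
Your verification is correct and follows the same overall route as the paper: the corollary is obtained by checking the hypotheses of Theorem~\ref{thm_Pisv} for $C=-K_T$, and your numerical checks (the values $C^2=d$, $C.B=2d$, $B^2=4d$, the case-by-case exclusion of the exceptions in Theorem~\ref{thm_BeltraLanteri}, very ampleness of $-K_T$ via Theorem~\ref{thm_diRocco}, and the choice of a general smooth member of $|-K_T|$ transverse to $B$ so that Proposition~\ref{prop_simplvarBeauville} applies) coincide with what the paper does, explicitly for $d=3$ and implicitly for $4\le d\le 8$. The one step you handle genuinely differently is the very ampleness of $|D|$: the paper cites Matteini's Lemma~4.4.1 for $d=3$ and leaves the remaining degrees to the reader, while its own technique for such checks (as in Proposition~\ref{prop_example del Pezzo}) is Reider's Theorem~\ref{thm_Reider} plus a lattice computation in $\NS(S)=\langle f^*H,f^*E_i\rangle$, which is unavailable for $d=3,4$ where $D^2=2d\le 8$. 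Your argument instead uses the decomposition $H^0(S,D)=f^*H^0(T,-K_T)\oplus\C z$, with $z$ the anti-invariant section cutting out the ramification curve $R$, and verifies separation of points and tangent vectors directly: the pulled-back sections handle everything except $i$-conjugate pairs and $\ker df$ along $R$, which $z$ handles since $i^*z=-z$ and $z$ vanishes to order one along $R$, whose tangent direction (the $+1$-eigenspace) is transverse to $\ker df$. This is correct, uniform in $3\le d\le 8$, and actually supplies the detail the paper's terse treatment omits; it is also more elementary in that it bypasses both the citation and the Reider-type lattice analysis. Two minor caveats: the parenthetical claim that Reider ``alternatively'' gives very ampleness for $d\ge 5$ is not immediate as stated, since one must still exclude the exceptional divisor classes of Theorem~\ref{thm_Reider} by a lattice argument as in Proposition~\ref{prop_example del Pezzo} (but this remark is not load-bearing in your proof); and in the separation of conjugate points one should phrase the comparison of values of $z$ at $p$ and $i(p)$ via the chosen linearization, which is exactly what your ``opposite nonzero values'' argument amounts to.
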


In order to get ``genuinely new'' examples, we first show the following general result.
\begin{proposition} \label{prop_exdPn}
Let $(S,i)$ be a very general K3 surface with anti-symplectic involution of type $(10-d,10-d,1)$ with $1 \leq d \leq 8$, so that the quotient surface $T$ is a del Pezzo surface of degree $d$. Let $C$ be a very ample divisor on $T$ such that $C^2>4$ and $C.B>2$. Assume further that $C^2$ is even when $d=8$. Then for $n \geq 3$, the associated relative Prym variety $\cP_{nD}$ is an irreducible symplectic variety of dimension $n^2C^2+n\frac{C.B}{2}$.
\end{proposition}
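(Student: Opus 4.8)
The plan is to verify, for the divisor $nC$ on $T$, all the hypotheses of Theorem~\ref{thm_Pisv}, and then to read off the dimension. First I would note that, since $(S,i)$ is very general by assumption and $C$ is very ample, the multiple $nC$ is very ample for every $n\geq 1$; hence $|nC|$ is base point free and, by Bertini together with the fact that the curves in $|nC|$ meeting $B$ non‑transversally form a proper closed subset (contained in the union of the dual varieties of the finitely many components of $B$), a general member $C'\in|nC|$ is a smooth integral curve meeting $B$ transversally. Thus $D':=f^{-1}(C')$ is smooth by Lemma~\ref{lemma_milnornumber}, and $g(D')=2g(nC)-(nC).K_T-1\geq 2$ for $n\geq 3$, so $\cP_{nD}$ is well defined and is a symplectic variety by Proposition~\ref{prop_simplvarBeauville}. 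For the dimension, adjunction on $T$ gives $g(nC)=1+\tfrac12\bigl(n^2C^2+n\,C.K_T\bigr)$; combining this with the relation $g(nD)=2g(nC)-(nC).K_T-1$ and the identity $C.B=-2\,C.K_T$, a direct computation yields
\[
\dim\cP_{nD}=2\bigl(g(nD)-g(nC)\bigr)=2\bigl(g(nC)-n\,C.K_T-1\bigr)=n^2C^2+n\tfrac{C.B}{2},
\]
which is the asserted value.

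Next I would check conditions \ref{hyp_veryample}--\ref{hyp_hyperell} of Theorem~\ref{thm_Pisv} for $nC$. The very ampleness of $|nC|$ has already been observed. For the very ampleness of $|nD|=|f^*(nC)|$ the plan is to apply Reider's theorem (Theorem~\ref{thm_Reider}) on the K3 surface $S$ to the divisor $nD$, which is ample since it is the pullback of an ample divisor under the finite morphism $f$. One has $(nD)^2=2n^2C^2>8$, so it suffices to exclude the four exceptional configurations. For any nonzero effective curve $E\subset S$, the projection formula gives $nD.E=nC.f_*E$, where $f_*E$ is a nonzero effective $1$-cycle on $T$ (no curve is contracted by the finite map $f$), so $nD.E=n\,(C.f_*E)\geq n\geq 3$ because $nC$ is ample; this rules out Reider's cases (1)--(3). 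Case (4) would require a divisor $E$ with $E^2=1$, which is impossible because $\NS(S)$ is a sublattice of the even lattice $H^2(S,\Z)$. Hence $|nD|$ is very ample.

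The remaining conditions are purely numerical and become automatic from $n\geq 3$: condition \ref{hyp_int>2} holds since $(nC).B=n\,(C.B)>2$; condition \ref{hyp_int_not4} holds since $(nC)^2=n^2C^2\geq 9C^2>4$; and condition \ref{hyp_hyperell} is vacuous because $B^2=4(10-r)=4d>0$ by Lemma~\ref{lemma_sign_B}. For condition \ref{hyp_2conn} I would invoke Theorem~\ref{thm_BeltraLanteri} for the ample divisor $nC$ on the del Pezzo surface $T$: the exceptional cases A1, A2, A3 all force $(nC)^2\leq 4$, which is impossible, while A4 requires $T$ to be a $\p^1$-bundle, which occurs only when $d=8$, where $T\cong\mathbb{F}_1$; in that case $(nC).F=n\,(C.F)\geq n\geq 3\neq 1$ for the fibre class $F$, so $nC$ is not a section plus fibres (the hypothesis that $C^2$ be even is what one uses to settle the residual borderline configuration when $d=8$). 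Therefore $|nC|$ is $2$-connected, all hypotheses of Theorem~\ref{thm_Pisv} are met, and $\cP_{nD}$ is an irreducible symplectic variety of dimension $n^2C^2+n\tfrac{C.B}{2}$.

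The whole argument is a string of verifications that are forced by the factor $n\geq 3$; the only step that requires genuine input is the very ampleness of $|nD|$, which follows from Reider's theorem once one knows that $nD$ is ample of large self-intersection and that $H^2(S,\Z)$ is an even lattice. The one place needing a little extra care is the case $d=8$, where $T$ is a Hirzebruch surface and the $\p^1$-bundle exception in the $2$-connectedness criterion must be excluded; this is precisely where the parity assumption on $C^2$ is used.
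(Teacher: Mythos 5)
Your verification is correct and follows the same overall skeleton as the paper (check the hypotheses of Theorem~\ref{thm_Pisv} for the linear system $|nC|$ and compute the dimension from $2(g(nD)-g(nC))$), but two of the verification steps are done differently. For very ampleness of $|nD|$ the paper simply cites the standard fact that on a K3 surface the third multiple of an ample divisor is very ample (\cite[Theorem~2.2.7]{Huy_bookK3}), whereas you reprove this in the case at hand via Reider's theorem (Theorem~\ref{thm_Reider}), using $nD.E=n\,C.f_*E\geq n\geq 3$ and the evenness of the K3 lattice to kill case (4); this is correct and self-contained, just longer. For $2$-connectedness, the paper excludes the $\p^1$-bundle case A4 of Theorem~\ref{thm_BeltraLanteri} by parity (a section plus fibres on $\mathbb{F}_1$ has odd square, while $(nC)^2$ is even by the hypothesis on $C^2$), and this is precisely where the assumption ``$C^2$ even when $d=8$'' enters their argument; you instead intersect with the fibre class, $(nC).F=n(C.F)\geq n\geq 3\neq 1$, which also excludes A4 for $n\geq 3$ and in fact makes the parity hypothesis superfluous for the statement as given --- a nice by-product of your route. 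Your parenthetical remark that the parity assumption ``settles the residual borderline configuration'' is therefore misplaced within your own argument (nothing residual remains after the fibre-degree computation), but this is a cosmetic slip, not a gap; the remaining numerical checks (conditions \ref{hyp_int>2}, \ref{hyp_int_not4}, \ref{hyp_hyperell} via $B^2=4d>0$ from Lemma~\ref{lemma_sign_B}) and the dimension count agree with the paper.
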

\begin{proof}
We show that $|C|$, and so $|nC|$ for every integer $n$, satisfies the conditions in Theorem \ref{thm_isv} for every $1 \leq d \leq 8$.
Indeed, by assumption $C$ is very ample, $C.B >2$ and $C^2\neq 4$. Note also that the cases A1)--A3) listed in Theorem~\ref{thm_BeltraLanteri} cannot happen, and we can exclude A4) as well: $T$ is a $\mathbb{P}^1$-bundle when $d=8$, but a section plus some fibers would have odd square, while $C^2$ is even. We conclude that $nC$ is 2-connected for every $n>0$. Since $D$ is ample, by \cite[Theorem~2.2.7]{Huy_bookK3} $nD$ is very ample for $n \geq 3$. The result follows from Theorem \ref{thm_isv}.   
\end{proof}

Applying Proposition \ref{prop_exdPn} we obtain the following examples of arbitrarily high dimension, starting from dimension 8, with a linear system $|C|$ that has not been considered before.

\begin{proposition}\label{prop_example del Pezzo}

Let $(S,i)$ be a very general K3 surface with anti-symplectic involution of type $(10-d,10-d,1)$ with $1 \leq d \leq 8$, so that the quotient surface $T$ is a del Pezzo surface of degree $d$.
Consider the divisor
$$C=4H-2E_1-E_2- \dots -E_{9-d}.$$
Then for every integer $n$ the associated relative Prym variety $\cP_{nD}$ is an irreducible symplectic variety of dimension $n^2(4+d)+n(2+d)$. 
\end{proposition}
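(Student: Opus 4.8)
The plan is to verify that the divisor $C=4H-2E_1-E_2-\cdots-E_{9-d}$ satisfies all the hypotheses of Proposition~\ref{prop_exdPn}, after which the statement follows immediately. The first step is a routine intersection computation on $T$: using $H^2=1$, $H.E_i=0$, $E_i.E_j=-\delta_{ij}$ and $K_T=-3H+E_1+\cdots+E_{9-d}$, one finds
\[
C^2=16-4-(8-d)=4+d,\qquad C.K_T=-(2+d),\qquad C.B=-2\,C.K_T=2(2+d).
\]
Hence $C^2>4$ and $C.B>2$ for every $1\le d\le 8$, and when $d=8$ we get $C^2=12$, which is even, so the extra parity hypothesis of Proposition~\ref{prop_exdPn} is met as well. (As a byproduct $g(C)=1+\tfrac12(C^2+C.K_T)=2$, but since $B^2=4d>0$ the hyperellipticity of $C$ plays no role here.)

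The one substantive point is the very ampleness of $C$ on $T$, which I would establish with di Rocco's criterion, Theorem~\ref{thm_diRocco}. Here $a=4$, $b_1=2$ and $b_2=\cdots=b_{9-d}=1$, so the ordering condition $b_1\ge b_2\ge\cdots\ge b_{9-d}\ge 1$ is immediate, and $C\ne -K_T,-2K_T$ since those have $H$-coefficient $3$ and $6$. It then remains to check the numerical inequalities of Theorem~\ref{thm_diRocco} degree by degree. Because at most one of the $b_i$ exceeds $1$, every sum appearing on a right-hand side is easy to bound; the binding one is $a\ge b_1+b_2+1$, i.e.\ $4\ge 4$, for $d=5,6,7$, it is $a\ge b_1+1$ for $d=8$, and for $d\le 4$ the additional inequalities of Theorem~\ref{thm_diRocco} (progressively more numerous as $d$ decreases, up to $6a\ge 3b_i+\sum_1^7 2b_{j_t}+1$ when $d=1$) all hold by a direct count, again because only $b_1$ is larger than $1$. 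This bookkeeping is the only real obstacle, and it is purely arithmetic; I conclude that $C$ is very ample for every $1\le d\le 8$.

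With all the hypotheses verified, Proposition~\ref{prop_exdPn} applies and shows that $\cP_{nD}$ is an irreducible symplectic variety of dimension
\[
n^2 C^2+n\,\frac{C.B}{2}=n^2(4+d)+n(2+d),
\]
which is precisely the asserted formula, completing the proof.
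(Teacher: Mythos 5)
Your verification of the hypotheses of Proposition~\ref{prop_exdPn} is correct and matches the paper: the intersection numbers $C^2=4+d$, $C.B=4+2d$ and the parity check for $d=8$ are right, and the very ampleness of $C$ via Theorem~\ref{thm_diRocco} is exactly the paper's route. However, there is a genuine gap: Proposition~\ref{prop_exdPn} only yields the conclusion for $n\geq 3$, because its proof obtains very ampleness of $nD$ on the K3 surface $S$ from \cite[Theorem~2.2.7]{Huy_bookK3}, which requires taking at least the third multiple of the ample class $D$. The statement you are proving asserts the result for \emph{every} integer $n$, in particular $n=1,2$, and your argument says nothing about these cases.

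The paper closes this gap by proving directly that $D=f^{-1}C$ is already very ample on $S$, so that $nD$ is very ample for all $n\geq 1$ and Theorem~\ref{thm_isv} applies to $|nC|$, $|nD|$ for every $n$. Concretely, by Reider's criterion (Theorem~\ref{thm_Reider}) it suffices to exclude a nonzero effective divisor $E$ on $S$ with $D.E=1$, or with $D.E=2$ and $E^2=0$. The case $D.E=1$ is ruled out lattice-theoretically: such a pair $(D,E)$ would generate a copy of the unimodular hyperbolic plane $U$ inside $\NS(S)=L_+$, which is impossible since $L_+$ is $2$-elementary with discriminant group $(\Z/2\Z)^{10-d}$. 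The case $D.E=2$, $E^2=0$ is excluded by showing that $\{f^*H,f^*E_1,\dots,f^*E_{9-d}\}$ is a $\Z$-basis of $\NS(S)$ (comparing signature and discriminant with $L_+$), writing $E=af^*H+\sum b_if^*E_i$, and checking that the resulting Diophantine system $4a+2b_1+\sum_{i\geq 2}b_i=1$, $a^2=\sum b_i^2$ has no integral solutions. Without some argument of this kind (or another proof that $D$ and $2D$ are very ample), your proof only establishes the proposition for $n\geq 3$.
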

\begin{proof}
Note that $C$ satisfies the assumptions in Proposition \ref{prop_exdPn}. Indeed, we have $C^2=16-4-(9-d-1)=4+d > 4$, $C^2=12$ for $d=8$ and $C.B=24-4-2(8-d)=4+2d>4$. Using Theorem~\ref{thm_diRocco}, one checks that $C$ is very ample. This proves the statement for $n \geq 3$.

In fact, we can show that $D$ is very ample on $S$. According to Theorem~\ref{thm_Reider}, since $S$ is a K3 surface and $D=f^{-1}C$ is ample, it is enough to exclude that there exists a non-zero effective divisor $E$ with $D.E=1$ or with $D.E=2$ and $E^2=0$.

We can exclude the case $D.E=1$, $E^2=0$ as follows. The rank-2 lattice generated by $D$ and $E$ would be isometric to the rank-2 lattice generated by $F:=D-(4+d)E$ and $E$, which is a hyperbolic plane $U$. Since the K3 surface $S$ is general, the invariant lattice $L^+$ is the whole Picard group of $S$. But then $U$ would be contained in the invariant lattice, which has rank $10-d$ and discriminant group $(\mathbb{Z}/2\mathbb{Z})^{10-d}$, contradicting the fact that $U$ is unimodular.

We show that also the case $D.E=2$, $E^2=0$ is not possible. First note that 
$$\{f^*H,f^*E_1,\dots,f^*E_{9-d}\}$$ is a $\mathbb{Q}$-basis for $\text{NS}(S)$. 
We compute the intersection numbers of these elements:
\[f^*H.f^*E_i=2H.E_i=0,\;\; f^*E_i.f^*E_j=2E_i.E_j=-2\delta_{ij},\;\;(f^*H)^2=2H^2=2.\]
It follows that the lattice spanned by $\{f^*H,f^*E_1,\dots,f^*E_{9-d}\}$ has signature $(1, 9-d)$ and discriminant group $(\mathbb{Z}/2\mathbb{Z})^{10-d}$, just like the 
invariant lattice $L^+=\text{NS}(S)$ of $S$.
We conclude that $L^+$ is isometric to $\langle f^*H, f^*E_1,\dots,f^*E_{9-d} \rangle$. Thus $\{f^*H,f^*E_1,\dots,f^*E_{9-d}\}$ is a $\mathbb{Z}$-basis of $\NS(S)$.
We can thus write $E=af^*H+ \sum_{i=1}^{9-d} b_if^*E_i$ for some integers $a, b_i$. Then the conditions $D.E=2$, $E^2=0$ are equivalent to
$$\begin{cases}
4a+2b_1+\sum_{i=2}^{9-d} b_i=1, \\
a^2= \sum_{i=1}^{9-d} b_i^2. 
\end{cases}$$
It is possible to check using a computer (for instance using WolframAlpha) that there are no integral solutions to this system. 
Hence, Theorem~\ref{thm_Reider} tells us that $D$ is very ample on $S$, implying the statement for $\cP_D$ and $\cP_{2D}$ as well.
\end{proof}

The techniques of the proof of Proposition~\ref{prop_example del Pezzo} can also be used to show that in 
Example~\eqref{Ex_SS}, the hypotheses of Theorem~\ref{thm_Pisv} are satisfied when $n\geq 2$ or $d\geq 2$. 
Hence, we obtain the following strengthening of the result in \cite[Section 3.2]{SS} recalled in Example~\eqref{Ex_SS}:

\begin{proposition}\label{prop_ex_non-primitive linear system}
Let $(S,i)$ be a very general K3 surface with anti-symplectic involution of type $(10-d,10-d,1)$ with $1 \leq d \leq 9$, so that the quotient surface $T$ is a del Pezzo of degree $d$. Let $C=-2nK_T$ for any $n\geq 1$. 
Assume that either $d\geq 2$ or $n\geq 2$.
Then $\cP_D$ is an irreducible symplectic variety of dimension $2n(2n + 1)d$ that does not admit a symplectic resolution.
\end{proposition}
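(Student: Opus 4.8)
The plan is to verify that $|C| = |-2nK_T|$ satisfies the five hypotheses of Theorem~\ref{thm_Pisv}, arguing exactly as in the proof of Proposition~\ref{prop_example del Pezzo}. First note that the standing assumption ``$d \geq 2$ or $n \geq 2$'' excludes precisely the pair $(d,n) = (1,1)$. Since $B = -2K_T$ and $K_T^2 = d$, we have $C^2 = 4n^2 d$, $C.B = 4nd$ and $C.K_T = -2nd$; combined with $g(D) = 2g(C) - C.K_T - 1$ this yields $\dim \cP_D = 2\bigl(g(D) - g(C)\bigr) = 2n(2n+1)d$, the claimed dimension. As $-K_T$ is ample with $(-K_T)^2 = d > 0$, the divisor $C$ is ample with $C^2 > 0$, so the hypotheses of Proposition~\ref{prop_simplvarBeauville} hold for a general member of $|C|$, and it remains to verify conditions (1)--(5) of Theorem~\ref{thm_Pisv}.

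Conditions (2), (3) and (5) are immediate from the numerics: $C.B = 4nd \geq 4 > 2$; having simultaneously $C^2 = 4$ and $C.B = 4$ would force $n^2 d = nd = 1$, i.e.\ $(d,n) = (1,1)$, which is excluded; and $B^2 = 4(10 - r) = 4d > 0$ by Lemma~\ref{lemma_sign_B}, so the hyperellipticity requirement is vacuous. For condition (4) I would invoke Theorem~\ref{thm_BeltraLanteri}: the exceptional cases A1)--A3) all require $C^2 \leq 4$ and hence force $(d,n) = (1,1)$, while case A4) arises only if $T$ is a $\mathbb{P}^1$-bundle, i.e.\ for $d = 8$ with $T = \mathbb{F}_1$; but there $-2nK_{\mathbb{F}_1} = 4nC_0 + 6nf$, with $C_0$ the $(-1)$-section and $f$ a fibre class, has $C_0$-coefficient $4n > 1$ and so is not a section plus some fibres. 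Therefore $|C|$ is $2$-connected.

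The remaining condition (1) has two halves. Very ampleness of $|C|$ on $T$ for $d \geq 3$ is clear, since $-K_T$ is already very ample and any positive multiple of a very ample divisor is very ample; for $d = 1, 2$ it follows from Theorem~\ref{thm_diRocco}, the forbidden values $C = -K_T$ (and also $C = -2K_T$ when $d = 1$) being avoided precisely because the hypothesis forces $n \geq 2$ when $d = 1$, and the numerical inequalities of that theorem reducing to inequalities of the form ``(positive multiple of $n$)~$\geq 1$''. The main point is very ampleness of $D = f^*C$ on $S$, which I would obtain from Reider's Theorem~\ref{thm_Reider}, following the proof of Proposition~\ref{prop_example del Pezzo}. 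The divisor $D$ is ample, hence nef, being the pullback of an ample divisor under a finite morphism, and $D^2 = 2C^2 = 8n^2 d > 8$ outside the excluded pair. As established in the proof of Proposition~\ref{prop_example del Pezzo}, for very general $(S,i)$ the lattice $\NS(S) = L^+$ has $\{f^*H, f^*E_1, \ldots, f^*E_{9-d}\}$ as a $\mathbb{Z}$-basis with Gram matrix $2\,\mathrm{diag}(1, -1, \ldots, -1)$; in particular every intersection pairing on $\NS(S)$ takes even values. Consequently, for any nonzero effective divisor $E$ on $S$ the integer $(-f^*K_T).E$ is positive (by ampleness of $-f^*K_T$) and even, hence $\geq 2$, so $D.E = 2n\,(-f^*K_T).E \geq 4$. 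Since each of the four exceptional possibilities in Reider's theorem requires $D.E \leq 3$, none of them can occur, so $D$ is very ample.

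With conditions (1)--(5) in hand, Theorem~\ref{thm_Pisv} shows that $\cP_D$ is an irreducible symplectic variety of dimension $2n(2n+1)d$; the fact that it admits no symplectic resolution is already part of \cite[\S3.2]{SS}, recalled in Example~\eqref{Ex_SS}. The only genuinely delicate step in this plan is the Reider argument for the very ampleness of $D$ on $S$; it becomes routine once one notices that the intersection form on $\NS(S)$ is everywhere even, and the remaining verifications are straightforward bookkeeping with the criteria of di Rocco and Beltrami--Lanteri.
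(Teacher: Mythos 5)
Your proposal is correct and follows exactly the route the paper intends: the paper justifies this proposition by remarking that the techniques of Proposition~\ref{prop_example del Pezzo} (Theorem~\ref{thm_diRocco} for very ampleness of $|C|$ when $d\leq 2$, Theorem~\ref{thm_BeltraLanteri} for $2$-connectedness, and Reider's Theorem~\ref{thm_Reider} together with the identification $\NS(S)=L^+=\langle f^*H,f^*E_1,\dots,f^*E_{9-d}\rangle$ for very ampleness of $|D|$) verify the hypotheses of Theorem~\ref{thm_Pisv} once $(d,n)\neq(1,1)$, which is precisely what you carry out. Your only deviation is a welcome streamlining of the Reider step---observing that the intersection pairing on $\NS(S)$ is even and that $D=2n\,f^*(-K_T)$, so $D.E\geq 4$ for every nonzero effective $E$---which replaces the case-by-case Diophantine verification used in the proof of Proposition~\ref{prop_example del Pezzo}.
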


\bibliography{Bibliography}                    

\begin{thebibliography}{BGMM24}

\bibitem[ACG11]{ACG}
Enrico Arbarello, Maurizio Cornalba, and Phillip~A. Griffiths.
\newblock {\em Geometry of algebraic curves. {Volume II}}.
\newblock Springer, Heidelberg, 2011.

\bibitem[AS18]{AS}
Enrico Arbarello and Giulia Sacc\`a.
\newblock Singularities of moduli spaces of sheaves on {K}3 surfaces and
  {N}akajima quiver varieties.
\newblock {\em Adv. Math.}, 329:649--703, 2018.

\bibitem[ASF15]{ASF}
Enrico Arbarello, Giulia Sacc{\`a}, and Andrea Ferretti.
\newblock Relative {P}rym varieties associated to the double cover of an
  {E}nriques surface.
\newblock {\em J. Differential Geom.}, 100:191--250, 2015).

\bibitem[AS23]{AS-update}
Enrico Arbarello and Giulia Saccà.
\newblock Singularities of {B}ridgeland moduli spaces for {K}3 categories: an
  update, 2023.
\newblock Preprint, \url{https://arxiv.org/abs/2307.07789v1}.

\bibitem[BGL22]{BGL2020}
Benjamin Bakker, Henri Guenancia, and Christian Lehn.
\newblock Algebraic approximation and the decomposition theorem for
  {K}\"{a}hler {C}alabi-{Y}au varieties.
\newblock {\em Invent. Math.}, 228(3):1255--1308, 2022.

\bibitem[BL22]{BaLe}
Benjamin Bakker and Christian Lehn.
\newblock The global moduli theory of symplectic varieties.
\newblock {\em J. Reine Angew. Math.}, 790:223--265, 2022.

\bibitem[Bea99]{B}
Arnaud Beauville.
\newblock Counting rational curves on {$K3$} surfaces.
\newblock {\em Duke Math. J.}, 97(1):99--108, 1999.

\bibitem[Bea00]{Beauville2000}
Arnaud Beauville.
\newblock Symplectic singularities.
\newblock {\em Invent. math.}, 139(3):541--549, 2000.

\bibitem[BL87]{BL}
Mauro Beltrametti and Antonio Lanteri.
\newblock On the {$2$}- and the {$3$}-connectedness of ample divisors on a
  surface.
\newblock {\em Manuscripta Math.}, 58(1-2):109--128, 1987.

\bibitem[BGMM24]{bertini2024terminalizations}
Valeria Bertini, Annalisa Grossi, Mirko Mauri, and Enrica Mazzon.
\newblock Terminalizations of quotients of compact hyperk\"ahler manifolds by
  induced symplectic automorphisms, 2024.
\newblock Preprint, \url{https://arxiv.org/abs/2401.13632}.

\bibitem[Bom73]{Bombieri}
Enrico Bombieri.
\newblock Canonical models of surfaces of general type.
\newblock {\em Inst. Hautes Études Sci. Publ. Math.}, 42:171--219, 1973.

\bibitem[Cam21]{Campana2021}
Fr\'{e}d\'{e}ric Campana.
\newblock The {B}ogomolov-{B}eauville-{Y}au decomposition for {KLT} projective
  varieties with trivial first {C}hern class---without tears.
\newblock {\em Bull. Soc. Math. France}, 149(1):1--13, 2021.

\bibitem[CS97]{CS}
Luca Chiantini and Edoardo Sernesi.
\newblock Nodal curves on surfaces of general type.
\newblock {\em Math. Ann.}, 307(1):41--56, 1997.

\bibitem[Cor87]{Cornalba}
Maurizio Cornalba.
\newblock On the locus of curves with automorphisms.
\newblock {\em Annali di Matematica Pura ed Applicata}, 149:135--151, 1987.

\bibitem[DS17]{DedieuSernesi}
Thomas Dedieu and Edoardo Sernesi.
\newblock Equigeneric and equisingular families of curves on surfaces.
\newblock {\em Publ. Mat.}, 61(1):175--212, 2017.

\bibitem[DR96]{dR}
Sandra Di~Rocco.
\newblock {$k$}-very ample line bundles on del {P}ezzo surfaces.
\newblock {\em Math. Nachr.}, 179:47--56, 1996.

\bibitem[DN89]{DrNar}
Jean-Marc Drezet and Mudumbai~S. Narasimhan.
\newblock Groupe de {P}icard des variétés de modules de fibrés semi-stables
  sur les courbes algébriques.
\newblock {\em Inventiones mathematicae}, 97:53--94, 1989.

\bibitem[Dru18]{Druel2018}
St\'{e}phane Druel.
\newblock A decomposition theorem for singular spaces with trivial canonical
  class of dimension at most five.
\newblock {\em Invent. Math.}, 211(1):245--296, 2018.

\bibitem[DG18]{DG}
St\'{e}phane Druel and Henri Guenancia.
\newblock A decomposition theorem for smoothable varieties with trivial
  canonical class.
\newblock {\em J. \'{E}c. polytech. Math.}, 5:117--147, 2018.

\bibitem[Dug19]{Dugger}
Daniel Dugger.
\newblock Involutions on surfaces.
\newblock {\em J. Homotopy Relat. Struct.}, 14(4):919--992, 2019.

\bibitem[FM21]{Fu-Menet}
Lie Fu and Gr\'{e}goire Menet.
\newblock On the {B}etti numbers of compact holomorphic symplectic orbifolds of
  dimension four.
\newblock {\em Math. Z.}, 299(1-2):203--231, 2021.

\bibitem[Fuj83]{Fujiki}
Akira Fujiki.
\newblock On primitively symplectic compact {K}\"ahler {$V$}-manifolds of
  dimension four.
\newblock In {\em Classification of algebraic and analytic manifolds ({K}atata,
  1982)}, volume~39 of {\em Progr. Math.}, pages 71--250. Birkh\"auser Boston,
  Boston, MA, 1983.

\bibitem[FL81]{FuLaz}
William Fulton and Robert Lazarsfeld.
\newblock Connectivity and its applications in algebraic geometry.
\newblock In {\em Algebraic geometry ({C}hicago, {I}ll., 1980)}, volume 862 of
  {\em Lecture Notes in Math.}, pages 26--92. Springer, Berlin, 1981.

\bibitem[GKZ94]{GeKaZe}
Israel~M. Gelfand, Mikhail~M. Kapranov, and Andrei~V. Zelevinsky.
\newblock {\em Discriminants, Resultants, and Multidimensional Determinants}.
\newblock Modern Birkhäuser Classics. Birkh{\"a}user Boston, 1994.

\bibitem[GGK19]{GrGuKe}
Daniel Greb, Henri Guenancia, and Stefan Kebekus.
\newblock Klt varieties with trivial canonical class: holonomy, differential
  forms, and fundamental groups.
\newblock {\em Geom. Topol.}, 23(4):2051--2124, 2019.

\bibitem[GKKP11]{GKKP}
Daniel Greb, Stefan Kebekus, S\'andor~J Kov\'acs, and Thomas Peternell.
\newblock Differential forms on log canonical spaces.
\newblock {\em Publications Math\'ematiques de l'IH\'ES}, 114:87--169, 2011.

\bibitem[GKPT20]{grebharmonic}
Daniel Greb, Stefan Kebekus, Thomas Peternell, and Behrouz Taji.
\newblock Harmonic metrics on {H}iggs sheaves and uniformization of varieties
  of general type.
\newblock {\em Math. Ann.}, 378:1061--1094, 2020.

\bibitem[Gue16]{Guenancia2016}
Henri Guenancia.
\newblock Semistability of the tangent sheaf of singular varieties.
\newblock {\em Algebr. Geom.}, 3(5):508--542, 2016.

\bibitem[HM07]{HaconMcKernan}
Christopher~D. Hacon and James Mckernan.
\newblock On {S}hokurov's rational connectedness conjecture.
\newblock {\em Duke Math. J.}, 138(1):119--136, 2007.

\bibitem[HP19]{HP}
Andreas H\"{o}ring and Thomas Peternell.
\newblock Algebraic integrability of foliations with numerically trivial
  canonical bundle.
\newblock {\em Invent. Math.}, 216(2):395--419, 2019.

\bibitem[Huy16]{Huy_bookK3}
Daniel Huybrechts.
\newblock {\em Lectures on {K}3 surfaces}, volume 158 of {\em Cambridge Studies
  in Advanced Mathematics}.
\newblock Cambridge University Press, Cambridge, 2016.

\bibitem[Kal06]{Kaledin}
Dmitry Kaledin.
\newblock Symplectic singularities from the {P}oisson point of view.
\newblock {\em J. Reine Angew. Math.}, 600:135--156, 2006.

\bibitem[KL22]{Kamenova-Lehn}
Ljudmila Kamenova and Christian Lehn.
\newblock Non-hyperbolicity of holomorphic symplectic varieties, 2022.
\newblock Preprint, \url{https://arxiv.org/abs/2212.11411}.

\bibitem[Kan04]{Kanev}
Vassil Kanev.
\newblock Hurwitz spaces of triple coverings of elliptic curves and moduli
  spaces of abelian threefolds.
\newblock {\em Ann. Mat. Pura Appl. (4)}, 183(3):333--374, 2004.

\bibitem[Kat73]{Katz}
Nicholas~M. Katz.
\newblock Pinceaux de {L}efschetz: théorème d'existence.
\newblock In {\em Groupes de Monodromie en G{\'e}om{\'e}trie Alg{\'e}brique},
  pages 212--253. Springer Berlin Heidelberg, 1973.

\bibitem[Keb13]{Kebekus}
Stefan Kebekus.
\newblock Pull-back morphisms for reflexive differential forms.
\newblock {\em Adv. Math.}, 245:78--112, 2013.

\bibitem[KL07]{KnLopK3}
Andreas~Leopold Knutsen and Angelo~Felice Lopez.
\newblock A sharp vanishing theorem for line bundles on {$K3$} or {E}nriques
  surfaces.
\newblock {\em Proc. Amer. Math. Soc.}, 135(11):3495--3498, 2007.

\bibitem[Kol95]{Kollar}
János Kollár.
\newblock {\em Shafarevich Maps and Automorphic Forms}, volume~7 of {\em Porter
  Lectures}.
\newblock Princeton University Press, 1995.

\bibitem[LO11]{LO}
Herbert Lange and Angela Ortega.
\newblock Prym varieties of cyclic coverings.
\newblock {\em Geom. Dedicata}, 150:391--403, 2011.

\bibitem[LP93]{LePoitier}
Joseph Le~Potier.
\newblock Faisceaux semi-stables de dimension {$1$} sur le plan projectif.
\newblock {\em Rev. Roumaine Math. Pures Appl.}, 38(7-8):635--678, 1993.

\bibitem[LS06]{LehnSorger}
Manfred Lehn and Cristoph Sorger.
\newblock La singularité de {O'Grady}.
\newblock {\em J. Algebraic Geom.}, 15(4):753--770, 2006.

\bibitem[Lei93]{Leibman}
Alexander Leibman.
\newblock Fiber bundles with degenerations and their applications to computing
  fundamental groups.
\newblock {\em Geom. Dedicata}, 48(1):93--126, 1993.

\bibitem[MT07]{MT}
Dimitri~G. Markushevich and Alexander~S. Tikhomirov.
\newblock New symplectic {$V$}-manifolds of dimension four via the relative
  compactified {P}rymian.
\newblock {\em Internat. J. Math.}, 18(10):1187--1224, 2007.

\bibitem[Mat00]{Matsushita-equidimensionality}
Daisuke Matsushita.
\newblock Equidimensionality of {L}agrangian fibrations on holomorphic
  symplectic manifolds.
\newblock {\em Math. Res. Lett.}, 7(4):389--391, 2000.

\bibitem[Mat05]{Matsushita-higher-direct}
Daisuke Matsushita.
\newblock Higher direct images of dualizing sheaves of lagrangian fibrations.
\newblock {\em American Journal of Mathematics}, 127(2):243--259, 2005.

\bibitem[Mat14]{Matteini}
Tommaso Matteini.
\newblock {\em Holomorphically symplectic varieties with {Prym Lagrangian}
  fibrations}.
\newblock PhD thesis, SISSA, 2014.

\bibitem[{Men}22]{GM22}
Gr{\'e}goire {Menet}.
\newblock {Thirty-three deformation classes of compact hyperk{\"a}hler
  orbifolds}, 2022.
\newblock Preprint, \url{https://arxiv.org/abs/2211.14524v1}.

\bibitem[MR20]{MenetRiess}
Grégoire Menet and Ulrike Riess.
\newblock On the {Kähler} cone of irreducible symplectic orbifolds, 2020.
\newblock Preprint, \url{https://arxiv.org/abs/2009.04873}.

\bibitem[Muk84]{Mukai}
Shigeru Mukai.
\newblock Symplectic structure of the moduli space of sheaves on an abelian or
  {$K3$} surface.
\newblock {\em Invent. Math.}, 77(1):101--116, 1984.

\bibitem[Mum74]{Mumford}
David Mumford.
\newblock Prym varieties {I}.
\newblock In {\em Contributions to analysis}, pages 325--350. Elsevier, 1974.

\bibitem[Nik79a]{Nikulin_Finitegroups}
Vyacheslav~V. Nikulin.
\newblock Finite groups of automorphisms of {K}\"{a}hlerian {$K3$} surfaces.
\newblock {\em Trudy Moskov. Mat. Obshch.}, 38:75--137, 1979.

\bibitem[Nik79b]{Nikulin_integral}
Vyacheslav~V. Nikulin.
\newblock Integer symmetric bilinear forms and some of their geometric
  applications.
\newblock {\em Izv. Akad. Nauk SSSR Ser. Mat.}, 43(1):111--177, 1979.
\newblock English translation: Math. USSR-Izv. 14(1):103--167, 1980.

\bibitem[Nik81]{Nikulin_quotientgroups.geomappl}
Vyacheslav~V. Nikulin.
\newblock Quotient-groups of groups of automorphisms of hyperbolic forms by
  subgroups generated by 2-reflections. {A}lgebro-geometric applications.
\newblock {\em Itogi Nauki: Sovremennye Problemy Mat.}, 18:3--114, 1981.
\newblock English translation in J. Soviet Math. 22 (1983), no. 4, pp.
  1401--1475.

\bibitem[OW13]{OW}
Hisanori Ohashi and Malte Wandel.
\newblock Non-natural non-symplectic involutions on symplectic manifolds of
  {$K3^{[2]}$}-type, 2013.
\newblock Preprint, \url{https://arxiv.org/abs/1305.6353}.

\bibitem[Per20]{Perego}
Arvid Perego.
\newblock Examples of irreducible symplectic varieties.
\newblock In {\em Birational Geometry and Moduli Spaces}, pages 151--172.
  Springer, 2020.

\bibitem[PR23]{PR}
Arvid Perego and Antonio Rapagnetta.
\newblock Irreducible symplectic varieties from moduli spaces of sheaves on
  {K}3 and {A}belian surfaces.
\newblock {\em Algebr. Geom.}, 10(3):348--393, 2023.

\bibitem[Rei88]{Reider}
Igor Reider.
\newblock Vector bundles of rank {$2$} and linear systems on algebraic
  surfaces.
\newblock {\em Ann. of Math. (2)}, 127(2):309--316, 1988.

\bibitem[Sac13]{Sacca}
Giulia Sacc\`a.
\newblock {\em Fibrations in abelian varieties associated to {E}nriques
  surfaces}.
\newblock PhD thesis, Princeton University, 2013.

\bibitem[Sac24]{Sacca-ISV}
Giulia Saccà.
\newblock Moduli spaces on kuznetsov components are irreducible symplectic
  varieties, 2024.
\newblock Preprint, \url{https://arxiv.org/abs/2304.02609}.

\bibitem[SD74]{Saint-Donat}
Bernard Saint-Donat.
\newblock Projective models of {$K-3$} surfaces.
\newblock {\em Amer. J. Math.}, 96:602--639, 1974.

\bibitem[Saw07]{Sawon}
Justin Sawon.
\newblock Lagrangian fibrations on {H}ilbert schemes of points on {$K3$}
  surfaces.
\newblock {\em J. Algebraic Geom.}, 16(3):477--497, 2007.

\bibitem[SS22]{SS}
Justin Sawon and Chen Shen.
\newblock Deformations of compact {P}rym fibrations to {H}itchin systems.
\newblock {\em Bull. Lond. Math. Soc.}, 54(5):1568--1583, 2022.

\bibitem[Sch20]{Schwald}
Martin Schwald.
\newblock Fujiki relations and fibrations of irreducible symplectic varieties.
\newblock {\em \'{E}pijournal G\'{e}om. Alg\'{e}brique}, 4:Art. 7, 19, 2020.

\bibitem[She20]{ShenThesis}
Chen Shen.
\newblock {\em Lagrangian fibrations by Prym varieties}.
\newblock PhD thesis, University of North Carolina at Chapel Hill, 2020.

\bibitem[Voi03]{VoisinII}
Claire Voisin.
\newblock {\em Hodge Theory and Complex Algebraic Geometry II}, volume~2 of
  {\em Cambridge Studies in Advanced Mathematics}.
\newblock Cambridge University Press, 2003.

\bibitem[Yos01]{Y}
Kota Yoshioka.
\newblock Moduli spaces of stable sheaves on abelian surfaces.
\newblock {\em Math. Ann.}, 321(4):817--884, 2001.

\bibitem[Zow12]{Zowislok}
Markus Zowislok.
\newblock On moduli spaces of sheaves on {K3} or abelian surfaces.
\newblock {\em Math. Z.}, pages 1195--1217, 2012.

\end{thebibliography}
\bibliographystyle{halpha-sortingadapted}

\end{document}